\theoremstyle{plain}
\newtheorem{theorem}{Theorem}
\newtheorem{lemma}[theorem]{Lemma}
\newtheorem{sub-lemma}[theorem]{Sub-lemma}
\newtheorem{corollary}[theorem]{Corollary}
\newtheorem{proposition}[theorem]{Proposition}
\newtheorem*{proposition*}{Proposition}
\newtheorem*{theorem*}{Theorem}
\newtheorem{theoremain}{Theorem}
\newtheorem{propositionmain}[theoremain]{Proposition}
\newtheorem{corollarymain}[theoremain]{Corollary}
\theoremstyle{definition}
\newtheorem*{definition*}{Definition}
\newtheorem*{definitions*}{Definitions}
\theoremstyle{remark}
\newtheorem*{claim*}{Claim}
\newtheorem*{example*}{Example}
\newtheorem{remark}[theorem]{Remark}
\newtheorem*{remark*}{Remark}
\newtheorem*{remarks*}{Remarks}
\newtheorem*{question*}{Question}
\newcommand{\N}{\mathbb{N}}
\renewcommand{\S}{\mathbb{S}}
\newcommand{\T}{\mathbb{T}}
\newcommand{\A}{\mathbb{A}}
\newcommand{\R}{\mathbb{R}}
\newcommand{\Z}{\mathbb{Z}}
\newcommand{\Q}{\mathbb{Q}}
\renewcommand{\ge}{\geqslant}
\renewcommand{\le}{\leqslant}
\renewcommand{\geq}{\geqslant}
\renewcommand{\leq}{\leqslant}
\begin{document}


\title{Topological horseshoes for surface homeomorphisms}

\author{Patrice Le Calvez}

\address{Sorbonne Universit\'e, CNRS, Institut de Math\'ematiques de Jussieu-Paris Rive Gauche, 
IMJ-PRG, F-75005 Paris, France, et Institut Universitaire de France}

\email{patrice.le-calvez@imj-prg.fr}

\author{Fabio Tal}

\address{University of S\~{a}o Paulo, Instituto de Matem\'atica e Estat\'{\i}stica, Rua do Mat\~ao 1010, 05508-090, S\~ao Paulo, Brasil \& Friedrich-Schiller-Universit\''at, Institute of Mathematics, Ernst-Abbe-Platz 2, 07743, Jena, Germany}

\email{fabiotal@ime.usp.br }
\thanks{F. A. Tal was partially supported by the Alexander von Humboldt foundation as well as by CAPES, FAPESP and CNPq-Brasil.}

 \date{}

\begin{abstract}
In this work we develop a new criterion for the existence of topological horseshoes for surface homeomorphisms in the isotopy class of the identity. Based on our previous work on forcing theory, this new criterion is purely topological and can be expressed in terms of equivariant Brouwer foliations and transverse trajectories. We then apply this new tool in the study of the dynamics of homeomorphisms of surfaces with zero genus and null topological entropy and we obtain several applications. For homeomorphisms of the open annulus $\A$ with zero topological entropy, we show that rotation numbers \textcolor{black}{exist} for all points with nonempty omega \textcolor{black}{ limit sets}, and that if $\A$ is a generalized region of instability then it admits a single rotation \textcolor{black}{number}. We also offer a new proof of a recent result of Passegi, Potrie and Sambarino, showing that zero entropy dissipative homeomorphisms of the annulus having as an attractor a circloid have a single rotation number. 

Our work also studies homeomorphisms of the sphere without horseshoes. For these maps we present a structure theorem in terms of fixed point free invariant sub-annuli, as well as a very restricted description of all possible dynamical behavior in the  transitive subsets. This description ensures, for instance, that transitive sets can contain at most $2$ distinct periodic orbits and that, in many cases, the restriction of the homeomorphism to the transitive set must be an extension of an odometer. In particular, we show that any nontrivial and stable transitive subset of a dissipative diffeomorphism of the plane is always infinitely renormalizable in the sense of Bonatti-Gambaudo-Lion-Tresser.

 \end{abstract}

\maketitle



















































\bigskip
\bigskip
\section{Introduction}

In the whole text, we will define the entropy of a homeomorphism $f$ of a Hausdorff locally compact topological space $X$ as being the topological entropy of the extension of $f$ to the Alexandrov compactification of $X$, that fixes the point at infinity. We will say that $f$ is {\it topologically chaotic} if its entropy is positive and if the number of periodic points of period $n$ for some iterate of $f$ grows exponentially in $n$. An important class of topologically chaotic homeomorphisms is the class of homeomorphisms that possesses a {\it topological horseshoe}. Let us give a precise definition of what we have in mind: a compact subset $Y$ of $X$ is a topological horseshoe if it is invariant by a power $f^r$ of $f$ and if $f^r\vert_Y$ admits a finite extension $g:Z\to Z$ on a Hausdorff compact space such that:
\begin{itemize}
\item $g$ is an extension of the Bernouilli shift $\sigma :\{1,\dots, q\}^{\Z}\to \{1,\dots, q\}^{\Z}$, where $q\geq 2$;
\item The preimage of every $s$-periodic sequence of $\{1,\dots,q\}^{\Z}$  by the factor map $u:Z\to \{1,\dots, q\}^{\Z}$ contains  a $s$-periodic point of $g$.
\end{itemize}
By an extension we mean a  homeomorphism semi-conjugated to a given homeomorphism, by a finite extension we mean an extension such that the fibers of the factor map are all finite with an uniform bound in their cardinality. Note that, if $h(f)$ denotes the topological entropy of $f$, then
$$ rh(f)=h(f^r)\geq  h(f^r\vert_Y)=h(g)\geq h(\sigma)=\log q,$$and that $f^{rn}$ has at least $q^n/m$ fixed points for every $n\geq 1$, if the cardinality of the fibers of the factor map $v:Z\to Y$ are uniformly bounded by $m$.\footnote{The reader will notice that the topological horseshoes that will be constructed in this article possess the following additional stability property: if $U$ is neighborhood of $Y$ and $\mathcal U$ a neighborhood of $f$ for the compact open topology, then every map $f'\in\mathcal U$ admits a topological horseshoe $Y'\subset U$ satisfying the same properties as $Y$.

\textcolor{black}{We can add that the requirement to consider finite extensions comes from the following fact: extensions $g$ of Bernouilli shifts will be
constructed for lifts of $f^r\vert _U$ to some covering spaces of $U$,
where $U$ is an open subset of a surface $M$ invariant by the power
$f^r$ of a homeomorphism $f$ of $M$. It is not clear that an extension
of a Bernouilli shift can be constructed directly on the surface.  More
generally, it is a natural question to determine if, whenever $Y$ is a topological horseshoe, then one may find a subset $Y'$ invariant by a power $f^{r'}$ of $f$ such that $f^{r'}\vert_{Y'}$ satisfies the same properties as $g$.}}

Since the groundbreaking work of Smale \cite{smale}, the presence of horseshoes have been a paradigmatic feature of dynamical systems, and its prevalence as a phenomenon is well known. Their role in the particular case of surface dynamics is even larger, as demonstrated by the results of Katok in \cite{Katok}, showing by Pesin's Theory that any sufficiently smooth diffeomorphism of a surface with strictly positive topological entropy has a topological horseshoe. Nonetheless, this direct relationship between existence of horseshoes and topological entropy is not valid in the context of homeomorphisms (see \cite{Rees}, \cite{BeguinCrovisierLeRoux1}).

This article is a natural continuation of \cite{LeCalvezTal}, where the authors gave a new orbit forcing theory for surface homeomorphisms related to the notion of {\it maximal isotopies}, {\it transverse foliations} and {\it transverse trajectories}. A classical example of forcing result is Sharkovski's theorem: there exists an explicit total order  $\preceq$ on the set of natural integers such that every continuous transformation $f$ on $[0,1]$ that contains a periodic orbit of period $m$ contains a periodic orbit of period $n$ if $n\preceq m$. Recall that if $f$ admits a periodic orbit whose period is not a power of $2$, one can construct a Markov partition and \textcolor{black}{code} orbits with a sub-shift of finite type: more precisely there exists a topological horseshoe. The simplest situation from which the existence of a topological horseshoe can be deduced is the existence of a point $x$ such that $f^3(x)\leq x<f(x)<f^2(x)$. The fundamental theorem of this paper, which will be announced at the end of this introduction, is a simple criterion of existence of a topological horseshoe stated in terms of transverse trajectories. We already gave a partial result in this direction in \cite{LeCalvezTal} but it required stronger hypothesis on the transverse trajectories. Moreover the conclusion was weaker: we proved that the map was topologically chaotic but did not get a topological horseshoe. 

The main result can be used as an interesting tool in the study of surface homemorphisms, with relations to horseshoes or to entropy. It has the advantage of being a purely topological technique, and as so can be applied with no restrictions to homeomorphisms, but the theorem has also new and relevant  consequences even in the presence of additional smoothness.  
We apply our main result do deduce many applications to the study of homeomorphisms of a surface of genus zero isotopic to the identity. This will be done usually by studying the complementary problem of describing the behaviour of homeomorphisms with no topological horseshoes, and consequently of $\mathcal{C}^{1+\epsilon}$ diffeomorphisms with zero topological entropy. This subject has been previously investigated for instance by Handel in \cite{Handel}, who studied among other things the existence of rotation numbers for maps of the annulus, and by Franks and Handel in \cite{FranksHandel}, who presented a structure theorem for $\mathcal{C}^{\infty}$ diffeomorphisms of the sphere preserving the volume measure, a result we extended to the $\mathcal{C}^{0}$ context in \cite{LeCalvezTal}. Another related topic that has received considerable attention is the study of dissipative diffeomorphisms of the plane with no entropy, in particular in the Henon family, with an emphasis in describing the dynamics of diffeomorphisms that can be approximated by maps with strictly positive entropy (see, for instance, \cite{CLM, LM, HMT}). Recent progress, which includes a description of dynamical behavior of transitive sets for the subset of {\it strongly dissipative diffeomorphisms}, was obtained by Crovisier, Kocsard, Koropecki and Pujals (see \cite{CKKP}), which enabled a breakthrough in the understanding of the boundary of entropy zero for a region of the parameter plane for the Henon family (see \cite{CPT}).

\subsection{Applications}
Our first application, one that is a main tool in obtaining the other results,  is about rotation numbers for annular homeomorphisms with no topological horseshoes. We will use the definition introduced by Le Roux \cite{LeRoux} and developed by Conejeros \cite{Conejeros}. Write $\T^1=\R/\Z$, denote by $\check\pi: \R^2\to \A$ the universal covering projection of the open annulus $\A=\T^1\times\R$ and by $\pi_1:\R^2 \to \R$ the projection in the first coordinate. Write $T:(x,y)\mapsto (x+1,y)$ for the fundamental covering automorphism. Let $f$ be a homeomorphism  of $\A$ isotopic to identity and $\check f$ a lift to $\R^2$.  Say that a point $z$ is an {\it escaping} point of $f$, if the sequence $(f^n(z))_{n\geq0}$ converges to an end of $\A$ or equivalently if its $\omega$-limit set $\omega(z)$ is empty. We will denote $\mathrm{ne}^+(f)$ the complement of the set of escaping points. Denote by $\mathrm{ne}^-(f)=\mathrm{ne}^{+}(f^{-1})$ the set of points $z$ such that the $\alpha$-limit set $\alpha(z)$ is non empty. We will define  $ \mathrm{ne}(f)=\mathrm{ne}^+(f)\cup \mathrm{ne}^-(f^{-})$ and denote $\Omega(f)$ the set of non-wandering points. We say that $z\in\mathrm{ne}^+(f)$ has a {\it rotation number $\mathrm{rot}_{\check f}(z)$}  if, for every compact set $K\subset\A$ and every increasing sequence of integers $(n_k)_{k\geq 0}$ such that $f^{n_k}(z)\in K$, we have  
$$\lim_{k\to+\infty}\frac{1}{n_k}\left( \pi_1(\check f^{n_k}(\check z))- \pi_1(\check z)\right)=\mathrm{rot}_{\check f}(z),$$ 
if $\check z$ is a lift of $z$. \footnote{In the article, when writing $\mathrm{rot}_{\check f}(z)=\rho$, we implicitly suppose that $z$ belongs to $\mathrm{ne}^+(f)$ and has a rotation number equal to $\rho$.}

It is straightforward that if $h$ is a homeomorphism of $\A$ then $z$ belongs to $\mathrm{ne}^+(f)$ is and only if $h(z)$ belongs to $\mathrm{ne}^+(h\circ f\circ h^{-1})$. Moreover, if $\check h$ is a lift of $h$ to $\R^2$, then $\mathrm{rot}_{\check h\circ\check f\circ\check h^{-1}}(h(z))$ exists if and only if $\mathrm{rot}_{\check f}(z)$ exists and we have $$ \mathrm{rot}_{\check h\circ\check f\circ\check h^{-1}}(h(z)) = \begin{cases} 
\mathrm{rot}_{\check f}(z) &\mathrm{if }\,\, h\mathrm{\,\,  induces \,\,  the \,\, identity \,\, on\,\,  }H_1(\A,\Z),\\
-\mathrm{rot}_{\check f}(z) &\mathrm{otherwise.}\\
\end{cases}$$ 
Consequently, one can naturally define rotation numbers for a homeomorphism of an abstract annulus $A$ and a given lift to the universal covering space, as soon as a generator of $H_1(A,\Z)$ is chosen. More precisely, let $f$ be homeomorphism of $A$ isotopic to the identity and $\check f$ a lift of $f\vert_A$ to the universal covering space $\check A$ of $A$. Let $\kappa$ be a generator of $H_1(A,\Z)$ and $h:A\to \A$ a homeomorphism such that $h_*(\kappa)=\kappa_*$, where $\kappa_*$ is the generator of $H_1(\A,\Z)$ induced by the loop $\Gamma_* :t\mapsto (t,0)$. If $\check h:\check A\to \R^2$ is a lift of $h$, then $\check h\circ \check f\circ\textcolor{black}{\check{h}}^{-1}$ is a lift of  $h\circ f\vert_A\circ h^{-1}$, independent of the choice of $\check h$. If $z$ belongs to $\mathrm{ne}^+(f\vert_A)$, then $h(z)$ belongs to $\mathrm{ne}^+(h\circ f\vert_A\circ h^{-1})$. The existence of \textcolor{black}{$\mathrm{rot}_{\check h\circ \check f\vert_A\circ \check h^{-1}}(h(z))$} does not depend on the choice of $h$. In case such a rotation number is well defined, we will denote it $\mathrm{rot}_{\check f, \kappa}(z)$ because it does not depend on $h$. Note that $$\mathrm{rot}_{\check f, -\kappa}(z)=-\mathrm{rot}_{\check f, \kappa}(z).$$
We will also define
$$\mathrm{rot}_{f, \kappa}(z)=\mathrm{rot}_{\check f, \kappa}(z)+\Z\in \R/\Z.$$

\begin{theoremain}
\label{thmain:rotation-number}
Let $f$ be a homeomorphism of $\A$ isotopic to the identity and $\check f$ a lift of $f$ to $\R^2$. We suppose that $f$ has no topological horseshoe. Then
\begin{enumerate}
\item each point $z\in\mathrm{ne}^+(f)$  has a \textcolor{black}{well defined} rotation number $\mathrm{rot}_{\check f}(z)$;
\item for every $z,z'\in\mathrm{ne}^+(f)$ such that $z'\in\omega(z)$, we have $\mathrm{rot}_{\check f}(z')=\mathrm{rot}_{\check f}(z)$;
\item if $z\in \mathrm{ne}^+(f)\cap \mathrm{ne}^-(f)$ is non-wandering, then $\mathrm{rot}_{\check f^{-1}}(z)=-\mathrm{rot}_{\check f}(z)$;
\item the map $\mathrm{rot}_{\check f^{\pm}} : \Omega(f) \cap \mathrm{ne}(f)\to\R$ is continuous, where
$$ \mathrm{rot}_{\check f^{\pm}}(z) = \begin{cases}\mathrm{rot}_{\check f}(z) &\mathrm{if}\enskip  z\in \Omega(f) \cap  \mathrm{ne}^+(f),\\
-\mathrm{rot}_{\check f^{-1}}(z) &\mathrm{if}\enskip  z\in \Omega(f) \cap  \mathrm{ne}^-(f).
\end{cases}$$

\end{enumerate} 
\end{theoremain}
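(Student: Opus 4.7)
Each of the four assertions is a rigidity statement of the form ``two orbit segments, or forward and backward iterates, have compatible average horizontal displacement''. I would argue by contradiction in each case: from the failure of the conclusion I would manufacture two \emph{admissible transverse loops} for a maximal identity isotopy $I$ of $f$ and an associated transverse foliation $\mathcal F$, whose homology classes in $H_1(\A,\Z)\simeq\Z$, normalized by their periods, disagree. The fundamental horseshoe-forcing criterion announced at the end of the introduction then produces a topological horseshoe, contradicting the standing hypothesis on $f$.

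\textbf{Items (1) and (2).} For (1), suppose $z\in\mathrm{ne}^+(f)$ does not admit a rotation number. Since $\omega(z)\neq\emptyset$, one can select $z^*\in\omega(z)$, a compact neighbourhood $K$ of $z^*$, and increasing sequences $(n_k)$, $(m_k)$ with $f^{n_k}(z),f^{m_k}(z)\in K$ for all $k$ such that
$$\lim_k\frac{\pi_1(\check f^{n_k}(\check z))-\pi_1(\check z)}{n_k}=\rho_1\neq\rho_2=\lim_k\frac{\pi_1(\check f^{m_k}(\check z))-\pi_1(\check z)}{m_k}.$$
The corresponding transverse trajectories of $z$, closed up through short connecting arcs inside $K$, would give admissible transverse loops $\Gamma_1,\Gamma_2$ whose homology classes divided by their lengths tend to $\rho_1$ and $\rho_2$, respectively. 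The main theorem, applied to this pair, supplies a horseshoe. Item (2) is analogous: if $z'\in\omega(z)$ has $\mathrm{rot}_{\check f}(z')\neq\mathrm{rot}_{\check f}(z)$, the orbit of $z$, being asymptotic to $z'$, produces arbitrarily long segments whose transverse trajectory has average displacement close to $\mathrm{rot}_{\check f}(z')$, while its overall displacement is governed by $\mathrm{rot}_{\check f}(z)$; the closing-up construction again supplies two transverse loops with distinct rotation speeds.

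\textbf{Items (3), (4), and the main difficulty.} For (3), take $z\in\Omega(f)\cap\mathrm{ne}^+(f)\cap\mathrm{ne}^-(f)$ with $\mathrm{rot}_{\check f}(z)=\rho_+$ and $\mathrm{rot}_{\check f^{-1}}(z)=\rho_-$. Non-wandering, combined with the non-emptiness of $\omega(z)$ and $\alpha(z)$, yields arbitrarily long forward and backward near-returns of the orbit of $z$; closing up a forward return of length $n_k$ produces a transverse loop of rotation speed $\to\rho_+$, while closing up a backward return of length $m_k$ produces a transverse loop of rotation speed $\to -\rho_-$. If $\rho_++\rho_-\neq 0$ these speeds differ, and the criterion again forces a horseshoe, so $\rho_-=-\rho_+$. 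For (4), take $z_k\to z$ in $\Omega(f)\cap\mathrm{ne}(f)$ with $\mathrm{rot}_{\check f^\pm}(z_k)\to\rho\neq \mathrm{rot}_{\check f^\pm}(z)$; using that $z$ is non-wandering and close to $z_k$, one closes a long return of $z_k$ through $z$ to obtain a loop of speed $\approx\rho$, compared with a return loop at $z$ itself of speed $\mathrm{rot}_{\check f^\pm}(z)$, and the contradiction is obtained as before.

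The principal difficulty will be the \emph{closing step}: converting ``$f^n(z)\in K$'' (or ``$z_k$ is close to $z$'') into a genuine admissible transverse loop whose homological winding is controlled by the horizontal displacement of the orbit segment up to an error $o(n)$. This requires the full control over transverse trajectories developed in \cite{LeCalvezTal}, together with the fact that short connecting arcs inside $K$ can be chosen to preserve admissibility while contributing only bounded horizontal winding. Once this is in place, verifying the hypotheses of the main theorem in each configuration is essentially automatic.
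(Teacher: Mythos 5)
Your high-level strategy (contradiction, produce two transverse objects with incompatible rotation speeds, invoke forcing) points in the right direction, but there is a genuine gap at its core: the criterion you invoke — ``two admissible transverse loops whose homology classes normalized by period disagree force a horseshoe'' — is not one of the stated fundamental results. Theorem \ref{th: horseshoe} requires a single admissible path with a $\mathcal F$-\emph{transverse self-intersection}, and the entire difficulty is to convert the two-speed configuration into such a self-intersection. The paper does this by first renormalizing: choose a rational $p/q$ strictly between the two speeds and replace $f$, $\check f$ by $f^q$, $\check f^q\circ T^{-p}$, so that one of the two loops becomes \emph{positive} and the other \emph{negative} in $H_1(\A,\Z)$; then the key lemma (Proposition \ref{prop:positive-negative} and Corollaries \ref{cor:positive-negative1}--\ref{cor:positive-negative2}) states that a transverse path with no $\mathcal F$-transverse self-intersection cannot draw both a positive and a negative simple loop in the relevant nested configurations. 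Without this renormalization and the positive/negative dichotomy, your ``pair of loops'' does not feed into any stated theorem. Note also that the paper never performs your ``closing step'': it works directly with the (infinite) transverse trajectory $I^{\N}_{\mathcal F}(z)$ and the notion of \emph{drawing} a simple transverse loop $\Gamma$ (meeting every leaf of $U_\Gamma$), which is exactly the device that replaces closing arcs; the rigidity then comes from Propositions \ref{prop:drawing-crossing} and \ref{prop:non-wandering} (uniqueness of the drawing component, and the fact that a non-wandering point's trajectory is drawn on a unique simple loop).

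Two further omissions. In item (1) you implicitly assume both limiting speeds are finite; the case where the limit is $\pm\infty$ cannot be encoded by a loop of ``infinite speed'' and requires a separate argument (in the paper: either the accumulation point is fixed, and one contradicts infinite drift using the loop it determines, or it is not fixed and one applies the free-disk criterion of Corollary \ref{cor:free-disks}). In items (2)--(4) the actual mechanism is not a comparison of two closed-up loops but the combination of Proposition \ref{prop:stability} (local persistence of finite pieces of transverse trajectories) with Proposition \ref{prop:non-wandering}: the non-wandering point's trajectory is drawn on a single simple loop $\Gamma$, nearby or asymptotic orbits must draw $\Gamma$, and uniqueness of the drawing component then pins their rotation number to that of $\Gamma$. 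As written, your proposal defers all of this to the flagged ``closing step'', which is precisely where the proof lives.
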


\begin{remarks*} In \cite{LeCalvezTal}, we proved the existence of the rotation number $\mathrm{rot}_{\check f}(z)$ for every bi-recurrent point $z$, assuming that $f$ is not topologically chaotic (which is a stronger hypothesis).  Similar results were previously proved by Handel \cite {Handel}. 

\end{remarks*}

\begin{example*} The map $\check f :(x,y)\mapsto (x+y, \,y\vert y\vert)$ lifts a homeomorphism $f$ of $\A$ isotopic to the identity such that
$$\mathrm{ne}^+(f)=\T^1\times[-1,1],\enskip \mathrm{ne}^-(f)=\A, \enskip \Omega(f)=\T^1\times\{-1,0,1\}.$$ Note that
$$ \mathrm{rot}_{\check f}(z) = \begin{cases} 
-1 &\mathrm{if}\enskip  z\in \T^1\times\{-1\},\\
0 &\mathrm{if}\enskip  z\in \T^1\times(-1,1),\\
1 &\mathrm{if}\enskip  z\in \textcolor{black}{\T^1\times\{1\}},\\
\end{cases}$$ and
$$ \mathrm{rot}_{\check f^{-1}}(z) =  \begin{cases} 
1 &\mathrm{if}\enskip  z\in \T^1\times(-\infty,0),\\
0 &\mathrm{if}\enskip  z\in \T^1\times\{0\},\\
-1 &\mathrm{if}\enskip  z\in \T^1\times(0,+\infty).\\
\end{cases}$$
The maps $ \mathrm{rot}_{\check f}$ and $\mathrm{rot}_{\check f^{-1}}$ are non continuous on their domains of definition and the equality $ \mathrm{rot}_{\check f}(z)=-\mathrm{rot}_{\check f^{-1}}(z)$ is true only if $z\in\Omega(f)$.
\end{example*}

The next result is a strong improvement of the assertion (3) of Theorem \ref{thmain:rotation-number}, it will be expressed in terms of {\it Birkhoff cycles} and {\it Birkhoff recurrence classes}. Let $X$ be a metric space, $f$ a homeomorphism of  $X$, and  $z_1$, $z_2$ two points of $X$. We say  that there exists a {\it Birkhoff connection} from $z_1$ to $z_2$ if for every neighborhood $W_1$ of $z_1$ and every neighborhood $W_2$ of $z_2$, there exists $n\geq 1$ such that $W_1\cap f^{-n}(W_2)\not=\emptyset$. A {\it Birkhoff cycle} is a finite sequence $(z _i)_{i\in\Z/p\Z}$  such that for every $i\in\Z/p\Z$, there exists a Birkhoff connection from $z_i$ to $z_{i+1}$. A point $z$ is said to be {\it Birkhoff recurrent} for $f$ if there exists a Birkhoff cycle containing $z$. Note that a point $z$ is non-wandering if and only if it defines a Birkhoff cycle \textcolor{black}{with $p=1$}, so every non wandering point is Birkhoff recurrent. One can define an equivalence relation in the set of Birkhoff recurrent points. Say that $z_1$ is {\it Birkhoff equivalent} to $z_2$ if there exists a Birkhoff cycle containing both points. The equivalence classes will be called Birkhoff recurrence classes. Note that every $\omega$-limit set or $\alpha$-limit set is contained in a single Birkhoff recurrence class. In particular, every transitive set, which means every set $\Lambda$ such that there exists $z\in\Lambda$ satisfying $\omega(z)=\Lambda$,  is contained in a single Birkhoff recurrence class.  On the other hand, it is easily seen that any Birkhoff recurrent class is contained in a chain recurrent class, but the converse does not need to hold, for instance when $f$ is the identity.

 If $f$ is a homeomorphism of $\A$ isotopic to the identity, we denote $f_{\mathrm{sp}}$ the continuous extension of $f$ to the sphere obtained by adding the two ends $N$ and $S$ of $\A$.

\begin{theoremain}\label{thmain:birkhoffcycles}
Let $f$ be a homeomorphism of $\A$ isotopic to the identity and $\check f$ a lift of $f$ to $\R^2$.  We suppose that $f$ has no topological horseshoe. If $\mathcal B$ is a Birkhoff recurrence class of $f_{\mathrm{sp}}$, there exists   $\rho\in\R$ such that,
$$\begin{cases}  \mathrm{rot}_{\check f}(z) = \rho&\mathrm{if}\enskip  z\in {\mathcal B}\cap\mathrm{ne}^+(f),\\
 \mathrm{rot}_{\check f^{-1}}(z) = -\rho&\mathrm{if}\enskip  z\in {\mathcal B}\cap\mathrm{ne}^-(f).
\end{cases}$$
\end{theoremain}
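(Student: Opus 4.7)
The argument will proceed by contradiction, combining Theorem \ref{thmain:rotation-number} with the main horseshoe criterion announced at the end of the introduction and the transverse foliation forcing theory of \cite{LeCalvezTal}. I would fix a maximal identity isotopy $I$ for $f$ and a transverse equivariant Brouwer foliation $\mathcal{F}$; recall that for $z \in \mathrm{ne}^+(f)$ the rotation number $\mathrm{rot}_{\check f}(z)$ equals the asymptotic horizontal drift in the universal cover of the admissible transverse trajectory of $z$, and symmetrically for $\mathrm{rot}_{\check f^{-1}}$. It is convenient to set
\[
\rho_f(z) := \begin{cases} \mathrm{rot}_{\check f}(z) & \text{if } z \in \mathrm{ne}^+(f),\\ -\mathrm{rot}_{\check f^{-1}}(z) & \text{if } z \in \mathrm{ne}^-(f),\end{cases}
\]
which is single-valued on its overlap $\Omega(f)\cap \mathrm{ne}^+(f)\cap \mathrm{ne}^-(f)$ by Theorem \ref{thmain:rotation-number}(3). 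Since the class $\mathcal{B}$ is defined via Birkhoff cycles and each cycle is a concatenation of Birkhoff connections, it suffices to show that $\rho_f(z_1)=\rho_f(z_2)$ whenever there is a Birkhoff connection from $z_1$ to $z_2$ inside $\mathcal{B}$.

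Given such a connection, I would choose shrinking neighborhood bases $W_1^k\ni z_1$, $W_2^k\ni z_2$ and pick $w_k\in W_1^k$ with $f^{n_k}(w_k)\in W_2^k$, $n_k\to\infty$. Up to a uniformly bounded bridge, the admissible transverse trajectory of $w_k$ between times $0$ and $n_k$ decomposes into a long prefix whose average horizontal slope tends to $\rho_f(z_1)$ and a long suffix whose average slope tends to $\rho_f(z_2)$. In the non-escaping case this is a direct consequence of Theorem \ref{thmain:rotation-number}(1) together with the continuity of transverse trajectories for points in a common neighborhood. In the escaping case, for instance when $z_i$ is one of the ends $N$, $S$ of $f_{\mathrm{sphere}}$ or lies in $\mathrm{ne}^-(f)\setminus\mathrm{ne}^+(f)$, I would exploit the other edges of the Birkhoff cycle through $z_i$ to produce nearby genuinely non-wandering points in $\mathrm{ne}(f)$ whose rotation numbers approach $\rho_f(z_i)$, then use the continuity statement of Theorem \ref{thmain:rotation-number}(4) to transfer the slope estimate to the forward transverse trajectory of $w_k$.

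If now $\rho_f(z_1)\neq\rho_f(z_2)$, concatenating such bridges around the whole Birkhoff cycle and iterating the construction $m$ times produces an admissible transverse trajectory which returns $\varepsilon$-close to its starting point and whose total horizontal displacement in the lift grows linearly in $m$. Splicing it, via the forcing theory of \cite{LeCalvezTal}, with the genuine forward transverse trajectory of a recurrent point close to $z_1$ of slope $\rho_f(z_1)$ yields two admissible transverse trajectories starting from the same leaf of $\mathcal{F}$ that drift apart at linear rate in $\R^2$. This is precisely the configuration to which the paper's main horseshoe criterion applies, producing a topological horseshoe and contradicting the hypothesis. The hardest step will be the escape case of the slope estimate: when $\rho_f(z_i)$ is only defined via backward dynamics, one must transfer this information to the forward transverse trajectory of a nearby non-escaping orbit, and it is there that the fine structure of $\mathcal{F}$ near the ends of $\A$, combined with the continuity provided by Theorem \ref{thmain:rotation-number}(4), has to do the real work.
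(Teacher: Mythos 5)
There is a genuine gap at the heart of your argument: the claim that the transverse trajectory of $w_k$ "decomposes into a long prefix whose average horizontal slope tends to $\rho_f(z_1)$ and a long suffix whose average slope tends to $\rho_f(z_2)$" is not justified and is in general false. Proximity of $w_k$ to $z_1$ at a single time only controls a \emph{bounded} number of iterates (Proposition \ref{prop:stability} gives that $I_{\mathcal F}^n(z_1)$ is a subpath of $I_{\mathcal F}^{n+2}(f^{-1}(w_k))$ for each fixed $n$, with a neighborhood depending on $n$); the rotation number is an asymptotic quantity, and a point arbitrarily close to $z_1$ can have a completely different rotation number or none at all. The paper circumvents this by never estimating slopes of the connecting orbits. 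Instead, it first renormalizes: pick a rational $p/q$ strictly between the two putative values, pass to $\check f^q\circ T^{-p}$ and a transverse foliation of a maximal isotopy of $f^q$; then the statement "$\rho_f(z_1)<p/q<\rho_f(z_2)$" becomes the statement that the whole transverse trajectory of (an iterate of) $z_1$ draws a \emph{negative} transverse simple loop while that of $z_2$ draws a \emph{positive} one. Drawing a loop is a finite-time, open condition, so it \emph{does} pass to the nearby connecting points $w_k$, and the contradiction comes from Proposition \ref{prop:Birkhoff-cycles} (all points of a Birkhoff cycle in $\mathrm{ne}(I)$ draw the same unique simple loop) together with Proposition \ref{prop:positive-negative}. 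Your "linear drift" configuration is also not literally the hypothesis of Theorem \ref{th: horseshoe}, which requires an $\mathcal F$-transverse self-intersection of a single admissible path; producing one from two trajectories with different average slopes is exactly what the positive/negative loop dichotomy (Corollaries \ref{cor:positive-negative1}--\ref{cor:positive-negative2}) is for.

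A second, independent gap is the treatment of the ends. When $\mathcal B$ contains $N$ or $S$ these are not points of $\A$, so $\mathrm{rot}_{\check f}$ is not defined for them and Theorem \ref{thmain:rotation-number}(4) cannot "transfer the slope estimate". The paper splits into three cases. If $\mathcal B$ avoids both ends, the cycle lies in $\mathrm{ne}^+(f)\cap\mathrm{ne}^-(f)$ and the loop argument above applies directly. If $\mathcal B$ contains both ends, one first produces periodic orbits of two distinct intermediate rational rotation numbers (Lemma \ref{lemma:periodic-orbits}, via a Poincar\'e--Bendixson argument on the transverse foliation), punctures the sphere at those periodic points instead, and uses a cross-ratio/linking computation to show $\mathrm{rot}(S)-\mathrm{rot}(N)=q'p-qp'\neq 0$, contradicting the cycle lemma (this is Proposition \ref{propositionmain_regionofinstability}). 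The case where $\mathcal B$ contains exactly one end is the hardest and requires Proposition \ref{prop: rotation numbers} plus a detailed analysis of the $\alpha$- and $\omega$-limit sets of leaves $\phi_0,\phi_1$ through the limit sets of $z_0^*,z_1^*$, ending with a closed leaf that would have to be simultaneously attracting and repelling. None of this is reachable by the continuity statement you invoke, so the proposal as written does not close.
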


We immediately deduce:

 \begin{corollarymain}\label{corollarymain:birkhoffclasses}
Let $f$ be a homeomorphism of $\A$ isotopic to the identity and $\check f$ a lift of $f$ to $\R^2$. We suppose that $f$ has no topological horseshoe. Then for every Birkhoff cycle $(z _i)_{i\in\Z/p\Z}$ of $f$ in $\mathrm{ne}(f)$, there exists $\rho\in\R$ such that, for every $i\in\Z/p\Z$:
$$\begin{cases}  \mathrm{rot}_{\check f}(z_i) = \rho&\mathrm{if}\enskip  z_i\in \mathrm{ne}^+(f),\\
 \mathrm{rot}_{\check f^{-1}}(z_i) = -\rho&\mathrm{if}\enskip  z_i\in \mathrm{ne}^-(f).
\end{cases}$$
\end{corollarymain}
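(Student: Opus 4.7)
The plan is to deduce this corollary directly from Theorem \ref{thmain:birkhoffcycles}. The key observation is that Birkhoff cycles of $f$ in $\A$ remain Birkhoff cycles once one extends the map to the sphere $f_{\mathrm{sphere}}$, so the corollary reduces to a single application of the preceding theorem on a Birkhoff recurrence class.

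More precisely, suppose $(z_i)_{i\in\Z/p\Z}$ is a Birkhoff cycle of $f$ with every $z_i\in\mathrm{ne}(f)\subset \A$. Since $\A$ is an open subset of the sphere, any neighborhood of $z_i$ in $\A$ is also a neighborhood of $z_i$ in the sphere, and conversely, given any neighborhood $V$ of $z_i$ in the sphere, the intersection $V\cap \A$ is a neighborhood of $z_i$ in $\A$. Hence a Birkhoff connection from $z_i$ to $z_{i+1}$ for $f$ immediately yields a Birkhoff connection from $z_i$ to $z_{i+1}$ for $f_{\mathrm{sphere}}$: for any neighborhoods $V_i$, $V_{i+1}$ in the sphere one uses the intersections $V_i\cap \A$, $V_{i+1}\cap \A$, finds $n\ge 1$ with $(V_i\cap\A)\cap f^{-n}(V_{i+1}\cap\A)\neq\emptyset$, and observes that this set is contained in $V_i\cap f_{\mathrm{sphere}}^{-n}(V_{i+1})$. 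Thus $(z_i)_{i\in\Z/p\Z}$ is a Birkhoff cycle for $f_{\mathrm{sphere}}$, and every $z_i$ lies in a single Birkhoff recurrence class $\mathcal B$ of $f_{\mathrm{sphere}}$.

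Theorem \ref{thmain:birkhoffcycles} applied to this class $\mathcal B$ produces a real number $\rho$ such that $\mathrm{rot}_{\check f}(z)=\rho$ for every $z\in\mathcal B\cap \mathrm{ne}^+(f)$ and $\mathrm{rot}_{\check f^{-1}}(z)=-\rho$ for every $z\in\mathcal B\cap\mathrm{ne}^-(f)$. Since each $z_i$ belongs to $\mathcal B\cap\mathrm{ne}(f)$, and $\mathrm{ne}(f)=\mathrm{ne}^+(f)\cup\mathrm{ne}^-(f)$, the dichotomy in the statement of the corollary follows at once with this same $\rho$.

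There is no substantial obstacle here; the only point that requires any thought is the compatibility between Birkhoff cycles in $\A$ and in its sphere compactification, and this is a straightforward neighborhood argument. All the difficulty has been absorbed into Theorem \ref{thmain:birkhoffcycles}.
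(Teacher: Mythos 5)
Your proposal is correct and is exactly the paper's own derivation: the introduction presents this corollary as an immediate consequence of Theorem \ref{thmain:birkhoffcycles}, and your neighborhood argument showing that a Birkhoff cycle of $f$ in $\A$ remains a Birkhoff cycle of $f_{\mathrm{sphere}}$ is the only detail that needed checking. Be aware, though, that in Section 5.3 the paper actually proves this statement independently first (as Lemma \ref{lemma:Birkhoff-cycles}, via Proposition \ref{prop:Birkhoff-cycles}), because it is an ingredient in the proof of Theorem \ref{thmain:birkhoffcycles} itself; so your deduction is non-circular only once that theorem has been established by those other means.
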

If $f$ is a homeomorphism of $\A$ isotopic to the identity and $\check f$ a lift of $f$ to $\R^2$, we will say that $\rho\in\R$ is a rotation number of $\check f$ if either there exists $z\in \mathrm{ne}^+(f)$ such that $\mathrm{rot}_{\check f}(z) =\rho$ or if there exists $z\in \mathrm{ne}^-(f)$ such that $\mathrm{rot}_{\check f^{-1}}(z) =-\rho$. We can prove another result.

\begin{propositionmain}\label{propositionmain_regionofinstability} Let $f$ be a homeomorphism of $\A$ isotopic to the identity, $\check f$ a lift of $f$ to $\R^2$. We suppose that:
\begin{enumerate}
\item $\check f$ has at least two rotations numbers;
\item $N$ and $S$ belong to the same Birkhoff recurrence class of $f_{\mathrm{sp}}$ .
\end{enumerate}
\noindent Then $f$ has a topological horseshoe.
\end{propositionmain}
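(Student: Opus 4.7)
\emph{Plan.} I will argue by contradiction, supposing that $f$ has no topological horseshoe; the goal is to force all rotation numbers of $\check f$ to coincide, contradicting hypothesis (1). Let $\mathcal{B}$ be the Birkhoff recurrence class of $f_{\mathrm{sphere}}$ containing both ends, given by hypothesis (2). Theorem~\ref{thmain:birkhoffcycles} applied to $\mathcal{B}$ produces a single $\rho\in\R$ with $\mathrm{rot}_{\check f}(z)=\rho$ on $\mathcal{B}\cap\mathrm{ne}^+(f)$ and $\mathrm{rot}_{\check f^{-1}}(z)=-\rho$ on $\mathcal{B}\cap\mathrm{ne}^-(f)$.

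Suppose $\rho'\neq\rho$ is a second rotation number. Replacing $f$ by $f^{-1}$ if necessary (which exchanges $\mathrm{ne}^+$ and $\mathrm{ne}^-$, negates rotation numbers, and preserves hypotheses (1), (2) as well as horseshoe-freeness), I may assume $\rho'$ is realized by some $z_0\in\mathrm{ne}^+(f)$ with $\mathrm{rot}_{\check f}(z_0)=\rho'$. Pick $z^*\in\omega(z_0)\cap\A$, which is non-empty since $z_0\in\mathrm{ne}^+(f)$. By assertion (2) of Theorem~\ref{thmain:rotation-number}, $\mathrm{rot}_{\check f}(z^*)=\rho'$, and $z^*$ is non-wandering in $\A$, hence Birkhoff recurrent for $f_{\mathrm{sphere}}$. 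Let $\mathcal{B}'$ be its Birkhoff recurrence class; Theorem~\ref{thmain:birkhoffcycles} applied to $\mathcal{B}'$ yields rotation number $\rho'$ on $\mathcal{B}'\cap\mathrm{ne}^+(f)$, so $\mathcal{B}'\neq\mathcal{B}$.

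One thus obtains two distinct Birkhoff recurrence classes of $f_{\mathrm{sphere}}$, one of which contains both ends, and the main obstacle is to extract a horseshoe from this configuration. My plan is to invoke the paper's fundamental horseshoe criterion announced at the end of the introduction: hypothesis (2) provides orbits of $f$ crossing the annulus between arbitrarily small neighborhoods of $N$ and $S$ whose net horizontal behavior is governed by the rotation $\rho$ of $\mathcal{B}$, while the recurrence of $z^*$ in $\mathcal{B}'$ produces recurrent orbits rotating at rate $\rho'\neq\rho$. Feeding a suitable maximal isotopy of $f$ and its associated equivariant Brouwer foliation to these two coexisting behaviors should produce a pair of transverse trajectories whose incompatible rotation speeds satisfy, via the forcing theory of transverse trajectories developed in the paper, the hypotheses of the horseshoe criterion, delivering the desired contradiction. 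The delicate point is precisely this last step: certifying that the rotational mismatch $\rho'\neq\rho$, together with the $N$-to-$S$ crossing encoded in $\mathcal{B}$, really yields the combinatorial configuration of transverse intersections that the fundamental criterion converts into a topological horseshoe.
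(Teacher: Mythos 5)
Your set-up is fine as far as it goes, but the proof stops exactly where the real work begins, and the configuration you reach at the end is not yet a contradiction: two distinct Birkhoff recurrence classes carrying different rotation numbers, one of them containing both ends, is perfectly consistent with everything proved up to that point (nothing in Theorem~\ref{thmain:birkhoffcycles} compares rotation numbers across \emph{different} classes). Your sketched plan for closing the gap --- feeding the two rotation speeds directly into the transverse-intersection criterion --- is not how the argument can be made to work, because the two speeds live on different orbits in different Birkhoff classes, and the forcing machinery (Proposition~\ref{prop:Birkhoff-cycles}, Corollary~\ref{cor:positive-negative2}) only combines trajectories that are linked by Birkhoff connections or that belong to a single whole trajectory. (There is also a small unjustified step earlier: a point $z^*\in\omega(z_0)$ need not lie in $\mathrm{ne}^+(f)$, so assertion (2) of Theorem~\ref{thmain:rotation-number} does not automatically apply to it.)

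The two ideas you are missing are the following. First, one must \emph{create periodic orbits}: for every rational $p/q$ strictly between the two given rotation numbers, the hypothesis that $N$ and $S$ are Birkhoff equivalent forces $\check f^q\circ T^{-p}$ to have a fixed point (Lemma~\ref{lemma:periodic-orbits}; otherwise the transverse foliation of a maximal isotopy of $f^q$ lifting $\check f^q\circ T^{-p}$ would be nonsingular and a Poincar\'e--Bendixson argument would produce an essential closed leaf disjoint from its image, separating $N$ from $S$). Second, and this is the decisive trick, one \emph{changes the annulus}: picking two such periodic orbits $O(z)$, $O(z')$ with distinct rotation numbers $p/q\neq p'/q'$, one regards $N$ and $S$ as ordinary points of $\S^2\setminus\{z,z'\}$. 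They remain Birkhoff equivalent there, so Theorem~\ref{thmain:birkhoffcycles} (applied to a suitable power of $f$ and after checking that the relevant limit sets avoid $z,z'$) forces their rotation numbers around the new punctures to coincide; but a linking-number/cross-ratio computation shows that these two rotation numbers differ by exactly $q'p-qp'\neq 0$. That quantitative incompatibility is the contradiction, and no direct construction of a transverse self-intersection is needed at this stage.
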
 

\begin{remark*} It was already known by Birkhoff \cite{Birkhoff} that the hypothesis of the theorem are satisfied for an area preserving positive twist map of the annulus, restricted to a Birkhoff region of instability. The positiveness of the entropy was already stated in this case (see  Boyland-Hall \cite{BoylandHall}  and Boyland \cite{Boyland} for a topological proof, see Angenent \cite{Angenent} for a more combinatorial proof). The previous corollary extends the result for situations including a generalized region of instability: the area preserving hypothesis can be replaced by the existence of a cycle of Birkhoff connections containing the two ends, the twist condition by the existence of two different rotation numbers. 
\end{remark*}









The third application is a new proof of a recent result of Passegi, Potrie and Sambarino \cite{PassegiPotrieSambarino}. Say that a homeomorphism $f$ of $\A$ is {\it dissipative} if for every non empty open set $U$ of finite area, the area of $f(U)$ is smaller than the area of $U$. Say that an invariant compact set $X$ is {\it locally stable} if $X$ admits a fundamental system of forward invariant neighborhoods. Say that a compact set $X\subset\A$ is a {\it circloid} if 
\begin{itemize}
\item it is the intersection of a nested sequence of sets homeomorphic to $\T^1\times[0,1]$ and homotopic to $\T^1\times\{0\}$; 
\item it is minimal for the inclusion among sets satisfying this property.
\end{itemize}
Its complement has two connected components that are non relatively compact. A classical example is a {\it cofrontier}\,: $X$ is a compact set whose complement has exactly two connected components, they are not relatively compact and $X$ is their common boundary.

\begin{theoremain}\label{thmain:circloids}
 Let $f$ be a homeomorphism of $\A$ isotopic to the identity and $\check f$ a lift of $f$ to $\R^2$. We suppose that $f$ has no topological horseshoe. Let $X$ be an invariant circloid. If $f$ is dissipative or if $X$ is locally stable, then the function $\mathrm{rot}_{\widetilde f}$, which is well defined on $X$, is constant on this set. More precisely, the
sequence of maps
$$\check z\mapsto \frac{ \pi_1(\check f^{n}(\check z))- \pi_1(\check z)}{n}$$ 
converges uniformly to this constant on $\check\pi^{-1}(X)$.
\end{theoremain}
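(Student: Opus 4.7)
The plan is to reduce Theorem~\ref{thmain:circloids} to Theorem~\ref{thmain:birkhoffcycles} by showing that the circloid $X$ lies entirely in a single Birkhoff recurrence class of $f_{\mathrm{sphere}}$, and then to upgrade the resulting pointwise equality of rotation numbers to uniform convergence of the lifted displacement by a compactness argument. Since $X$ is compact and $f$-invariant, for every $z\in X$ both $\omega(z)$ and $\alpha(z)$ are nonempty subsets of $X$, so $X\subset\mathrm{ne}^+(f)\cap\mathrm{ne}^-(f)$, and Theorem~\ref{thmain:rotation-number} assigns to every $z\in X$ a rotation number $\mathrm{rot}_{\check f}(z)$. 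It therefore remains to show this function is constant on $X$ with uniform displacement.

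The main step is the following Birkhoff-connectivity claim: for any $z_1,z_2\in X$ and any neighborhoods $W_1\ni z_1$, $W_2\ni z_2$, there exist $n\geq 1$ and $z\in W_1$ with $f^n(z)\in W_2$. In the locally stable case I would fix a small forward-invariant annular neighborhood $U$ of $X$ and argue by contradiction: a failure of the Birkhoff connection would imply that the forward $f$-orbit of $W_1\cap X$ avoids some open neighborhood $W_2'\Subset W_2$ of $z_2$, so that the set $U\setminus\bigcup_{n\geq 0}f^{-n}(\overline{W_2'})$ would be a proper forward-invariant open subset of $U$ containing $X\setminus\overline{W_2'}$ and still carrying every essential loop in $U$. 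Extracting a proper closed annular subset inside this set would then produce a strictly smaller circloid inside $X$, contradicting minimality. In the dissipative case, dissipativity combined with the circloid structure of $X$ provides an annular trapping region $V$ with $f(\overline{V})\subset V$, whose maximal invariant set is itself a circloid and must coincide with $X$ by minimality; the previous argument then applies verbatim inside $V$.

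With Birkhoff-connectivity in hand, Theorem~\ref{thmain:birkhoffcycles} immediately yields a single $\rho\in\R$ with $\mathrm{rot}_{\check f}(z)=\rho$ for every $z\in X$. For the uniform convergence on $\check\pi^{-1}(X)$, note that the cocycle $\phi_n(\check z):=\pi_1(\check f^n(\check z))-\pi_1(\check z)$ is continuous and $T$-invariant, so it suffices to prove uniform convergence of $\phi_n/n$ to $\rho$ on a compact fundamental domain $K\subset\check\pi^{-1}(X)$. I would argue by contradiction: a failure would produce $\varepsilon_0>0$, points $\check z_k\to\check z_\infty$ in $K$, and integers $n_k\to\infty$ with $|\phi_{n_k}(\check z_k)/n_k-\rho|\geq\varepsilon_0$; combining the established Birkhoff-connectivity with the continuity of $\check f$ and a diagonal extraction along the orbits of $\check z_k$ would yield a Birkhoff cycle of $f_{\mathrm{sphere}}$ containing $\check\pi(\check z_\infty)$ together with a point of nonstandard displacement, violating Corollary~\ref{corollarymain:birkhoffclasses}.

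The main obstacle is the Birkhoff-connectivity step. One must convert either local stability or dissipativity, combined with the minimality of the circloid $X$, into positive recurrence that joins any two points of $X$. The minimality hypothesis is essential, but extracting a strictly smaller circloid from the failure of a single Birkhoff connection requires delicate topological control of the forward-invariant complement produced inside the annular neighborhood, and in the dissipative case one must first argue that area contraction, together with $f$-invariance of $X$, actually forces the existence of a strict annular trapping neighborhood whose maximal invariant set is precisely $X$.
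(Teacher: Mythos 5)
Your central step --- that any two points of the circloid $X$ are joined by Birkhoff connections in both directions, so that $X$ lies in a single Birkhoff recurrence class --- is false in general, and this breaks the whole reduction. Take $f$ preserving the circle $X=\T^1\times\{0\}$, acting on it with one attracting and one repelling fixed point $a$ and $r$, and attracting transversally towards $X$ (so $X$ is a locally stable circloid and $f$ can be chosen without horseshoe). Then $a$ is a local attractor in the annulus, every sufficiently small neighborhood $W_1$ of $a$ satisfies $f^{-n}(W_2)\cap W_1=\emptyset$ for all $n\geq 1$ whenever $W_2$ is a small neighborhood of $r$, and there is no Birkhoff connection from $a$ to $r$. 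The conclusion of the theorem still holds here (rotation number $0$, uniformly), but your route through Theorem~\ref{thmain:birkhoffcycles} applied to all of $X$ is blocked. Relatedly, your attempt to extract ``a strictly smaller circloid'' from the failure of a single Birkhoff connection cannot work as stated: minimality of a circloid is minimality among essential annular continua, and the failure of one connection does not produce a proper essential subcontinuum.

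The ingredient you are missing is the realization theorem of Koropecki (extending Barge--Gillette): if the displacement cocycle does not converge uniformly to a constant on $\check\pi^{-1}(X)$, then there is a nontrivial interval $(\rho,\rho')$ such that every rational $p/q$ in it is realized by a periodic orbit of period $q$ in (a neighborhood of) $X$. The paper argues by contradiction from this: it takes limits $z,z'$ of periodic points with rotation numbers tending to $\rho$ and $\rho'$ respectively, and establishes Birkhoff connections only between these two specific points. The periodic points are indispensable: the essentiality of the forward-invariant connected neighborhood of $z$ (Lemma~\ref{lemma:essential} in the paper) is proved by trapping two periodic orbits with distinct rotation vectors $p_{n_0}/q_{n_0}\neq p_{n_1}/q_{n_1}$ inside it, forcing a lift of the component to be $T^k$-invariant for some $k\neq 0$. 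Dissipativity or local stability then enters only at the very end, to rule out the configuration $V\cap V'=\emptyset$ using the circloid structure (a relatively compact backward-invariant open set in $U_S$ contradicts dissipativity; an inessential forward-invariant neighborhood contradicts local stability). Finally, your last paragraph treats uniform convergence as a separate upgrade from pointwise constancy, but without Koropecki's theorem the ``diagonal extraction'' you sketch does not produce a point of $X$ with a different rotation number --- it only produces finite-time displacement anomalies, which by themselves violate nothing. In the paper, constancy and uniformity are obtained simultaneously from the single contradiction.
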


\begin{remark*} { In our proof we need either dissipativeness or local stability.} Nevertheless,  is not clear that they are necessary to get the conclusion. So, it is natural to ask whether the conclusion holds without supposing $f$ dissipative or $X$ locally stable.
\end{remark*}

We can conclude this description of results related to \textcolor{black}{rotation numbers} with the following result which has its own interest:

\begin{propositionmain}
\label{prop:zero-rotation}Let $f$ be a homeomorphism isotopic to the identity of $\A$ and $\check f$ a lift of $f$ to $\R^2$.  We suppose that $\check f$ is fixed point free and that there exists  $z_*\in\Omega(f)\cap \mathrm{ne}^+(f)$ such that  $\mathrm{rot}_{\check f}(z_*)$ is well defined and equal to $0$.

\begin{itemize}
\item Then, at least one of the following situation occurs:
\begin{enumerate}
\item  there exists $q$ arbitrarily large such that $\check f^q\circ T^{-1}$ has a fixed point;  
\item there exists $q$ arbitrarily large such that $\check f^q\circ T$ has a fixed point. \end{enumerate}
\item Moreover, if $z_*$ is positively recurrent then $f$ has a topological horseshoe.
\end{itemize}
\end{propositionmain}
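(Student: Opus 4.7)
My plan is to combine the equivariant transverse foliation machinery with Brouwer's plane translation theorem and the paper's main horseshoe criterion.

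\emph{Setup and approximate returns.} Since $\check f$ is fixed-point free, one can choose a $T$-equivariant maximal identity isotopy $I$ of $f$ on $\A$ whose lift $\check I$ to $\R^2$ is a fixed-point-free isotopy from $\mathrm{id}$ to $\check f$. Its associated oriented transverse foliation $\check{\mathcal F}$ is regular on all of $\R^2$, $T$-equivariant, and projects to a foliation $\mathcal F$ on $\A$; every orbit of $\check f$ gives a proper transverse trajectory $\check\gamma_{\check z}$. Fix a lift $\check z_*$ of $z_*$. From $z_*\in\Omega(f)$ extract $n_k\to+\infty$ with $f^{n_k}(z_*)\to z_*$; then there are integers $p_k$ with $\check f^{n_k}(\check z_*)-T^{p_k}(\check z_*)\to 0$. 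The hypothesis $\mathrm{rot}_{\check f}(z_*)=0$ gives $p_k/n_k\to 0$, while Brouwer's plane translation theorem applied to the free $\check f$ rules out $p_k=0$ for $k$ large (otherwise $\check z_*$ would be recurrent for $\check f$). After extraction, one sign prevails; say $p_k>0$ for all $k$.

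\emph{Dichotomy (first conclusion).} The arc $\check\gamma_{\check z_*}\vert_{[0,n_k]}$ runs from near the leaf $\check\phi_{\check z_*}$ through $\check z_*$ to near $T^{p_k}(\check\phi_{\check z_*})$, forcing transverse crossings of each intermediate leaf $T^{j}(\check\phi_{\check z_*})$ for $1\leq j\leq p_k-1$. An equivariant Brouwer--Franks translation-arc argument in $\check{\mathcal F}$ then extracts from this forced crossing configuration a fixed point of $\check f^q\circ T^{-1}$ for some $q\leq n_k$; since $n_k$ may be taken arbitrarily large while $p_k/n_k\to 0$, the integer $q$ can be made arbitrarily large, yielding conclusion $(1)$. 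The sign-symmetric case $p_k<0$ gives $(2)$.

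\emph{Horseshoe under positive recurrence (second conclusion).} If $z_*\in\omega(z_*)$, the sequence $(n_k)$ can be chosen along the forward orbit of $z_*$. For $k<k'$ large one must have $p_k\neq p_{k'}$, otherwise the interval $[n_k,n_{k'}]$ would produce a drift-free near-return of $\check f^{n_{k'}-n_k}$, again contradicting Brouwer freeness of $\check f$. Consequently the orbit of $z_*$ yields two distinct transverse closing sub-arcs through the leaf of $z_*$ in $\mathcal F$, with distinct integer windings in $H_1(\A,\Z)$. Their concatenation produces two transverse trajectories through the same leaf whose lifts to $\R^2$ end at distinct $T$-translates of $\check z_*$. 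This is exactly the transversal-intersection configuration required by the fundamental horseshoe criterion of this paper; applying that criterion yields a topological horseshoe for $f$.

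The main obstacle lies in the \emph{Dichotomy} step: turning a long approximately closing transverse arc of integer winding $p_k$ into an actual fixed point of $\check f^q\circ T^{-1}$. Franks' classical realization theorem needs a genuine rotation interval, which is not available here since only the value $0$ is known a priori to be a rotation number. The required refinement combines the equivariant fixed-point index theory inside $\check{\mathcal F}$ with the Brouwer freeness of $\check f$, exploiting the forced leaf crossings of $\check\gamma_{\check z_*}$ to produce a periodic orbit of small rotation. The horseshoe step then demands a careful verification that the concatenated sub-arcs produced by positive recurrence actually meet the transversal-intersection hypothesis of the paper's main criterion, rather than merely crossing leaves in a homologically nontrivial but dynamically trivial way.
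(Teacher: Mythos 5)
Your overall strategy (maximal isotopy, transverse foliation, Brouwer freeness, the paper's horseshoe criterion) is the right one, but the two steps that carry the actual content are not established. First, in the setup you extract times $n_k$ with $f^{n_k}(z_*)\to z_*$; this uses recurrence of $z_*$, whereas the hypothesis for the first conclusion is only that $z_*$ is non-wandering, so one must work with returns of a small free disk $D\ni z_*$ rather than of the orbit of $z_*$ itself. More seriously, the ``Dichotomy'' step --- which you yourself flag as the main obstacle --- is exactly the heart of the proposition and is left unproved: no mechanism is given that converts a long transverse arc of winding $p_k$ (with $p_k/n_k\to 0$) into a fixed point of $\check f^q\circ T^{-1}$ specifically, i.e.\ a periodic point of rotation number $1/q$. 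The argument that works is of a different nature: after showing that any return $f^q(D)\cap D\neq\emptyset$ forces $\check f^q(\check D)\cap T^p(\check D)\neq\emptyset$ with $p\geq 2$, one passes to a maximal isotopy $I'$ of $f^q$ lifted to the \emph{new} lift $\check f'=\check f^q\circ T^{-1}$, for which $\mathrm{rot}_{\check f'}(z_*)=-1$; one chooses a transverse foliation $\mathcal F'$ adapted to the free disk so that the return produces a \emph{positive} transverse loop through the leaf of $z_*$, while the negative rotation number forces the transverse trajectory of the non-wandering point $z_*$ to be drawn on a \emph{negative} simple loop. Since a nonsingular foliation of $\A$ cannot carry both (Proposition \ref{prop:positive-negative} together with the dual-function argument), $\mathcal F'$ must have a singularity, which is precisely a fixed point of $\check f^q\circ T^{-1}$. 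A vague ``equivariant fixed-point index'' argument inside $\check{\mathcal F}$ does not produce this, and your $q\leq n_k$ does not obviously tend to infinity either.

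Second, the horseshoe step is incorrect as stated: two returns with \emph{distinct} windings $p_k\neq p_{k'}$ do not give an $\mathcal F$-transverse self-intersection. If, say, all $p_k$ are positive and nondecreasing (compatible with $p_k/n_k\to 0$, e.g.\ $p_k=k$, $n_k=k^2$), every sub-loop you form is a positive loop and may be drawn on a single simple positive transverse loop, with no transverse self-intersection at all. What is needed is one positive and one negative loop through the same leaf (Corollary \ref{cor:positive-negative1}, or equivalently the configuration of Corollary \ref{cor:free-disks}), and this is obtained only after the change of lift above: for $\check f'=\check f^q\circ T^{-1}$ the recurrent point $z_*$ draws a negative loop while its return through the free disk draws a positive one. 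As written, your argument would fail in the monotone-winding scenario.
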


\medskip

An important section of this article is devoted to a structural study of the homeomorphisms of the $2$-sphere $\S^2$ with no horseshoe. For instance we will  get a very precise description of the transitive
sets (see Proposition K). Let us state the fundamental result.
\begin{theoremain}\label{thmain:global-structure}
Let $f:\S^2\to\S^2$ be an orientation preserving homeomorphism that has no topological horseshoe. Then, the set
$$\Omega'(f)=\{z\in\Omega(f)\,\vert\, \alpha(z)\cup\omega(z)\not\subset\mathrm{fix}(f)\}.$$
can be covered by a family $(A_{\alpha})_{\alpha\in\mathcal A}$ of invariant sets such that:

\begin{enumerate}
\item $A_{\alpha}$ is a fixed point free topological open annulus and $f\vert_{A_{\alpha}}$ is isotopic to the identity;
\item if $\kappa$ is a generator of $H_1(A_{\alpha},\Z)$, there exists a lift of $f\vert_{A_{\alpha}}$ to the universal covering space of $A_{\alpha}$ whose rotation numbers are all included in $[0,1]$;
\item $A_{\alpha}$ is maximal for the two previous properties. 
\end {enumerate}
\end{theoremain}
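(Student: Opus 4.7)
The plan is, for each $z_0 \in \Omega'(f)$, to construct an invariant fixed-point-free open annulus $A \ni z_0$ with $f|_A$ isotopic to the identity and with a lift whose rotation numbers all lie in $[0,1]$; a Zorn's lemma argument among such annuli then produces the desired covering. A preliminary observation is that the connected component $U$ of $\S^2 \setminus \mathrm{fix}(f)$ containing $z_0$ cannot be simply connected: otherwise $U$ would be a topological plane and $f|_U$ an orientation-preserving fixed-point-free homeomorphism, hence a Brouwer homeomorphism; every point of $U$ would then be wandering under $f|_U$, which, since $U$ is open and $f$-invariant in $\S^2$, contradicts $z_0 \in \Omega(f)$.

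The technical heart of the proof is the annular extraction. Starting from the multiply connected domain $U$ I would produce two disjoint compact $f$-invariant subsets $K_0, K_1 \subset \mathrm{fix}(f)$ whose complement $A = \S^2 \setminus (K_0 \cup K_1)$ is a topological annulus containing the orbit of $z_0$. For this I would invoke the theory of maximal isotopies and equivariant transverse foliations from \cite{LeCalvezTal}: a maximal isotopy $I$ of $f$ with transverse foliation $\mathcal F$ organises the dynamics outside $\mathrm{fix}(I)$, and the hypotheses $z_0 \in \Omega(f)$ and $\alpha(z_0) \cup \omega(z_0) \not\subset \mathrm{fix}(f)$ force the transverse trajectory of $z_0$ to accumulate on an essential closed set in $\S^2 \setminus \mathrm{fix}(f)$ whose complement decomposes into two pieces each containing at least one fixed point of $f$. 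These two pieces furnish $K_0$ and $K_1$. Since the resulting $A$ is planar and $f$ is orientation preserving, $f|_A$ is automatically isotopic to the identity.

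Once $A$ is constructed, Theorem \ref{thmain:rotation-number} yields a well-defined rotation number on $\mathrm{ne}^+(f|_A)$ for any lift $\check f_A$. If the set of such rotation numbers had length strictly greater than $1$, two points whose rotation numbers differ by more than $1$ would, via the main horseshoe criterion of this paper, produce a topological horseshoe of $f|_A$ and hence of $f$, contradicting the hypothesis. After postcomposing $\check f_A$ with an appropriate power of the deck transformation $T$ one arranges the rotation set to lie in $[0,1]$. Finally I would apply Zorn's lemma to the family of invariant fixed-point-free open annuli $A' \supset A$ (with $f|_{A'}$ isotopic to the identity and a compatible lift with rotation numbers in $[0,1]$), ordered by inclusion. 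The union of a totally ordered chain is again such an annulus, the control on rotation numbers passing to the union via part (2) of Theorem \ref{thmain:rotation-number}, so Zorn applies and yields a maximal $A_\alpha$. Letting $z_0$ range over $\Omega'(f)$ produces the covering.

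The main obstacle is the annular extraction step in the second paragraph: isolating exactly two disjoint closed fixed-point subsets that bound a clean invariant annulus around the orbit of $z_0$, in the presence of a possibly very complicated $\mathrm{fix}(f)$, requires delicate use of the equivariant Brouwer foliation theory, and it is precisely here that the no-horseshoe hypothesis must be invoked to prevent transverse trajectories from wrapping in uncontrolled ways. The remaining ingredients — the Brouwer translation argument, the rotation number bound, and the Zorn's lemma extension — are either standard or direct consequences of the other results stated in the introduction.
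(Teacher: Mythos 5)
There is a genuine gap — in fact two of the three main steps of your outline fail as stated. First, the ``annular extraction'': you propose to realize the annulus as $A=\S^2\setminus(K_0\cup K_1)$ with $K_0,K_1$ disjoint compact invariant subsets of $\mathrm{fix}(f)$. For such an $A$ to be fixed point free one would need $\mathrm{fix}(f)=K_0\cup K_1$, so $A$ would have to be the \emph{whole} of $\S^2\setminus\mathrm{fix}(f)$, i.e.\ a single annulus covering all of $\Omega'(f)$. This is impossible in general: take an integrable twist-like map of $\S^2$ with two fixed points at the poles and invariant circles on which the rotation number varies continuously from $0$ to $5$. It has no horseshoe, $\Omega'(f)=\S^2\setminus\{N,S\}$, and no single lift has rotation set in $[0,1]$; one needs many annuli $A_\alpha$, and the complementary components of each such $A_\alpha$ are \emph{not} contained in $\mathrm{fix}(f)$ — they merely each contain at least one fixed point (of a suitable maximal isotopy). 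Second, your rotation-number bound: it is false that two points of an invariant annulus whose rotation numbers differ by more than $1$ force a horseshoe. The same twist example refutes it. Proposition \ref{propositionmain_regionofinstability} requires in addition that the two ends lie in the same Birkhoff recurrence class, and Theorem \ref{thmain:birkhoffcycles} only pins down the rotation number \emph{within} a Birkhoff class. So the containment of the rotation set in an interval of length $1$ cannot be deduced a posteriori from the no-horseshoe hypothesis on an arbitrary invariant annulus around $z_0$; it has to be built into the construction of the annulus itself.

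What the paper actually does is quite different at this point. It first proves a local theorem (Theorem \ref{th:local-structure}): for any maximal isotopy $I$, every $z\in\Omega'(I)$ lies in a ``positive annulus'' of $I$, i.e.\ an invariant annulus whose complementary components each contain a fixed point of $I$ and which admits a distinguished lift with all non-wandering points positive. This is the hard step, carried out via the transverse simple loop $\Gamma$ drawn by $I^{\Z}_{\mathcal F}(z)$ and a careful analysis of the sets of points whose transverse trajectories enter or leave $U_\Gamma$ from each side. The squeezed annulus containing $z$ is then obtained by applying this local theorem \emph{twice}, with two different lifts chosen so that the rotation number of $z$ is positive for one and negative for the other (shifted by $T^{-1}$): the second positive annulus sits essentially inside the first and is squeezed. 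A separate degenerate case, where $\mathrm{rot}(z)=0$ mod $\Z$ for every pair of fixed points, is handled by a perturbation supported on a free disk together with the linking result of \cite{LeCalvez3} and Proposition \ref{prop:zero-rotation}. Your Zorn's lemma step at the end is essentially the paper's (nested squeezed annuli are essential in one another, so chains have squeezed unions), but the core of the argument is missing.
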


 Such a result was already proved by Franks and Handel in the case of smooth area preserving diffeomorphisms. In that case, the first condition implies the second one. Moreover, the family 
$(A_{\alpha})_{\alpha\in\mathcal A}$ is explicitly defined in terms of {\it disk recurrent points} (see \cite{FranksHandel}) and the annuli are pairwise disjoint. This result is the main building block in a structure theorem for area preserving diffeomorphisms given by the two authors (one should apply the previous result to all iterates of $f$). 
We extended Franks-Handel results in the case of homeomorphisms with no wandering points in \cite{LeCalvezTal}. In the present article, there is an attempt to give a similar structure theorem with no assumption about wandering points. It appears that some of the results remain (usually with proofs technically tougher) and some not (for example, the annuli appearing in the previous theorem are not necessarily disjoint). There is probably some progress to do to obtain a complete description of homeomorphisms with no topological horseshoes. Nevertheless we were able to state some results on the structure of Birkhoff recurrence classes and of transitive sets.
{A property of Birkhoff recurrence classes, related to rotation numbers, has already been stated in Theorem \ref{thmain:birkhoffcycles}. Let us continue to describe the behavior of Birkhoff recurrence classes. Say a set $X\subset \mathrm{fix}(f)$ is {\it unlinked} if the restriction of $f$ to $\S^2\setminus X$ is still homotopic to the identity. Let us also say a homeomorphism $f$ of $\S^2$ is an {\it irrational pseudo-rotation} if:
\begin{itemize}
\item  $f$ has exactly two periodic points, $z_0$ and $z_1$,  which are both fixed;
\item $\mathrm{ne}(f\vert _{\S^2\setminus\{z_0, z_1\}})$ is not empty;
\item if $\check f$ is a lift of $f\vert _{\S^2\setminus\{z_0, z_1\}}$ to the universal covering space of $\S^2\setminus\{z_0, z_1\}$, its unique rotation number is an irrational number $\rho$.
\end{itemize}  

\begin{propositionmain}\label{propmain:birkoffclasses3fixedpoints}
Suppose that $f$ is an orientation preserving homeomorphism of $\S^2$ with no topological horseshoe. \textcolor{black}{Assume $f$ has at least three distinct recurrent points.} If $\mathcal{B}$ is a Birkhoff recurrence class containing two fixed points $z_0$ and $z_1$, then 
\begin{itemize}
\item[(1)] either there exists $q\geq1$ such that \textcolor{black}{every recurrent point in $\mathcal B$ is a periodic point of period $q$ and} $\mathrm{fix}(f^q)\cap\mathcal{B}$ is an unlinked set of $f^q$;
\item[(2)] or $f$ is an irrational pseudo-rotation.
\end{itemize}
\end{propositionmain}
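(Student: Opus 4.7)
The plan is to reduce to the annular picture by setting $\A=\S^2\setminus\{z_0,z_1\}$ and $g=f\vert_{\A}$, which is isotopic to the identity because $f$ preserves orientation and fixes each $z_i$. Pick a lift $\check g:\R^2\to\R^2$. Any horseshoe for $g$ extends to one for $f$, so $g$ has none, and Theorems \ref{thmain:rotation-number}, \ref{thmain:birkhoffcycles} together with Propositions \ref{propositionmain_regionofinstability} and \ref{prop:zero-rotation} all apply. Identifying $g_{\mathrm{sphere}}$ with $f$, the Birkhoff recurrence class $\mathcal B$ contains both ends of $\A$; Theorem \ref{thmain:birkhoffcycles} then produces a single $\rho\in\R$ with $\mathrm{rot}_{\check g}(z)=\rho$ on $\mathcal B\cap \mathrm{ne}^+(g)$. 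Finally, Proposition \ref{propositionmain_regionofinstability} forbids a second rotation number for $\check g$: two distinct values, with both ends sharing a Birkhoff class of $f_{\mathrm{sphere}}$, would create a horseshoe. Hence $\rho$ is the unique rotation number of $\check g$.

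If $\rho\notin\Q$, no point of $\A$ can be periodic for $f$: a periodic point of minimal period $k$ has rotation number $p'/k\in\Q$, which cannot equal $\rho$. Thus $f$ has only $z_0$ and $z_1$ as periodic points, both fixed. Moreover $\mathcal B\cap\mathrm{ne}(g)\neq\emptyset$ because every Birkhoff class contains non-wandering points, so $\mathrm{ne}(g)$ is nonempty. The three defining conditions of an irrational pseudo-rotation are then met, yielding the second alternative.

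If instead $\rho=p/q$ with $\gcd(p,q)=1$ and $q\geq 1$, I set $h=f^q$ and $\check h=\check g^q\circ T^{-p}$, so the unique rotation number of $\check h$ equals $q\rho-p=0$. A preliminary point: $z_0$ and $z_1$ remain in a common Birkhoff class of $h$. Indeed, given neighborhoods $W_0,W_1$ of the two ends, the set $V=\bigcap_{k=0}^{q-1}f^{-k}(W_1)$ is a neighborhood of the fixed point $z_1$, and an $f$-Birkhoff connection $W_0\to V$ realized at step $n$ yields, writing $n=Nq-r$ with $N\geq 1$ and $0\leq r<q$, a point $x\in W_0$ with $f^{Nq}(x)=f^r(f^n(x))\in f^r(V)\subset W_1$; the opposite direction is analogous. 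Any $f$-periodic point in $\A$ therefore has $\check g$-rotation $p/q$ by uniqueness, so its $f$-period is a multiple of $q$, equal to $q$ precisely when its lift is $\check h$-fixed.

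The remaining assertions---that every recurrent point of $\mathcal B$ is $f$-periodic, that every $f$-periodic point distinct from $\{z_0,z_1\}$ has period exactly $q$, and that $\mathrm{fix}(h)\cap\mathcal B$ is unlinked for $h$---form the main obstacle. My plan is to play Proposition \ref{prop:zero-rotation} and the paper's fundamental horseshoe criterion against every noncompliant configuration. First, Proposition \ref{prop:zero-rotation} applied to $h$ at any positively recurrent point of $\mathcal B\cap\A$ with $\check h$-rotation $0$ forces $\check h$ to have fixed points, producing $f$-periodic points of period exactly $q$. Second, an $h$-periodic point of period $k>1$ (whether in $\mathcal B$ or not) still has $\check h$-rotation $0$; combined with a $\check h$-fixed point, its orbit should yield transverse trajectories, in an equivariant Brouwer foliation adapted to $\check h$ on $\A\setminus\mathrm{fix}(\check h)$, that trigger the fundamental horseshoe criterion and deliver the forbidden horseshoe. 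Third, non-periodic recurrent points in $\mathcal B$ and linked pairs of $h$-fixed points are handled by the same mechanism: each creates transverse-trajectory configurations forbidden by the criterion. The hard part is carrying out these transverse-trajectory arguments starting only from Birkhoff recurrence rather than full recurrence; this is precisely the regime where the fundamental theorem of this paper becomes indispensable.
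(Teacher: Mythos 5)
Your setup is sound and coincides with the paper's: the unique rotation number $\rho$ for $\check g$ comes from Theorem \ref{thmain:birkhoffcycles} together with Proposition \ref{propositionmain_regionofinstability} (the two ends of $\S^2\setminus\{z_0,z_1\}$ lie in $\mathcal B$), and the irrational case is handled correctly. But the rational case --- which carries essentially all the content of the first alternative --- is not proved; it is only a plan, and the plan's central mechanism does not work as described. You claim that an $h$-periodic point of period $k>1$ with $\check h$-rotation $0$, ``combined with a $\check h$-fixed point,'' yields transverse trajectories triggering the horseshoe criterion. That is false without further input: a homeomorphism of the annulus can have a fixed point together with a periodic orbit of period $m\geq 2$ of zero rotation number and no horseshoe whatsoever (rotate a small inessential disk by $2\pi/m$ and do nothing elsewhere). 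What rules this out here is not the bare existence of the two orbits but their relation to the Birkhoff class $\mathcal B$ and to the pair $z_0,z_1$, and your sketch never says how that hypothesis enters the foliation argument. Likewise, ``linked pairs of $h$-fixed points are handled by the same mechanism'' is an assertion, not a proof: a linked pair does not directly produce an $\mathcal F$-transverse self-intersection.

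The paper closes these gaps by a different route that you do not invoke. For ``recurrent points of $\mathcal B$ are fixed by $f^q$,'' it applies Proposition \ref{prop:linking-recurrent} to $f^q$: a recurrent point $z$ not fixed by $f^q$ admits a pair of $f^q$-fixed points $z_2,z_3$ around which $\mathrm{rot}_{f^q\vert_{\S^2\setminus\{z_2,z_3\}}}(z)\neq 0+\Z$; since at least one of $z_0,z_1$ survives in that annulus with rotation number $0+\Z$, and $z,z_0,z_1$ lie in one Birkhoff class, Theorem \ref{thmain:birkhoffcycles} applied in the auxiliary annulus $\S^2\setminus\{z_2,z_3\}$ gives the contradiction. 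Note that Proposition \ref{prop:zero-rotation} alone cannot substitute for this: once $\check h$ is known to have fixed points, a recurrent point of rotation number $0$ for $\check h$ is no longer contradictory, so one must change the pair of punctures, which is exactly what Proposition \ref{prop:linking-recurrent} provides. For unlinkedness, the paper's argument is again a rotation-number computation (a fixed point $z_2'$ of $f^q$ in the domain of a maximal isotopy has $\mathrm{rot}_{I,z_0',z_1'}(z_2')=k_+-k_->0$ for suitable fixed points realizing the extreme values of the dual function of its transverse loop), contradicted by the previously established vanishing of all such rotation numbers on $\mathcal B$. Finally, you only obtain that periods are multiples of $q$, whereas the statement asserts they equal $q$; this too is not recovered by your sketch. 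As written, the proposal establishes the dichotomy and the irrational branch but leaves the three substantive claims of the rational branch unproved.
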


\begin{corollarymain}\label{crmain:periodsofbirkhoffclasses} Let $f$ be an orientation preserving homeomorphism of $\S^2$ with  no topological horseshoe. Let $\mathcal{B}$ be a Birkhoff recurrence class containing periodic points of different periods, then:
\begin{itemize}
\item there exist integers $q_1$ and $q_2$, with $q_1$ dividing $q_2$, such that every periodic point in $\mathcal{B}$ has a period either equal to $q_1$ or to $q_2$
\item if $q_1\geq 2$, there exists a unique periodic orbit of period $q_1$ in $\mathcal B$;
\item if $q_1=1$, there exist at most two fixed points in $\mathcal B$.
\end{itemize}
\end{corollarymain}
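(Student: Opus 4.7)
The plan is to reduce the corollary to Proposition \ref{propmain:birkoffclasses3fixedpoints} by passing to suitable iterates of $f$, converting periodic orbits into collections of fixed points living in a common Birkhoff recurrence class of the iterated map. The case $q_1=1$ is immediate from the proposition: $\mathcal{B}$ contains a fixed point $z_0$ and a periodic orbit $O_2$ of period $q_2\geq 2$; if there is a second fixed point $z_1\in\mathcal{B}$, applying Proposition \ref{propmain:birkoffclasses3fixedpoints} to $(z_0,z_1)$ rules out case (b) (an irrational pseudo-rotation has no periodic orbit of period $\geq 2$) and the conclusion of case (a) forces every periodic point of $f$ distinct from $z_0,z_1$ to share a common period $q$, so only $z_0,z_1$ are fixed in $\mathcal{B}$, the periods in $\mathcal{B}$ are $1$ and $q$, and $q_1=1\mid q_2=q$. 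The case of a single fixed point in $\mathcal{B}$ follows from the general construction below.

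For $q_1\geq 2$ I pass to $g=f^{q_1}$, which has no topological horseshoe, and take a period-$q_1$ orbit $O_1\subset\mathcal{B}$ whose $q_1\geq 2$ points are all $g$-fixed. The crucial step is an auxiliary lemma: every $f$-Birkhoff connection $a\to b$ produces, via a pigeonhole argument on the residues modulo $q_1$ of the connecting times (combined with the fact that a connection at time $k$ can be replaced by one at time $k+r$ landing near $f^{-r}(b)$), a $g$-Birkhoff connection $a\to f^m(b)$ for some $m\in\{0,\dots,q_1-1\}$ depending only on $a,b$. Applied link-by-link to a Birkhoff cycle in $\mathcal{B}$ that visits $O_1$ and a second periodic orbit of different period, and combined with the $f$-equivariance of $g$-connections (since $f$ commutes with $g$), this produces a $g$-Birkhoff chain starting at some $p_1\in O_1$ and ending at $f^M(p_1)$ with cumulative shift $M=\sum_i m_i$; iterating the chain $q_1/\gcd(M,q_1)$ times closes a genuine $g$-Birkhoff cycle based at $p_1$ which, when $M\not\equiv 0\pmod{q_1}$, visits at least two distinct points of $O_1$. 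When $M\equiv 0\pmod{q_1}$, the same strategy applied at the larger iterate $g'=f^{\mathrm{lcm}(q_1,q_2)}$, under which points of $O_2$ also become fixed, produces two fixed points of $g'$ in a common $g'$-Birkhoff recurrence class.

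Once two distinct fixed points $z_0,z_1$ of $g$ (or $g'$) are known to lie in a common Birkhoff recurrence class $\mathcal{B}_g$, I apply Proposition \ref{propmain:birkoffclasses3fixedpoints} to that iterate. Case (b) is excluded since it would force only two periodic points on $\S^2$, contrary to $\mathcal{B}$ containing another periodic orbit. Case (a) then yields a common period $\widetilde q$ for every periodic point of $g$ in $\mathcal{B}_g\setminus\{z_0,z_1\}$; translating back to $f$, every $f$-periodic point of $\mathcal{B}$ not on $O_1$ has a common $f$-period $q_2$ determined by $q_1$ and $\widetilde q$, with $q_1\mid q_2$. For the uniqueness of the period-$q_1$ orbit in $\mathcal{B}$, a second such orbit $O_1'\subset\mathcal{B}$ would, by the same chain construction (linking $O_1'$ to $O_2$ and hence to $O_1$ through the $f$-Birkhoff cycle structure of $\mathcal{B}$), contribute $g$-fixed points to $\mathcal{B}_g\setminus\{z_0,z_1\}$, contradicting the forced common $g$-period $\widetilde q\geq 2$ for such points.

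The main obstacle is the shift-bookkeeping in the second paragraph: extracting from a given $f$-Birkhoff cycle in $\mathcal{B}$ a genuine \emph{closed} $g$-Birkhoff cycle that visits the intended periodic orbits. Each link of the $f$-cycle introduces an unknown shift $m_i\in\{0,\dots,q_1-1\}$, and because the intermediate points of the cycle need not themselves be periodic the shifts must be assembled into a closing relation by combining a pigeonhole residue argument with $f$-equivariance, possibly passing to the larger iterate $g'=f^{\mathrm{lcm}(q_1,q_2)}$ when the cumulative shift happens to vanish modulo $q_1$. Once this combinatorial step is settled, the remainder of the proof is a routine translation between Birkhoff recurrence classes of $f$ and of $g$ together with a direct appeal to Proposition \ref{propmain:birkoffclasses3fixedpoints}.
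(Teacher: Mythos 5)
Your overall strategy---pass to an iterate of $f$ so that the relevant periodic orbits become fixed points, place two of them in a common Birkhoff recurrence class of that iterate by a pigeonhole argument on the residues of the connecting times, and invoke Proposition \ref{propmain:birkoffclasses3fixedpoints}---is the same as the paper's. The shift bookkeeping of your second paragraph is precisely items (3) and (4) of Proposition \ref{prop: birkhoff connexions}, already proved in the paper, so that part is sound (if redundant). There are, however, two genuine gaps.

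First, the divisibility $q_1\mid q_2$ is asserted but never proved. In the paper this is Corollary \ref{cr:divideperiods}, a separate and non-trivial statement whose proof uses maximal isotopies, transverse foliations and Theorem \ref{th:birkhoffcycles}; nothing in your construction yields it. Worse, your argument needs it as an input. Take periods $\{2,3\}$ in $\mathcal B$: if two points of the $2$-orbit land in one class of $f^2$, Proposition \ref{propmain:birkoffclasses3fixedpoints} gives $\widetilde q=3$ with no contradiction; if not, your fallback $g'=f^{\mathrm{lcm}(2,3)}=f^{6}$ fixes both orbits, forces $\widetilde q=1$, and again nothing contradicts. So neither the divisibility nor even the ``at most two periods'' claim is established (e.g.\ $\{2,3,6\}$ also survives). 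Relatedly, the assertion that the common $g$-period $\widetilde q$ ``determines'' the $f$-period is false without divisibility: $p/\gcd(p,N)=\widetilde q$ has several solutions (for $N=2$, $\widetilde q=3$ one gets $p=3$ or $p=6$), so you cannot conclude that all periodic points off $O_1$ share one $f$-period.

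Second, insisting on two points of the \emph{same} orbit $O_1$ in one class of $f^{q_1}$ is both harder than necessary and sometimes impossible: when $\mathcal B$ splits into $q_1$ classes of $f^{q_1}$ cyclically permuted by $f$ (the typical renormalizable situation, cf.\ Proposition \ref{prop: birkhoff classes iterate}), the cumulative shift is always $0$ modulo $q_1$. Your fallback---one point of $O_1$ and one of $O_2$ for $f^{\mathrm{lcm}(q_1,q_2)}$---then yields $\widetilde q=1$, which is perfectly compatible with a second period-$q_1$ orbit, so the uniqueness bullet is not recovered. The paper's route avoids all of this: for uniqueness it takes one point from \emph{each} of the two putative period-$q_1$ orbits (always possible by Proposition \ref{prop: birkhoff connexions}(4)); the $2q_1-2\geq 2$ remaining points of those orbits are additional fixed points of $f^{q_1}$ forcing $\widetilde q=1$, which the non-fixed period-$q_2$ point contradicts. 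For ``at most two periods'' it passes to $f^{q_2}$ (legitimate once $q_1\mid q_2\mid q_3$ is known) and uses one point from each of the three orbits.
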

As an illustration, note that if the restriction of $f$ to a transitive set is topologically chaotic, then $f$ has a topological horseshoe  \textcolor{black}{because there are infinitely many periodic points in the set.} 

Let us now enumerate the possible dynamics of transitive sets for homeomorphisms with no topological horseshoes.
The following corollary shows that, if there is no topological horseshoe, transitive sets almost always must contain at most a single periodic orbit.
\begin{corollarymain}\label{crmain:transitivewithperiodicorbit}
Let $f$ be an orientation preserving homeomorphism of $\S^2$ with no topological horseshoe. Assume that there exists a transitive set containing two distinct periodic orbits. Then $f$ is an irrational pseudo-rotation.
\end{corollarymain}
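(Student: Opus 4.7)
The plan is to apply Proposition~\ref{propmain:birkoffclasses3fixedpoints} to the iterate $g = f^{q_0}$, where $q_0 = \operatorname{lcm}(p_1, p_2)$ and $p_1, p_2$ are the periods of the two given distinct periodic orbits $O_1, O_2 \subset \Lambda$ in the transitive set. Let $z \in \Lambda$ satisfy $\omega_f(z) = \Lambda$. Any horseshoe for an iterate of $g$ is also a horseshoe for $f$, so $g$ inherits the no-horseshoe hypothesis; moreover $O_1 \cup O_2 \subset \mathrm{fix}(g)$. The point $z$ cannot be $f$-periodic (hence not $g$-periodic), since otherwise $\Lambda = \omega_f(z)$ would reduce to the single $f$-orbit of $z$, contradicting $O_1 \neq O_2$.

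First, I would place two distinct $g$-fixed points in the same $g$-Birkhoff recurrence class. Splitting $\{f^n(z)\}_{n\geq 0}$ by the residue of $n$ modulo $q_0$ and using that $f$ commutes with $g$ yields
\[
\Lambda = \omega_f(z) = \bigcup_{r=0}^{q_0-1} f^r(\omega_g(z)).
\]
Since each $O_i \subset \Lambda$ is $f$-invariant, this forces $O_i \cap \omega_g(z) \neq \emptyset$. Choosing $\tilde z_i \in O_i \cap \omega_g(z)$, the points $\tilde z_0, \tilde z_1$ are two distinct $g$-fixed points, and both lie in the single $g$-Birkhoff recurrence class $\mathcal B'$ that contains the $\omega$-limit set $\omega_g(z)$.

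Next, I would apply Proposition~\ref{propmain:birkoffclasses3fixedpoints} to $g$ on $\mathcal B'$. In the pseudo-rotation alternative, $g$ has exactly two periodic points, both $g$-fixed; since $O_1 \cup O_2$ is a disjoint union of two nonempty orbits consisting of $g$-fixed points, it must reduce to those two points, forcing $|O_1| = |O_2| = 1$, hence $p_1 = p_2 = 1$, $q_0 = 1$, and $g = f$. Thus $f$ itself is an irrational pseudo-rotation, as required.

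The main obstacle is to rule out the other alternative, in which every $g$-recurrent point of $\mathcal B'$ is $g$-periodic. The plan is to show that $z$ itself is $g$-recurrent, which will contradict this alternative since $z$ is not $g$-periodic. Because $z$ is $f$-recurrent, pigeonhole yields some residue $r$ modulo $q_0$ and a sequence of $f$-return times $n_j \equiv r \pmod{q_0}$ with $f^{n_j}(z) \to z$; this gives $f^{-r}(z) \in \omega_g(z)$. Using that $\omega_g(y) \subset \omega_g(z)$ for every $y \in \omega_g(z)$ and that $f^{-r}$ commutes with $g$, an induction produces $f^{-jr}(z) \in \omega_g(z)$ for all $j \geq 1$. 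Choosing $j$ so that $jr \in q_0\Z$ and invoking the $g$-invariance of $\omega_g(z)$ yields $z \in \omega_g(z) \subset \mathcal B'$, completing the contradiction and finishing the proof.
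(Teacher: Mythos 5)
Your proof is correct, and it takes a genuinely different route from the paper's. The paper deduces this corollary from the trichotomy for transitive sets (Proposition \ref{prmain:transitivesetsgeneralcase}): a set with two distinct periodic orbits is not a single periodic orbit, cannot support infinite renormalization (the renormalization disks are pairwise disjoint in arbitrarily large numbers, so they exclude periodic points), and in the remaining irrational-type case Corollary \ref{co: B.classes} forces $f$ to be an irrational pseudo-rotation. You instead bypass the rational/irrational-type machinery entirely and apply Proposition \ref{propmain:birkoffclasses3fixedpoints} to the iterate $g=f^{q_0}$, $q_0=\operatorname{lcm}(p_1,p_2)$. The supporting steps all check out: the decomposition $\omega_f(z)=\bigcup_{r=0}^{q_0-1}f^r(\omega_g(z))$ and the $f$-invariance of the $O_i$ do place two distinct $g$-fixed points in $\omega_g(z)$, hence in a single Birkhoff recurrence class of $g$; in the pseudo-rotation alternative the bound of two periodic points correctly forces $p_1=p_2=1$ and $g=f$; and your recurrence argument ($f^{-r}(z)\in\omega_g(z)$ by pigeonhole on residues, then $f^{-jr}(z)\in\omega_g(z)$ by induction using $\omega_g(y)\subset\omega_g(z)$ for $y\in\omega_g(z)$, then $z\in\omega_g(z)$ by $g$-invariance) cleanly kills the alternative in which all recurrent points of $\mathcal B'$ are periodic, since $z$ is not periodic. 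What your approach buys is a self-contained derivation from Proposition \ref{propmain:birkoffclasses3fixedpoints} alone, plus elementary $\omega$-limit manipulations; what the paper's approach buys is brevity once the full structure theorem for transitive sets is in place, together with the extra structural information that theorem carries.
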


Before providing a more precise description of the dynamics of transitive sets for homeomorphisms with no topological horseshoes, let us say, following \cite{BGLT}, that $f:\S^2\to\S^2$ is {\it topologically infinitely renormalizable} over a closed, nonempty invariant set $\Lambda$, if there exists an increasing sequence $(q_n)_{n\geq 0}$ of positive integers and a decreasing sequence  $(D_n)_{n\geq 0}$ of open disks such that:
\begin{itemize}
\item $q_n$ divides $q_{n+1}$; 
\item $D_n$ is $f^{q_n}$ periodic;
\item the disks $f^k(D_n)$, $0\leq k<q_n$ are pairwise disjoint;
\item $\Lambda\subset \bigcup_{0\leq k<q_n}f^{k}(D_n)$.
\end{itemize}
We note that, if $f$ is topologically infinitely renormalizable over $\Lambda$, then the restriction of $f$ to $\Lambda$ is semi-conjugated to an odometer transformation and one can show, by standard Brouwer Theory arguments, that $f$ has periodic points of period $q_n$ for each $n\in \N$. 

\begin{propositionmain}\label{prmain:transitivesetsgeneralcase} Let $f:\S^2\to\S^2$ be an orientation preserving homeomorphism with no horseshoe and $\Lambda$ a  transitive set. Then:
\begin{enumerate}
\item either $\Lambda$ is a periodic orbit;
\item or $f$ is topologically infinitely renormalizable over $\Lambda$;
\item or $\Lambda$ is  of irrational type.
\end{enumerate}
\end{propositionmain}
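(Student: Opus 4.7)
The plan is to proceed by case analysis on the periodic content of $\Lambda$ and the rotation behavior on it, and then to iterate a renormalization construction.

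If $\Lambda$ is finite, its $f$-invariance together with transitivity force it to be a single periodic orbit, which is case (1). Assume henceforth $\Lambda$ is infinite. Since $\Lambda$ is an $\omega$-limit set, it lies in a single Birkhoff recurrence class. If $\Lambda$ contains at least two distinct periodic orbits, Corollary \ref{crmain:transitivewithperiodicorbit} forces $f$ to be an irrational pseudo-rotation; then both fixed points $z_0,z_1$ of $f$ lie in $\Lambda$, and by the definition of irrational pseudo-rotation the rotation number of $f$ on $\S^2\setminus\{z_0,z_1\}$ is irrational, placing $\Lambda$ in case (3).

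It remains to treat the case where $\Lambda$ contains at most one periodic orbit $O$, of period $q$ (with $q=1$ if $O=\emptyset$). After replacing $f$ by $f^q$ we may assume $O\subset\mathrm{fix}(f)$ and, by passing to one of the $f^q$-invariant components of the original $\Lambda$, that $\Lambda$ is itself $f^q$-transitive. Applying Theorem \ref{thmain:global-structure}, the set $\Lambda\setminus O\subset\Omega'(f)$ is contained in one of the invariant fixed-point-free annuli $A_\alpha$, and by Theorem \ref{thmain:birkhoffcycles} all points of $\Lambda\setminus O$ share a common rotation number $\rho$ with respect to the canonical lift furnished by Theorem \ref{thmain:global-structure}. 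If $\rho$ is irrational, $\Lambda$ is of irrational type and we are in case (3). If $\rho=p/q'$ is rational, the lift $\check f^{q'}\circ T^{-p}$ has rotation number $0$ on $\Lambda\setminus O$, and any generator of $\Lambda$ gives a positively recurrent point of $f^{q'}$ in $\Lambda\setminus O$. Proposition \ref{prop:zero-rotation} then produces either a topological horseshoe, contradicting the hypothesis, or a fixed point of the lift, i.e., a periodic orbit $O'$ of $f$ inside $A_\alpha$. Using the equivariant Brouwer foliation associated to a maximal isotopy of $f^{qq'}$ fixing $O\cup O'$, one constructs an $f^{qq'}$-invariant open topological disk $D$ whose $f$-iterates of length $qq'$ are pairwise disjoint and whose union covers $\Lambda$. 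Setting $q_1=qq'$ and $D_1=D$, the restriction $f^{q_1}|_{D_1}$ is horseshoe-free with $\Lambda\cap D_1$ transitive, and inductive application of the construction yields either a strictly increasing sequence $(q_n,D_n)$ realizing case (2), or an irrational rotation number at some stage, landing in case (3).

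The principal obstacle is the disk construction in the previous paragraph: one must produce $D$ as a genuinely $f^{qq'}$-invariant open disk whose $f$-orbit covers (not merely intersects) $\Lambda$, and one must verify that horseshoe-freeness is inherited by the pair $(f^{q_1},D_1)$ so the recursion proceeds. This is where the transverse foliation and maximal isotopy technology of \cite{LeCalvezTal} is essential, via an \emph{unlinked} configuration of $O\cup O'$ supporting the transverse loop bounding $D$.
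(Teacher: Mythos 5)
Your overall shape (trichotomy, then a nested-disk renormalization when the rotation data is rational) is the right one, but the argument as written has a genuine gap at exactly the point you flag as ``the principal obstacle'': the construction of an $f^{qq'}$-invariant open disk $D$ whose $qq'$ iterates are pairwise disjoint and whose union \emph{covers} $\Lambda$. This is not a technical detail to be outsourced to ``the transverse foliation technology''; it is the main content of the rational case. The paper's mechanism is quite different from the one you gesture at: it does not produce $D$ from an unlinked configuration of $O\cup O'$ bounding a transverse loop. Instead (Proposition \ref{prop:rational}) it first proves the key Lemma \ref{lemma:differentclasses}, namely that the Birkhoff recurrence classes of $z,f(z),\dots,f^{q-1}(z)$ for $f^{q}$ are pairwise distinct --- this uses the squeezed annulus of Theorem \ref{th:global-structure2} through $z$, the fixed point supplied by Proposition \ref{prop:linking-recurrent}, and a covering-space argument in the $q$-fold cover --- and then invokes the purely topological Proposition \ref{prop: birkhoff classes iterate}, whose compactness argument converts the distinctness of these classes into a family of pairwise disjoint open sets permuted by $f$ and covering $\overline{O_f(z)}$; filling in the compact complementary components yields the disks. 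Without Lemma \ref{lemma:differentclasses} (or an equivalent), nothing prevents the putative disks from colliding, and your recursion cannot start.

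Two secondary problems. First, your reduction assumes $\Lambda\setminus O$ lies in a single annulus $A_\alpha$ of Theorem \ref{thmain:global-structure}; that theorem only provides a \emph{covering} of $\Omega'(f)$ by (possibly overlapping) annuli, so this containment needs an argument. Second, and more structurally: the statement's case (3) is defined via the \emph{type} of a generating recurrent point ($\mathrm{rot}_{f,z_0,z_1}(z)\notin\Q/\Z$ for some pair of periodic points), so the natural case division is on whether the generator $z$ of $\Lambda$ is periodic, of rational type, or of irrational type --- which is exactly what the paper does, reducing the proposition to the already-established Propositions \ref{prop:rational} and \ref{prop:irrational} in three lines. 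Your division by the number of periodic orbits in $\Lambda$ forces you to re-derive the rational-type analysis inside the proof and to translate an annulus rotation number back into a rotation number relative to two periodic points of $f$, another step you do not carry out.
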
 
Let us precise the last case. It means that $\Lambda$ is the disjoint union of a finite number of closed sets $\{\Lambda_1, ..., \Lambda_q\}$ that are cyclically permuted by $f$, and that there exists an irrational number $\rho\in\R$ such that, \textcolor{black}{for all $n\geq 1$,} if $z_0$, $z_1$ are different fixed points of $f^{qn}$ \textcolor{black}{(possibly in $\Lambda$)}, there exists a lift $\check g$ of $f^{qn}\vert_{\S^2\setminus\{z_0,z_1\}}$ and a generator $\kappa$ of $H_1(\S^2\setminus\{z_0,z_1\},\Z)$ such that: 
\begin{itemize}
\item either the unique rotation number of \textcolor{black}{$\Lambda{_1}\setminus\{z_0, z_1\}$} for $\check g$ is {$n\rho$;}
\item or the unique rotation number of \textcolor{black}{$\Lambda{_1}\setminus\{z_0, z_1\}$} for $\check g$ is \textcolor{black}{$0$;}
\end{itemize}
{furthermore, there exist some $n\geq1$ and some pair of different fixed points of $f^{qn}$ such that the first situation holds.}

Among classical examples of transitive sets of irrational type one can mention: periodic closed curves with no periodic orbits, Aubry-Mather sets, the sphere itself for a transitive irrational pseudo-rotation (\textcolor{black}{for instance, Besicovitch's example in \cite{Besicovitch}}), and Perez-Marco hedgehogs \textcolor{black}{(see \cite{Perez-Marco})}. 

Finally, we can improve the previous proposition when considering $\mathcal{C}^{1}$ dissipative diffeomorphisms, a class that has been extensively studied in the context of the H\'enon family and the transition to positive entropy. 

\begin{propositionmain}\label{prmain:dissipativediffeomorfisms}
Let $f:\R^2\to\R^2$ be an orientation preserving $\textcolor{black}{\mathcal{C}^1}$ diffeomorphism with no horseshoe, such that $\vert \textcolor{black}{\mathrm{det}}Df(x)\vert<1$ for all $x\in\R^2$. Let $\Lambda$ be a compact transitive set that is locally stable. Then either $\Lambda$ is a periodic orbit, or $f$ is topologically infinitely renormalizable over $\Lambda$.
 \end{propositionmain}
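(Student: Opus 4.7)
The plan is to rule out, via an area contraction argument, the ``irrational type'' alternative in the trichotomy provided by Proposition~\ref{prmain:transitivesetsgeneralcase}. First I extend $f$ to the Alexandrov compactification $\S^2=\R^2\cup\{\infty\}$ by fixing $\infty$; the resulting orientation-preserving homeomorphism of $\S^2$, still denoted $f$, has no topological horseshoe. Applying Proposition~\ref{prmain:transitivesetsgeneralcase} to the transitive set $\Lambda\subset\S^2$ leaves only three possibilities: $\Lambda$ is a periodic orbit, $f$ is topologically infinitely renormalizable over $\Lambda$, or $\Lambda$ is of irrational type. The first two are exactly the desired conclusions, so I assume $\Lambda$ is of irrational type; I write $\Lambda=\Lambda_1\sqcup\cdots\sqcup\Lambda_q$, with the $\Lambda_i$ cyclically permuted by $f$, and invoke the final clause of the irrational type definition to fix $n\geq 1$, distinct fixed points $z_0,z_1$ of $f^{qn}$, and a lift $\check g$ of $f^{qn}\vert_{\S^2\setminus\{z_0,z_1\}}$ whose rotation number on $\Lambda$ equals $n\rho$, an irrational and hence nonzero number.

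I next claim that $\Lambda_1$ is essential in the annulus $A:=\S^2\setminus\{z_0,z_1\}$, i.e., that the two ends of $A$ are separated by $\Lambda_1$. Otherwise, the common end-containing component $F$ of $A\setminus\Lambda_1$ defines a compact subset $K:=A\setminus F$ of $A$ containing $\Lambda_1$, whose connected components are each inessential in $A$ and therefore lift to disjoint $T$-translates in the universal cover; analyzing the permutation and integer deck-shifts induced by $\check g$ on these lifts then forces the rotation number of every point of $\Lambda_1$ to be rational (a ratio of integer shifts over a cycle length), contradicting irrationality of $n\rho$. Hence the components $F_0,F_1$ of $A\setminus\Lambda_1$ accumulating on $z_0$ and $z_1$ are distinct; since both $z_j$ are $f^{qn}$-fixed and $f^{qn}$ is orientation-preserving, each $F_j$ is $f^{qn}$-invariant, so $\tilde F_j:=F_j\cup\{z_j\}$ is an open, $f^{qn}$-invariant subset of $\S^2$ containing $z_j$, with $\tilde F_0\cap\tilde F_1=\emptyset$.

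Since $\tilde F_0$ and $\tilde F_1$ are disjoint open sets, at least one of them, say $\tilde F_0$ after relabeling, does not contain $\infty$. Then $\tilde F_0\subset\S^2\setminus\tilde F_1$ is contained in a closed subset of $\S^2$ avoiding a neighborhood of $\infty$, and is therefore a bounded open subset of $\R^2$; it is nonempty (it contains $z_0\in\R^2$ together with an open neighborhood), so it has finite and strictly positive Lebesgue area. Invariance yields $f^{qn}(\tilde F_0)=\tilde F_0$, while the change-of-variables formula together with the strict pointwise inequality $|\det Df(x)|<1$ gives
\[
\mathrm{area}(\tilde F_0)=\mathrm{area}(f^{qn}(\tilde F_0))=\int_{\tilde F_0}|\det Df^{qn}(x)|\,dA(x)<\mathrm{area}(\tilde F_0),
\]
a contradiction. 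Consequently, the irrational type case cannot occur.

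The main delicate step I anticipate is the essentiality of $\Lambda_1$: the set $\Lambda_1$ may be disconnected (Cantor-like, as in Aubry--Mather examples) and the permutation of the components of $K=A\setminus F$ induced by $f^{qn}$ need not \emph{a priori} have uniformly bounded orbits, so the reduction from ``inessential'' to ``rational rotation'' must be done carefully, most likely by first passing to the filled hull $\Lambda_1\cup(\text{bounded complementary components in }A)$ and exploiting transitivity of $\Lambda_1$ to obtain a finite permutation of components. The local stability hypothesis is not used in the final area inequality itself; its role is to keep us in the setup of Proposition~\ref{prmain:transitivesetsgeneralcase} and to enable the selection of the fixed points $z_0,z_1$ realizing the nonzero rotation number, while the quantitative work is done entirely by dissipativity through the concluding area computation.
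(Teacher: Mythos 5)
There is a genuine gap at the step you yourself flag as delicate: the claim that $\Lambda_1$ must be essential in $A=\S^2\setminus\{z_0,z_1\}$ because its rotation number is irrational. This is false for compact invariant sets. An Aubry--Mather set (or the minimal Cantor set of a Denjoy-type example suspended in an annulus) is a totally disconnected, inessential, compact invariant set with irrational rotation number; the paper explicitly lists Aubry--Mather sets among the ``classical examples of transitive sets of irrational type''. Your proposed repair does not work either: after passing to the filled hull, the set can still be totally disconnected, the complement $A\setminus\Lambda_1$ can be connected (so there is no decomposition into finitely many permuted inessential pieces), and transitivity gives no finite cycle of components from which to extract a rational rotation number. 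The implication ``inessential invariant set $\Rightarrow$ rational rotation number'' is only valid for \emph{open connected} invariant sets (fill to an invariant disk and apply Brouwer theory), not for compact ones.

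This is exactly where local stability enters in the paper's proof, and your remark that local stability ``is not used'' should have been a warning: without it you would be proving a strictly stronger statement that the trichotomy of Proposition~\ref{prmain:transitivesetsgeneralcase} alone cannot deliver. The paper's route is: use local stability to obtain a forward invariant open neighborhood $U$ of $\Lambda'=\omega_{f^q}(z)$ avoiding the relevant fixed point $z^*$; the connected component $U'$ of $U$ containing $z$ is then an open connected forward invariant set, which must be essential in $\R^2\setminus\{z^*\}$ (else the rotation number of $z$ would be rational), and the complementary component containing $z^*$ is a relatively compact backward invariant open set, contradicting area contraction. Note also that before choosing $U$ one must know $z^*\notin\overline{U}$ is achievable, i.e.\ $z^*\notin\Lambda'$; the paper proves this by a $C^1$ blow-up of $z^*$ (using that $0<\det Df^q(z^*)<1$ forces real distinct eigenvalues unless $z^*$ is a sink), producing a periodic point with rational rotation number in the same limit set as $z$ and contradicting Theorem~\ref{thmain:rotation-number}. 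Your argument silently assumes $z_0,z_1$ lie off $\Lambda_1$ and skips this point as well. The final area-contraction computation in your proposal is fine, but the invariant bounded open set it needs cannot be produced from $\Lambda_1$ directly; it must come from the locally-stable neighborhood.
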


\subsection{Forcing results}
Our two fundamental results deal with mathematical objects (maximal isotopies, transverse foliations, transverse trajectories, $\mathcal F$-transverse intersection) that will be reminded in the next section. We will detail the theorems in the same section. The first result is a realization theorem improving results stated in \cite{LeCalvezTal}. The second theorem is the key point in the proofs of all statements above. 

\begin{theoremain}\label{th: realization}
Let $M$ be an oriented surface, $f$ a homeomorphism of $M$ isotopic to the identity, $I$ a maximal isotopy of $f$ and $\mathcal F$ a foliation transverse to $I$. Suppose that  $\gamma:[a,b]\to \mathrm{dom}(I)$ is an admissible path of order $r$ with a $\mathcal{F}$-transverse self intersection at $\gamma(s)=\gamma(t)$, where $s<t$. Let $\widetilde \gamma$ be a lift of $\gamma$ to the universal covering space $\widetilde{\mathrm{dom}}(I)$ of $\mathrm{dom}(I)$ and $T$ the covering automorphism such that $\widetilde\gamma$ and $T(\widetilde\gamma)$ have a $\widetilde{\mathcal F}$-transverse intersection at  $\widetilde\gamma(t)=T(\widetilde\gamma)(s)$. Let $\widetilde f$ be the lift of $f\vert_{\mathrm{dom}(I)}$ to $\widetilde{\mathrm{dom}}(I)$, time-one map of the identity isotopy that lifts $I$, and $\widehat f$ the homeomorphism of the annular covering space $ \widehat{\mathrm{dom}}(I)=\widetilde {\mathrm{dom}}(I)/T$ lifted by $\widetilde f$. Then we have the following:
 \begin{enumerate}
 \item For every rational number $p/q\in(0,1]$, written in an irreducible way, there exists a point $\widetilde z\in \widetilde{\mathrm{dom}}(I)$ such that $\widetilde f^{qr}(\widetilde z)=T^p(\widetilde z)$ and such that $\widetilde I_{\widetilde{\mathcal F}}^{\Z}(\widetilde z)$ is equivalent to $\prod_{k\in\Z} T^k(\widetilde\gamma_{[s,t]})$.
  \item For every irrational number $\rho\in[0,1/r]$, there exists a compact set $\widehat Z_{\rho}\subset\widehat{\mathrm{dom}}(I)$ invariant by $\widehat f$, such that every point $\widehat z\in \widehat Z_{\rho}$ has a rotation number $\mathrm{rot}_{\widetilde f}(\widehat z)$ equal to $\rho$. Moreover if $\widetilde z\in\widetilde{\mathrm{dom}}(I)$ is a lift of $\widehat z$, then $\widetilde I_{\widetilde{\mathcal F}}^{\Z}(\widetilde z)$ is equivalent to $\prod_{k\in\Z} T^k(\widetilde\gamma_{[s,t]})$.
 \end{enumerate}
\end{theoremain}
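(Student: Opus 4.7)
The strategy is to bootstrap from the forcing theorem of \cite{LeCalvezTal}: given two admissible paths with an $\widetilde{\mathcal F}$-transverse intersection, the shortcut concatenations across that intersection are themselves admissible, with orders that add. The plan is to iterate this operation to construct admissible paths realizing arbitrarily prescribed combinatorial rotation type, then extract periodic points in the rational case and Hausdorff limits in the irrational case.

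For assertion (1), fix $p/q\in(0,1]$ in lowest terms. By $T$-equivariance of $\widetilde{\mathcal F}$ and the hypothesis on $\widetilde\gamma$, the paths $T^k(\widetilde\gamma)$ and $T^{k+1}(\widetilde\gamma)$ have an $\widetilde{\mathcal F}$-transverse intersection at $T^k(\widetilde\gamma(t))=T^{k+1}(\widetilde\gamma(s))$ for every $k\in\Z$. Iterated application of the forcing theorem then yields that the concatenation $\prod_{k=0}^{N-1}T^k(\widetilde\gamma_{[s,t]})$ is admissible of order $Nr$ for every $N\geq 1$. Taking $N=p$ gives an admissible path of order $pr$ whose endpoints differ by $T^p$, and taking $N=pq$ gives one of order $pqr$ whose endpoints differ by $T^{pq}$. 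Projecting to the annular cover $\widehat{\mathrm{dom}}(I)=\widetilde{\mathrm{dom}}(I)/T$, the first becomes a closed admissible loop for $\widehat f^{pr}$ winding once around $\widehat{\mathrm{dom}}(I)$. An equivariant Brouwer plane translation argument applied to $\widetilde f^{qr}\circ T^{-p}$ in the strip carved out by the iterated admissible concatenations then forces a fixed point $\widetilde z$, i.e., $\widetilde f^{qr}(\widetilde z)=T^p(\widetilde z)$. Since $\widetilde z$ is produced inside this strip, its whole transverse trajectory $\widetilde I^{\Z}_{\widetilde{\mathcal F}}(\widetilde z)$ is equivalent to $\prod_{k\in\Z}T^k(\widetilde\gamma_{[s,t]})$.

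For assertion (2), approximate $\rho\in[0,1/r]$ by rationals $p_n/(q_nr)\to\rho$ with $p_n/q_n\in(0,1]$ in lowest terms and $q_n\to\infty$. Part (1) provides periodic points $\widetilde z_n$ with $\widetilde f^{q_nr}(\widetilde z_n)=T^{p_n}(\widetilde z_n)$, projecting to $\widehat f$-periodic orbits $\widehat O_n\subset\widehat{\mathrm{dom}}(I)$ of rotation number $p_n/(q_nr)$. Because the transverse trajectory of each $\widetilde z_n$ is equivalent to $\prod_{k\in\Z}T^k(\widetilde\gamma_{[s,t]})$, the orbits $\widehat O_n$ are uniformly confined to a fixed compact neighborhood of the projection of this doubly infinite concatenation. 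A Hausdorff-limit argument along a subsequence then yields a nonempty compact $\widehat f$-invariant set $\widehat Z_\rho$. For any $\widehat z\in\widehat Z_\rho$ with lift $\widetilde z$, upper semicontinuity of the $\widetilde{\mathcal F}$-transverse trajectory equivalence class gives $\widetilde I^{\Z}_{\widetilde{\mathcal F}}(\widetilde z)$ equivalent to $\prod_{k\in\Z}T^k(\widetilde\gamma_{[s,t]})$; combined with the fact that the approximating orbits have $T$-displacement rate converging to $\rho$, this pins down the rotation number of $\widehat z$ under $\widetilde f$ to be exactly $\rho$.

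The main difficulty is the iteration of forcing: one must verify that each intermediate concatenation $\prod_{k=0}^{N-1}T^k(\widetilde\gamma_{[s,t]})$ still possesses the $\widetilde{\mathcal F}$-transverse intersection with its $T$-translate needed to continue the induction, and that admissibility survives the concatenation process with the correct book-keeping of orders. Equally delicate is the passage to the limit in the irrational case: rotation numbers are not continuous in general, so the argument must exploit the persistence of the transverse trajectory equivalence class under Hausdorff limits (rather than a naive $\mathrm{lim\,sup}$ / $\mathrm{lim\,inf}$ comparison) to force the rotation number on $\widehat Z_\rho$ to equal $\rho$ on the nose.
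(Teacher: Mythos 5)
There is a genuine gap in both parts. For assertion (1), iterating Proposition \ref{pr: fundamental} along the translates $T^k(\widetilde\gamma)$ only produces ``fast'' admissible paths: $\prod_{k=0}^{N-1}T^k(\widetilde\gamma_{[s,t]})$ is admissible of order $Nr$ with displacement $T^N$, i.e.\ rotation rate $1/r$, no matter how you choose $N$. Your choices $N=p$ and $N=pq$ both have this rate, so nothing in your construction sees the target rotation number $p/(qr)<1/r$. The essential content of the paper's argument is precisely the ``slowing down'' mechanism: Lemma \ref{lm:pqintersections} shows $\widetilde f^q(\widetilde\phi_a)\cap T^p(\widetilde\phi_b)\neq\emptyset$ for \emph{every} $0\le p\le q$ (using monotonicity in $q$ of these intersections, which comes from the Brouwer-line property of the leaves), and Proposition \ref{prop: preliminary-realization} then extracts the fixed point of $\widetilde f^{qr}\circ T^{-p}$ from an index computation of winding number $\pm1$ on a rectangle bounded by leaves $\widetilde\phi_a$, $T(\widetilde\phi_a)$ and two arcs of $\widetilde f^{-q}(\widetilde\phi_b)$ lying at \emph{two adjacent translation levels} $\mathcal X_{p,q}$ and $\mathcal X_{p\pm1,q}$. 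An unspecified ``equivariant Brouwer plane translation argument in the strip'' is not a substitute: $\widetilde f^{qr}\circ T^{-p}$ is an orientation-preserving plane homeomorphism, and without a nonzero index configuration nothing forces it to have a fixed point. Likewise, ``$\widetilde z$ is produced inside this strip'' does not yield that the \emph{whole} (bi-infinite) transverse trajectory is equivalent to $\prod_{k\in\Z}T^k(\widetilde\gamma_{[s,t]})$; the paper needs the classification of transverse simple loops of the lifted foliation on the sphere $\widehat{\mathrm{dom}}(I)\cup\{N,S\}$ (uniqueness of the essential transverse loop up to equivalence, or the closed-leaf/invariant-annulus alternative), together with a separate treatment of the case where $\widetilde\gamma$ also meets $T^k(\widetilde\gamma)$ transversally for some $k=k_0>1$.

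For assertion (2), the Hausdorff-limit scheme does not close. A Hausdorff limit of periodic orbits with rotation numbers $p_n/(q_nr)\to\rho$ is indeed compact and invariant, but there is no mechanism forcing every point of the limit set to have rotation number $\rho$: the limit can contain points of any rotation number in the interval permitted by the confinement (including periodic points), and knowing that the transverse trajectory of a limit point is equivalent to $\prod_{k\in\Z}T^k(\widetilde\gamma_{[s,t]})$ only bounds the displacement rate, it does not determine it. Even passing to weak-$*$ limits of the orbital measures only controls the average rotation vector, not the pointwise rotation number of every point of the support. The paper's route is different and this is where the real work lies: it first builds a \emph{rotational} horseshoe in the annular cover (Proposition \ref{prop: horseshoe-annular} and the remark following it), in which the $T$-displacement of an orbit over each block of $q$ iterates is read off exactly from its symbolic itinerary, and then embeds a Sturmian subshift with prescribed irrational frequency (Proposition \ref{prop: irrational compact set}); unique ergodicity of the Sturmian system gives uniform convergence of the displacement averages, hence rotation number exactly $\rho$ at \emph{every} point of a minimal subset. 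To repair your argument you would need to replace the Hausdorff limit by such a symbolic selection.
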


\begin{theoremain} \label{th: horseshoe}Let $M$ be an oriented surface, $f$ a homeomorphism of $M$ isotopic to the identity, $I$ a maximal  isotopy of $f$ and $\mathcal F$ a foliation transverse to $I$. If there exists a point $z$ in the domain of $I$ and an integer $q\geq 1$ such that the transverse trajectory $I_{\mathcal F}^{q}(z)$ has a $\mathcal F$-transverse self-intersection, then $f$ has a topological horseshoe. Moreover, the entropy of $f$ is  at least equal to $\log 4/3q$.
\end{theoremain}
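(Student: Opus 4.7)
The plan is to exploit the $\mathcal{F}$-transverse self-intersection as a combinatorial ``crossing'' that, by the forcing lemma from \cite{LeCalvezTal}, permits admissible paths to be concatenated in many distinct ways; to apply Theorem~\ref{th: realization} to realize each symbolic coding by an honest orbit; and to assemble the resulting orbits into a compact invariant set fitting the definition of topological horseshoe given in the introduction. First I would lift to the universal cover $\widetilde{\mathrm{dom}}(I)$: by hypothesis, some lift $\widetilde{\gamma}$ of $I_{\mathcal{F}}^{q}(z)$ and some nontrivial covering automorphism $T$ satisfy that $\widetilde{\gamma}$ and $T(\widetilde{\gamma})$ admit a $\widetilde{\mathcal{F}}$-transverse intersection at parameters $s<t$, so passing to the annular quotient $\widehat{\mathrm{dom}}(I)=\widetilde{\mathrm{dom}}(I)/\langle T\rangle$ places us precisely in the setting of Theorem~\ref{th: realization}.

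Next I would iterate the forcing lemma for transverse self-intersections: writing $\gamma=\alpha\beta\delta$ with $\beta=\gamma|_{[s,t]}$, new admissible paths are obtained by freely inserting or omitting extra copies of the loop $\beta$ at each step. Stacking several such choices within a window of length $3q$ should yield four pairwise $\widetilde{\mathcal{F}}$-inequivalent admissible ``building blocks''; for every $\omega\in\{1,2,3,4\}^{\Z}$, concatenating these blocks according to $\omega$ then defines an infinite admissible path $\Gamma_{\omega}$ in $\widetilde{\mathrm{dom}}(I)$. The number of choices per window together with its length are what will drive the entropy bound.

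By part (1) of Theorem~\ref{th: realization}, every periodic $\omega$ is realized by a periodic orbit of $\widehat{f}$ in $\widehat{\mathrm{dom}}(I)$ whose transverse trajectory is equivalent to $\Gamma_{\omega}$. Taking the closure of the union of these orbits in a suitable fundamental region for the $T$-action yields a compact $\widehat{f}$-invariant set $\widehat{Y}$; reading off which building block the transverse trajectory of an orbit follows in each $3q$-window defines a candidate factor map $v\colon\widehat{Y}\to\{1,2,3,4\}^{\Z}$ semi-conjugating $\widehat{f}^{3q}|_{\widehat{Y}}$ to the full shift. The main obstacle lies here: one must prove that $v$ is well defined, continuous, and has uniformly bounded finite fibres, which reduces to showing that two orbits with equivalent transverse trajectories must lie in bounded ``vertical'' neighborhoods of one another in $\widehat{\mathrm{dom}}(I)$. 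This should follow from a careful local analysis of admissible paths near the transverse crossing.

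Finally, projecting $\widehat{Y}$ down to $M$ gives the topological horseshoe $Y$, with $\widehat{Y}$ itself playing the role of the finite extension $Z$ appearing in the definition; the required realization of $s$-periodic codings by $s$-periodic orbits of $f^{3q}$ comes directly from part (1) of Theorem~\ref{th: realization}. The entropy estimate $h(f)\geq \log 4/(3q)$ is then immediate: the full shift on four symbols being a topological factor of $\widehat{f}^{3q}|_{\widehat{Y}}$, one has $h(f^{3q})\geq\log 4$, and since $\widehat{Y}\to Y$ is a finite-to-one factor map, $h(f)\geq \log 4/(3q)$.
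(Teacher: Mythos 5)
Your overall strategy — concatenating admissible blocks according to symbol sequences and then realizing the codings by orbits — does not match the paper's proof, and it contains a gap that I do not think can be repaired along the lines you describe. The paper instead builds the horseshoe geometrically: using Lemmas \ref{lm:pqintersections}, \ref{lemma:following} and \ref{lemma: paths} it produces, inside $L_a=\bigcap_k L(T^k(\widetilde\phi_a))$, ordered families of arcs $\delta_i\in\mathcal X_{p,q}$ (components of $T^p\circ\widetilde f^{-q}(\widetilde\phi_b)\cap L_a$) bounding disjoint compact rectangles $\overline\Delta_i$; the crossing property of $\widetilde f^q\circ T^{-p}$ on these rectangles is a Kennedy--Yorke horseshoe, the itinerary map on $\bigcap_k\widetilde g^{-k}(\bigcup_i\overline\Delta_i)$ is automatically continuous with finite fibres because the rectangles are disjoint isolating blocks, and the realization of periodic codings by periodic points is obtained from the homological Conley index (the products $\prod m_{\varepsilon_k,\varepsilon_{k+1}}$ are nonzero). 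The entropy constant comes from a second construction in the double annular cover with $2q-2$ rectangles, optimized at $q=3$.

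The decisive gap in your argument is the appeal to Theorem~\ref{th: realization}(1) to realize the paths $\Gamma_\omega$. That theorem only produces periodic points whose whole transverse trajectory is equivalent to $\prod_{k\in\Z}T^k(\widetilde\gamma_{[s,t]})$, i.e.\ to powers of the \emph{single} loop defined by the self-intersection; it says nothing about orbits shadowing an arbitrary bi-infinite concatenation of four distinct building blocks. Passing from ``every finite subword of $\Gamma_\omega$ is admissible'' (which Proposition~\ref{pr: fundamental} can give, modulo its either/or ambiguity about orders) to ``there is an actual orbit whose transverse trajectory is $\Gamma_\omega$'' is precisely the hard content of the horseshoe construction, and no realization statement in the paper delivers it. Relatedly, the step you flag as ``the main obstacle'' — well-definedness, continuity and uniformly finite fibres of the coding map $v$ on the closure of a union of periodic orbits — is exactly where a closure-of-orbits approach breaks down: limits of periodic orbits with prescribed codings need not carry any coding at all unless the orbits are confined to disjoint compact cells from the outset, which is what the paper's rectangles provide and your construction does not. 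Finally, even granting a semiconjugacy to the shift, the definition of topological horseshoe used here demands periodic points over every periodic coding, and this requires a fixed-point index argument (Lefschetz/Conley), not the realization theorem.
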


\begin{remark*} In \cite{LeCalvezTal} we stated a weaker version of this result. Supposing that $z$ was a recurrent point of $f$ and $f^{-1}$, we proved that $f$ was topologically chaotic, but we did not show the existence of a horseshoe.
The proof there was obtained through the coding of orbits by infinite sequences on a finite alphabet and applying a "forcing lemma" but it relied on some artificial preliminary results. The proof that is given here is much more direct and natural. It uses the orbit of a Brouwer line for a fixed point free $G$-equivariant homeomorphim of the plane, the group $G$ acting freely and properly. In addition, the  arguments are more geometrical; we construct a topological  horseshoe related to the Conley index theory. The conclusion is much stronger and the assumptions are weaker, no recurrence hypothesis is needed. The fact that the recurrence is no longer required is what allow us to get the applications announced in this introduction.
\end{remark*}

\subsection{Notations and organization of the paper}
We will use the following notations:
\begin{itemize}
\item If $Y$ is a subset of a topological space $X$, we will write $\overline Y$, $\mathrm{Int}(Y)$, $\mathrm{Fr}(Y)$ for its closure, its interior and its frontier, respectively.
 
\item We will denote $\mathrm{fix}(f)$ the set of fixed points of a homeomorphism defined on a surface $M$. 

\item A set $X\subset M$ will be called $f$-free if $f(X)\cap X=\emptyset$. 

\item A line of $M$ is a proper topological embedding  $\lambda:\R\to M$ (or the image of this embedding). If the complement of $\lambda$ has two connected components, we will denote $R(\lambda)$ the component lying on the right of $\lambda$ and $L(\lambda)$ the component lying on its left. 

\item If $\mathcal F$ is a topological foliation defined on $M$, we will denote $\phi_z$ the leaf passing through a given point $z\in M$.
\end{itemize}

The paper is organized as follows: In Section 2 we recall the basic features of Brouwer equivariant theory and of the forcing theory, as well as some basic results on rotation numbers and Birkhoff recurrence classes. In Section 3 we construct the topological horseshoes and prove Theorems \ref{th: realization} and \ref{th: horseshoe}. Section 4 describes properties of transverse trajectories with no transverse intersection in the sphere, Section 5 is dedicated to proving the applications concerning rotation numbers, including Theorems \ref{thmain:rotation-number}, \ref{thmain:birkhoffcycles} and \ref{thmain:circloids}, as well as Propositions \ref{propositionmain_regionofinstability} and \ref{prop:zero-rotation}. In Section 6 we derive our structure theorem for homeomorphisms of the sphere with no topological horseshoes and prove Theorem \ref{thmain:global-structure}. Lastly, Section 7 deals with the final applications for Birkhoff recurrence classes and transitive sets, proving Propositions \ref{propmain:birkoffclasses3fixedpoints}, \ref{prmain:transitivesetsgeneralcase} and \ref{prmain:dissipativediffeomorfisms} and related results. 

We would like to thank A. Koropecki for several discussions and suggestions regarding this work. \textcolor{black}{We are also very grateful for the referees'  impressive work. The suggestions greatly improved the presentation of this work.}

\section{Preliminaries}

\subsection{Maximal isotopies, transverse foliations and transverse trajectories}

\subsubsection{Maximal isotopies} Let $M$ be an oriented surface (not necessarily closed, not necessarily of finite type) and $f$ a homeomorphism of $M$ that is isotopic to the identity. We denote by $\mathcal I$ the space of {\it identity isotopies of $f$} which means the set of continuous paths defined on $[0,1]$ that join the identity to $f$ in the space of homeomorphisms of $M$, furnished with the $C^0$ topology (defined by the uniform convergence of maps and their inverse on compact sets). If $I=(f_t)_{t\in[0,1]}$ is such an isotopy, we define the {\it trajectory} $I(z)$ of a point $z\in M$ to be the path $t\mapsto f_t(z)$.  For every integer $n\geq 1$, we can define by concatenation 
$$I^n(z)=\prod_{0\leq k<n} I(f^k(z)).$$
Furthermore we can define
$$ I^{\N}(z)=\prod_{k\geq0} I(f^k(z)),
 \enskip I^{-\N}(z)=\prod_{k<0} I(f^k(z)),
\enskip I^{\Z}(z)=\prod_{k\in\Z} I(f^k(z)),$$
the last path being called the {\it whole trajectory} of $z$.

The {\it fixed point set of $I$} is the set $\mathrm{fix}(I)=\bigcap_{t\in[0,1]} \mathrm{fix}(f_t)$
and the {\it domain  of $I$} its complement, that we will denote $\mathrm{dom}(I)$.
We have a preorder on $\mathcal I$ defined as follows: say that $I\preceq I'$ if
\begin{itemize}
\item $\mathrm{fix}(I)\subset\mathrm{fix}(I')$;
\item $I'$ is homotopic to $I$ relative to $\mathrm{fix}(I)$.
\end{itemize}

We have the recent following result, due to B\'eguin-Crovisier-Le Roux \cite{BeguinCrovisierLeRoux2}: 
\begin{theorem}For every $I\in\mathcal I$, there exists $I'\in\mathcal I$ such that $I\preceq I'$ and such that $I'$ is maximal for the preorder.
\end{theorem}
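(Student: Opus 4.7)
The plan is to apply Zorn's lemma to the set $\mathcal J := \{J \in \mathcal I : I \preceq J\}$. Since $I \in \mathcal J$ and $\preceq$ is a preorder, a maximal element above $I$ will exist as soon as every totally ordered chain in $\mathcal J$ admits an upper bound in $\mathcal J$. On any chain, the map $J \mapsto \mathrm{fix}(J)$ is order-preserving into the poset of closed subsets of $\mathrm{fix}(f)$, and it distinguishes the elements of the chain up to equivalence in $\preceq$ (any two comparable isotopies with the same fixed point set are homotopic rel it, hence equivalent). The natural candidate upper bound for a chain $(I_\alpha)_\alpha$ is therefore an isotopy $I^*$ whose fixed-point set is the closure $X^* := \overline{\bigcup_\alpha \mathrm{fix}(I_\alpha)}$ and which sits above every $I_\alpha$ in $\preceq$.

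A standard second-countability argument allows one first to reduce to the case of a countable chain, indexed by $\N$, with $\mathrm{fix}(I_n)$ strictly increasing and $\bigcup_n \mathrm{fix}(I_n)$ dense in $X^*$. The construction of $I^*$ is then carried out iteratively: the passage from $I_n$ to $I_{n+1}$ modifies trajectories only on a small open neighborhood $U_n$ of the newly fixed set $\mathrm{fix}(I_{n+1}) \setminus \mathrm{fix}(I_n)$, chosen disjoint from $\mathrm{fix}(I_n)$. Arranging the diameters of the $U_n$ (in some proper metric on $M$) to decrease fast enough makes the sequence $(I_n)$ Cauchy in the compact-open topology; let $I^*$ be its limit. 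Concatenating the intermediate homotopies, each supported in $\bigcup_{k \geq n} U_k$ and each performed rel $\mathrm{fix}(I_n)$, yields the desired relation $I_n \preceq I^*$ for every $n$, and transitivity of $\preceq$ then gives $I_\alpha \preceq I^*$ for every $\alpha$ in the original chain.

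The main obstacle will be to certify that $\mathrm{fix}(I^*) = X^*$ rather than something strictly larger: a priori the limit could acquire spurious fixed points wherever the successive perturbations accumulate. Ruling this out requires that the $U_n$ be chosen not just small but also with supports avoiding an appropriate exhaustion of $M \setminus X^*$, so that every point off $X^*$ lies outside all but finitely many $U_k$, and that the modifying isotopies inside each $U_n$ be gentle enough that no extra point is simultaneously fixed at every time $t \in [0,1]$. Carrying out these choices uniformly on a possibly non-compact $M$ without bounded geometry, and ensuring that the resulting $I^*$ is still an isotopy in $\mathcal I$ (rather than merely a continuous family of homeomorphisms), is the technical core of \cite{BeguinCrovisierLeRoux2}, and is where the main work lies.
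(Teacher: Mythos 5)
This theorem is not proved in the paper you are commenting on: it is stated there as a quoted result of B\'eguin--Crovisier--Le Roux \cite{BeguinCrovisierLeRoux2}, so there is no in-paper argument to compare yours against. Judged on its own terms, your proposal is a reasonable outline of the natural Zorn's-lemma strategy, but it is not a proof, and the gap is exactly at the step you would need to supply.

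Concretely: the relation $I_n\preceq I_{n+1}$ only says that $\mathrm{fix}(I_n)\subset\mathrm{fix}(I_{n+1})$ and that $I_{n+1}$ is homotopic to $I_n$ relative to $\mathrm{fix}(I_n)$. It does \emph{not} say, and it is not true in general, that the passage from $I_n$ to $I_{n+1}$ ``modifies trajectories only on a small open neighborhood $U_n$ of $\mathrm{fix}(I_{n+1})\setminus\mathrm{fix}(I_n)$'': the homotopy rel $\mathrm{fix}(I_n)$ may move trajectories arbitrarily far throughout the complement of $\mathrm{fix}(I_n)$. Your Cauchy argument, the control of supports along an exhaustion of $M\setminus X^*$, and the verification that $\mathrm{fix}(I^*)=X^*$ all presuppose this localization, so the construction of an upper bound for a chain never gets started without first proving a normalization statement of the form ``within its equivalence class, $I_{n+1}$ can be chosen to agree with $I_n$ outside a prescribed neighborhood of the new fixed points.'' That normalization (or a way around it) is essentially the whole content of the theorem; B\'eguin--Crovisier--Le Roux do not obtain maximal isotopies by taking limits of isotopies in this way, but by characterizing which closed subsets of $\mathrm{fix}(f)$ are fixed-point sets of isotopies in the given homotopy class and showing this property passes to closures of increasing unions. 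Since you explicitly defer ``the technical core'' to the reference, what you have written is a statement of the difficulty rather than a resolution of it. A secondary, smaller point: the reduction to a countable chain needs an argument that an upper bound of a countable cofinal (for the fixed-point sets) subchain actually dominates every element of the original chain in $\preceq$, not merely that its fixed-point set contains theirs.
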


\begin{remarks*} {}

 A weaker version of the theorem was previously proved by Jaulent \cite{Jaulent} and stated in terms of singular isotopies. While sufficient for the applications we are looking for, it is less easy to handle. Note that we used Jaulent's formalism in \cite{LeCalvezTal}.

An isotopy $I$ is maximal if and only if, for every $z\in\mathrm{fix}(f)\setminus\mathrm{fix}(I)$, the closed curve $I(z)$ is not contractible in $\mathrm{dom}(I)$ (see \cite{Jaulent}). Equivalently, if we lift the isotopy $I\vert_{\mathrm{dom}(I)}$ to an identity isotopy $\widetilde I=(\widetilde f_t)_{t\in[0,1]}$ on the universal covering space $\widetilde{\mathrm{dom}}(I)$ of $\mathrm{dom}(I)$, the maximality of $I$ means that $\widetilde f_1$ is fixed point free. Note that every connected component of $\widetilde{\mathrm{dom}}(I)$ must be a topological plane. 

\end{remarks*}

\subsubsection{Transverse foliation}

We keep the same notations as above. We have the following result (see \cite{LeCalvez1}):

\begin{theorem}
\label{th.transversefoliation}If $I\in\mathcal I$ is maximal, there exists a topological oriented singular foliation $\mathcal F$ on $M$ such that 

\begin{itemize}
\item the singular set $\mathrm{sing}(\mathcal F)$ coincides with $\mathrm{fix}(I)$;
\item for every $z\in \mathrm{dom}(I)$, the trajectory $I(z)$ is homotopic in  $\mathrm{dom}(I)$, relative to the ends, to a path $\gamma$ {\rm positively transverse \footnote{in the whole text, ``transverse'' will mean ``positively transverse''}} to $\mathcal F$, which means locally crossing each leaf from the right to the left.
\end{itemize}
\end{theorem}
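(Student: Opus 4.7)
The plan is to work in the universal cover $\widetilde{\mathrm{dom}}(I)$, construct an equivariant foliation there whose leaves are Brouwer lines for the lifted time-one map, then project down to $\mathrm{dom}(I)$ and extend trivially across $\mathrm{fix}(I)$ by declaring those points to be singularities.

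First I would lift $I\vert_{\mathrm{dom}(I)}$ to an identity isotopy $\widetilde I = (\widetilde f_t)_{t\in[0,1]}$ on the universal cover. By the maximality remark already quoted in the excerpt, $\widetilde f_1$ is fixed point free, and each component of $\widetilde{\mathrm{dom}}(I)$ is a topological plane, so $\widetilde f_1$ restricts to a Brouwer homeomorphism on each component. The first serious step is to produce a locally finite cellular (``brick'') decomposition of $\widetilde{\mathrm{dom}}(I)$ by topological closed disks (``bricks'') such that every brick $B$ is $\widetilde f_1$-free, equivalently $\widetilde f_1(B)\cap B=\emptyset$, and such that the whole decomposition is invariant under the deck transformation group $G$ acting on $\widetilde{\mathrm{dom}}(I)$. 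Existence of a free $G$-equivariant brick decomposition is standard Brouwer-theoretic craft: start from any locally finite equivariant cellular decomposition with sufficiently small cells (which exists because $G$ acts properly discontinuously by homeomorphisms on the plane), then iteratively merge and subdivide bricks, using the fact that each $G$-orbit of bricks is discrete and using the local freeness obtained from compactness of individual cells.

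Next I would pass from the brick decomposition to an oriented combinatorial structure on its $1$-skeleton $E$. For each edge $e\subset E$, separating two adjacent bricks $B^-$ and $B^+$, orient $e$ by declaring the right side to be $B^-$ and the left side $B^+$ precisely when $\widetilde f_1(B^-)\cap B^+=\emptyset$ and $\widetilde f_1^{-1}(B^+)\cap B^-=\emptyset$ fails in the opposite direction; more carefully, one chooses the orientation so that the combinatorial flow runs ``from $\widetilde f_1^{-1}$ toward $\widetilde f_1$''. The key lemma (which is where Brouwer theory is really used) is that no oriented cycle is created at a vertex: around each vertex of the brick decomposition, the free condition forces the orientations on the incident edges to be consistent with a global flow. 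This is essentially Franks' lemma on Brouwer decompositions and is the main technical obstacle in the whole proof.

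Once the $1$-skeleton is coherently oriented, I would build the foliation $\widetilde{\mathcal F}$ by placing in each brick $B$ a product foliation by arcs joining the incoming edges on $\partial B$ to the outgoing edges, compatibly with the orientation, and then gluing over edges so that leaves continue smoothly across $E$. The orientation consistency from the previous step is exactly what makes this gluing possible and what makes each leaf a proper line oriented globally. Each leaf is then a Brouwer line for $\widetilde f_1$ by construction: it is disjoint from its image and separates the plane into a right side containing $\widetilde f_1^{-1}$ of the leaf and a left side containing $\widetilde f_1$ of the leaf. By making all choices $G$-equivariantly (bricks, orientations, transversal arcs), $\widetilde{\mathcal F}$ descends to an oriented foliation $\mathcal F$ on $\mathrm{dom}(I)$, which we extend to $M$ by setting $\mathrm{sing}(\mathcal F)=\mathrm{fix}(I)$.

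Finally I would verify the transversality of trajectories. For $z\in\mathrm{dom}(I)$, lift $I(z)$ to a path $\widetilde\gamma$ from $\widetilde z$ to $\widetilde f_1(\widetilde z)$. Because the leaves of $\widetilde{\mathcal F}$ are Brouwer lines for $\widetilde f_1$, the path $\widetilde\gamma$ crosses each leaf it meets only from right to left up to homotopy with fixed endpoints: inside a single brick the homotopy is trivial, and at each edge crossing one uses the orientation of $E$ to push the path to an arc that crosses positively. Concatenating these local homotopies and projecting down gives the required transverse representative $\gamma$ of $I(z)$ in $\mathrm{dom}(I)$, relative to endpoints. The hardest single ingredient, as already flagged, is the combinatorial/Brouwer lemma ensuring that a $G$-equivariant free brick decomposition admits a coherent orientation on its $1$-skeleton; once this is in hand, the construction and verification are essentially formal.
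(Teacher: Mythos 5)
This theorem is not proved in the paper at all: it is quoted from \cite{LeCalvez1}, so the only meaningful comparison is with the proof given there. Your sketch does reproduce the architecture of that proof (lift to $\widetilde{\mathrm{dom}}(I)$, observe that maximality makes $\widetilde f_1$ fixed point free on each plane component, take an equivariant free brick decomposition, orient the skeleton using Franks' lemma, build the foliation, verify transversality), and you correctly identify Franks' lemma on cycles of free disks as the engine. But as written the argument has genuine gaps at precisely the hard points. First, you never require the free decomposition to be \emph{maximal} (no union of two adjacent bricks is free). Without maximality, for an edge $e=B^-\cap B^+$ it may happen that $\widetilde f_1(B^-)\cap B^+=\emptyset$ \emph{and} $\widetilde f_1(B^+)\cap B^-=\emptyset$, so your rule orients nothing; maximality is what forces at least one of the two intersections to be nonempty, and Franks' lemma (applied to the $2$-cycle) is what forces at most one. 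Relatedly, the statement you need is not ``no oriented cycle at a vertex'' but global antisymmetry of the transitive closure of the relation $\widetilde f_1(B)\cap B'\neq\emptyset$, i.e.\ no cycle of free bricks of any length.

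Second, the passage from the oriented skeleton to a foliation is far from ``essentially formal'', and this is where most of the work in \cite{LeCalvez1} lies. At a generic (trivalent) vertex three oriented edges meet, so the skeleton itself cannot be a union of leaves of a nonsingular foliation; your ``product foliation in each brick joining incoming to outgoing edges'' presupposes, at the very least, that the incoming edges and the outgoing edges of $\partial B$ each form a single connected arc, which is a nontrivial consequence of maximality and the order structure, not a formality. One must also prove that the resulting leaves are \emph{properly embedded} lines (a leaf built by concatenating infinitely many arcs could a priori accumulate on itself) and only then that each one is a Brouwer line. Finally, for the transversality of trajectories the issue is not homotoping $\widetilde I(\widetilde z)$ leaf by leaf --- any two paths in the simply connected component from $\widetilde z$ to $\widetilde f_1(\widetilde z)$ are already homotopic with fixed endpoints --- but proving the \emph{existence} of a positively transverse path from $\widetilde z$ to $\widetilde f_1(\widetilde z)$; this uses that the set of endpoints of positively transverse paths issued from $\widetilde z$ is open with boundary a union of leaves, together with the Brouwer-line property, and is a separate lemma rather than a consequence of local pushes across edges.
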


We will say that $\mathcal F$ is {\it transverse to $I$}. It can be lifted to a non singular foliation $\widetilde{\mathcal F}$ on $\widetilde{\mathrm{dom}}(I)$ which is transverse to $\widetilde I$. This last property is equivalent to saying that every leaf $\widetilde\phi$ of $\widetilde{\mathcal F}$, restricted to the connected component of $\widetilde{\mathrm{dom}}(I)$ that contains it, is a {\it Brouwer line} of $\widetilde f_1$: its image is contained in the connected component $L(\widetilde\phi)$ of the complement of $\widetilde\phi$ lying on the left of $\widetilde\phi$ and its inverse image in the connected component $R(\widetilde\phi)$ lying on the right (see \cite{LeCalvez1}). The path $\gamma$ is not uniquely defined. When lifted to $\widetilde{\mathrm{dom}}(I)$, one gets a path $\widetilde\gamma$ from a lift $\widetilde z$ of $z$ to $\widetilde f_1(\widetilde z)$:

\begin{itemize}
\item that meets
once every leaf that is on the left of the leaf $\phi_{\widetilde z}$ containing $\widetilde z$ and on the right of the leaf $\phi_{\widetilde f(\widetilde z)}$ containing $\widetilde f(\widetilde z)$, 
\item that does not meet any other leaf.
\end{itemize}

 Every other path $\widetilde\gamma'$ satisfying these properties projects onto another possible choice $\gamma'$. We will say that two positively transverse paths $\gamma$ and $\gamma'$ are {\it equivalent} if they can be lifted to $\widetilde{\mathrm{dom}}(I)$ into paths that meet exactly the same leaves. We will write $\gamma=I_{\mathcal F}(z)$ and call this path the {\it transverse trajectory of $z$}  (it is defined up to equivalence). For every integer $n\geq 1$, we can define by concatenation 
$$I_{\mathcal F}^n(z)=\prod_{0\leq k<n} I_{\mathcal F}(f^k(z)).$$
Furthermore we can define
$$ I_{\mathcal F}^{\N}(z)=\prod_{k\geq0} I_{\mathcal F}(f^k(z)),
 \enskip I_{\mathcal F}^{-\N}(z)=\prod_{k<0} I_{\mathcal F}(f^k(z)),
\enskip I_{\mathcal F}^{\Z}(z)=\prod_{k\in\Z} I_{\mathcal F}(f^k(z)),$$
the last path being called the {\it whole transverse trajectory} of $z$.

The following proposition, that will be useful in the article,  is easy to prove (see \cite{LeCalvezTal}):

\begin{proposition}
\label{prop:stability} We have the following:
\begin{itemize}
\item for every $z\in\mathrm{dom}(I)$ and every $n\geq 1$, there exists a neighborhood $U$ of $z$ such that $I_{\mathcal F}^n(z)$ is a subpath (up to equivalence) of $I_{\mathcal F}^{n+2}(f^{-1}(z'))$, if $z'\in U$;

\item for every $z\in\mathrm{dom}(I)$, every $z'\in\omega(z)\cap \mathrm{dom}(I) $, every $m\geq 0$ and every $n\geq 1$, $I_{\mathcal F}^n(z')$ is a subpath (up to equivalence) of $I_{\mathcal F}^{\N}(f^m(z))$.
\end{itemize}
\end{proposition}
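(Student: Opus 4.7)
The plan is to prove both bullets by lifting to the universal cover $\widetilde{\mathrm{dom}}(I)$, where transverse trajectories are characterized up to equivalence by the set of leaves of $\widetilde{\mathcal F}$ they meet, and by combining continuity of $\widetilde f^{\pm 1}$ with the Brouwer-line property of every leaf of $\widetilde{\mathcal F}$.

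For the first bullet, I would fix a lift $\widetilde z \in \widetilde{\mathrm{dom}}(I)$ of $z$ together with the time-one lift $\widetilde f$, which selects canonical lifts $\widetilde f^k(\widetilde z)$ of $f^k(z)$. Because each leaf is a Brouwer line and two leaves either coincide or are disjoint, the leaves $\phi_{\widetilde f^k(\widetilde z)}$ are nested: $\phi_{\widetilde f^{k+1}(\widetilde z)} \subset L(\phi_{\widetilde f^k(\widetilde z)})$. In particular, the four strict membership relations $\widetilde f^{-1}(\widetilde z) \in R(\phi_{\widetilde z})$, $\widetilde f(\widetilde z) \in L(\phi_{\widetilde z})$, $\widetilde f^{n-1}(\widetilde z) \in R(\phi_{\widetilde f^n(\widetilde z)})$, $\widetilde f^{n+1}(\widetilde z) \in L(\phi_{\widetilde f^n(\widetilde z)})$ all hold; since each of the sets $R(\phi_{\widetilde z})$, $L(\phi_{\widetilde z})$, $R(\phi_{\widetilde f^n(\widetilde z)})$, $L(\phi_{\widetilde f^n(\widetilde z)})$ is open, continuity of $\widetilde f^{\pm 1}$ produces a neighborhood $U$ of $z$ such that these four relations persist when $\widetilde z$ is replaced by the nearby canonical lift $\widetilde{z}'$ of any $z' \in U$.

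Consider $\widetilde\gamma = \widetilde I_{\widetilde{\mathcal F}}^{n+2}(\widetilde f^{-1}(\widetilde{z}'))$, the lifted transverse trajectory from $\widetilde f^{-1}(\widetilde{z}')$ to $\widetilde f^{n+1}(\widetilde{z}')$, passing through the relay points $\widetilde{z}', \widetilde f(\widetilde{z}'), \dots, \widetilde f^n(\widetilde{z}')$. Between its start and its passage through $\widetilde f(\widetilde{z}')$, the path $\widetilde\gamma$ moves from $R(\phi_{\widetilde z})$ into $L(\phi_{\widetilde z})$, so positive transversality forces it to cross $\phi_{\widetilde z}$ at a single point; symmetrically, it crosses $\phi_{\widetilde f^n(\widetilde z)}$ at a single point, necessarily occurring later along $\widetilde\gamma$. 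The sub-path $\widetilde\gamma'$ cut out between these two crossings is a positively transverse path from $\phi_{\widetilde z}$ to $\phi_{\widetilde f^n(\widetilde z)}$ contained in the closed strip $\overline{L(\phi_{\widetilde z}) \cap R(\phi_{\widetilde f^n(\widetilde z)})}$. Every leaf of $\widetilde{\mathcal F}$ inside this strip separates $\phi_{\widetilde z}$ from $\phi_{\widetilde f^n(\widetilde z)}$ in the plane component containing them, so $\widetilde\gamma'$ must meet it; and no leaf outside the closed strip can be met. Since this is exactly the set of leaves met by $\widetilde I_{\widetilde{\mathcal F}}^n(\widetilde z)$, the equivalence is established, proving that $I_{\mathcal F}^n(z)$ is a sub-path of $I_{\mathcal F}^{n+2}(f^{-1}(z'))$.

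The second bullet is immediate from the first: apply it to $z'$, obtaining a neighborhood $V$ of $z'$ such that for every $w \in V$, $I_{\mathcal F}^n(z')$ is a sub-path of $I_{\mathcal F}^{n+2}(f^{-1}(w))$. Since $z' \in \omega(z)$, there exists $k \geq m+1$ with $f^k(z) \in V$; taking $w = f^k(z)$ exhibits $I_{\mathcal F}^n(z')$ as a sub-path of $I_{\mathcal F}^{n+2}(f^{k-1}(z))$, which is itself a sub-path of $I_{\mathcal F}^{\N}(f^m(z))$ since $k-1 \geq m$. The only technical subtlety is the leaf-by-leaf verification in the first bullet: the two extra iterates (one at each end) are precisely what is required so that continuity forces $\widetilde\gamma$ to cross the unperturbed leaves $\phi_{\widetilde z}$ and $\phi_{\widetilde f^n(\widetilde z)}$, rather than only their perturbations $\phi_{\widetilde{z}'}$ and $\phi_{\widetilde f^n(\widetilde{z}')}$, which may lie on either side of the unperturbed ones.
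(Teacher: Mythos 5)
Your overall strategy is the standard one (and the one the paper implicitly relies on, since it defers the proof to \cite{LeCalvezTal}): work in $\widetilde{\mathrm{dom}}(I)$, use that equivalence of transverse paths is detected by the set of leaves met, and use the Brouwer-line property of leaves together with continuity of $\widetilde f^{\pm 1}$ to force the perturbed trajectory to cross the two \emph{unperturbed} leaves $\phi_{\widetilde z}$ and $\phi_{\widetilde f^n(\widetilde z)}$. The choice of open conditions, the extraction of the subpath $\widetilde\gamma'$ between the two crossings, and the deduction of the second bullet from the first via $\omega$-limit recurrence are all correct.

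One assertion deserves more care than you give it: the claim that \emph{every} leaf contained in $L(\phi_{\widetilde z})\cap R(\phi_{\widetilde f^n(\widetilde z)})$ separates $\phi_{\widetilde z}$ from $\phi_{\widetilde f^n(\widetilde z)}$. This is not a general fact about a foliation of the plane and two disjoint leaves: in a Reeb component, the leaves lying between the two boundary lines do not separate them (both boundary lines lie in the same complementary component of each interior leaf). What saves you is that the strip here is swept out by the transverse trajectory itself: $\widetilde I_{\widetilde{\mathcal F}}^{n}(\widetilde z)$ is a positively transverse path from $\phi_{\widetilde z}$ to $\phi_{\widetilde f^n(\widetilde z)}$ meeting exactly the leaves of the closed strip (the concatenation of the elementary strips between consecutive $\phi_{\widetilde f^k(\widetilde z)}$ tiles the big strip), and it meets each such leaf exactly once, crossing from right to left; hence for each interior leaf $\widetilde\phi$ of the strip one gets $\phi_{\widetilde z}\subset R(\widetilde\phi)$ and $\phi_{\widetilde f^n(\widetilde z)}\subset L(\widetilde\phi)$, which is the separation you need. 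With that one-line justification inserted, the argument is complete.
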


We will say that a path $\gamma:[a,b]\to {\mathrm{dom}}(I)$, positively transverse to $\mathcal F$, is {\it admissible of order $n$} if it is equivalent to a path $I_{\mathcal{F}}^n
 (z)$, $z\in\mathrm{dom}(I)$. It means that if $\widetilde \gamma:[a,b]\to \widetilde {\mathrm{dom}}(I)$ is a lift of $\gamma$,  there exists a point $\widetilde z\in\widetilde{\mathrm{dom}}(I)$ such that $\widetilde z\in\phi_{\widetilde \gamma (a)}$ and $\widetilde f^n(\widetilde z)\in\phi_{\widetilde \gamma (b)}$, or equivalently, that 
 $\widetilde f^n(\phi_{\widetilde \gamma (a)})\cap \phi_{\widetilde \gamma(b)}\not=\emptyset$.

\subsection {$\mathcal F$-transverse intersection}

Let us begin with a definition. Given three disjoint lines $\lambda_0,\lambda_1, \lambda_2:\R\to \widetilde{\mathrm{dom}}(I)$ contained in the same connected component of $\widetilde{\mathrm{dom}}(I)$, we will say that that $\lambda_2$ is {\it above $\lambda_1$ relative to $\lambda_0$} (and that $\lambda_1$ is {\it below $\lambda_2$ relative to $\lambda_0$}) if none of the lines separates the two others and if, for every pair of disjoint paths $\gamma_1, \gamma_2$ joining $z_1=\lambda_0(t_1)$ to $z_1'\in \lambda_1$ and  $z_2=\lambda_0(t_2)$ to $ z_2'\in\lambda_2$ respectively, such that the paths do not meet the lines but at their endpoints, one has that $t_1< t_2$.
 
 \begin{figure}[ht!]
\hfill
\includegraphics [height=48mm]{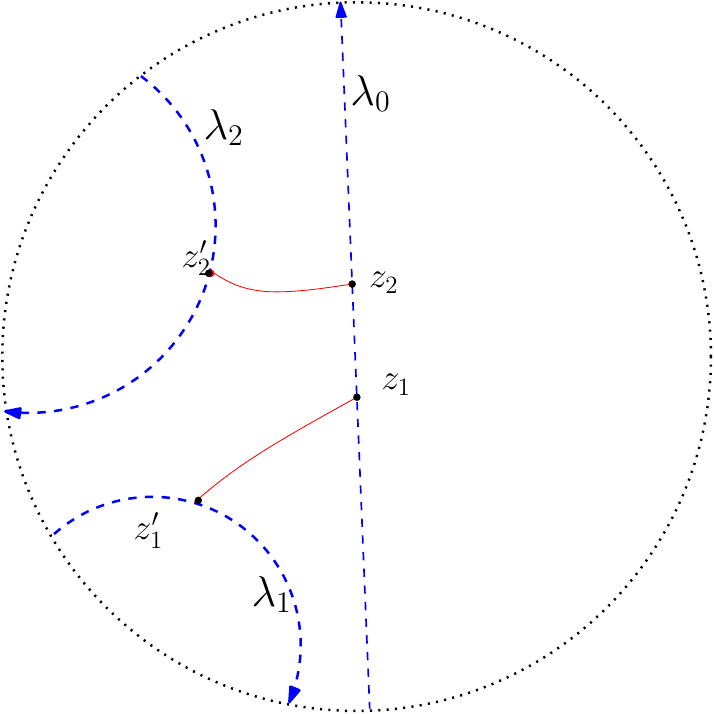}
\hfill{}
\caption{\small Order of lines relative to $\lambda_0$.}
\label{figure_order_lines}
\end{figure}
 
\medskip
Let $\gamma_1:J_1\to \mathrm{dom}(I)$ and $\gamma_2:J_2\to \mathrm{dom}(I)$ be two transverse paths defined on intervals $J_1$ and $J_2$ respectively. Suppose that there exist $t_1\in J_1$ and $t_2\in J_2$ such that $\gamma_1(t_1)=\gamma_2(t_2)$. We will say that $\gamma_1$ and $\gamma_2$ have a {\it $\mathcal F$-transverse intersection} at  $\gamma_1(t_1)=\gamma_2(t_2)$ if there exist $a_1, b_1\in J_1$ satisfying $a_1<t_1<b_1$  and $a_2, b_2\in J_2$ satisfying $a_2<t_2<b_2$ such that if $\widetilde\gamma_1,\widetilde\gamma_2$ are lifts of $\gamma_1, \gamma_2$ to $\widetilde{\mathrm{dom}}(I)$ respectively, verifying $\widetilde\gamma_1(t_1)=\widetilde\gamma_2(t_2)$, then

\begin{itemize}

\item $\phi_{\widetilde\gamma_1(a_1)}\subset L(\phi_{\widetilde\gamma_2(a_2)}),\enskip \phi_{\widetilde\gamma_2(a_2)}\subset L(\phi_{\widetilde\gamma_1(a_1)})$;

\item $\phi_{\widetilde\gamma_1(b_1)}\subset R(\phi_{\widetilde\gamma_2(b_2)}),\enskip \phi_{\widetilde\gamma_2(b_2)}\subset R(\phi_{\widetilde\gamma_1(b_1)})$

\item $\phi_{\widetilde\gamma_2(b_2)}$ is below $\phi_{\widetilde\gamma_1(b_1)}$ relative to $\phi_{\widetilde\gamma_1(t_1)}$ if $\phi_{\widetilde\gamma_2(a_2)}$ is above $\phi_{\widetilde\gamma_1(a_1)}$ relative to $\phi_{\widetilde\gamma_1(t_1)}$ and above $\phi_{\widetilde\gamma_1(b_1)}$ if $\phi_{\widetilde\gamma_2(a_2)}$ is below $\phi_{\widetilde\gamma_1(a_1)}$.
\end{itemize}

\begin{figure}[ht!]
\hfill
\includegraphics [height=48mm]{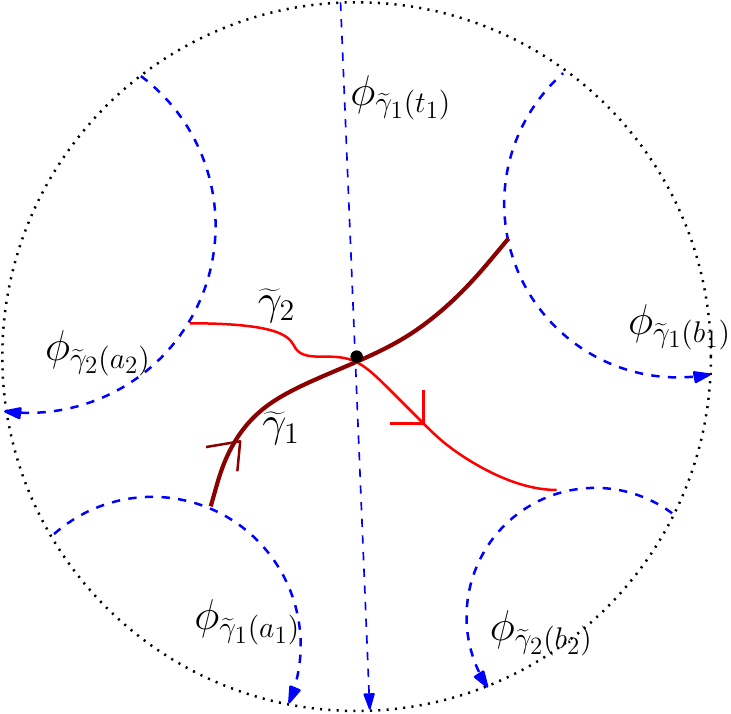}
\hfill{}
\caption{\small $\mathcal{F}$-transverse intersection. }
\label{figure1}
\end{figure}


 Roughly speaking, it means that there is a ``crossing'' in the space of leaves of $\widetilde{\mathcal F}$ (which is a one-dimensional non Hausdorff manifold). \textcolor{black}{More precisely, if $\widetilde{\beta_1}$ is a path joining $\phi_{\widetilde\gamma_1(a_1)}$ to $\phi_{\widetilde\gamma_1(b_1)}$ and $\widetilde{\beta_2}$ is a path joining $\phi_{\widetilde\gamma_2(a_2)}$ to $\phi_{\widetilde\gamma_2(b_2)}$, then they must intersect}.  If $\gamma_1=\gamma_2$ one speaks of a {\it $\mathcal F$-transverse self-intersection}. This means that if $\widetilde \gamma_1$ is a lift of $\gamma_1$, there exist a covering automorphism $T$ such that $\widetilde\gamma_1$ and $T\widetilde\gamma_1$ have a $\widetilde{\mathcal F}$-transverse intersection at  $\widetilde\gamma_1(t_1)=T\widetilde\gamma_1(t_2)$. 

The next proposition is the key result of  \cite{LeCalvezTal}. It permits to construct new admissible paths from a pair of admissible paths.

\begin{proposition}\label{pr: fundamental}
 Suppose that  $\gamma_1: [a_1,b_1]\to M$ and $\gamma_2: [a_2,b_2]\to M$ are transverse paths that intersect $\mathcal{F}$-transversally at $\gamma_1(t_1)=\gamma_2(t_2)$. If $\gamma_1$ is admissible of order $n_1$ and $\gamma_2$ is admissible of order $n_2$, then $\gamma_1\vert_{[a_1,t_1]}\gamma_2\vert_{[t_2,b_2]}$ and $\gamma_2\vert_{[a_2,t_2]}\gamma_1\vert_{[t_1,b_1]}$ are admissible of order $n_1+n_2$. Furthermore, either one of these paths is admissible of order $\min(n_1, n_2)$ or both paths are admissible of order $\max(n_1, n_2)$.
\end{proposition}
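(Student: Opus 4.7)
The plan is to work entirely in the universal cover $\widetilde{\mathrm{dom}}(I)$. Fix lifts $\widetilde\gamma_1,\widetilde\gamma_2$ of $\gamma_1,\gamma_2$ with $\widetilde\gamma_1(t_1)=\widetilde\gamma_2(t_2)=:\widetilde z$, and denote $\phi_i^a=\phi_{\widetilde\gamma_i(a_i)}$, $\phi_i^b=\phi_{\widetilde\gamma_i(b_i)}$. By admissibility, choose $\widetilde z_i\in\phi_i^a$ with $\widetilde f^{n_i}(\widetilde z_i)\in\phi_i^b$. The $\widetilde{\mathcal F}$-transverse intersection at $\widetilde z$ supplies the full separation data: each of $\phi_1^a,\phi_2^a$ lies in $L$ of the other, each of $\phi_1^b,\phi_2^b$ lies in $R$ of the other, and the relative ``above/below'' clause fixes the cyclic order of the four leaves around $\phi_{\widetilde z}$. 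The engine of the whole argument is that every leaf $\phi$ of $\widetilde{\mathcal F}$ is a Brouwer line for $\widetilde f$, so $\widetilde f(\overline{L(\phi)})\subset L(\phi)$ and $\widetilde f^{-1}(\overline{R(\phi)})\subset R(\phi)$; in particular, once an orbit is on a given side of $\phi$ it stays there under iteration in the appropriate direction.

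For the first assertion (admissibility of order $n_1+n_2$) I need $\widetilde f^{n_1+n_2}(\phi_1^a)\cap\phi_2^b\neq\emptyset$. The strategy is to compare the orbit of $\widetilde z_1$ with the ``reference orbit'' $\{\widetilde f^k(\widetilde z_2)\}_{0\le k\le n_2}$, which goes from $\phi_2^a$ to $\phi_2^b$. Because $\widetilde z_1\in\phi_1^a\subset L(\phi_2^a)$ while $\widetilde f^{n_1}(\widetilde z_1)\in\phi_1^b\subset R(\phi_2^b)$, the forward orbit of $\widetilde z_1$ must cross every leaf separating these two regions, and in particular eventually land in a region bounded by $\phi_2^a$ on the left and by $\phi_2^b$ on the right. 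The Jordan-plane topology of $\widetilde{\mathrm{dom}}(I)$, together with the Brouwer line monotonicity applied both to $\phi_2^a$ and $\phi_2^b$, then lets one splice the orbit of $\widetilde z_1$ into the reference orbit of $\widetilde z_2$ through the common leaf produced at the transverse crossing, yielding a point of $\phi_1^a$ whose $(n_1+n_2)$-th iterate lies in $\phi_2^b$. The symmetric concatenation is handled identically by swapping the roles of $1$ and $2$.

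The second, refined statement is the genuinely subtle one. I would introduce, for each side, a first-crossing time such as $k_1=\min\{k\ge 0\mid\widetilde f^k(\phi_1^a)\text{ meets }\overline{R(\phi_2^a)}\}\le n_1$, and its dual $k_2$. The precise ``above/below'' clause of the $\mathcal F$-transverse intersection together with the Brouwer-line monotonicity forces a trichotomy: either $k_1$ or $k_2$ is small enough that a single concatenation already realises the transit in $\min(n_1,n_2)$ steps, or the two crossings interleave in such a way that \emph{both} concatenated paths can be realised by orbits of length $\max(n_1,n_2)$, obtained by replacing the slower half of one reference orbit with iterates of the other. The main obstacle, and where I expect the bulk of the work to lie, is to show that the leaves reached at these first-crossing iterates really are $\phi_2^a$ and $\phi_1^a$ themselves (rather than some other leaf of $\widetilde{\mathcal F}$ producing a merely topological crossing); this is exactly what the relative order clause in the definition of $\mathcal F$-transverse intersection is designed to provide. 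This case analysis is, in essence, the content of the fundamental proposition of \cite{LeCalvezTal}, and I would follow that blueprint.
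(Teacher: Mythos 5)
First, a point of reference: the paper does not prove Proposition \ref{pr: fundamental} at all — it is recalled verbatim as ``the key result of \cite{LeCalvezTal}'' and used as a black box. So there is no in-paper argument to compare yours against; the only meaningful question is whether your proposal stands on its own. It does not: it is an outline that names the right ingredients (Brouwer-line monotonicity of the leaves of $\widetilde{\mathcal F}$, the separation data encoded in the $\mathcal F$-transverse intersection, a first-crossing analysis for the refinement) but defers every step that actually carries weight, and you acknowledge as much when you say you ``would follow that blueprint.''

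Two concrete gaps. First, for the order-$(n_1+n_2)$ claim, your mechanism is to ``splice the orbit of $\widetilde z_1$ into the reference orbit of $\widetilde z_2$ through the common leaf produced at the transverse crossing.'' But admissibility is a statement about leaves, not about a distinguished orbit: you need $\widetilde f^{\,n_1+n_2}(\phi_{\widetilde\gamma_1(a_1)})\cap\phi_{\widetilde\gamma_2(b_2)}\neq\emptyset$, and the particular point $\widetilde z_1\in\phi_1^a$ witnessing the admissibility of $\gamma_1$ has no reason to land on, or near, the orbit of $\widetilde z_2$; even if $\widetilde f^{\,n_1}(\widetilde z_1)$ lies on the leaf $\phi_{\widetilde z}$ of the crossing, it need not be the point of $\phi_{\widetilde z}$ whose $n_2$-th image reaches $\phi_2^b$, so no literal splicing is available. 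The correct mechanism (visible in the special case treated in Lemma \ref{lm:pqintersections} of this paper) works with the closed regions $\widetilde f^{\,k}(\overline{R(\phi)})$ and $\overline{L(\phi')}$: one first shows these meet, and then rules out the configuration $\widetilde f^{\,k}(R(\phi))\cup L(\phi')=\widetilde{\mathrm{dom}}(I)$ by exhibiting a set disjoint from both — and it is precisely here that the full order data of the transverse intersection (the ``above/below relative to $\phi_{\widetilde z}$'' clause) is consumed. Second, the $\min/\max$ refinement — the genuinely delicate half of the statement — is reduced in your write-up to the assertion that a trichotomy ``is forced''; no argument is given for why the interleaving of the two first-crossing times yields realizability in $\max(n_1,n_2)$ steps for \emph{both} concatenations in the second branch. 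As it stands the proposal is a plausible plan, not a proof.
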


So, the meaning of Theorem \ref{th: horseshoe} is that the existence of an admissible path of order $q$ with a self intersection implies the existence of a horseshoe. Moreover it gives a lower bound $\log 4/3q$ of the entropy.

\subsection{Rotation numbers}\label{subsect:rotnumb}
As explained in the introduction, rotation numbers can be naturally defined in an abstract annulus $A$ for a homeomorphism $f$ isotopic to the identity and a given lift to the universal covering space, as soon as a generator of $H_1(A,\Z)$ is chosen. We will state two useful propositions. In this article a {\it loop} on the sphere is a continuous map $\Gamma:\T^1\to\S^2$ \textcolor{black}{and the {\it natural lift} of a  $\Gamma$ is the path $\gamma:\R\to\S^2$ such that $\gamma(t)=\Gamma(t+\Z)$. }

\textcolor{black}{First let us define the notion of  {\it dual function} and its properties. If $\Gamma$ is a loop, one can introduce a {\it function} $\delta$, defined up to an additive constant on $\S^2\setminus\Gamma$ as follows: for every $z$ and $z'$ in $\R^2\setminus\Gamma$, the difference $\delta(z')-\delta(z)$ is the algebraic intersection number $\Gamma\wedge \gamma'$ where $\gamma'$ is any path from $z$ to $z'$.  Now, suppose that $\mathcal F$ is a singular oriented foliation on the sphere $\S^2$ and that $\Gamma$ is {\it transverse} to $\mathcal F$ (which means that its natural lift $\gamma$ is transverse to $\mathcal F$). In that case $\delta$ decreases along each leaf with  a jump at every intersection point. One proves easily that $\delta$ is bounded and that the space of leaves that meet $\Gamma$, furnished with the quotient topology is a (possibly non Hausdorff) one dimensional manifold (see the end of section 3.1 in \cite{LeCalvezTal}). In particular, there exists $n_0$ such that $\Gamma$ meets each leaf at most $n_0$ times. If $\Gamma$ is a simple loop (which means that it is injective), $\delta$ takes exactly two values and $\Gamma$ meets each leaf at most once. In that case we will write $U_{\Gamma}$ for the union of leaves that meet $\Gamma$. It is an open annulus. We can define the dual function of a multi-loop $\Gamma=\sum_{1\leq i\leq p}\Gamma_i$  and have similar results in case each $\Gamma_i$ is transverse to $\mathcal F$. } 
\medskip

\begin{proposition} \label{prop: rotation numbers}
Let $f$ be an orientation preserving homeomorphism of the sphere $\S^2$ and $(z_i)_{i\in \Z/3\Z}$ three distinct fixed points. For every $i\in \Z/3\Z$, set $A_i=\S^2\setminus\{z_{i+1},z_{i+2}\}$ and \textcolor{black}{fix  a generator $\textcolor{black}{\kappa_i}$ of $H_1(A_i,\Z)$} by taking the oriented boundary of a small closed disk containing $z_{i+1}$ in its interior. Denote $f_i$ the restriction of $f$ to $A_i$ and write $\check f_i$ for the lift of $f_i$ to the universal covering space of $A_i$ that fixes the preimages of $z_i$. Let $z\in \S^2$ be a point such that the sequence $(f^n(z))_{n\geq0}$ does not converge to $z_i$, for every $i\in\Z/3\Z$. If the three rotations numbers $\mathrm {rot}_{\check f_i}(z)$, $i\in\Z/3\Z$, are well defined then $\sum_{i\in\Z/3\Z}\mathrm {rot}_{\check f_i}(z) =0$.
\end{proposition}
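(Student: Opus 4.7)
My plan is to realize each $\mathrm{rot}_{\check f_i}(z)$ as an asymptotic algebraic intersection number of the orbit trajectory with a suitable arc in $\S^2$, and then to arrange the three arcs so that their sum forms a null-homologous triangle in $\S^2$.

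First, I would produce an identity isotopy $I=(f_t)_{t\in[0,1]}$ of $f$ on $\S^2$ such that $f_t(z_i)=z_i$ for every $t\in[0,1]$ and every $i\in\Z/3\Z$; such an isotopy exists because the orientation-preserving pure mapping class group of $\S^2$ with three marked points is trivial (equivalently, the evaluation fibration $\mathrm{Homeo}^+(\S^2;z_0,z_1,z_2)\hookrightarrow\mathrm{Homeo}^+(\S^2)\to\mathrm{Conf}_3(\S^2)$ has the property that the natural map $\pi_1(\mathrm{Homeo}^+(\S^2))=\Z/2\to\pi_1(\mathrm{Conf}_3(\S^2))=\Z/2$ is an isomorphism, sending the $2\pi$ rotation to the full twist). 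Since $I$ fixes $z_{i+1}$ and $z_{i+2}$, it restricts to an identity isotopy of $A_i$ which lifts to $\check A_i$; because the loop $I(z_i)$ is constant, the time-one map of the lifted isotopy fixes every preimage of $z_i$ and hence coincides with $\check f_i$.

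Next, I would choose three simple oriented arcs $\alpha_i\subset\S^2$ from $z_{i+1}$ to $z_{i+2}$, pairwise disjoint except at their common endpoints $z_0,z_1,z_2$, and oriented so that the algebraic intersection number of $\alpha_i$ with the chosen generator of $H_1(A_i,\Z)$ equals $+1$. In $\check A_i$, the preimages of $\alpha_i$ form a family of disjoint lines translated by the deck generator $T_i$, so the displacement of the lifted orbit in the $T_i$ direction equals $\alpha_i\wedge I^n(z)$; this gives
$$\mathrm{rot}_{\check f_i}(z)=\lim_{k\to\infty}\frac{1}{n_k}\,\alpha_i\wedge I^{n_k}(z)$$
along any subsequence $(n_k)$ for which $f^{n_k}(z)$ stays in a compact subset of $A_i$. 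The concatenation $\alpha_0\cdot\alpha_1\cdot\alpha_2$ is a topological triangle in $\S^2$, null-homologous because it bounds two open disks; letting $\chi_D$ denote the characteristic function of one of them, for any path $\gamma$ transverse to the triangle we have $\sum_i\alpha_i\wedge\gamma=\chi_D(\gamma(\mathrm{end}))-\chi_D(\gamma(\mathrm{start}))\in\{-1,0,1\}$. Applied to $\gamma=I^n(z)$, this uniform bound yields $\frac{1}{n}\sum_i\alpha_i\wedge I^n(z)\to 0$ as $n\to\infty$.

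The main remaining obstacle is to convert this uniform estimate into the identity $\sum_i\mathrm{rot}_{\check f_i}(z)=0$: the three rotation numbers are a priori realized along distinct subsequences (namely those for which $f^{n_k}(z)$ stays in a compact subset of the respective $A_i$), so one needs to find a single subsequence realizing all three simultaneously. The hypothesis that $(f^n(z))$ does not converge to any $z_i$, together with the simultaneous well-definedness of the three rotation numbers, should rule out the degenerate scenarios where the orbit clusters exclusively near two of the fixed points and force the existence of an accumulation point in $\S^2\setminus\{z_0,z_1,z_2\}$ (or at worst a diagonal extraction that stabilizes all three averages), providing the common subsequence that closes the argument.
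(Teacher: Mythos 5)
Your argument is essentially the paper's proof written in Poincar\'e-dual form. The paper also starts from an identity isotopy $I$ fixing the three points, closes each trajectory $I^{n_k}(z)$ into a loop $\Gamma_k$ by means of a small disk $D$ around a point $z'\in\omega(z)\setminus\{z_0,z_1,z_2\}$, and uses the identity $\mathrm {rot}_{\check f_i}(z)=\lim_k\bigl(\delta_k(z_{i+1})-\delta_k(z_{i+2})\bigr)/n_k$ for a dual function $\delta_k$ of $\Gamma_k$, so that the sum over $i\in\Z/3\Z$ telescopes to $0$; your intersection numbers with the arcs $\alpha_i$ compute exactly these differences, and your null-homologous triangle is that telescoping.

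The one step you leave open --- producing a single subsequence $(n_k)$ along which $f^{n_k}(z)$ stays in a compact subset of $\S^2\setminus\{z_0,z_1,z_2\}$, so that the ``for every compact set $K$'' clause in the definition realizes all three rotation numbers simultaneously --- is genuinely needed and is precisely where the hypothesis enters, so you should not leave it at ``should''. It does hold: it suffices to show $\omega(z)\not\subset\{z_0,z_1,z_2\}$, after which any point $z'$ in the difference and a small disk $D\ni z'$ avoiding the $z_i$ give the common subsequence. Suppose then $\omega(z)\subset\{z_0,z_1,z_2\}$. Since the orbit converges to no single $z_i$, $\omega(z)$ contains at least two of them, say $z_0$ and $z_1$. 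Choose disjoint closed disks $B_j\ni z_j$ for the $z_j\in\omega(z)$, none containing another $z_i$, and, by continuity at the fixed point $z_0$, an open disk $B_0'$ with $z_0\in B_0'\subset B_0$ and $f(B_0')\subset B_0$. The orbit eventually lies in the union of the $B_j$ and visits both $B_0$ and $B_1$ infinitely often, so there are infinitely many $n$ with $f^n(z)\in B_0$ and $f^{n+1}(z)\notin B_0$; for each such $n$ one has $f^n(z)\in B_0\setminus B_0'$, a compact set disjoint from $\{z_0,z_1,z_2\}$, which yields a point of $\omega(z)$ outside $\{z_0,z_1,z_2\}$, a contradiction. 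With this inserted your proof is complete. (A minor point: with the paper's convention for the generators of $H_1(A_i,\Z)$ your normalization $\alpha_i\wedge\kappa=+1$ might uniformly come out as $-1$ instead, but a global sign does not affect the vanishing of the sum.)
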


\begin{proof}\textcolor{black}{ For every $i\in\Z/3\Z$ we will denote $\check A_i$ the universal covering space of $A_i$ and $\check \pi_i:\check A_i\to A_i$ the covering projection.  We will denote $T_i$ the generator of the group of covering automorphisms, that is sent onto $\kappa_i$ by the Hurewicz morphism. We will fix a homeomorphism $h_i:A_i\to\A$ such that $(h_i)_*(\kappa_i)=\kappa_*$ and choose a lift $\check h_i:\check A_i\to\R^2$ of $h_i$.}  Fix an open topological disk $D$ that contains a point $z'\in\omega(z)$ and \textcolor{black}{whose closure does not contain any $z_i$}. Then fix a path {$\beta$} joining $z'$ to $z$ that does not contain any $z_i$. It is a classical fact that there exists an identity isotopy $I$ that fixes each $z_i$. Moreover $I\vert_{A_i}$ is lifted to an identity isotopy \textcolor{black}{$\check I_i$} of $\check f_i$.  Let $(n_k)_{k\geq 0}$ be an increasing sequence such that $f^{n_k}(z)\in D$, for every $k\geq 0$. Consider the loop $\Gamma_k$ defined by \textcolor{black}{$I^{n_k}(z)\alpha_k\beta$, where $\alpha_k$} is a path included in $D$ that joins $f^{n_k}(z)$ to $z'$. Fix a dual function $\delta_k$ of $\Gamma_k$. It is defined up to an additive constant, and its values on each $z_i$ are independent of the choice of  \textcolor{black}{$\alpha_k$. The integer $m_k = \delta_k(z_{i+1})- \delta_k(z_{i+2})$ is the algebraic intersection number between a given  path joining $z_{i+1}$ to $z_{i+2}$ and $\Gamma_k$. Consequently, if $\check z\in\check A_i$ is a given lift of $z$, then $\Gamma_k$ can be lifted to a path $\check \gamma_k:\R\to\check A_i$ such that $\check\gamma_k(0)=\check z$ and $\check \gamma_k(t+1)=T_i^{m_k}(\check\gamma_k(t))$ for every $t\in\R$.  One can write $\check \gamma_k{}\vert_{[0,1]}= \check I_i^{n_k}(\check z)\check\alpha_k\check\beta_k,$ where $\check \alpha_k$ is a lift of $\alpha_k$ and $ \check\beta_k$ a lift of $\beta$. Consequently, there exists $M>0$ such that for every $k$, it holds that $$\vert \pi_1(\check h_i\circ \check f_i^{n_k}(\check z))-\pi_1(\check h_i(\check z) )-m_k\vert \leq M.$$ }By definition of $\mathrm {rot}_{\check f_i}(z)$, one has
$$\mathrm {rot}_{\check f_i}(z) =\lim_{k\to+\infty} {m_k\over n_k} =\lim_{k\to+\infty}  {\delta_k(z_{i+1})- \delta_k(z_{i+2})\over n_k}.$$
The conclusion is immediate.
\end{proof}

The proof of the next result can be found in \cite{LeRoux}.

\begin{proposition} \label{prop: rotation numbers powers}
Let $f$ be a homeomorphism of the annulus $\A$ and $\check f$ a lift to the universal covering space. Fix two integer numbers $p\in\Z$ and $q\geq 1$.  Then, one has $\mathrm{ne}(f)= \mathrm{ne}(f^q)$. Moreover $\mathrm{rot}_{\check f^q\circ T^{p}}(z)$ is defined if and only if it is the case for $\mathrm{rot}_{\check f}(z)$ and one has $\mathrm{rot}_{\check f^q\circ T^{p}}(z)=q\mathrm{rot}_{\check f}(z)+p$.
\end{proposition}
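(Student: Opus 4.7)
The plan splits cleanly along the two claims. For $\mathrm{ne}(f)=\mathrm{ne}(f^q)$, by symmetry I only need $\mathrm{ne}^+(f)=\mathrm{ne}^+(f^q)$. The inclusion $\subset$ is immediate because $\omega_{f^q}(z)\subset\omega_f(z)$. For $\supset$, I argue by contrapositive: suppose the $f$-orbit of $z$ visits a compact $K\subset\A$ along a sequence $(n_k)$. The pigeonhole principle produces a constant residue $r\in\{0,\dots,q-1\}$ along a subsequence, so that $f^{qm_k}(z)\in f^{-r}(K)$, a compact set. Hence $\omega_{f^q}(z)\neq\emptyset$, i.e.\ $z\in\mathrm{ne}^+(f^q)$.

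For the rotation number identity, the key algebraic observation is that $T$ commutes with $\check f$ (since $\check f$ is a lift), so $(\check f^q\circ T^p)^n=\check f^{qn}\circ T^{pn}$ and consequently
$$\pi_1\bigl((\check f^q\circ T^p)^n(\check z)\bigr)-\pi_1(\check z)=\bigl(\pi_1(\check f^{qn}(\check z))-\pi_1(\check z)\bigr)+pn.$$
Assuming $\mathrm{rot}_{\check f}(z)=\rho$, for any compact $K$ and any increasing sequence $(n_k)$ with $f^{qn_k}(z)\in K$, the increasing sequence $(qn_k)$ itself satisfies $f^{qn_k}(z)\in K$, so applying the definition of $\rho$ along it yields $\pi_1(\check f^{qn_k}(\check z))-\pi_1(\check z)=qn_k\rho+o(n_k)$. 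Dividing the displayed identity by $n_k$ then gives the limit $q\rho+p$, as claimed.

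For the converse, assume $\mathrm{rot}_{\check f^q\circ T^p}(z)=\rho'$. Given a compact $K$ and an increasing sequence $(n_k)$ with $f^{n_k}(z)\in K$, I pass to a subsequence on which $n_k=qm_k+r$ for a fixed $r\in\{0,\dots,q-1\}$, so that $f^{qm_k}(z)\in f^{-r}(K)=:K'$. The definition of $\rho'$ applied along $(m_k)$ gives $\pi_1(\check f^{qm_k}(\check z))+pm_k-\pi_1(\check z)=m_k\rho'+o(m_k)$. To transfer this back to the iterate $n_k=qm_k+r$, I would observe that the displacement map $\check w\mapsto\pi_1(\check f^r(\check w))-\pi_1(\check w)$ is $T$-invariant and continuous, hence descends to a continuous function on $\A$ that is bounded on the compact set $K'$. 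This gives $\pi_1(\check f^{n_k}(\check z))-\pi_1(\check z)=\pi_1(\check f^{qm_k}(\check z))-\pi_1(\check z)+O(1)$, and dividing by $n_k=qm_k+r$ yields $\mathrm{rot}_{\check f}(z)=(\rho'-p)/q$, which is equivalent to the claimed formula.

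The only delicate point is this last transfer between the time-scales $n_k$ and $qm_k$; the natural tool is precisely the $T$-equivariance of $\check f^r$, which ensures that the first-coordinate displacement cocycle is bounded on compact sets of $\A$. Everything else reduces to careful bookkeeping with the defining sequences of the rotation numbers.
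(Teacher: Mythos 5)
Your proof is correct; the paper does not prove this proposition itself but refers to Le Roux \cite{LeRoux}, and your argument — pigeonhole on residues mod $q$ for $\mathrm{ne}(f)=\mathrm{ne}(f^q)$, the commutation $(\check f^q\circ T^p)^n=\check f^{qn}\circ T^{pn}$, and the boundedness over a compact set of the $T$-invariant displacement $\check w\mapsto \pi_1(\check f^r(\check w))-\pi_1(\check w)$ to pass between the time scales $n_k$ and $qm_k$ — is the standard one. The only step worth making explicit is that establishing the limit along each constant-residue subsequence of an arbitrary admissible sequence $(n_k)$ does suffice, because every subsequence then admits a further subsequence converging to the same value $(\rho'-p)/q$, so the full sequence converges.
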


\subsection{Birkhoff cycles and Birkhoff recurrence classes}

 Let $X$ be a Hausdorff topological space and $f:X\to X$ a homeomorphism. We recall from the introduction that, given points $z_1$ and $z_2$ in $X$, we say that there exists a Birkhoff connection from $z_1$ to $z_2$ if, for every neighborhood $W_1$ of $z_1$ in $X$ and every neighborhood $W_2$ of $z_2$, there exists $n>0$ such that $W_1\cap f^{-n}(W_2)$ is not empty. We will write $z_1\underset{f}\rightarrow z_2$ if there is a Birkhoff connection from $z_1$ to $z_2$ and will write $z\underset{f}\preceq z'$ if there exists a family $(z_i)_{0\leq i\leq p}$ such that
$$z=z_0\underset{f}\rightarrow z_1\underset{f}\rightarrow \dots \underset{f}\rightarrow z_{p-1}\underset{f}\rightarrow z_p=z'.$$

This same concept appears for instance in \cite{Arnaud} or in \cite{crovisier_birkhoff}, where it is said that $z_2$ is a weak iterate of $z_1$. We have defined in the introduction the concepts of Birkhoff cycles, Birkhoff recurrent points and Birkhoff recurrent classes for $f$. Note that in \cite{Arnaud} a Birkhoff cycle was named an $(\alpha,\omega)$ closed chain, and Birkhoff recurrent points were named $(\alpha, \omega)$-recurrent. Note that the set of Birkhoff recurrent points of $f$ coincides with the set of Birkhoff recurrent points of $f^{-1}$ and that the Birkhoff recurrent classes are the same for $f$ and $f^{-1}$. Let us state some other simple additional properties that will be used frequently in the remainder of the article.   
\begin{proposition} \label{prop: birkhoff connexions} Let $X$ be a Hausdorff topological space and $f$ a homeomorphism of $X$.
\begin{enumerate}
\item If there exists a Birkhoff connection from $z_1$ to $z_2$, then there exists a Birkhoff connection from $z_1$ to $f(z_2)$ and, if $z_2$ is not $f(z_1)$, then there exists a Birkhoff connection from $f(z_1)$ to $z_2$.  Furthermore, if $z_2\notin \{ f^n(z_1), n\geq1\}$ , then there exists a Birkhoff connection from any point $z\in\omega(z_1)$ to $z_2$.

\item
Let $\mathcal{B}$ be a nonempty Birkhoff recurrence class for $f$. Then $\mathcal{B}$ is invariant and, for all $z\in\mathcal{B}$ the closure of the orbit of $z$ is included in $\mathcal{B}$. 

\item If $f$ has a Birkhoff connection from $z_1$ to $z_2$, then given $q\ge 1$, there exists some $0\le j < q$ such that $f^q$ has a Birkhoff connection from $z_1$ to $f^j(z_2)$. 

\item The set of Birkhoff recurrent points of $f$ is equal to the set of Birkhoff recurrent points  of $f^q$ for every $q\geq 2$. Moreover, if $z_1$ and $z_2$ are in the same Birkhoff recurrence class of $f$, then given $q\ge 2$, there exists some $0\le j < q$ such that $z_1$ and $f^j(z_2)$ are in the same Birkhoff recurrence class of $f^q$. More precisely, if $(z_i)_{i\in \Z/r\Z}$ is a Birkhoff circle of $f$, there exist $p\geq 1$ and a Birkhoff circle $(z'_j)_{j\in \Z/pr\Z}$ of $f^q$ such that $z'_j$ is in the $f$-orbit of $z_i$ if $j\equiv i \pmod r$.

\end{enumerate}
\end{proposition}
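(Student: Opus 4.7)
Proof plan. Assertions (1) and (2) come down to direct manipulation of the definition of Birkhoff connection. For the first half of (1), I would set $W_2:=f^{-1}(W')$ where $W'$ is a neighbourhood of $f(z_2)$, and observe that any $n\geq 1$ witnessing $W_1\cap f^{-n}(W_2)\neq\emptyset$ is equivalent to $W_1\cap f^{-(n+1)}(W')\neq\emptyset$. The second half is symmetric but crucially uses the hypothesis $f(z_1)\neq z_2$ and Hausdorffness: shrink the test neighbourhoods $W_1'$ of $f(z_1)$ and $W_2$ of $z_2$ so that they are disjoint; then any witness $m$ for $z_1\to z_2$ (transported via $f$) must satisfy $m\geq 2$, so that $n:=m-1\geq 1$ is valid. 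The third half then follows by iterating the second to obtain $f^k(z_1)\to z_2$ for every $k\geq 0$ (the forward-orbit hypothesis is exactly what each iteration needs), and observing that any neighbourhood of $z\in\omega(z_1)$ contains some $f^k(z_1)$, so the connection $f^k(z_1)\to z_2$ applied to that very neighbourhood gives $z\to z_2$.

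For (2), invariance of $\mathcal{B}$ is immediate from (1): given a cycle $z\to z_1\to\cdots\to z_{r-1}\to z$ through $z$, part (1) supplies $z_{r-1}\to f(z)$ and (if $f(z)\neq z_1$) $f(z)\to z_1$, so $(f(z),z_1,\ldots,z_{r-1},z)$ is a Birkhoff cycle containing both $z$ and $f(z)$. The orbit-closure statement will follow once I show that $\mathcal{B}$ is topologically closed, since orbit closure is the closure of an invariant set. To show closedness, take $z_n\in\mathcal{B}$ converging to $z$ and fix a reference $z^*\in\mathcal{B}$; for any neighbourhoods of $z$ and $z^*$, I would pick $n$ large enough that $z_n$ lies in the first, then use the connection $z_n\to z^*$ (drawn from a cycle equivalencing $z_n$ and $z^*$) to yield $z\to z^*$, and symmetrically $z^*\to z$, making $(z,z^*)$ a Birkhoff $2$-cycle.

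For (3), the plan is pigeonhole on residues modulo $q$ by contradiction. Suppose no $j\in\{0,\ldots,q-1\}$ gave the required $f^q$-connection; then for each $j$ I would pick witness neighbourhoods $U_j$ of $z_1$ and $V_j$ of $f^j(z_2)$ with $U_j\cap(f^q)^{-n}(V_j)=\emptyset$ for all $n\geq 1$, and set $U:=\bigcap_j U_j$, $V:=\bigcap_j f^{-j}(V_j)$. The hypothesis $z_1\to_f z_2$ supplies some $m\geq 1$ and $y\in U$ with $f^m(y)\in V$, so $f^{m+j}(y)\in V_j$ for each $j$; choosing $j_0\in\{0,\ldots,q-1\}$ with $m+j_0=qn$ then forces $n\geq 1$ (since $m\geq 1$ and $j_0\leq q-1$), contradicting the witness at $j_0$.

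For (4), one direction is immediate since any $f^q$-Birkhoff connection is an $f$-Birkhoff connection. For the converse and the explicit cycle, I would apply (3) to each $f$-connection $z_i\to_f z_{i+1}$ of the given cycle to obtain $j_i\in\{0,\ldots,q-1\}$; since Birkhoff connections are preserved under conjugation by $f$, this extends to $f^k(z_i)\to_{f^q} f^{k+j_i}(z_{i+1})$ for every $k\in\mathbb{Z}$. Chaining starting at $z_0$ and traversing the cycle $p$ times produces an $f^q$-path of length $pr$ ending at $f^{pJ}(z_0)$, where $J:=\sum_i j_i$. The hard part will be closing this path into a genuine $f^q$-Birkhoff cycle: my plan is to choose $p$ so that $pJ$ is a positive multiple of $q$ (e.g.\ $p=q/\gcd(J,q)$), making the endpoint $(f^q)^{pJ/q}(z_0)$ lie in the $f^q$-orbit of $z_0$, and then invoke part (2) applied to $f^q$ to place $z_0$ in the same $f^q$-Birkhoff class as the endpoint. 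The "more precisely" claim drops out of this construction with $z'_j$ the elements of the chain, and the set-equality of Birkhoff recurrent points then follows by specializing to a cycle through the given recurrent point.
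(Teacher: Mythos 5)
Your treatments of (1) and (3) are correct and essentially the paper's own arguments (direct manipulation of neighborhoods for (1), and the pigeonhole/contraposition on residues mod $q$ for (3)), and the invariance half of (2) is fine. There are, however, two genuine gaps.

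In (2), you reduce the orbit-closure statement to the claim that a Birkhoff recurrence class $\mathcal B$ is topologically closed. That claim is false: the paper exhibits, in the remark immediately following this proposition, a homeomorphism of a Hausdorff space for which all the points $(1/n,0)$ lie in one class while their limit $(0,0)$ forms a class by itself. Your closedness argument breaks exactly where you invoke ``the connection $z_n\to z^*$'': belonging to the same class only provides a \emph{chain} of Birkhoff connections $z_n\to w_1^{(n)}\to\cdots\to z^*$ whose intermediate points depend on $n$, not a single direct connection, so its first link cannot be transferred to the limit $z$ (the target $w_1^{(n)}$ changes each time you shrink the neighborhood of $z$ and re-choose $n$). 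The correct route is the one the paper takes: pick the first index $i_0$ in a cycle through $z$ with $z_{i_0+1}\notin\{f^n(z_{i_0}),\ n\geq 1\}$; then $z_{i_0}$ is a forward iterate of $z$, so $z_{i_0}\to z'$ holds trivially for any $z'\in\omega(z)$, and the third assertion of (1) gives $z'\to z_{i_0+1}$, so $z'$ can be spliced into the cycle.

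In (4), your chain only accumulates nonnegative displacements $j_i\in\{0,\dots,q-1\}$, so after $p$ traversals you reach $f^{pJ}(z_0)$ with $pJ\geq 0$, and arranging $q\mid pJ$ merely makes the endpoint a \emph{forward} $f^q$-iterate of $z_0$. Invoking part (2) to identify $z_0$ with that endpoint is circular: (2) applies to points already known to be Birkhoff recurrent for $f^q$, which is precisely what is being proved, and the relation $z\preceq_{f^q}(f^q)^N(z)$ by itself never implies recurrence (it holds for every point of a translation of $\R$, since $z\to f(z)$ is always a Birkhoff connection). To close the cycle one needs a second chain with \emph{negative} total displacement. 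The paper obtains it by choosing a link $z_{i_0}\to z_{i_0+1}$ whose target is not in the strict forward orbit of its source (possible when $z$ is not periodic, the periodic case being trivial), using (1) to replace that target by $f^{-M}(z_{i_0+1})$ for a large $M$, and then concatenating appropriate numbers of the positive and negative chains so that the displacements cancel. Without this step your construction, and hence also the ``more precisely'' clause and the equality of the sets of Birkhoff recurrent points, remains incomplete.
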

\begin{proof} Suppose that $f$ has a Birkhoff connection from $z_1$ to $z_2$. If $W_1$ is a neighborhood of $z_1$ and $W_2$ is a neighborhood of $f(z_2)$, then $f^{-1}(W_2)$ is a neighborhood of $z_2$ and so, there exists $n>0$ such that $W_1\cap f^{-n}(f^{-1}(W_2))=W_1\cap f^{-n-1}(W_2)$ is not empty. So, there is a Birkhoff connection from $z_1$ to \textcolor{black}{$f(z_2)$}. Suppose now that $f(z_1)\not=z_2$.  If $W_1$ is a neighborhood of $f(z_1)$, $W_2$ is a neighborhood of $z_2$, and $W_1\cap W_2=\emptyset$, then there exists $n>0$ such that $f^{-1}(W_1)\cap f^{-n}(W_2)\not=\emptyset$ because $f^{-1}(W_1)$ is a neighborhood of $z_1$, and so $W_1\cap f^{-n+1}(W_2)\not=\emptyset$. Note that $n\not=1$, which implies that there is a Birkhoff connection from $f(z_1)$ to $z_2$.  Finally, suppose that $z_2\notin \{ f^n(z_1), n\geq1\}$ and that $z$ belongs to $\omega(z_1)$. Let $W_1$ be a neighborhood of $z$ and $W_2$ a neighborhood of $z_2$. There exists $n_1\geq 0$ such that $f^{n_1}(z)\in W_1$. As explained above, there  is a Birkhoff connection from $f^{n_1}(z_1)$ to $z_2$, and so, there exists $n>0$ such that $W_1\cap f^{-n}(W_2) \not=\emptyset$. The assertion (1) is proven.
 
Let us prove (2). Of course, the set of Birkhoff recurrent points is invariant by $f$. Moreover (2) is obviously true if $z$ is a periodic point. Suppose now that $z$ is a Birkhoff recurrent point that is not periodic and let us prove that $z$ and $f(z)$  are in the same Birkhoff recurrence class. There exists a sequence $(z_i)_{0\leq i\leq p}$ such that 
  $$z=z_0\underset{f}\rightarrow z_1\underset{f}\rightarrow \dots \underset{f}\rightarrow z_{p-1}\underset{f}\rightarrow z_p=z.$$ As $z$ is not periodic, there exists $i_0<p$ such that $z_{i_0+1}\not=f(z_{i_0})$. So, using (1), one deduces that
  $$z\underset{f}\rightarrow f(z)\underset{f}\rightarrow f(z_1)\underset{f}\rightarrow \dots \underset{f}\rightarrow f(z_{i_0})\underset{f}\rightarrow z_{i_0+1}\underset{f}\rightarrow \dots \underset{f}\rightarrow z_{p-1}\underset{f}\rightarrow z_p=z,$$ 
which implies that $z$ and $f(z)$ are in the same Birkhoff recurrence class.
Let us prove now that $\omega(z)$ is included in the same class by fixing $z'\in\omega(z)$. Let $i_0$ be the smallest integer such that $z_{i_0+1}$ is not in the strict positive orbit of $z_{i_0}$. Using (1), one deduces that
  $$z=z_0\underset{f}\rightarrow z_1\underset{f}\rightarrow\dots \underset{f}\rightarrow z_{i_0} \underset{f}\rightarrow z'\underset{f}\rightarrow z_{i_0+1}\underset{f}\rightarrow \dots \underset{f}\rightarrow z_{p-1}\underset{f}\rightarrow z_{p}=z,$$ 
which implies that $z$ and $z'$ are in the same Birkhoff recurrence class. We prove in a similar way that $\alpha(z)$ is included in the Birkhoff recurrence class of $z$.

 Let us prove (3) by contraposition. Suppose that for every $0\leq j<q$, there exists a neighborhood $W_1^j$ of $z_1$ and a neighborhood $W_2^j$ of $f^j(z_2)$ such that $W_1^j\cap f^{-qs}(W_2^{j})=\emptyset$ for every $s>0$. The set $W_1=\bigcap_{0\leq j<q} W_1^j$ is a neighborhood of $z_1$ and the set $W_2=\bigcap_{0\leq j<q} f^{-j}(W_2^j)$ a neighborhood of $z_2$. Note that 
 $$W_1\cap f^{-qs}(W_2)\subset W^0_1\cap f^{-qs}(W^0_2)=\emptyset,$$ for every $s>0$ and that 
 $$W_1\cap f^{-qs-r}(W_2)\subset W_1^{q-r}\cap f^{-q(s+1)} (W_2^{q-r})=\emptyset,$$ for every $s\geq 0$ and every $0<r<q$ .  Consequently, $W_1\cap f^{-n}(W_2)=\emptyset$ for every $n>0$.
 
Let us prove (4). The fact that a Birkhoff recurrent point of $f^q$ is a Birkhoff recurrent point of $f$ is obvious. Let us prove the converse. Suppose that $z$ is a Birkhoff recurrent point of $f$ and fix $q\geq 2$. If $z$ is a periodic point of $f$, of course it is a Birkhoff recurrent point of $f^q$. Suppose now that $z$ is not periodic and consider a sequence $(z_i)_{0\leq i\leq p}$ such that 
  $$z=z_0\underset{f}\rightarrow z_1\underset{f}\rightarrow \dots \underset{f}\rightarrow z_{p-1}\underset{f}\rightarrow z_p=z.$$
  Applying (3), one deduces that there exists a sequence $(s_i)_{1\leq i\leq p}$, such that $0\leq s_{i+1}-s_i<q$ if $i<p$ and 
  $$z=z_0\underset{f^q}\rightarrow f^{s_1}(z_1)\underset{f^q}\rightarrow \dots \underset{f^q}\rightarrow f^{s_{p-1}}(z_{p-1})\underset{f^q}\rightarrow f^{s_{p}}(z_p)=f^{s_{p}}(z),$$which implies that $z\underset{f^q}\preceq f^{s_{p}}(z)$, where $0\leq s_q\leq p(q-1)$.
Let $i_0<p$ be an integer such that $z_{i_0+1}$ is not in the strict positive orbit of $z_{i_0}$. Using (1), one knows that
  $$z=z_0\underset{f}\rightarrow z_1\underset{f}\rightarrow\dots \underset{f}\rightarrow z_{i_0} \underset{f}\rightarrow f^{-pq}(z_{i_0+1})\underset{f}\rightarrow \dots \underset{f}\rightarrow f^{-pq}(z_{p-1})\underset{f}\rightarrow f^{-pq}(z_{p})=f^{-pq}(z).$$ Applying (3) again as above, one deduces that there exists $0\leq s'_q\leq p(q-1)$ such that
  $z\underset{f^q}\preceq f^{s'_{p}-pq}(z)$.
  Note know that 
 \begin{equation}\label{eq:birkhoffcycleiterate}
 z\underset{f^q}\preceq f^{s_{p}}(z)\underset{f^q}\preceq f^{2s_{p}}(z)\underset{f^q}\preceq \dots \underset{f^q}\preceq f^{(pq-s'_q)s_{p}}(z)\underset{f^q}\preceq f^{(pq-s'_q)(s_{p}-1)}(z)\underset{f^q}\preceq\dots \underset{f^q}\preceq z,\tag{*}
 \end{equation}
  which implies that $z$ is a Birkhoff recurrent point of $f^q$. 
  
Suppose now that $z$ and $z'$ are two Birkhoff recurrent points of $f$ in the same Birkhoff recurrence class. We want to prove that $z$ and $f^j(z')$ are in the same Birkhoff recurrence class of $f^q$ for some $0\le j < q$. Using the fact that the Birkhoff recurrence class of $f^q$ are invariant by $f^q$, it is sufficient to prove that $z$ and $f^j(z')$ are in the same Birkhoff recurrence class of $f^q$ for some $j\in\Z$. Of course, one can suppose that $z$ and $z'$ are not on the same orbit of $f$. There exists a sequence $(z_i)_{0\leq i\leq p}$ such that 
  $$z=z_0\underset{f}\rightarrow z_1\underset{f}\rightarrow \dots \underset{f}\rightarrow z_{p-1}\underset{f}\rightarrow z_p=z$$
  and such that $z'$ is equal to a certain $z_i$. We have seen that there exists a sequence $(s_i)_{1\leq i\leq p}$, such that $0\leq s_{i+1}-s_i<q$ if $i<p$ and 
  $$z=z_0\underset{f^q}\rightarrow f^{s_1}(z_1)\underset{f^q}\rightarrow \dots \underset{f^q}\rightarrow f^{s_{p-1}}(z_{p-1})\underset{f^q}\rightarrow f^{s_{p}}(z_p)=f^{s_{p}}(z).$$ Moreover we have seen that $z$ and $f^{s_{p}}(z)$ are in the same Birkhoff recurrence class of $f^q$ (the argument above works even if $z$ is periodic). So $z$ is in the same Birkhoff recurrence class of $f^q$ that a point in the orbit of $z'$. Finally, one observes that from (\ref{eq:birkhoffcycleiterate}) one obtains a Birkhoff cycle for $f^q$ with the properties stated in the last assertion of (4).  
  \end{proof}

We can note that, in general,  the class itself does not need to be closed. For instance, let $X$ be the space obtained by identifying, in $[0,1]\times\{0,1\}$, the points $(x,0)$ and $(x,1)$ if $x\in F=\{0\}\cup\{1/n, n\geq 1\}$. Let $T_1:[0,1]\to [0,1]$ be a homeomorphism such that $T_1(x)=x$  if $x\in F$ and $T_1(x)>x$ otherwise. Let $T:X\to X$ be such that $T(x,0)=(T_1(x),0)$ and $T(x,1)= (T_1^{-1}(x), 1)$. Note that, for every $n\geq 1$, $(1/n,0)=(1/n,1)$ and $(1/(n+1),0)=(1/(n+1),1)$ belong to the same Birkhoff class. Note also that the Birkhoff class of $(0,0)$ is reduced to this point.

The set of Birkhoff recurrent points is not necessarily closed. For instance, let $(h_n)_{n\geq 1}$ be a sequence of orientation preserving homeomorphisms of $\T^1=\R/\Z$ such that $h_n$ has exactly $n$ fixed points and has a lift $\check h_n:\R\to\R$ having fixed points and satisfying $\check h_n(x)\ge x$ for all $x\in\R$. Assume furthermore that $h_n$ converges uniformly to a homeomorphism $h_0$ that is not the identity, but has infinitely many fixed points. Consider the homeomorphism $T$ of  $X=\T^1\times F$  defined as a $T(x, 1/n)= (h_n(x), 1/n)$ in case $n>0$ and $T(x,0)=(h_0(x), 0)$. Every point $(x, y)$ with $y\not=0$,  is a Birkhoff recurrent point, but 
$(x,0)$ is a Birkhoff recurrent point if and only if $x$ is fixed by $h_0$.

We will conclude this section by studying what happens, when looking at the iterates.
\begin{proposition} \label{prop: birkhoff classes iterate} Let $X$ be a Hausdorff topological space and $f$ a homeomorphism of $X$. We suppose that $\mathcal B$ is a Birkhoff recurrence class of $f$ and fix $q\geq 2$. Then, there exist $q'\in\{1,\dots, q\}$ dividing $q$ and a family of pairwise disjoint sets $({\mathcal B}'_i)_{i\in\Z/q'\Z}$ such that:
\begin{itemize}
\item ${\mathcal B}'_i$ is a Birkhoff recurrence class of $f^q$ for every $i\in\Z/q'\Z$;
\item $\mathcal B=\bigsqcup_{i\in\Z/q'\Z} {\mathcal B}'_i$;
\item $f({\mathcal B}'_i)={\mathcal B}'_{i+1}$, for every $i\in\Z/q'\Z$.
\end{itemize}
Moreover, if the closure of the $f$-orbit of $z_*\in{\mathcal B}$ is compact, each set $\overline{ O_f(z_*)}\cap \mathcal B'_i$ is compact and is the closure of the $f^{q'}$-orbit of a point of the orbit of $z_*$. In addition, there exists a family of pairwise disjoint open sets $(U_i)_{i\in\Z/q'\Z}$ such that:
\begin{itemize}
\item  $\overline{ O_f(z_*)}\cap \mathcal B'_i\subset U_i$ for every $i\in\Z/q'\Z$;
\item $f(U_i)=U_{i+1}$, for every $i\in\Z/q'\Z$.
\end{itemize}
\end{proposition}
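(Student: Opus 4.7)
My plan is to build the cyclic partition $\mathcal B = \bigsqcup \mathcal B'_i$ from Proposition 2.5(4) together with the observation that $f$ preserves the $f^q$-Birkhoff relation, and then, in the compact-orbit-closure setting, identify the pieces $\overline{O_f(z_*)} \cap \mathcal B'_i$ as $f^{q'}$-orbit closures and construct the equivariant open neighborhoods via a characteristic-function argument.

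For the partition, I would fix $z\in\mathcal B$ and let $\mathcal B'$ denote its $f^q$-Birkhoff recurrence class. Because conjugation by $f$ sends $f^q$-Birkhoff connections to $f^q$-Birkhoff connections, each $f^j(\mathcal B')$ is also an $f^q$-Birkhoff class, and two such classes are either equal or disjoint. Let $q'$ be the smallest positive integer with $f^{q'}(\mathcal B')=\mathcal B'$; since $\mathcal B'$ is $f^q$-invariant as an $f^q$-Birkhoff class, $q'$ divides $q$. Set $\mathcal B'_i=f^i(\mathcal B')$ for $i\in\Z/q'\Z$: these are pairwise disjoint $f^q$-Birkhoff classes cyclically permuted by $f$. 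Applying Proposition 2.5(4) to $z$ and any $z'\in\mathcal B$ produces $0\le j<q$ with $f^j(z')$ in the $f^q$-class of $z$, so $z'\in\mathcal B'_{-j}$; hence $\mathcal B=\bigsqcup_i \mathcal B'_i$, which settles the first three bullet points.

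Next, suppose $Y:=\overline{O_f(z_*)}$ is compact and, after a cyclic relabeling, $z_*\in\mathcal B'_0$. I claim $K_i:=Y\cap \mathcal B'_i = \overline{O_{f^{q'}}(f^i(z_*))}$. For $\supseteq$, I would decompose $O_{f^{q'}}(f^i(z_*)) = \bigcup_{\ell=0}^{q/q'-1} O_{f^q}(f^{i+\ell q'}(z_*))$; each seed lies in $\mathcal B'_i$, so Proposition 2.5(2) applied to $f^q$ traps each $f^q$-orbit closure inside $\mathcal B'_i$, and a finite union of closures gives $\overline{O_{f^{q'}}(f^i(z_*))}\subset \mathcal B'_i$; it is clearly contained in $Y$. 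For $\subseteq$, any $z\in Y\cap \mathcal B'_i$ is the limit of some $f^{n_k}(z_*)$; writing $n_k=j_k+m_kq'$ with $0\le j_k<q'$ and passing to a constant subsequence $j_k\equiv j$, I get $z\in\overline{O_{f^{q'}}(f^j(z_*))}\subset\mathcal B'_j$, so disjointness forces $j=i$. Each $K_i$ is therefore closed in the compact $Y$, hence compact; because the $K_i$ partition $Y$ into finitely many pairwise disjoint compact pieces, each is clopen in $Y$.

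Finally, to produce the $U_i$, I would apply Urysohn's lemma inside the compact Hausdorff space $Y$ to obtain a continuous $\chi^Y:Y\to[0,1]$ equal to $1$ on $K_0$ and $0$ on the remaining $K_j$, extend it to a continuous $\tilde\chi:X\to[0,1]$ via Tietze (available in the paper's normal/metric setting), define $\tilde\chi_i:=\tilde\chi\circ f^{-i}$ so that $\tilde\chi_i\vert_Y$ is the indicator function of $K_i$, and set
$$U_i=\{z\in X : \tilde\chi_i(z)>\tilde\chi_j(z)\text{ for every } j\in\Z/q'\Z\setminus\{i\}\}.$$
These sets are open (finite intersection of strict inequalities), pairwise disjoint (only one index can strictly dominate at any point), contain $K_i$ (where $\tilde\chi_i=1$ and all other $\tilde\chi_j$ vanish), and $f(U_i)=U_{i+1}$ follows tautologically from $\tilde\chi_{i+1}=\tilde\chi_i\circ f^{-1}$. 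The combinatorial decomposition in the earlier paragraphs is a clean application of Proposition 2.5; the genuine technical delicacy is in this final step, where producing open neighborhoods with \emph{exact} cyclic equivariance requires extending a continuous function from the compact invariant set $Y$ to $X$, and this is the one place where a separation property beyond bare Hausdorffness is invoked (automatic in all the paper's applications, where $X$ is a surface).
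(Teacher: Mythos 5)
Your construction of the partition $(\mathcal B'_i)_{i\in\Z/q'\Z}$ and your identification of $\overline{O_f(z_*)}\cap\mathcal B'_i$ with $\overline{O_{f^{q'}}(f^i(z_*))}$ are correct and essentially the same as the paper's (one cosmetic remark: in a bare Hausdorff space you should not extract convergent subsequences; instead use that $\overline{O_f(z_*)}=\bigcup_{0\le j<q'}\overline{O_{f^{q'}}(f^j(z_*))}$ because a finite union of sets has closure equal to the union of the closures). The genuine gap is in the final step. Your functions $\tilde\chi_i=\tilde\chi\circ f^{-i}$ are indexed by $i\in\Z$, not by $\Z/q'\Z$: well-definedness modulo $q'$ would require $\tilde\chi\circ f^{-q'}=\tilde\chi$ on all of $X$, and a Tietze extension of $\chi^Y$ has no reason to be $f^{q'}$-invariant off $Y$. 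Concretely, if you take representatives $i\in\{0,\dots,q'-1\}$, the identity $\tilde\chi_{i+1}=\tilde\chi_i\circ f^{-1}$ fails at the wrap-around $i=q'-1$, so $f(U_{q'-1})=U_0$ does not follow and is false in general: membership of $f(z)$ in $U_0$ compares $\tilde\chi(f(z))$ with the other values, whereas membership of $z$ in $U_{q'-1}$ compares $\tilde\chi(f^{-(q'-1)}(z))$ with them, and these agree only where $\tilde\chi\circ f^{q'}=\tilde\chi$. A secondary problem is that Tietze requires normality of $X$, which is not among the hypotheses (the proposition is stated for an arbitrary Hausdorff space), so even a corrected version of this route proves less than what is claimed.

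The paper avoids both issues by working directly with the definition of Birkhoff connections. Since $z\underset{f^q}\preceq f^r(z')$ fails for $z,z'\in\mathcal B'_0$ and $r\notin q'\Z$, the very definition of the absence of a Birkhoff connection (together with Hausdorff separation for the $k=0$ term) yields open neighborhoods $V_{z,z'}\ni z$ and $W_{z,z'}\ni z'$ with $f^{k}(V_{z,z'})\cap W_{z,z'}=\emptyset$ for all $k\notin q'\Z$; two compactness arguments over $K_0=\overline{O_f(z_*)}\cap\mathcal B'_0$ then produce open sets $V,W\supset K_0$ with $f^{k}(V)\cap W=\emptyset$ whenever $k\notin q'\Z$, and one sets $U_i=\bigcup_{k\equiv i\ (\mathrm{mod}\ q')}f^k(V\cap W)$. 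This union is tautologically equivariant ($f(U_i)=U_{i+1}$ for all $i\in\Z/q'\Z$, including the wrap-around), and the disjointness of the $U_i$ is exactly the condition $f^{k}(V)\cap W=\emptyset$ for $k\notin q'\Z$. If you want to salvage your approach, you must build your separating open sets so that the infinite family of conditions $f^k(V)\cap W=\emptyset$, $k\notin q'\Z$, holds \emph{before} taking the equivariant union; a single Urysohn function cannot encode this.
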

\begin{proof} We have seen that every Birkhoff recurrent point of $f$ is a Birkhoff recurrent point of $f^q$ and that every Birkhoff recurrence class of $f$ is invariant by $f$. Fix $z\in \mathcal B$ and denote ${\mathcal B}'_k$, the Birkhoff recurrence class of $f^k(z)$ for $f^q$. Of course $f( {\mathcal B}'_k)= {\mathcal B}'_{k+1}$ for every $k\in\Z$. Moreover one has $ {\mathcal B}'_q= {\mathcal B}'_{0}$. We deduce that there exists a divisor $q'$ of $q$ such that the sequence $({\mathcal B}'_k)_{k\in\Z}$ is $q'$-periodic. We can index it with $i\in\Z/q'\Z$ to get our family $({\mathcal B}'_i)_{i\in\Z/q'\Z}$. The three first items are satisfied. 
Note also that we cannot have $z\underset{f^q}\preceq f^r(z')$ if $z$ and $z'$ belong to  ${\mathcal B}'_{0}$ and $r\not\in q'\Z$. Indeed, we would have
$$z\underset{f^q}\preceq f^r(z')\underset{f^q}\preceq f^{r}(z)\underset{f^q}\preceq f^{2r}(z')\underset{f^q}\preceq f^{2r}(z)\underset{f^q}\preceq\dots \underset{f^q}\preceq f^{q'r}(z')\underset{f^q}\preceq z,$$
contradicting the fact that $z$ and $f^r(z')$ are not in the same Birkhoff recurrence class of $f^q$. In particular there is no Birkhoff connection of $f^q$ from $z$ to $f^r(z')$ and no Birkhoff connection of $f^q$ from \textcolor{black}{$f^r(z')$ to $z$}.  One deduces that there exist an open neighborhood $V^r_{z,z'}$ of $z$ and an open neighborhood $W^r_{z,z'}$ of $z'$ such that $f^{qk}(V^r_{z,z'})\cap f^r(W^r_{z,z'})=\emptyset$, for every $k\in\Z$. Setting $$V_{z,z'}=\bigcap_{0<r<q, q'\nmid r}V^r_{z,z'}\,,\enskip W_{z,z'}=\bigcap_{0<r<q, q'\nmid r}W^r_{z,z'},$$ one gets an open neighborhood 
$V_{z,z'}$ of $z$ and an open neighborhood $W_{z,z'}$ of $z'$ such that $$k\not\in q'\Z \rightarrow f^{k}(V_{z,z'})\cap W_{z,z'}=\emptyset.$$ 
Suppose that the closure of the $f$-orbit of $z_*\in \mathcal B$ is compact. One can suppose that $z_*$ belongs to ${\mathcal B}'_0$. Write $O_f(z_*)=\bigcup_{k\in\Z} O_{f^q} (f^k(z_*))$. The family $\left(O_{f^q} (f^k(z_*))\right)_{k\in\Z}$ is $q$-periodic and $O_{f^q} (f^k(z_*))\subset {\mathcal B}'_i$, if $k+q'\Z=i$. 
As explained in Proposition \ref{prop: birkhoff connexions}, the closure of $O_{f^q} (f^k(z_*))$ is included in the Birkhoff recurrence class of $f^k(z_*)$, which means in ${\mathcal B}'_i$, if $k+q'\Z=i$.  Moreover $\overline {O_{f^q}(f^k(z_*))}$ is compact, being a closed subset of $\overline{O_f(z_*)}$. So, one gets the following decomposition in disjoint compact sets:
$$\overline{O_f(z_*)}=\bigsqcup_{i\in \Z/q'\Z} \overline{O_f(z_*)}\cap {\mathcal B}'_i=\bigsqcup_{i\in \Z/q'\Z}\left( \bigcup_{k\in\Z,\, k+q'\Z=i} \overline {O_{f^q}(f^k(z_*))}\right).$$
By compactness of $ \overline{O_f(z_*)}\cap {\mathcal B}'_0$, for every point $z\in  \overline{O_f(z_*)}\cap {\mathcal B}'_0$ one can find a finite subset $I'_z$ of $\overline{O_f(z_*)}\cap {\mathcal B}'_0$ such that $ \overline{O_f(z_*)}\cap {\mathcal B}'_0\subset\bigcup_{z'\in I'_z} W_{z,z'}$. The set $V_z=\bigcap _{z'\in I'_z} V_{z,z'}$ is an open neighborhood of $z$, the set $W_z=\bigcup _{z'\in I'_z} W_{z,z'}$ an open neighborhood of $\overline{O_f(z_*)}\cap {\mathcal B}'_0$ and one knows that  
 $$k\not\in q'\Z \rightarrow f^{k}(V_{z})\cap W_{z}=\emptyset.$$
 Using again the compactness of $ \overline{O_f(z_*)}\cap {\mathcal B}'_0$, one can find a finite subset $I$ of $\overline{O_f(z_*)}\cap {\mathcal B}'_0$ such that $ \overline{O_f(z_*)}\cap {\mathcal B}'_i\subset\bigcup_{z\in I} V_{z}$. The sets $V=\bigcup _{z\in I} V_{z}$ and $W=\bigcap _{z\in I} W_{z}$ are open neighborhoods of $\overline{O_f(z_*)}\cap {\mathcal B}'_0$ and one knows that  
 $$k\not\in q'\Z \rightarrow f^{k}(V)\cap W=\emptyset.$$ 
For every $i\in\Z/q'\Z$, define $$U_i=\bigcup_{k\in\Z, \, k+q'\Z=i} f^k(V\cap W).$$The family $(U_i)_{i\in\Z/q'\Z}$ satisfies the two last items.
\end{proof}

\begin{remarks*} Suppose that the \textcolor{black}{closure of the} orbit of $z_*\in \mathcal B$ is compact. Set $U=\bigcup_{i\in\Z/q'\Z} U_i$. The map $f$ acts naturally as a permutation on the set of connected components of $U$ that meets $\overline {O_f(z_*)}$ and this action is transitive. Moreover this set is finite because $\overline {O_f(z_*)}$ is compact. So, this permutation has a unique cycle whose length is a multiple of $q'$ because the $U_i$ are pairwise disjoint and $f(U_i)=U_{i+1}$. In other terms, there exists an integer $r\geq 1$ and a family $(U'_j)_{j\in \Z/ rq'\Z}$ of distinct connected components of $U$ that cover  $\overline {O_f(z_*)}$ and satisfy $f(U'_j)=U'_{j+1}$.

There are two kinds of Birkhoff recurrent points: the points such that there exists an integer $M$ that bounds, for every $q$, the integer $q'$ appearing in the proof of Proposition \ref{prop: birkhoff classes iterate} applied to the Birkhoff recurrence class of $z$; the points where such an integer $M$ does not exist. For points of the latter type, $f$ is ``abstractly'' topologically infinitely renormalizable. This dichotomy will appear in the final section of our article. 

\end{remarks*}

\section{Construction of horseshoes}

 We suppose \textcolor{black}{from this point on, and until the beginning of Section \ref{subsec:TheoMandN},}   that  
\begin{itemize}
\item$M$ is an oriented surface;
\item $f$ is a homeomorphism of $M$ isotopic to the identity;
\item $I=(f_t)_{t\in[0,1]}$ is a maximal isotopy of $f$;
\item $\mathcal F$ is a foliation transverse to $I$;
\item $\gamma=[a,b]\to \mathrm{dom}(I)$ is an admissible transverse path  or order $1$;
\item $\gamma$ has a $\mathcal F$-transverse self intersection at $\gamma(s)=\gamma(t)$, where $a<s<t<b$.
\end{itemize}
We want to show that $f$ has a topological horseshoe, as defined in the introduction. We start by describing the dynamics that can be deduced from our assumptions, stating some useful lemmas. We will mainly work in the universal covering space $\widetilde{\mathrm{dom}}(I)$  of the domain of $I$ and occasionally in other covering spaces and will construct there topological horseshoes with a precise geometric description. We denote $\widetilde I=(\widetilde f_t)_{t\in[0,1]}$ the identity isotopy of  $\widetilde{\mathrm{dom}}(I)$  that lifts $I\vert_{ \mathrm{dom}(I)}$ and set $\widetilde f=\widetilde f_1$. We denote $\widetilde{\mathcal F}$ the non singular foliation of $\widetilde{\mathrm{dom}}(I)$  that lifts $\mathcal F\vert_{ \mathrm{dom}(I)}$. Fix a lift $\widetilde\gamma$ of $\gamma$.  By assumptions, there exists a non trivial covering transformation $T$ such that $T(\widetilde\gamma)$ and $\widetilde\gamma$ have a $\widetilde{\mathcal F}$-transverse intersection at $T(\widetilde\gamma(s))=\widetilde\gamma(t)$, where $s<t$. Since $\widetilde \gamma$ and $T(\widetilde \gamma)$ have a $\widetilde{\mathcal{F}}$-transverse intersection, the sets $R(\widetilde\phi_{\widetilde\gamma(a)}), R(T(\widetilde\phi_{\widetilde\gamma(a)})), L(\widetilde\phi_{\widetilde\gamma(b)})$ and $L(T(\widetilde\phi_{\widetilde\gamma(b)}))$ are all disjoint. Consider the annular covering space $\widehat{\mathrm{dom}}(I)=\widetilde{\mathrm{dom}}(I)/T$. Denote by $\pi: \widetilde{\mathrm{dom}}(I)\to \widehat{\mathrm{dom}}(I)$ the covering projection, by $\widehat I$ the induced identity isotopy, by $\widehat f$ the induced lift of $f$, by $\widehat{\mathcal{F}}$ the induced foliation and by $\widehat \gamma$ the projection of $\widetilde \gamma$. The fact that $R(\widetilde\phi_{\widetilde\gamma(a)})$ and  $R(T(\widetilde\phi_{\widetilde\gamma(a)}))$ are disjoint implies that $\widehat\phi_{\widehat\gamma(a)}$, the projection of $ \widetilde\phi_{\widetilde\gamma(a)}$, is homoclinic to an end of $\widehat{\mathrm{dom}}(I)$ and that the sets $\overline{R(T^k(\widetilde\phi_a))}$, $k\in\Z$, are pairwise disjoint. The fact that $L(\widetilde\phi_{\widetilde\gamma(b)})$ and  $L(T(\widetilde\phi_{\widetilde\gamma(b)}))$ are disjoint implies that $\widehat\phi_{\widehat\gamma(b)}$, the projection of $ \widetilde\phi_{\widetilde\gamma(b)}$, is homoclinic to an end of $\widehat{\mathrm{dom}}(I)$ and that the sets $\overline{L(T^k(\widetilde\phi_{\widetilde\gamma(b)}))}$, $k\in\Z$ are pairwise disjoint.  The fact that $T(\widetilde\gamma)$ and $\widetilde\gamma$ have a $\widetilde{\mathcal F}$-transverse intersection implies that the two previously mentioned ends are equal. Otherwise, one could find a path $\widetilde\delta$ joining $\widetilde\phi_{\widetilde\gamma(a)}$ to $\widetilde\phi_{\widetilde\gamma(b)}$ and projecting onto a simple path of $\widehat{\mathrm{dom}}(I)$ and consequently such that $\widetilde\delta\cap T(\widetilde\delta)=\emptyset$.  We will denote $S$ this common end and $N$ the other one. Note that $\overline{R(T^k(\widetilde\phi_a))}\cap\overline{ L(T^{k'}(\widetilde\phi_b))}=\emptyset$ for all integers $k$ and $k'$. The complement of $\overline{R(\widehat\phi_{\widehat\gamma(a)})}\cup\overline{L(\widehat\phi_{\widehat\gamma(b)})}$ is an essential open set of $\widehat{\mathrm{dom}}(I) $. If $\Gamma^*$ is a simple essential loop in this complement that is lifted to a line  ${\widetilde\gamma}^*:\R\to\widetilde{\mathrm{dom}}(I)$ satisfying ${\widetilde\gamma}^*(t+1)=T({\widetilde\gamma}^*(t))$ for every $t\in\R$, there exists a unique $k_0\ge 1$ such that $T^{-k_0}(\widetilde\phi_{\widetilde{\gamma}(b)})$ is above $\widetilde\phi_{\widetilde{\gamma}(a)}$ but below $T(\widetilde\phi_{\widetilde{\gamma}(a)})$ relative to $\widetilde\gamma^*$.
The integer $k_0$ can be defined by the fact that $\gamma$ and $T^k(\gamma)$ have a $\widetilde{\mathcal{F}}$-transverse intersection if $1\leq k\leq k_0$ and no transverse intersection if $k>k_0$.
We will suppose by now and until giving the proofs of theorems \textcolor{black}{M and N in Section \ref{subsec:TheoMandN}} that $k_0=1$. For convenience, we will denote $\widetilde\phi_a=\widetilde\phi_{\widetilde\gamma(a)}$ and $\widetilde\phi_b=T^{-1}(\widetilde\phi_{\widetilde\gamma(b)})$. We will often use the following result:  if $\sigma$ is a path connecting $\widetilde \phi_a$ to $T(\widetilde \phi_a)$ and contained in $\bigcap_{k\in\Z} T^{k}(R(\widetilde \phi_b))$ and if $j\not=0$, then $\widetilde \phi_b$ and $T^{j}(\widetilde \phi_b)$ lie in different connected components of the complement of $\widetilde \phi_a\cup\sigma\cup T(\widetilde \phi_a)$.

\begin{figure}[ht!]
\hfill
\includegraphics [height=48mm]{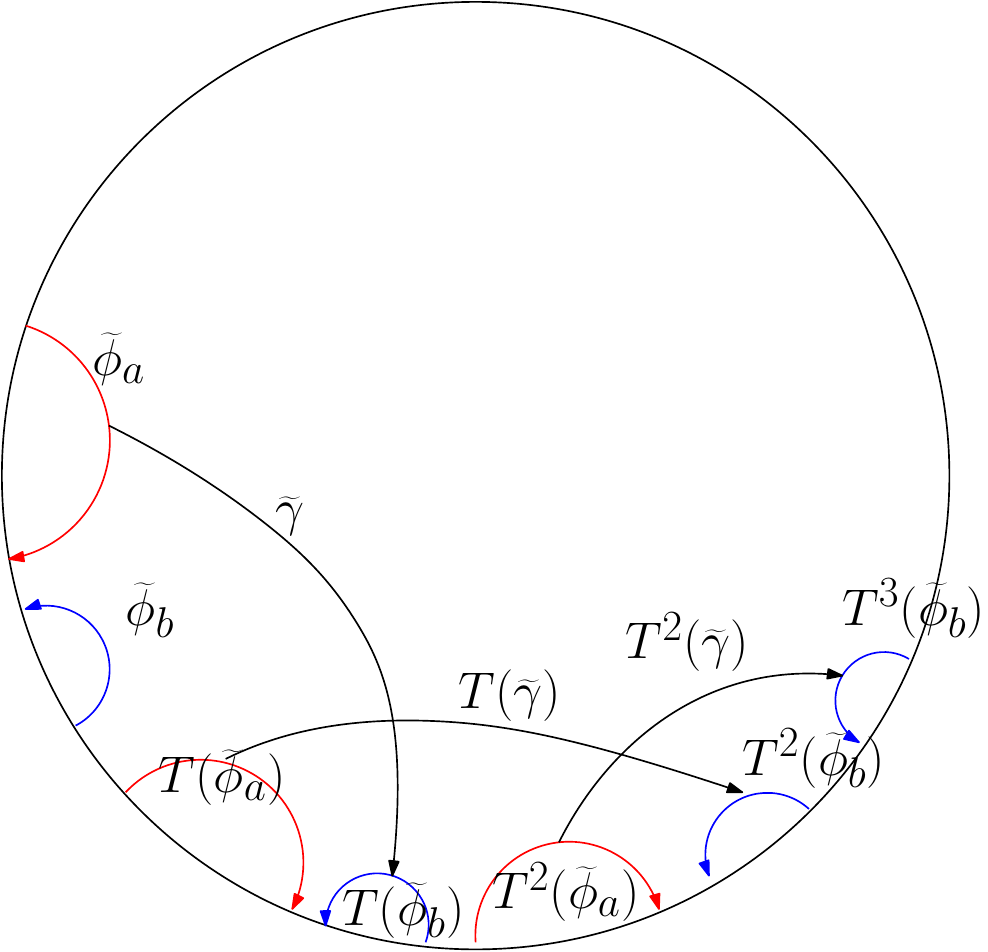}
\hfill{}
\caption{\small Lemma \ref{lm:pqintersections} }
\label{figureintersectionlemma}
\end{figure}

\subsection{\textcolor{black}{Preliminary results}}

\begin{lemma} \label{lm:pqintersections}
\label{lemma:accessibility}For every $q\geq 2$ and every $p\in\{0,\dots, q\}$,  one has   $\widetilde f^q(\widetilde\phi_a)\cap T^p(\widetilde\phi_b)\not=\emptyset$.
\end{lemma}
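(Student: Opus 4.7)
My plan splits into three steps handling the extremes $p=q$ and $p=0$ (both via Proposition~\ref{pr: fundamental}) and the intermediate cases $0<p<q$ by a connectedness argument.

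For $p=q$, I build by induction on $n$ an admissible transverse path $\delta_n$ of order $n$ from $\widetilde\phi_a$ to $T^n(\widetilde\phi_b)$ whose terminal sub-path is $T^{n-1}(\widetilde\gamma)|_{[s,b]}$. Set $\delta_1:=\widetilde\gamma$. For the inductive step, the $T^{n-1}$-translate of the given transverse self-intersection yields a $\widetilde{\mathcal F}$-transverse intersection of $T^{n-1}(\widetilde\gamma)$ and $T^n(\widetilde\gamma)$ at $T^{n-1}(\widetilde\gamma)(t)=T^n(\widetilde\gamma)(s)$, which lies in the interior of the terminal sub-path of $\delta_n$ since $s<t<b$. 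Applying Proposition~\ref{pr: fundamental} produces $\delta_{n+1}$ as the concatenation of the initial portion of $\delta_n$ (up to the intersection) with $T^n(\widetilde\gamma)|_{[s,b]}$, retaining the required form. Taking $n=q$ gives $\widetilde f^q(\widetilde\phi_a)\cap T^q(\widetilde\phi_b)\neq\emptyset$.

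For $p=0$, I apply instead the \emph{swap} half of Proposition~\ref{pr: fundamental} to the pair $(\delta_{q-1},T^{q-1}(\widetilde\gamma))$ at the same transverse intersection. The output is an admissible order-$q$ path equal to $T^{q-1}(\widetilde\gamma)|_{[a,s]}$ followed by the tail of $\delta_{q-1}$ past the intersection, going from $T^{q-1}(\widetilde\phi_a)$ to $T^{q-1}(\widetilde\phi_b)$; translating by $T^{-(q-1)}$ yields an admissible order-$q$ path from $\widetilde\phi_a$ to $\widetilde\phi_b$, so $\widetilde f^q(\widetilde\phi_a)\cap\widetilde\phi_b\neq\emptyset$.

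For intermediate $0<p<q$, I invoke connectedness. The image $\widetilde f^q(\widetilde\phi_a)$ is a properly embedded curve (homeomorphic image of a line) which meets both $\widetilde\phi_b$ and $T^q(\widetilde\phi_b)$ by the previous two steps; picking an arc $J\subset\widetilde\phi_a$ whose $\widetilde f^q$-image joins such points, the image arc $\widetilde f^q(J)$ is a continuous path in $\widetilde{\mathrm{dom}}(I)$ from a point of $\widetilde\phi_b$ to a point of $T^q(\widetilde\phi_b)$. The pairwise disjointness of the closed left half-planes $\overline{L(T^k(\widetilde\phi_{\widetilde\gamma(b)}))}$, $k\in\mathbb Z$, combined with the $T$-equivariant linear order of these leaves along $\widetilde\gamma^*$, implies that each intermediate leaf $T^p(\widetilde\phi_b)$, $0<p<q$, separates $\widetilde\phi_b$ from $T^q(\widetilde\phi_b)$ in the relevant region, so $\widetilde f^q(J)$ must cross each such leaf. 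The main technical subtlety is precisely this separation statement: the individual leaves are not globally separating in the plane, and one must exploit the whole nested family (as illustrated in Figure~\ref{figureintersectionlemma}) to reduce to a one-dimensional intermediate-value argument.
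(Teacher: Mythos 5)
Your constructions for the two extreme cases are sound: the induction producing an admissible path of order $n$ from $\widetilde\phi_a$ to $T^n(\widetilde\phi_b)$ is essentially the iterated concatenation $\widetilde\gamma_{[a,s]}\prod_{0\leq i<p}T^i(\widetilde\gamma_{[s,t]})T^{p-1}(\widetilde\gamma_{[t,b]})$ that the paper invokes for $p\geq 3$, and your ``swap'' for $p=0$ reproduces the paper's use of $T(\widetilde\gamma)\vert_{[a,s]}\widetilde\gamma\vert_{[t,b]}$. The problem is the interpolation step for $0<p<q$. The separation claim on which it rests is false as stated: precisely because the sets $\overline{L(T^k(\widetilde\phi_b))}$, $k\in\Z$, are pairwise disjoint, every translate $T^k(\widetilde\phi_b)$ with $k\neq p$ is contained in $R(T^p(\widetilde\phi_b))$; in particular $\widetilde\phi_b$ and $T^q(\widetilde\phi_b)$ lie on the \emph{same} side of $T^p(\widetilde\phi_b)$, so that leaf separates nothing in the plane, and a path joining them can a priori stay in $R(T^p(\widetilde\phi_b))$. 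The statement that rescues your argument is exactly the remark following Lemma~\ref{lemma:following}: a \emph{$T$-free simple} arc joining $T^{-p_0}(\widetilde\phi_b)$ to $T^{p_1}(\widetilde\phi_b)$ with $p_0,p_1>0$ must meet $L(\widetilde\phi_b)$. Applied, after recentering, to the arc $\widetilde f^q(J)\subset\widetilde f^q(\widetilde\phi_a)$ (which is simple and $T$-free since the $T^k(\widetilde\phi_a)$ are pairwise disjoint and $\widetilde f$ commutes with $T$), it forces the arc to enter $L(T^p(\widetilde\phi_b))$ and hence to cross its frontier $T^p(\widetilde\phi_b)$. But that remark is not an intermediate-value triviality: its proof is the half-page free-arc/Jordan-curve argument of Lemma~\ref{lemma:following}, which you acknowledge as ``the main technical subtlety'' without supplying it. As written, the intermediate cases are therefore not proved.

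It is also worth noting that the paper takes a different and more economical route that avoids this issue entirely. It first proves monotonicity in the order: using the Brouwer-line property $\overline{R(\widetilde\phi)}\subset\widetilde f(R(\widetilde\phi))$ and $\widetilde f(\overline{L(\widetilde\phi)})\subset L(\widetilde\phi)$, together with the fact that $T(R(\widetilde\phi_a))$ is disjoint from both $\widetilde f^{q'}(R(\widetilde\phi_a))$ and $L(T^p(\widetilde\phi_b))$, one gets that $\widetilde f^q(\widetilde\phi_a)\cap T^p(\widetilde\phi_b)\neq\emptyset$ implies the same for every $q'>q$. Then each individual $p$ is handled by exhibiting an admissible path of low order ending on $T^p(\widetilde\phi_b)$ ($p=1$ at order $1$; $p\in\{0,2\}$ at order $2$ via Proposition~\ref{pr: fundamental}; $p\geq3$ at order $p$ via the iterated concatenation), and monotonicity lifts the order up to $q$. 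If you prefer to keep your interpolation strategy, you must prove the $T$-free crossing statement; otherwise, adding the monotonicity lemma and a direct concatenation for each intermediate $p$ closes the gap along the paper's lines.
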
 

\begin{proof}

Let us explain first why $$\widetilde f^q(\widetilde\phi_a)\cap T^p(\widetilde\phi_b)\not=\emptyset\enskip \mathrm{and} \enskip q'>q \Rightarrow\enskip\widetilde f^{q'}(\widetilde\phi_a)\cap T^p(\widetilde\phi_b)\not=\emptyset.$$
Recall that, for every leaf $\widetilde\phi$ of $\widetilde{\mathcal F}$, one has
 
$$\overline{R(\widetilde\phi)}\subset \widetilde f(R(\widetilde \phi)), \enskip \widetilde f(\overline{L(\widetilde\phi)})\subset L(\widetilde \phi),$$ which implies that
$$\widetilde{f}^{q}(\overline{R(\widetilde\phi)})\subset \widetilde f^{q'}(R(\widetilde \phi)), \enskip \widetilde f^{q'}(\overline{L(\widetilde\phi)})\subset \widetilde{f}^{q}(L(\widetilde \phi)),$$if $q'> q\geq 0$. We deduce that
$$\widetilde f^q(\widetilde\phi_a)\cap T^p(\widetilde \phi_b)\not=\emptyset \Rightarrow\widetilde f^q\left(\overline{R(\widetilde\phi_a)}\right)\cap \overline{L(T^p(\widetilde \phi_b)})\not=\emptyset \Rightarrow\widetilde f^{q'}\left(\overline{R(\widetilde\phi_a)}\right)\cap \overline{L(T^p(\widetilde \phi_b)})\not=\emptyset.$$ 
It means that there exists a point on the right of $\widetilde\phi_a$ that is sent by $\widetilde f^{q'}$ on the left of $T^p(\widetilde\phi_b)$. But this implies that there is a point on $\widetilde\phi_a$ that is sent by $\widetilde f^{q'}$ on $T^p(\widetilde\phi_b)$. The proof is easy and can be found in \cite{LeCalvezTal}. The reason is the following: in case $\widetilde f^{q'}(\widetilde\phi_a)\cap T^p(\widetilde\phi_b)=\emptyset$, one should have $$\widetilde f^{q'}(R(\widetilde\phi_a))\cup L(T^p(\widetilde \phi_b))=\widetilde{\mathrm{dom}}(I),$$ but this is prohibited by the the fact that $$T(R(\widetilde\phi_a))\cap\left(f^{q'}(R(\widetilde\phi_a))\cup L(T^p(\widetilde \phi_b))\right)=\emptyset.$$

The path $\widetilde\gamma$ is an admissible path of $\widetilde f$ of order $1$, which means that  $\widetilde f(\widetilde\phi_a)\cap T(\widetilde \phi_b)\not=\emptyset$. So the lemma is proved if $p=1$. 
Applying Proposition \ref {pr: fundamental}, one deduces that boths paths 
$$\widetilde\gamma\vert_{[a,t]} \,T(\widetilde\gamma)\vert_{[s,b]}, \enskip T(\widetilde\gamma)\vert_{[a,s]}\, \widetilde\gamma\vert_{[t,b]}$$ are admissible paths of $\widetilde f$ of order $2$, which means that $$\widetilde f^2(\widetilde\phi_a)\cap T^2(\widetilde\phi_b)\not=\emptyset, \enskip \widetilde f^2(T(\widetilde\phi_a))\cap T(\widetilde\phi_b)\not=\emptyset.$$ Consequently the lemma is proven if $p\in\{0,2\}$. 

Let us suppose now that $p\geq 3$. It is also proved in \textcolor{black}{Proposition 23 of} \cite{LeCalvezTal} that 
$$\widetilde\gamma_{[a,s]}\prod_{0\leq i<p} T^i(\widetilde\gamma_{[s,t]}) T^{p-1} (\widetilde\gamma_{[t,b]})$$ is admissible of order $p$, which means that  $\widetilde f^p(\widetilde\phi_a)\cap T^p(\widetilde\phi_b)\not=\emptyset$. Consequently the lemma is proved for every $p$.\end{proof}

Let us state another important lemma:

\begin{lemma} 
\label{lemma:following}Every simple path $\delta:[c,d]\to \widetilde{\mathrm{dom}}(I)$ that joins $ T^{-p_0}(\widetilde\phi_a)$ to $ T^{p_1}(\widetilde\phi_a)$, where $p_0$, $p_1$ are positive, and which is $T$-free, meets $R(\widetilde\phi_a)$.
\end{lemma}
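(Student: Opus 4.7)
The plan is a proof by contradiction: assume $\delta \cap R(\widetilde\phi_a) = \emptyset$, so $\delta$ lies in the closed set $\overline{L(\widetilde\phi_a)} = \widetilde{\mathrm{dom}}(I) \setminus R(\widetilde\phi_a)$. I want to contradict the $T$-freeness hypothesis $T(\delta) \cap \delta = \emptyset$. Note first that both endpoints of $\delta$ automatically lie in $L(\widetilde\phi_a)$: the pairwise disjointness of the closed sets $\overline{R(T^k(\widetilde\phi_a))}$ forces every $T^k(\widetilde\phi_a)$ with $k \neq 0$ to be contained in $L(\widetilde\phi_a)$.

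I would first reduce to the cleaner situation where $\delta$ has no contact with $\widetilde\phi_a$ at all, so $\delta \subset L(\widetilde\phi_a)$. Then, projecting to the annular cover $\widehat{\mathrm{dom}}(I) = \widetilde{\mathrm{dom}}(I)/\langle T\rangle$, the arc $\widehat\delta := \pi(\delta)$ has both endpoints on $\widehat\phi_a$; closing it by any arc of $\widehat\phi_a$ produces a loop whose homotopy class in $\pi_1(\widehat{\mathrm{dom}}(I)) \cong \Z$ equals $\pm(p_0+p_1)$, of absolute value at least $2$. Recall from the preamble that $\widehat\phi_a \cup \{S\}$ is a Jordan curve in the one-point compactification $\widehat{\mathrm{dom}}(I) \cup \{S\}$, that $\pi^{-1}(U_N)$ is connected (it is the universal cover of $U_N$), and that each $R(T^k(\widetilde\phi_a))$ is a component of $\pi^{-1}(U_S)$ attached to the arch $T^k(\widetilde\phi_a)$.

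The decisive step is topological. Since $\delta$ avoids the ``disk'' $R(\widetilde\phi_a)$, it is forced to traverse the arch $\widetilde\phi_a$ from the outside: in the strip model, $\delta$ must pass \emph{above} this arch. By $T$-equivariance, $T(\delta)$ must pass above the arch $T(\widetilde\phi_a)$. Since $p_0+p_1 \geq 2$, the horizontal extents of $\delta$ and $T(\delta)$ overlap substantially; in the middle range containing both arches $\widetilde\phi_a$ and $T(\widetilde\phi_a)$ the two paths must coexist, with $\delta$ descending from above $\widetilde\phi_a$ toward $T^{p_1}(\widetilde\phi_a)$ while $T(\delta)$ ascends from $T^{-p_0+1}(\widetilde\phi_a)$ to above $T(\widetilde\phi_a)$. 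A Jordan curve / intermediate-value argument then forces $\delta \cap T(\delta) \neq \emptyset$, contradicting $T$-freeness.

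The main obstacle will be making this final crossing argument rigorous without appealing to a specific strip or coordinate picture. Since $\widetilde{\mathrm{dom}}(I)$ is an abstract simply-connected surface, the ``above/below the arch'' intuition must be translated into the formal language of partial orders on lines relative to the cross section $\gamma^*$ introduced just before the lemma --- or equivalently, by constructing an explicit simple closed Jordan curve from pieces of $\delta$, $T(\delta)$, segments of the leaves $T^k(\widetilde\phi_a)$, and the end $S$ in a suitable sphere compactification, and exhibiting the required intersection from the separation properties of the disjoint closed sets $\overline{R(T^k(\widetilde\phi_a))}$.
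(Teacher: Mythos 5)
Your outline points in the right direction -- argue by contradiction and derive an intersection between $\delta$ and a translate of itself, violating freeness -- and your closing sentence even names the construction that actually works (a separation argument built from pieces of $\delta$, a translate of $\delta$, and the leaves $T^k(\widetilde\phi_a)$). But the decisive step is exactly the part you leave as ``the main obstacle'': the assertion that $\delta$ ``passes above the arch'' and that an ``intermediate-value argument forces $\delta\cap T(\delta)\neq\emptyset$'' is the entire content of the lemma, and as stated it does not yet follow from anything you have established. Two concrete problems. First, your reduction to ``$\delta$ has no contact with $\widetilde\phi_a$'' is neither justified nor the right reduction: the hypothesis does not prevent $\delta$ from entering $R(T^k(\widetilde\phi_a))$ for $k\neq 0$, nor from recrossing the leaves $T^{-p}(\widetilde\phi_a)$ and $T^{p}(\widetilde\phi_a)$ many times, and in that generality the ``above/below the arches'' picture on which your crossing argument rests is simply not available. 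Second, the detour through the annular cover and the homotopy class $\pm(p_0+p_1)$ is not used anywhere and does not by itself produce the intersection.

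What is missing is the following two-step argument (which is the paper's proof). One first trims $\delta$ to a subpath $\delta\vert_{[c',d']}$ joining $T^{-p'_0}(\widetilde\phi_a)$ to $T^{p'_1}(\widetilde\phi_a)$ whose interior meets no leaf $T^{p}(\widetilde\phi_a)$, $p\neq 0$; under the contradiction hypothesis its interior then avoids every set $\overline{R(T^{k}(\widetilde\phi_a))}$. One then extends this subpath to a line $\lambda$ whose two ends dive into $R(T^{-p'_0}(\widetilde\phi_a))$ and $R(T^{p'_1}(\widetilde\phi_a))$, so that $R(\widetilde\phi_a)\subset R(\lambda)$ while $R(T^{p'_0+p'_1}(\widetilde\phi_a))\subset L(\lambda)$. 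The translate $T^{p'_0}(\delta\vert_{[c',d']})$ has one endpoint on $\widetilde\phi_a$ (hence in $R(\lambda)$, once one checks it is not on $\lambda$ -- this uses that $\delta$ is $T^{p'_0}$-free, which follows from $T$-freeness by Brouwer theory and is a fact you never invoke) and the other endpoint on $T^{p'_0+p'_1}(\widetilde\phi_a)$ (hence in $L(\lambda)$), yet it is disjoint from $\lambda$: disjoint from $\delta\vert_{[c',d']}$ by freeness and from the two diving tails because the sets $\overline{R(T^k(\widetilde\phi_a))}$ are pairwise disjoint. A path joining $R(\lambda)$ to $L(\lambda)$ without meeting $\lambda$ is the desired contradiction. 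Until you supply the trimming, the line $\lambda$, and the verification of which side each translated endpoint lies on, the proof is not complete.
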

\begin{proof}Recall first, by standard Brouwer theory arguments, that $\delta$ is $T^k$-free for every $k\geq1$, because $T$ is fixed point free. There exists $c'\in[c,d)$ and $p'_0>0$, uniquely defined, such that $\delta(c')$ belongs to $ T^{-p'_0}(\widetilde\phi_a)$ and such that $\delta_{(c',d]}$ is disjoint from every leaf $ T^{-p}(\widetilde\phi_a)$, $p>0$. Then there exists $d'\in(c',d]$, and $p'_1>0$ uniquely defined, such that $\delta(d')$ belongs to $T^{p'_1}(\widetilde\phi_a)$ and such that $\delta_{[c',d')}$ is disjoint from  every leaf $ T^{p}(\widetilde\phi_a)$, $p>0$. Note that $\delta_{(c',d')}$ is on the left of every leaf $T^{p}(\widetilde\phi_a)$, $p\not=0$. One can extend $\delta_{[c',d']}$ as a line $\lambda:\R\to \widetilde{\mathrm{dom}}(I)$ such that $\lambda\vert_{(-\infty, c')}$ is included in $R(T^{-p'_0}(\widetilde\phi_a))$ and $\lambda\vert_{(d', +\infty)}$ is included in $R(T^{p'_1}(\widetilde\phi_a))$. We will prove by contradiction that $\delta_{[c',d']}$ meets $R(\widetilde\phi_a)$. If it is not the case, then 
$$R(\widetilde\phi_a)\subset R(\lambda), \enskip R(T^{p'_0+p'_1}(\widetilde\phi_a))\subset L(\lambda).$$ The point $T^{p'_0}(\delta(c'))$ being on $\widetilde\phi_a$, belongs to the closure of $R(\widetilde\phi_a)$. It does not belong to $\delta_{[c',d']}$ (because $\delta$ is $T^{p'_0}$-free), neither to $R(T^{-p'_0}(\widetilde\phi_a))$, nor to  $R(T^{p'_1}(\widetilde\phi_a))$. So it does not belong to $\lambda$.  Consequently it belongs to $R(\lambda)$. Similarly, $T^{p'_0}(\delta(d'))$ does not belong to $\lambda$ and belongs to the closure of $R(T^{p'_0+p'_1}(\widetilde\phi_a))$, so its belongs to $L(\lambda)$. The path $T^{p'_0}(\delta_{[c',d']})$, being disjoint from $\delta_{[c',d']}$, from $R(T^{-p'_0}(\widetilde\phi_a))$ and from $R(T^{p_1}(\widetilde\phi_a))$,  is disjoint from $\lambda$. We have found a contradiction.\end{proof}

\begin{remark*} We can prove similarly that every $T$-free simple path that joins $ T^{-p_0}(\widetilde\phi_b)$ to $ T^{p_1}(\widetilde\phi_b)$, where $p_0$, $p_1$ are positive, meets $L(\widetilde\phi_b)$.
\end{remark*}

For every path $\delta: [c,d]\to \widetilde{\mathrm{dom}}(I)$, we define its {\it interior} as being the path $\dot\delta=\delta\vert_ {(c,d)}$. The following sets 
$$L_a=\bigcap_{k\in\Z} L(T^k(\widetilde\phi_a)), \enskip R_b=\bigcap_{k\in\Z} R(T^k(\widetilde\phi_b))$$ are open sets homeomorphic to the plane. For every $q\geq 1$, one can look at the relatively compact connected components of $\widetilde f^{-q}(\widetilde\phi_b)\cap L_a$. By Lemma \ref{lemma:following}, every such a component is the interior of path joining $T^{p}(\widetilde\phi_a)$ to $T^{p'}(\widetilde\phi_a)$, where $\vert p-p'\vert\leq 1$. For every $p\geq 0$ and $q\geq 1$, we will define the set ${\mathcal X}_{p,q}$ of paths joining $\widetilde\phi_a$ to $T(\widetilde\phi_a)$ whose interior is a connected component of $T^p\circ f^{-q}(\widetilde\phi_b)\cap L_a$.  We denote ${\mathcal X}=\bigcup_{p\geq 0, q\geq 1} {\mathcal X}_{p,q}$. Note that two different paths in ${\mathcal X}$ have disjoint interiors, and are disjoint  if they do not belong to the same set ${\mathcal X}_{p,q}$. Fix a real parametrization $t\mapsto \widetilde\phi_a(t)$ of $\widetilde\phi_a$. If $\delta_0\in\mathcal X$  joins $\widetilde\phi_a(v_0)$ to $T(\widetilde\phi_a)(v'_0)$, and if $\delta_1\in\mathcal X$ joins $\widetilde\phi_a(v_1)$ to $T(\widetilde\phi_a)(v'_1)$, then $(v_1-v_0)(v'_1-v'_0)\leq 0$. Moreover, if $\delta_0\not=\delta_1$ at least one of the numbers $v_1-v_0$, $v'_1-v'_0$ is not zero. So one gets a natural total order on ${\mathcal X}$ by setting 
$$\delta_0\leq \delta_1\enskip\mathrm{if}\enskip v_1\leq v_0\enskip\mathrm{and} \enskip v'_0\leq v'_1.$$
One can extend this order to every set $\mathcal Y$ of paths containing $\mathcal X$, provided the paths of $\mathcal Y$ join $\widetilde\phi_a$ to $T(\widetilde\phi_a)$, have their interior included in $L_a$, have pairwise disjoint interiors, \textcolor{black}{and provided that no two paths in $\mathcal Y$ share both endpoints}.

\begin{lemma}\label{lemma: paths}
We have the following:

\begin{enumerate}
\item The set $\mathcal X_{p,q}$ is not empty if $q\geq 2$ and $1\leq p\leq q$.
\item If $\delta_0$ and $\delta_1$ are two paths in $\mathcal X$, there are finitely many paths in $\mathcal X_{p,q}$ between $\delta_0$ and $\delta_1$.
\item Suppose that $q\geq 3$ and $1\leq p_0<p_2< p_1\leq q$. For every $\delta_0\in \mathcal X_{p_0,q}$ and every $\delta_1\in \mathcal X_{p_1,q}$, there exist at least two paths in $\mathcal X_{p_2,q}$ between $\delta_0$ and $\delta_1$.\end{enumerate}

 \end{lemma}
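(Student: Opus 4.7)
\emph{Part (1).} Applying Lemma~\ref{lemma:accessibility}, the line $\ell_p := T^p\widetilde f^{-q}(\widetilde\phi_b)$ meets $\widetilde\phi_a$ for every $p \in \{0,\dots,q\}$ with $q \ge 2$; applying $T$ to the case $p-1$ shows it also meets $T(\widetilde\phi_a)$ for $1 \le p \le q$. Pick a subarc $\beta \subset \ell_p$ joining these two leaves. Since the leaves $T^k(\widetilde\phi_b)$ are pairwise disjoint, $\ell_p \cap \ell_{p+1} = \emptyset$, and therefore $\beta$ is $T$-free. Each connected component of $\beta \cap L_a$ has closure a simple $T$-free arc joining two leaves $T^{k}(\widetilde\phi_a), T^{k'}(\widetilde\phi_a)$; by a $T$-equivariant version of Lemma~\ref{lemma:following}, one must have $|k-k'| \le 1$, lest the arc meet some $R(T^n(\widetilde\phi_a))$ with $n$ strictly between $k$ and $k'$, contradicting its inclusion in $L_a$. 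Consecutive components on $\beta$ share an endpoint leaf, so the sequence of leaf indices traced out is a walk on $\mathbb Z$ with steps in $\{-1,0,1\}$, starting at $0$ and ending at $1$. Such a walk must at some moment take a unit step $0 \to 1$, and the corresponding component is the required element of $\mathcal X_{p,q}$.

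\emph{Part (2).} The paths $\delta_0,\delta_1$, together with the bounded subarcs of $\widetilde\phi_a$ and $T(\widetilde\phi_a)$ joining their respective endpoints, bound a compact topological disk $U$. Every $\mathcal X_{p,q}$-path strictly between $\delta_0$ and $\delta_1$ in the order is an arc of $\ell_p$ contained in $U$ with endpoints on the two side-subarcs; these arcs are pairwise disjoint and each has Euclidean diameter at least the positive distance $c := d(\widetilde\phi_a \cap U,\; T(\widetilde\phi_a) \cap U)$. Since $\ell_p$ is a proper topological embedding, $\ell_p^{-1}(U) \subset \mathbb R$ is compact, and $\ell_p$ is uniformly continuous on it; so the parameter-intervals associated to these arcs are pairwise disjoint open intervals, each of length bounded below by some $\delta > 0$. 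Only finitely many such intervals can fit into the bounded set $\ell_p^{-1}(U)$.

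\emph{Part (3).} The identity $\ell_{p_2} = T^{p_2-p_0}(\ell_{p_0}) = T^{p_2-p_1}(\ell_{p_1})$ places the translates $T^{p_2-p_0}(\delta_0)$ and $T^{p_2-p_1}(\delta_1)$ on the same line $\ell_{p_2}$, lying in $L_a$ by $T$-invariance but in strips strictly above and strictly below the middle strip between $\widetilde\phi_a$ and $T(\widetilde\phi_a)$. The lines $\ell_k$ are pairwise disjoint (since the leaves $T^k(\widetilde\phi_b)$ are), so $\ell_{p_2}$ meets neither $\delta_0$ nor $\delta_1$, and consequently enters the disk $U$ from part (2) only through the side-subarcs on $\widetilde\phi_a$ and $T(\widetilde\phi_a)$. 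The walk-on-$\mathbb Z$ argument of (1), now applied along $\ell_{p_2}$, forces the leaf-index sequence to descend from $\ge 1$ to $\le 0$ in passing from one translate to the other, and hence to cross between indices $0$ and $1$. Using the explicit $T$-equivariance of the endpoints of the two translates relative to those of $\delta_0,\delta_1$, combined with a Jordan-curve analysis inside $U$, one produces at least two distinct components of $\ell_{p_2} \cap L_a$ joining $\widetilde\phi_a$ to $T(\widetilde\phi_a)$ inside $U$, yielding two elements of $\mathcal X_{p_2,q}$ between $\delta_0$ and $\delta_1$. The main obstacle is precisely this last step: ensuring that \emph{two} passages (not merely one) are forced inside $U$ and that both lie strictly between $\delta_0$ and $\delta_1$ in the order requires a careful accounting of how the $T$-action interacts with the order on $\mathcal X$ and of the Jordan-curve combinatorics of how $\ell_{p_2}$ enters and exits $U$.
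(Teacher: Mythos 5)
Parts (1) and (2) of your proposal are essentially the paper's argument. For (1) you use Lemma \ref{lm:pqintersections} to get a subarc of $\ell_p=\widetilde f^{-q}(T^p(\widetilde\phi_b))$ joining $\widetilde\phi_a$ to $T(\widetilde\phi_a)$, note that each component of its intersection with $L_a$ joins leaves $T^k(\widetilde\phi_a)$, $T^{k'}(\widetilde\phi_a)$ with $\vert k-k'\vert\le 1$ (the consequence of Lemma \ref{lemma:following} recorded just before the statement), and run a connectivity/walk argument to force a component realizing the edge $\{0,1\}$; this matches the paper. For (2) your uniform lower bound on the diameters of the arcs, combined with properness of $\ell_p$, is just the contrapositive of the paper's Hausdorff-limit argument (infinitely many disjoint parameter intervals in a compact set would force arcs of arbitrarily small diameter, hence a limit point on both $\widetilde\phi_a$ and $T(\widetilde\phi_a)$); both are correct.

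Part (3), however, has a genuine gap, and you have correctly located it yourself: your mechanism does not produce \emph{two} elements of $\mathcal X_{p_2,q}$, nor does it place even one of them \emph{between} $\delta_0$ and $\delta_1$ in the order on $\mathcal X$. The walk along $\ell_{p_2}$ from $T^{p_2-p_0}(\delta_0)$ to $T^{p_2-p_1}(\delta_1)$ forces only an odd number (hence at least one) of traversals of the edge $\{0,1\}$, and being between the two translates in the \emph{parameter} of $\ell_{p_2}$ says nothing about the position of the endpoints on $\widetilde\phi_a$ and $T(\widetilde\phi_a)$, which is what the order on $\mathcal X$ measures; so at best you recover part (1). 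The paper's mechanism is different and is precisely what supplies the missing two crossings inside the disk. After reducing (via (2)) to the case where no path of $\mathcal X_{p_0,q}\cup\mathcal X_{p_1,q}$ lies between $\delta_0$ and $\delta_1$, one forms the simple closed curve $C=\beta\delta_0\beta'\delta_1^{-1}$, proves that $C$ is $T$-free (this step, which you skip, uses that the two arcs $\delta_1^{-1}\beta\delta_0$ and $T(\delta_0)T(\beta')T(\delta_1^{-1})$ are disjoint with interiors in $L(T(\widetilde\phi_a))$, so that $\beta'\cap T(\beta)\ne\emptyset$ would force $\overline\Delta$ to be invariant under $T^{\pm1}$), hence $\Delta\subset L_a$. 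Then Lemma \ref{lemma:following} applied to $\widetilde f^q(\beta'')$, for any arc $\beta''\subset\overline\Delta$ joining $\delta_0$ to $\delta_1$, shows that $\beta''$ must meet $\widetilde f^{-q}(T^{p_2}(L(\widetilde\phi_b)))$; this yields a crosscut $\sigma$ of $\Delta$, joining $\widetilde\phi_a$ to $T(\widetilde\phi_a)$, whose interior lies entirely in that open set. Since $\delta_0$ and $\delta_1$ lie on $\ell_{p_0}$ and $\ell_{p_1}$ and hence outside $\widetilde f^{-q}(T^{p_2}(L(\widetilde\phi_b)))$, each of the two components of $\Delta\setminus\sigma$ must contain a component of $\ell_{p_2}\cap L_a$ joining $\widetilde\phi_a$ to $T(\widetilde\phi_a)$: one between $\delta_0$ and $\sigma$, one between $\sigma$ and $\delta_1$. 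These are distinct because $\sigma$ separates them, and both lie in $\Delta$, hence between $\delta_0$ and $\delta_1$ in the order. Without some such separating crosscut (or an equivalent device), the "careful accounting" you defer to is exactly the content of the lemma, so as written the proof of (3) is incomplete.
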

\begin{proof} To get $(1)$, apply Lemma \ref{lemma:accessibility} that asserts that  $\widetilde f^{-q}\circ T^p(\widetilde\phi_b)$ meets $\widetilde \phi_a$ and $T(\widetilde \phi_a)$ if $q\geq2$ and $1\leq p\leq 2$, and then apply Lemma \ref{lemma:following}.

Let us prove $(2)$. Suppose that $\delta_0$ and $\delta_1$ are two paths in $\mathcal X$, and write $(\delta_i)_{i\in I}$ the family of paths in $\mathcal X_{p,q}$ that are between $\delta_0$ and $\delta_1$. It consists of subpaths of the line $\widetilde f^{-q}\circ T^p(\widetilde\phi_b)$  with pairwise disjoint interiors, whose union is relatively compact. If this family is infinite, it contains a sequence that converges to a point in the Hausdorff topology. Such a point should belong both to $\widetilde \phi_a$ and $T(\widetilde \phi_a)$. We have a contradiction.

Let us prove $(3)$. Suppose that $q\geq 3$ and $1\leq p_0<p_2< p_1\leq q$ and fix $\delta_0\in \mathcal X_{p_0,q}$ and $\delta_1\in \mathcal X_{p_1,q}$. By $(2)$, one can suppose that there is no path in $\mathcal X_{p_0,q}\cup \mathcal X_{p_1,q}$ between $\delta_0$ and $\delta_1$. Suppose for example that $\delta_0\prec\delta_1$. The path $\delta_0$  joins $\widetilde\phi_a(v_0)$ to $T(\widetilde\phi_a)(v'_0)$, and $\delta_1$ joins $\widetilde\phi_a(v_1)$ to $T(\widetilde\phi_a)(v'_1)$. Denote $\beta=\widetilde\phi_a\vert_{[v_1,v_0]}$ and $\beta'=T(\widetilde\phi_a)\vert_{[v'_0,v'_1]}$. Consider the simple closed curve $C=\beta\delta_0\beta'\delta_1^{-1}$ and the relatively compact connected component $\Delta$ of its complement. 

Let us prove that $C$ is $T$-free. Note first that $T(C)\cap C\subset \beta'\cap T(\beta)$. So, one must prove that $\beta'\cap T(\beta)=\emptyset$. Each path $\delta_1^{-1}\beta\delta_0$ and $T(\delta_0)T(\beta')T(\delta_1^{-1})$ join $T(\widetilde\phi_a)$ to itself. Moreover, these two paths  are disjoint and their interior are included in $L(T(\widetilde\phi_a))$. So, either one of the paths $\beta'$  or $T(\beta)$ is included in the other one, or they are disjoint. But in the first case, $\overline\Delta$ has to be forward or backward invariant by $T$, which contradicts the fact that $T$ is fixed point free. One deduces that $\beta'\cap T(\beta)=\emptyset$ and consequently that $C$ is $T$-free.

 So, $\overline\Delta$ is also $T$-free and $\Delta$ is included in $L_a$.  Applying Lemma \ref{lemma:following} to $ \widetilde f^q(\beta'')$, one deduces that every path $\beta''\subset\overline\Delta$ that joins $\delta_0$ to $\delta_1$ meets $\widetilde f^{-q}\circ T^{p_2}(L(\widetilde\phi_b)))$. So, there exists a simple path $\sigma\subset\widetilde f^{-q}\circ T^{p_2}(L(\widetilde\phi_b)))$ that joins  $\widetilde\phi_a$ to $T(\widetilde\phi_a)$ and whose interior is contained in $\Delta$. Consequently, there exists at least a path $\delta_2$ of $\mathcal X_{p_2,q}$ between $\delta_0$ and $\sigma$ and another path $\delta_2'$ of $\mathcal X_{p_2,q}$ between $\sigma$ and $\delta_1$, see Figure \ref{figureinbetweenpaths}. \end{proof}

\begin{figure}[ht!]
\hfill
\includegraphics [height=48mm]{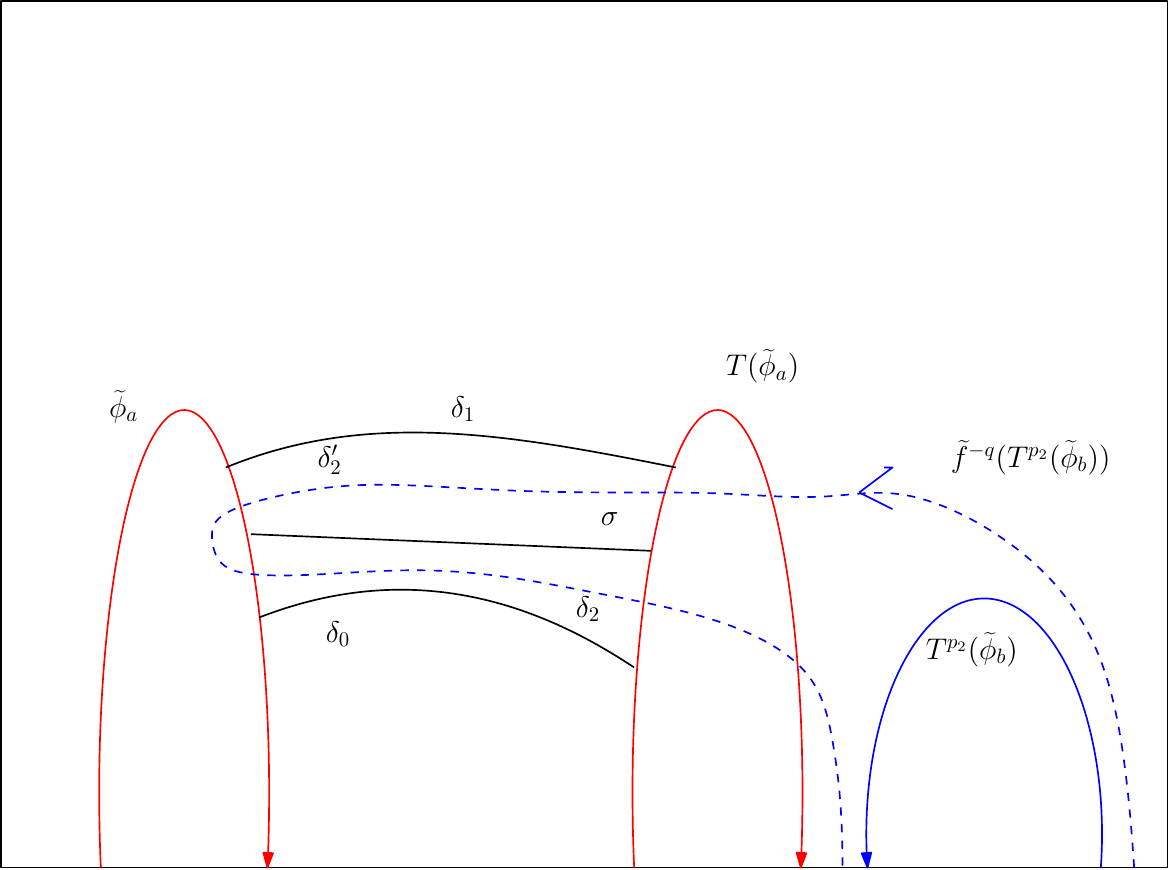}
\hfill{}
\caption{\small Lemma \ref{lemma: paths} }
\label{figureinbetweenpaths}
\end{figure}

\subsection{Weak version of the Realization Theorem (Theorem \ref{th: realization})}
Let us begin with the following preliminary version of Theorem \ref{th: realization}.

\begin{proposition}\label{prop: preliminary-realization}
 If $q\geq 2$ and $1\leq p\leq q$, then $\widetilde f^q\circ T^{-p}$ has a fixed point.
 
 \end{proposition}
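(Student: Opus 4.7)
The plan is to produce a closed topological disk $\Delta$ in the plane component $\widetilde P\subset\widetilde{\mathrm{dom}}(I)$ containing $\widetilde\gamma$ and to check that $g:=\widetilde f^{q}\circ T^{-p}$ maps $\Delta$ into itself, so that Brouwer's fixed point theorem yields a fixed point of $g$ (which is what the proposition asks for).

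\emph{Setup.} Since $\widetilde\gamma\cap T(\widetilde\gamma)\ne\emptyset$, the plane component $\widetilde P$ is $T$-invariant, and since $T$ is a deck transformation of $\widetilde{\mathrm{dom}}(I)$ it commutes with $\widetilde f$; thus $g$ is a well-defined orientation-preserving homeomorphism of $\widetilde P$.

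\emph{A transverse section.} The hypotheses $q\ge 2$ and $1\le p\le q$ are exactly those of Lemma~\ref{lemma: paths}(1), which gives some $\delta\in\mathcal X_{p,q}$. By construction $\delta$ is a simple arc with endpoints $\widetilde z_{0}\in\widetilde\phi_{a}$ and $\widetilde z_{1}\in T(\widetilde\phi_{a})$, its interior lies in $L_{a}$, and $\delta\subset T^{p}\circ\widetilde f^{-q}(\widetilde\phi_{b})=\widetilde f^{-q}(T^{p}(\widetilde\phi_{b}))$. Consequently
\[
g(\delta)=\widetilde f^{q}\bigl(T^{-p}(\delta)\bigr)\subset\widetilde\phi_{b}.
\]

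\emph{A closed disk.} Because $\widehat\phi_{a}$ is homoclinic to the end $S$ of $\widehat{\mathrm{dom}}(I)$, both ends of each $T^{k}(\widetilde\phi_{a})$ escape to the lift of $S$ in $\widetilde P$; the same holds for $\widetilde\phi_{b}$. Using the half-leaves of $\widetilde\phi_{a}$ and $T(\widetilde\phi_{a})$ emanating from $\widetilde z_{0}$ and $\widetilde z_{1}$ toward $S$, together with $\delta$ and a closing arc at $S$ (working in an appropriate end-compactification of $\widetilde P$), one bounds a closed topological disk $\Delta\subset\overline{L_{a}}$ whose interior contains $g(\delta)\subset\widetilde\phi_{b}$. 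The last inclusion uses the assumption $k_{0}=1$, which places $\widetilde\phi_{b}$ between $\widetilde\phi_{a}$ and $T(\widetilde\phi_{a})$ in the order relative to $\widetilde\gamma^{*}$.

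\emph{Invariance and Brouwer.} It remains to verify $g(\overline\Delta)\subset\Delta$. The boundary $\partial\Delta$ consists of four pieces: subarcs $\alpha,\alpha'$ of $\widetilde\phi_{a}$ and $T(\widetilde\phi_{a})$, the path $\delta$, and a closing arc near $S$. The Brouwer line property $\widetilde f(\overline{L(\widetilde\phi_{a})})\subset L(\widetilde\phi_{a})$ combined with $T$-equivariance sends $\alpha$ and $\alpha'$ into $L_{a}$; the middle piece $g(\delta)$ sits in $\widetilde\phi_{b}\subset\overline\Delta$; and the closing arc near $S$ is mapped inside $\Delta$ by a standard end-behavior argument exploiting that $g$ lifts $\widehat f^{q}$ and that $S$ is an end of $\widehat{\mathrm{dom}}(I)$ on which nothing escapes. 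By Brouwer's fixed point theorem applied to $g|_{\Delta}\colon\Delta\to\Delta$, there exists a fixed point of $g$ in $\Delta$.

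\emph{Main obstacle.} The delicate part is the construction of $\Delta$ and the verification of its $g$-invariance, since $L_{a}$ is non-compact: one must exploit the homoclinic structure of $\widehat\phi_{a}$ and $\widehat\phi_{b}$ at $S$ to close off a genuine topological disk, and control $g$ near the end $S$ using $T$-equivariance and the Brouwer line property. Alternatively, one could formalize this via an equivariant Brouwer-type theorem on the annular covering $\widehat{\mathrm{dom}}(I)$, producing a periodic point of $\widehat f$ of period $q$ and rotation number $p/q$ directly; but the disk-and-Brouwer route is the most elementary, and it uses exactly the structural input extracted in Lemmas~\ref{lm:pqintersections}, \ref{lemma:following}, and~\ref{lemma: paths}.
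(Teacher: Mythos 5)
There is a genuine gap at the heart of your argument: the claimed forward invariance $g(\overline\Delta)\subset\Delta$ is not established and is, in general, false. The region cut out by an arc $\delta\in\mathcal X_{p,q}$ together with pieces of $\widetilde\phi_a$ and $T(\widetilde\phi_a)$ is not an attracting disk for $g=\widetilde f^q\circ T^{-p}$; it behaves like an isolating block of saddle type. Indeed, the only control you have on the leaf arcs is the Brouwer-line property $\widetilde f^q(T^{-p}(\widetilde\phi_a))\subset L(T^{-p}(\widetilde\phi_a))$, and $L(T^{-p}(\widetilde\phi_a))$ contains all of $\overline{R(\widetilde\phi_a)}$, $\overline{R(T(\widetilde\phi_a))}$, etc., so it says nothing about landing in $\Delta$ (by Lemma \ref{lm:pqintersections}, $g(\widetilde\phi_a)$ actually meets many translates $T^j(\widetilde\phi_b)$, which are separated from one another by $\widetilde\phi_a\cup\delta\cup T(\widetilde\phi_a)$, so the image of the leaf crosses out of the strip). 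Similarly $g(\delta)$ is an arc of $\widetilde\phi_b$, but there is no reason it lies on the $\Delta$-side of $\delta$. The "standard end-behavior argument" near $S$ is also not available: nothing forces $g$ to push points toward $S$ inside $\Delta$, and even if an end-compactified disk were invariant, Brouwer's theorem might only give you the added end point as a fixed point. The later parts of the paper make the non-invariance explicit: on the rectangles $\overline\Delta_{\delta_0,\delta_1}$ every boundary point leaves under $g$ or $g^{-1}$, which is the opposite of what you need.

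The fix is the one the paper uses, and it explains why the hypothesis is stated as $q\ge2$, $1\le p\le q$: this guarantees that $1\le p+1\le q$ or $1\le p-1\le q$, so Lemma \ref{lemma: paths}(1) provides a second arc $\delta_1\in\mathcal X_{p\pm1,q}$ in addition to $\delta_0\in\mathcal X_{p,q}$. These two arcs, joined by subarcs $\beta\subset\widetilde\phi_a$ and $\beta'\subset T(\widetilde\phi_a)$, bound a compact topological rectangle, and one computes the fixed point index of $g$ on its boundary. The key input is that $g(\delta_0)\subset\widetilde\phi_b$ and $g(\delta_1)\subset T^{\pm1}(\widetilde\phi_b)$ lie in different components of the complement of $\widetilde\phi_a\cup\delta_i\cup T(\widetilde\phi_a)$, so the displacement vector points "down" on one horizontal side and "up" on the other, while the Brouwer-line property makes it point "right" on $\beta$ and "left" on $\beta'$; after normalizing the picture with Homma's theorem the winding number is $\pm1$, hence $g$ has a fixed point inside. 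Your single-arc construction cannot see this index, because with one horizontal side there is no cancellation-free winding. (Your alternative suggestion of a Poincar\'e--Birkhoff-type argument on $\widehat{\mathrm{dom}}(I)$ would require verifying an intersection or boundary-twist property that is not available at this stage.)
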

\begin{proof} We set $\widetilde g=\widetilde f^q\circ T^{-p}$. The hypothesis implies that either $1\leq p+1\leq q$ or $1\leq p-1\leq q$. We will study the first case. The sets  $\mathcal X_{p,q}$ and $\mathcal X_{p+1,q}$ are non empty by assertion $(1)$ of Lemma \ref{lemma: paths}. Choose $\delta_0\in \mathcal X_{p,q}$ and $\delta_1\in \mathcal X_{p+1,q}$. Suppose that $\delta_0$ joins $\widetilde\phi_a(t_0)$ to $T(\widetilde\phi_a(t'_0))$ and that $\delta_1$ joins $\widetilde\phi_a(t_1)$ to $T(\widetilde\phi_a(t'_1))$. We remark that, as $\widetilde g(\delta_0)$ is contained in $\widetilde \phi_b$ and as $\widetilde g(\delta_1)$ is contained in $T(\widetilde \phi_b)$, then $\widetilde g(\delta_0)$ and $\widetilde g(\delta_1)$ lie in different connected components of the complement of $\widetilde \phi_a\cup \delta_0\cup T(\widetilde \phi_a)$ as well as in different connected components of the complement of $\widetilde \phi_a\cup \delta_1\cup T(\widetilde \phi_a)$.

Suppose first that $\delta_0\prec\delta_1$. Denote $\beta=\widetilde\phi_a\vert_{[t_1,t_0]}$ and $\beta'=T(\widetilde\phi_a)\vert_{[t'_0,t'_1]}$ and consider the simple closed curve $C=\beta\delta_0\beta'\delta_1^{-1}$. The index $i(\widetilde g, C)$ is well defined, we will prove that it is equal to {$-1$}. It will imply that $\widetilde g$ has a fixed point inside the topological disk bounded by $C$. Recall that to define $i(\widetilde g, C)$, one needs to choose a homeomorphism $h$ between $\widetilde{\mathrm{dom}}(I)$  and the Euclidean plane and consider the winding number of the vector field $\xi: z\mapsto  h\circ \widetilde g\circ h^{-1}(z)-z$ on $h(C)$, this integer being independent of the choice of $h$. To make the computation, one can use Homma's theorem \cite{Homma} that asserts that $h$ can be chosen such that 
$$h(\widetilde\phi _a)=\{0\}\times\R, \enskip h(T(\widetilde\phi _a))=\{1\}\times\R, \enskip h(\delta_0)=[0,1]\times\{0\}, \enskip h(\delta_1)=[0,1]\times\{1\}.$$
Note now that $\xi(z)$ is pointing: 
\begin{itemize}
\item on the right when $z$ belongs to $h(\beta)=\{0\}\times[0,1]$,
\item  on the left when $z$ belongs to $h(\beta')=\{1\}\times[0,1]$,
\item  below when $z$ belongs to $h(\delta_0)=\{0\}\times[0,1]$,
\item  above when $z$ belongs to $h(\delta_1)=\{1\}\times[0,1]$.
\end{itemize} This implies that the winding number is equal to $-1$.  

In case where $\delta_1\prec\delta_0$, define
$$\beta=\widetilde\phi_a\vert_{[t_0,t_1]}, \enskip \beta'=T(\widetilde\phi_a)\vert_{[t'_1,t'_0]}, \enskip C=\beta\delta_1\beta'\delta_0^{-1}$$ and choose $h$ such that
$$h(\widetilde\phi _a)=\{0\}\times\R, \enskip h(T(\widetilde\phi _a))=\{1\}\times\R, \enskip h(\delta_0)=[0,1]\times\{1\}, \enskip h(\delta_1)=[0,1]\times\{0\}.$$
Note now that $\xi(z)$ is pointing: 
\begin{itemize}
\item on the right when $z$ belongs to $h(\beta)=\{0\}\times[0,1]$,
\item  on the left when $z$ belongs to $h(\beta')=\{1\}\times[0,1]$,
\item  below when $z$ belongs to $h(\delta_0)=\{1\}\times[0,1]$,
\item  above when $z$ belongs to $h(\delta_1)=\{0\}\times[0,1]$.
\end{itemize} This implies that the winding number is equal to $+1$.  

In case $1\leq p-1\leq q$, we choose $\delta_0\in \mathcal X_{p,q}$ and $\delta_1\in \mathcal X_{p-1,q}$. The arguments are similar.\end{proof}

 \begin{figure}[ht!]
\hfill
\includegraphics [height=48mm]{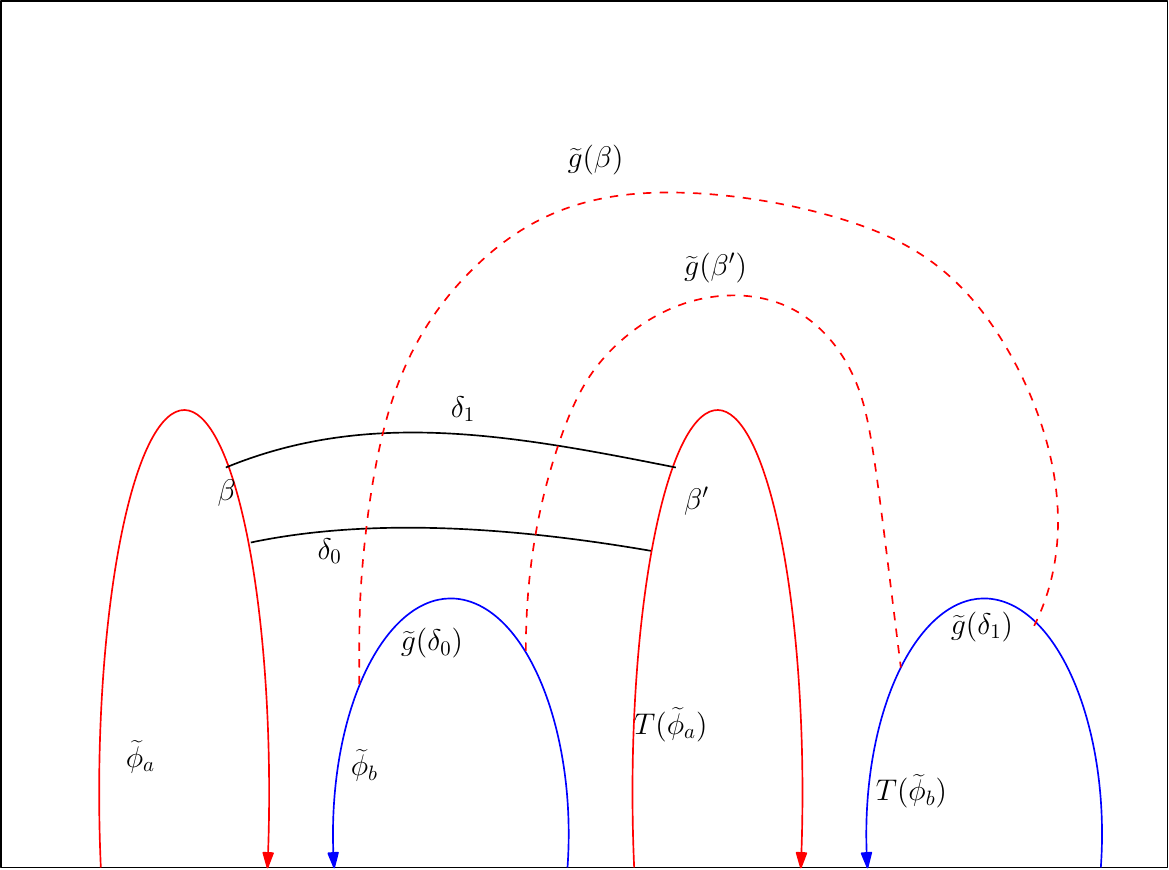}
\hfill{}
\caption{\small Proposition \ref{prop: preliminary-realization}. Relative position of $\widetilde g(\beta), \widetilde g(\delta_0), \widetilde g(\beta')$ and $\widetilde g(\delta_1)$. }
\label{figure_index_1}
\end{figure}
\begin{corollary}\label{cor: preliminary-realization}
 For every $p/q\in(0,1]$, such that $p$ and $q$ are positive integers, there exists a point $\widetilde z$ such that $\widetilde f^q(\widetilde z)=T^p(\widetilde z)$.
 \end{corollary}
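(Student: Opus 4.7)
The proof is essentially an immediate consequence of Proposition~\ref{prop: preliminary-realization}. The main observation I would open with is that $T$, being a covering automorphism of $\widetilde{\mathrm{dom}}(I)$, commutes with $\widetilde f$, so the desired equality $\widetilde f^q(\widetilde z) = T^p(\widetilde z)$ is the same as asserting that $\widetilde z$ is a fixed point of $\widetilde f^q \circ T^{-p}$.

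Next, given $p/q \in (0,1]$ written in lowest terms, I would split on the size of $q$. If $q \geq 2$, then the irreducibility $\gcd(p,q) = 1$ forces $p < q$ (since $\gcd(q,q) = q > 1$), so that $1 \leq p < q$ falls strictly within the range $1 \leq p \leq q$ required by Proposition~\ref{prop: preliminary-realization}. That proposition then directly produces a fixed point of $\widetilde f^q \circ T^{-p}$, which is the desired $\widetilde z$ for this case. Nothing more needs to be said here; the geometric and index-theoretic heart of the argument has already been handled.

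The one case not covered by this direct appeal is $(p,q) = (1,1)$, where one wants a fixed point of $\widetilde g := \widetilde f \circ T^{-1}$. My plan would be to rerun the index computation from the proof of Proposition~\ref{prop: preliminary-realization} at the level $q=1$: one path is taken from $\mathcal X_{1,1}$, which is non-empty because admissibility of $\widetilde\gamma$ at order $1$ gives $\widetilde f(\widetilde\phi_a) \cap T(\widetilde\phi_b) \neq \emptyset$, and Lemma~\ref{lemma:following} then extracts a component of $\widetilde f^{-1}(T\widetilde\phi_b) \cap L_a$ joining $\widetilde\phi_a$ to $T(\widetilde\phi_a)$. A companion arc comes from the $\widetilde{\mathcal F}$-transverse self-intersection data bounding a second component of the same preimage against $\widetilde\phi_a$ and $T(\widetilde\phi_a)$. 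Closing these arcs up into a Jordan curve $C$ and normalizing via Homma's theorem, the computation of the winding number of $z \mapsto \widetilde g(z) - z$ along $C$ is identical to the one in the proposition and produces $\pm 1$, hence a fixed point inside the disk bounded by $C$.

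The principal obstacle I anticipate is precisely this $(1,1)$ borderline case, because Lemma~\ref{lemma: paths}(1) is stated only for $q \geq 2$, so one must verify by hand the non-emptiness of the second family of paths at the $q=1$ level. Once both paths are available, the index argument transfers verbatim from the proof of the proposition, and the corollary follows.
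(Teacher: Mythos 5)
Your treatment of the case $p/q<1$ is exactly the paper's: irreducibility forces $q\geq 2$ and $1\leq p<q$, so Proposition \ref{prop: preliminary-realization} applies verbatim. The problem is the case $(p,q)=(1,1)$, which you correctly flag as the delicate point but then resolve with an argument that does not work. Your plan is to take one path in $\mathcal X_{1,1}$ and a ``companion arc'' bounding \emph{a second component of the same preimage} $\widetilde f^{-1}(T(\widetilde\phi_b))\cap L_a$. But the index computation in Proposition \ref{prop: preliminary-realization} is not insensitive to which translate of $\widetilde\phi_b$ the two horizontal sides map into: it uses $\delta_0\in\mathcal X_{p,q}$ and $\delta_1\in\mathcal X_{p\pm1,q}$ precisely so that $\widetilde g(\delta_0)\subset\widetilde\phi_b$ while $\widetilde g(\delta_1)\subset T^{\pm1}(\widetilde\phi_b)$; the separation statement recalled just before Lemma \ref{lm:pqintersections} then places these images in \emph{different} components of the complement of $\widetilde\phi_a\cup\delta_i\cup T(\widetilde\phi_a)$, which is what forces the displacement field to point ``below'' on one horizontal side and ``above'' on the other, yielding winding number $\pm1$. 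If both arcs lie in $\widetilde f^{-1}(T(\widetilde\phi_b))\cap L_a$, then both images lie in the single connected set $\widetilde\phi_b$, on the same side of the rectangle, the contributions of the two horizontal sides cancel, and the index is $0$, not $\pm1$. To repair this you would instead need a path in $\mathcal X_{0,1}$ or $\mathcal X_{2,1}$, i.e.\ $\widetilde f(\widetilde\phi_a)\cap\widetilde\phi_b\neq\emptyset$ or $\widetilde f(\widetilde\phi_a)\cap T^2(\widetilde\phi_b)\neq\emptyset$, and neither is available: the transverse self-intersection only yields, via Proposition \ref{pr: fundamental}, admissibility of order $2$ for the concatenated paths (this is why Lemma \ref{lm:pqintersections} and Lemma \ref{lemma: paths}(1) start at $q\geq2$).

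The paper sidesteps the whole issue: apply Proposition \ref{prop: preliminary-realization} with $(p,q)=(2,2)$ to get a fixed point of $\widetilde f^2\circ T^{-2}=(\widetilde f\circ T^{-1})^2$, i.e.\ a periodic point of the orientation-preserving plane homeomorphism $\widetilde f\circ T^{-1}$, and then invoke Brouwer's theory (a fixed point free, orientation preserving homeomorphism of the plane has no periodic points) to conclude that $\widetilde f\circ T^{-1}$ itself has a fixed point. You should replace your $q=1$ index argument with this two-line reduction.
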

 \begin{proof}  
 In case $p/q<1$ we just apply Proposition \ref{prop: preliminary-realization}. The case $p/q=1$ is particular, as not being an immediate consequence of Proposition \ref{prop: preliminary-realization}. Nevertheless, we know that $(\widetilde f\circ T^{-1})^2=\widetilde f^2\circ T^{-2}$ has a fixed point. Brouwer's theorem asserts that the existence of a periodic point implies the existence of a fixed point, for an orientation preserving plane homeomorphism. So $\widetilde f\circ T^{-1}$ has a fixed point.
 \end{proof}

\subsection{Existence of horseshoe in the universal covering space}  If $\delta_0$ and $\delta_1$ are two disjoint paths in $\mathcal X$, we denote $\overline{\Delta}_{\delta_0,\delta_1}$ the closed ``rectangle'' bounded by $\delta_0$, $\delta_1$, $\beta$ and $\beta'$, where $\beta$ is the subpath of $\widetilde \phi_a$ joining the ends of $\delta_0$, $\delta_1$ that lie on $\widetilde \phi_a$ and $\beta'$ is the subpath of $\widetilde T(\widetilde \phi_a)$ joining the two other ends. We will call {\it horizontal sides} the paths $\delta_0$, $\delta_1$, and vertical sides the paths $\beta$, $\beta'$. We will prove the following result. 

\begin{proposition} \label{prop: horseshoe-universal}
 If $q\geq 3$ and $1< p<q$, there exists a compact set $\widetilde Z_{p,q}\subset\widetilde{\mathrm{dom}}(I)$, invariant by $\widetilde f^q\circ T^{-p}$ such that
\begin{itemize}
\item the restriction of $\widetilde f^q\circ T^{-p}$ to $\widetilde Z_{p,q}$ is an extension of the Bernouilli shift $\sigma :\{1,2\}^{\Z}\to \{1,2\}^{\Z}$;
\item every $r$-periodic sequence of $\{1,2\}^{\Z}$ has a preimage by the factor map $\widetilde H_{p,q}:\widetilde Z_{p,q}\to \{1,2\}^{\Z}$ that is a $r$-periodic point of $\widetilde f^q\circ T^{-p}$.
\end{itemize}
\end{proposition}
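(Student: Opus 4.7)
The plan is to construct, inside the strip bounded by $\widetilde \phi_a$ and $T(\widetilde \phi_a)$, two disjoint topological rectangles $R^1, R^2$ whose images under $\widetilde g = \widetilde f^q \circ T^{-p}$ each cross $R^1 \cup R^2$ in the Markov manner of a Smale-type geometric horseshoe, and then to extract the invariant Cantor set and the semi-conjugation to the Bernoulli 2-shift using the planar index arguments already deployed in Proposition \ref{prop: preliminary-realization}. Concretely, since $q \geq 3$ and $1 < p < q$, Lemma \ref{lemma: paths}(1) supplies $\delta_0 \in \mathcal X_{1, q}$ and $\delta_1 \in \mathcal X_{q, q}$; Lemma \ref{lemma: paths}(3) applied to the triple $(p_0, p_2, p_1) = (1, p, q)$ then furnishes two distinct paths $\delta^1 \prec \delta^2$ in $\mathcal X_{p, q}$ sandwiched between $\delta_0$ and $\delta_1$. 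I set $R^1 = \overline \Delta_{\delta_0, \delta^1}$ and $R^2 = \overline \Delta_{\delta^2, \delta_1}$; these are disjoint topological rectangles with vertical sides on $\widetilde \phi_a$ and $T(\widetilde \phi_a)$.

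The key step is to show that each image $\widetilde g(R^i)$ crosses both $R^1$ and $R^2$ horizontally, i.e.\ from the $\widetilde \phi_a$-side to the $T(\widetilde \phi_a)$-side. Under $\widetilde g$, the horizontal sides of $R^1$ map into $T^{1-p}(\widetilde \phi_b)$ and $\widetilde \phi_b$, those of $R^2$ into $\widetilde \phi_b$ and $T^{q-p}(\widetilde \phi_b)$; the vertical sides, arcs of $\widetilde \phi_a$ and $T(\widetilde \phi_a)$, are carried to arcs of $\widetilde g(\widetilde \phi_a) = T^{-p}\widetilde f^q(\widetilde \phi_a)$ and its $T$-translate, which by Lemma \ref{lemma:accessibility} meet $T^k(\widetilde \phi_b)$ for every $k \in [1-p, q-p]$. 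Using Homma's theorem to place $\widetilde \phi_a, T(\widetilde \phi_a), \widetilde \phi_b$ and the relevant $T$-translates in planar standard position, the situation reduces to a finite combinatorial check on the cyclic order of the boundary arcs of $\widetilde g(R^i)$ relative to the boundary of $R^1 \cup R^2$: the choice of $\delta_0 \in \mathcal X_{1, q}$ and $\delta_1 \in \mathcal X_{q, q}$ is precisely what forces $\widetilde g(R^i)$ to stretch across enough $\widetilde \phi_b$-translates to produce two disjoint horizontal crossings.

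Granted this Markov crossing, I define $\widetilde Z_{p,q} = \bigcap_{n \in \Z} \widetilde g^{-n}(R^1 \cup R^2)$, a compact $\widetilde g$-invariant set, and the factor map $\widetilde H_{p,q} : \widetilde Z_{p,q} \to \{1,2\}^{\Z}$ by itineraries, $\widetilde H_{p,q}(\widetilde z)(n) = i$ iff $\widetilde g^n(\widetilde z) \in R^i$. Semi-conjugation is automatic, and surjectivity onto all sequences follows from the standard Cantor nesting argument once one knows that each finite cylinder $\bigcap_{|n|\leq N} \widetilde g^{-n}(R^{i_n})$ is non-empty, which in turn is a consequence of the crossing property. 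For an $r$-periodic sequence $(i_n)_{n\in\Z/r\Z}$, the set $K = \bigcap_{0\leq n < r} \widetilde g^{-n}(R^{i_n})$ is non-empty, compact, contained in a topological sub-rectangle of $R^{i_0}$, and $\widetilde g^r$-invariant; the index-$\pm 1$ computation of Proposition \ref{prop: preliminary-realization}, applied to $\widetilde g^r$ on the boundary of this sub-rectangle (whose four sides lie on $\widetilde \phi_a$, $T(\widetilde \phi_a)$ and two horizontal strips produced by the Markov crossing), yields a fixed point of $\widetilde g^r$ inside $K$ — an $r$-periodic point of $\widetilde g$ with the prescribed itinerary.

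The principal obstacle is the Markov crossing in the middle step: showing rigorously that $\widetilde g(R^1)$ and $\widetilde g(R^2)$ each intersect $R^1 \cup R^2$ in two components that cross horizontally, with the four resulting crossings mutually disjoint. Once Homma's theorem straightens the relevant leaves, this becomes a combinatorial statement about the order of the endpoints on $\widetilde \phi_a$ and $T(\widetilde \phi_a)$ of the images of $\delta_0, \delta^1, \delta^2, \delta_1$, but arranging the planar picture carefully, and ruling out spurious intersections between the boundary arcs, is the technical core of the argument.
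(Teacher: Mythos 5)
Your overall architecture --- two rectangles bounded by paths of $\mathcal X$, a Markov crossing for $\widetilde g=\widetilde f^q\circ T^{-p}$, itinerary coding, an index argument for periodic itineraries --- is the same as the paper's, and your choice of rectangles $R^1=\overline\Delta_{\delta_0,\delta^1}$, $R^2=\overline\Delta_{\delta^2,\delta_1}$ is legitimate (the paper takes its four paths in $\mathcal X_{p-1,q},\mathcal X_{p,q},\mathcal X_{p,q},\mathcal X_{p+1,q}$, but all that matters is that the two $\delta$-sides of each rectangle are sent into \emph{distinct} translates of $\widetilde\phi_b$). The gap is in the middle step, which you yourself flag as the ``technical core'' and do not carry out, and the picture you propose for it is rotated by ninety degrees. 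Since $\widetilde f(\overline{L(\widetilde\phi)})\subset L(\widetilde\phi)$ for every leaf, one has $\widetilde g(R^i)\subset L_a=\bigcap_{k}L(T^k(\widetilde\phi_a))$, so the image is disjoint from $\widetilde\phi_a\cup T(\widetilde\phi_a)$ and can never produce a component of $\widetilde g(R^i)\cap R^j$ joining the two vertical sides of $R^j$: there are no ``horizontal crossings from the $\widetilde\phi_a$-side to the $T(\widetilde\phi_a)$-side''. What actually happens is that a continuum $K\subset R^i$ joining the two $\delta$-sides is sent to a continuum in $L_a$ joining $T^{k}(\widetilde\phi_b)$ to $T^{k'}(\widetilde\phi_b)$ with $k\neq k'$; the separation fact recorded just before Lemma \ref{lm:pqintersections} (for any $\delta\in\mathcal X$, the set $\widetilde\phi_a\cup\delta\cup T(\widetilde\phi_a)$ separates $T^{k}(\widetilde\phi_b)$ from $T^{k'}(\widetilde\phi_b)$ when $k\neq k'$) then forces $\widetilde g(K)$ to cross the interior of \emph{every} path of $\mathcal X$, hence to contain disjoint subcontinua spanning $R^1$ from $\delta_0$ to $\delta^1$ and $R^2$ from $\delta^2$ to $\delta_1$. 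This is the Kennedy--Yorke crossing the paper uses; it is not a statement about the cyclic order of endpoints on $\widetilde\phi_a$, and Homma's theorem plays no role in it.

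The periodic-point step is also under-justified and departs from the paper. The paper runs the discrete Conley index: $\overline\Delta_1\cup\overline\Delta_2$ is an isolating block, the rank-one homological index of the cylinder over an $r$-periodic word is the $1\times 1$ matrix $\bigl(\prod_{0\leq k<r}m_{\varepsilon_k,\varepsilon_{k+1}}\bigr)$ with every $m_{i,j}=\pm1$ of a coherent sign, hence nonzero, which gives a nonzero fixed point index for $\widetilde g^r$ on that cylinder. Your plan to reapply the boundary computation of Proposition \ref{prop: preliminary-realization} to $\widetilde g^r$ on a sub-rectangle of $R^{i_0}$ does not go through as stated for $r\geq 2$: the sides of $\bigcap_{0\leq n<r}\widetilde g^{-n}(R^{i_n})$ are arcs of iterated preimages of the $\delta$'s, not arcs of $\widetilde\phi_a$, $T(\widetilde\phi_a)$ and of translates of $\widetilde f^{-q}(\widetilde\phi_b)$, and you have no control on the direction of the displacement vector of $\widetilde g^r$ along them without an inductive covering-relation argument or the Conley-index packaging. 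As written, both bullet points of the statement remain unproven.
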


\par\noindent\textit{Proof.} We suppose that $q\geq 3$ and fix $1<p<q$. Using Lemma \ref{lemma: paths} we can find four paths $\delta_1\in\mathcal X_{p-1,q}$, $\delta_2\in\mathcal X_{p,q}$,  $\delta_3\in\mathcal X_{p,q}$ and  $\delta_4\in\mathcal X_{p+1,q}$ all pairwise disjoint such that \textcolor{black}{$\delta_1<\delta_2<\delta_3<\delta_4$ or $\delta_4<\delta_3<\delta_2<\delta_1$. }

We set $$\overline\Delta_1=\overline\Delta_{\delta_1,\delta_2}, \enskip\overline\Delta_2=\overline\Delta_{\delta_3,\delta_4},\enskip \overline\Delta=\overline\Delta_{\delta_1,\delta_4}$$ and $\widetilde g=\widetilde f^q\circ T^{-p}$. Note again that $\widetilde g(\delta_1)\subset T^{-1}(\widetilde \phi_b)$, that both $\widetilde g(\delta_2)$ and $\widetilde g(\delta_3)$ are contained in $\widetilde \phi_b$ and that $\widetilde g(\delta_4)\subset T(\widetilde \phi_b)$.

Let us explain first why $\overline\Delta$ is a topological horseshoe in the sense of Kennedy-Yorke (see \cite{KennedyYorke}). Every continuum $K\subset \overline\Delta$ that meets $\delta_{1}$ and $\delta_{4}$ contains a continuum $K_1\subset \overline\Delta_1$ that meets $\delta_{1}$ and $\delta_{2}$  and a continuum $K_2\subset \overline\Delta_2$ that meets $\delta_{3}$ and $\delta_4$. The set $\widetilde g(K_1)$ is a continuum that meets $\widetilde\phi_b$ and $T^{-1}(\widetilde\phi_b)$ and that does not meet neither $R(\widetilde\phi_a)$ nor $R(T(\widetilde\phi_a))$. This implies that it contains a continuum included in $\overline\Delta$ that meets $\delta_{1}$ and $\delta_{4}$. Similarly,  $\widetilde g(K_2)$ is a continuum that meets $\widetilde\phi_b$ and $T(\widetilde\phi_b)$ and that does not meet neither $R(\widetilde\phi_a)$ nor $R(T(\widetilde\phi_a))$. So, it contains a continuum included in $\overline\Delta$ that meets $\delta_{1}$ and $\delta_{4}$. Summarizing,  every continuum $K\subset \overline\Delta$ that meets $\delta_{1}$ and $\delta_{4}$ contains two disjoint sub-continua whose images by $\widetilde g$ are included in $\overline\Delta$ and meet $\delta_{1}$ and $\delta_{4})$. This is the definition of a topological horseshoe in the sense of Kennedy-Yorke.  According to \cite{KennedyYorke}, there exists a compact set $\widetilde Z$ invariant by $\widetilde g$ such that $\widetilde g\vert_{\widetilde Z}$ is an extension of the two sided Bernouilli shift on $\{1,2\}^{\Z}$, see Figure \ref{figure_horseshoe_covering}.

 \begin{figure}[ht!]
\hfill
\includegraphics [height=48mm]{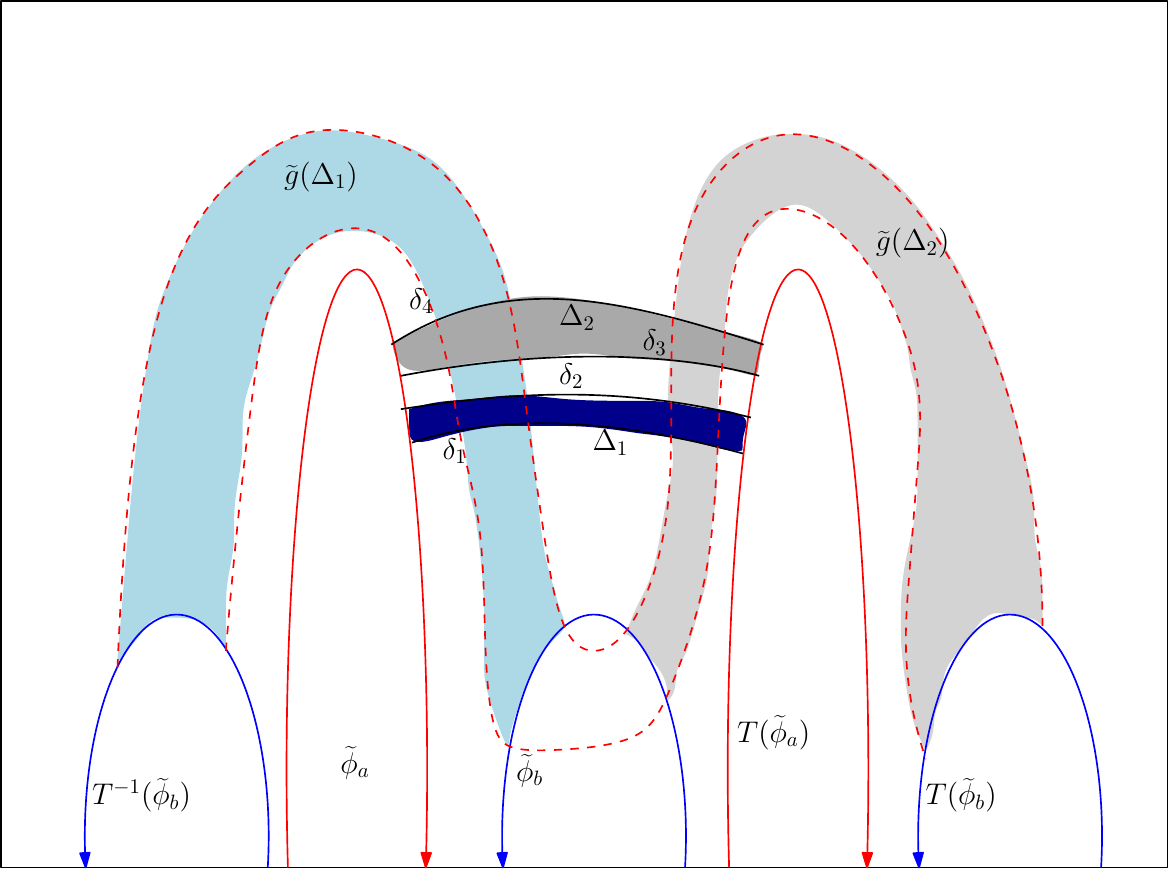}
\hfill{}
\caption{\small Horseshoe in the universal covering space}
\label{figure_horseshoe_covering}
\end{figure}

To get our proposition \textcolor{black}{we still must} prove that to every periodic sequence corresponds at least one periodic point in $\widetilde Z$. If $\widetilde z$ belongs to $\partial(\overline\Delta_1\cup \overline\Delta_2)$, then either its image by $\widetilde g$ or its inverse image is outside $\overline\Delta_1\cup \overline\Delta_2$. The first situation occurs when $z$ belongs to an horizontal side, the second one when $z$ belongs to a vertical side. This means that the set $\widetilde Z_{p,q}=\bigcap_{k\in\Z} \widetilde g^{-k}(\overline\Delta_1\cup \overline\Delta_2)$ is a compact set included in $\Delta_1\cup \Delta_2$, {it} is a {\it locally maximal invariant set} and $\overline\Delta_1\cup \overline\Delta_2$ an {\it isolating block} of $\widetilde Z_{p,q}$. Note that there exists a map $\widetilde H_{p,q}: \widetilde Z_{p,q}\to \{1,2\}^{\Z}$ that associates to a point $\widetilde z\in\widetilde Z_{p,q}$ the sequence $(\varepsilon_k)_{k\in\Z}$ such that $\widetilde g^{i}(\widetilde z)\in \Delta_{\varepsilon_k}$. The map $\widetilde H_{p,q}$ is continuous and satisfies $\widetilde H_{p,q}\circ\widetilde g=\sigma\circ \widetilde H_{p,q}$.  We will prove that every periodic sequence of period $r$ is the image by $\widetilde H_{p,q}$ of a periodic point of $\widetilde g$ of period $r$. 
This will imply that $\widetilde H_{p,q}$ is onto, because its image is compact, and we will get a complete proof of Proposition \ref{prop: horseshoe-universal}.

\textcolor{black}{We will implicitly use in the proof a result of Ker\'ekj\'art\'o about intersection of Jordan domains. Say that an open set $U$ of the plane is a {\it Jordan domain} if it is the bounded connected component of the complement of a simple loop. The boundary $\partial U$ of $U$ inherits a natural orientation and consequently a natural cyclic order. The following result gives a description of the intersection of Jordan domains in the non trivial case (see \cite{LeCalvezYoccoz1}).
\begin{lemma}\label{lemma:JordanDomains} Let $U$ and $U'$ be two Jordan domains such that
$$U\cap U'\not=\emptyset, \enskip U\not\subset U', \enskip U'\not\subset U.$$
Then, every connected component $U''$ of $U\cap U'$ is a Jordan domain. Moreover, one can write
$$\partial U''\setminus (\partial U\cap\partial U')=\bigsqcup_{i\in I} (a_i,b_i) \sqcup  \bigsqcup_{i'\in I'} (a_{i'},b_{i'}),$$
where  $(a_i,b_i)$ is a connected component of $\partial U\cap U'$ and  $(a_{i'},b_{i'})$ is a connected component of $\partial U'\cap U$, and it holds that
\begin{itemize}
\item the orientations induced on each interval $(a_i,b_i)$ by $\partial U$ and $\partial U''$ coincide;
 \item the orientations induced on each interval $(a_{i'},b_{i'})$ by $\partial U'$ and $\partial U''$ coincide;
 \item the cyclic orders induced on $\partial U\cap \partial U''$ by $\partial U$ and $\partial U''$ coincide;
  \item the cyclic orders induced on $\partial U'\cap \partial U''$ by $\partial U'$ and $\partial U''$ coincide.
  \end{itemize}
\end{lemma}}
\textcolor{black}{
\begin{lemma} \label{lemma:Markov} Fix ${\bf e}=(\varepsilon_{k})_{0\leq k<K}\in\{1,2\}^K$, where $K>1$, then denote $\beta$, $\beta'$ the vertical sides of $\Delta_{\varepsilon_0}$ and $\delta$, $\delta'$ the horizontal sides of  $\Delta_{\varepsilon_{K-1}}$.There exists a Jordan domain $U_{\bf e}$ that satisfies the following properties:
\begin{itemize}
\item $U_{\bf e}$ is a connected component of $\bigcap_{ 0\leq k<K} \widetilde g^{-k}( \Delta_{\varepsilon_k})$;
\item $U_{\bf e}$ is a connected component of $ \Delta_{\varepsilon_0}\cap \widetilde g^{-K+1}( \Delta_{\varepsilon_{K-1}})$;
\item $\partial U_{\bf e}\subset \beta\cup\beta'\cup \widetilde g^{-K+1}(\delta)\cup \widetilde g^{-K+1}(\delta')$;
\item there exists a unique connected component $\alpha$ of $\partial U_{\bf e}\cap \widetilde g^{-K+1}(\Delta_{\varepsilon_{K-1}})$ that is contained in $\beta$ and such that $g^{K-1}(\overline\alpha) $ joins $\delta$ and $\delta'$;
\item there exists a unique connected component $\alpha'$ of $\partial U_{\bf e}\cap \widetilde g^{-K+1}( \Delta_{\varepsilon_{K-1}})$ that is contained in $\beta'$ and such that $g^{K-1}(\overline\alpha') $ joins $\delta$ and $\delta'$;
\item there exists a unique connected component $\sigma$ of $\partial U_{\bf e}\cap  \Delta_{\varepsilon_{0}}$ that is contained in $\widetilde g^{-K+1}(\delta)$ and such that $\overline\sigma$ joins $\beta$ and $\beta'$;
\item there exists a unique connected component $\sigma'$ of $\partial U_{\bf e}\setminus  \partial \Delta_{\varepsilon_{0}}$ that is contained in $\widetilde g^{-K+1}(\delta')$ and such that $\overline\sigma'$ joins $\beta$ and $\beta'$.
\end{itemize} 
\end{lemma}}
\begin{proof} \textcolor{black}{Let us prove first that every connected component $U$ of $\bigcap_{ 0\leq k<K} \widetilde g^{-k}( \Delta_{\varepsilon_k})$ is a connected component of $ \Delta_{\varepsilon_0}\cap \widetilde g^{-K+1}( \Delta_{\varepsilon_{K-1}})$. Let $U'$ be the connected component of  $ \Delta_{\varepsilon_0}\cap \widetilde g^{-K+1}( \Delta_{\varepsilon_{K-1}})$ that contains $U$. The boundary of $U$ is included in $\bigcup_{ 0\leq k<K} \widetilde g^{-k}( \partial \Delta_{\varepsilon_k})$ and we have $$\widetilde g^{-k+1}(\overline\Delta_{\varepsilon_{k-1}})\cap  \widetilde g^{-k}( \partial \Delta_{\varepsilon_k}) \cap \widetilde g^{-k-1}(\overline\Delta_{\varepsilon_{k+1}})=\emptyset,$$ if $0<k<K-1$, because
$$\widetilde g(\overline \Delta_{\varepsilon_{k-1}})\cap   \partial \Delta_{\varepsilon_k} \cap \widetilde g^{-1}(\overline\Delta_{\varepsilon_{k+1}})=\emptyset.$$ }
\textcolor{black}{Indeed, the forward image of a horizontal side of $ \Delta_{\varepsilon_k}$ and the backward image of a vertical side of  $ \Delta_{\varepsilon_k}$ are disjoint from $\overline \Delta$. Consequently it holds that $\partial U\subset \partial \Delta_{\varepsilon_0} \cup \widetilde g^{-K+1}( \partial \Delta_{\varepsilon_{K-1}})$. This means that the frontier of $U$ in $U'$ is empty and so $U=U'$ by connectedness of $U'$.  The forward image of a horizontal side of $ \Delta_{\varepsilon_0}$ and the backward image of a vertical side of  $ \Delta_{\varepsilon_{K-1}}$ being disjoint from $\overline \Delta$, one deduces that  $\partial U\subset \beta\cup\beta'\cup \widetilde g^{-K+1}(\delta)\cup \widetilde g^{-K+1}(\delta')$.}

\textcolor{black}{We will argue by induction to prove  Lemma \ref{lemma:Markov} . Let us begin by studying the case where \textcolor{black}{$K=2$}.  Both paths $\widetilde g(\beta)$ and $\widetilde g(\beta')$ join $\widetilde \phi_b$ to $T(\widetilde \phi_b)$ or join $\widetilde \phi_b$ to $T^{-1}(\widetilde \phi_b)$ and are included in $L_a$. So, there exists a finite and odd number $m$ of connected components of $\widetilde g(\beta)\cap \Delta_{\varepsilon_1}$ whose closure join $\delta$ and $\delta'$ and a finite and odd number $m'$ of connected components of $\widetilde g(\beta')\cap \Delta_{\varepsilon_1}$ whose closure join $\delta$ and $\delta'$. Each of these $m+m'$ paths is included in the boundary of a connected component of $\widetilde g(\Delta_{\varepsilon_0})\cap \Delta_{\varepsilon_1}$, unique but depending on the path. Moreover, by Lemma  \ref{lemma:JordanDomains}, this boundary is a simple loop and it contains exactly two such paths. So there exists at least one component $V$ of $\widetilde g(\Delta_{\varepsilon_0})\cap \Delta_{\varepsilon_1}$ whose boundary contains a connected component $\widetilde g(\alpha)$ of $\widetilde g(\beta)\cap \Delta_{\varepsilon_1}$ whose closure joins $\delta$ and $\delta'$ and a connected component $\widetilde g(\alpha')$ of $\widetilde g(\beta')\cap \Delta_{\varepsilon_1}$ whose closure joins $\delta$ and $\delta'$. Using again Lemma  \ref{lemma:JordanDomains}, one deduces that the boundary of $U=\widetilde g^{-1}(V)$ contains a unique connected component $\sigma$ of $\widetilde g^{-1}(\delta)\cap \Delta_{\varepsilon_0}$ whose closure joins $\beta$ and $\beta'$ and a unique connected component $\sigma'$  of $\widetilde g^{-1}(\delta')\cap \Delta_{\varepsilon_0}$ whose closure joins $\beta$ and $\beta'$. Note that $U$ satisfies the conclusion of the lemma.}

\textcolor{black}{Now suppose that Lemma \ref{lemma:Markov} has been proven for $K$ and let us prove it for $K+1$. We denote $\beta$, $\beta'$ the vertical sides of $\Delta_{\varepsilon_0}$ and $\delta$, $\delta'$ the horizontal sides of  $\Delta_{\varepsilon_{K}}$. Consider a sequence ${\bf e}=(\varepsilon_{k})_{0\leq k\leq K}$,
 set ${\bf e'}=(\varepsilon_{k})_{0\leq k<K}$ and choose  $U_{\bf e}$ satisfying the conclusion of Lemma \ref{lemma:Markov}.  Every component $\alpha$ of $\partial U_{\bf e}\cap \widetilde g^{-K+1}(\Delta_{\varepsilon_{K-1}})$ is included in $\beta$ or in $\beta'$. Its image by $\widetilde g^{K-1}$ is included in $\Delta_{\varepsilon_{k-1}}$ and its closure joins a horizontal side of $\Delta_{\varepsilon_{k-1}}$ to a horizontal side of $\Delta_{\varepsilon_{k-1}}$. If both sides are equal, there exists a finite and even number of connected components of $\widetilde g^K(\alpha)\cap \Delta_{\varepsilon_K}$ whose closure joins $\delta$ to  $\delta'$. 
Furthermore there are finitely many components $\alpha$ such that this number is non zero (because there is finitely many components $\alpha$ whose diameter is larger than a given positive number). If both sides are different, there exists a finite and odd number of connected components of $\widetilde g^K(\alpha)\cap \Delta_{\varepsilon_K}$ whose closure joins $\delta$ to  $\delta'$. By assumptions, we know that there exists exactly one such component on $\beta$ and one on $\beta'$. Arguing like in the case $K=1$, we deduce that there exists at least one component $V_{\bf e}$ of $\widetilde g^K(U_{\bf e'})\cap \Delta_{\varepsilon_K}$ whose boundary contains a connected component of $\widetilde g^K(\beta)\cap \Delta_{\varepsilon_K}$ whose closure joins $\delta$ to  $\delta'$ and a connected component of $\widetilde g^K(\beta')\cap \Delta_{\varepsilon_K}$ whose closure joins $\delta$ to  $\delta'$. Here again, the boundary of $U_{\bf e} =\widetilde g^{-K}(V_{\bf e})$ contains a connected component of $\widetilde g^{-K}(\delta)\cap \Delta_{\varepsilon_0}$ whose closure joins $\beta$ to  $\beta'$ and a connected component of $\widetilde g^{-K}(\delta')\cap \Delta_{\varepsilon_0}$ whose closure joins $\beta$ to  $\beta'$. Note that $U_{\bf e} $ satisfies the conclusion of the lemma.}
 \end{proof}
 
 \textcolor{black}{The following result will immediately imply Proposition \ref{prop: horseshoe-universal}.
 \begin{lemma} \label{lemma:periodique} Let  ${\bf e}=(\varepsilon_{k})_{0\leq k<K}$ be a sequence in $\{1,2\}^K$, where $K>1$. If $U_{\bf e}$ is a Jordan domain that satisfies the conclusion of Lemma \ref{lemma:Markov}, then $U_{\bf e}$ contains a fixed point of $\widetilde g^K$.
\end{lemma}
\begin{proof} The proof is similar to the proof of Proposition \ref{prop: preliminary-realization}. We denote $\beta$ the vertical side of $\Delta_{\varepsilon_0}$ lying on $\widetilde \phi_a$ and $\beta'$ the vertical side of $\Delta_{\varepsilon_0}$ lying on $T(\widetilde \phi_a)$. We denote $\delta$ the horizontal side of $\Delta_{\varepsilon_{K-1}}$ whose image by $\widetilde g$ lies on $\widetilde\phi_b$ and $\delta'$ the horizontal side of $\Delta_{\varepsilon_{K-1}}$ whose  image by $\widetilde g$ lies on $T^{\pm 1} (\widetilde \phi_b)$ (meaning $T(\widetilde\phi_b)$ or $T^{-1}(\widetilde\phi_b)$. The paths $\alpha$, $\alpha'$, $\sigma$ and $\sigma'$ are defined in the statement of Lemma \ref{lemma:Markov}. To prove Lemma \ref{lemma:periodique}  we will show that the index $i(\widetilde g^K, \partial U_{\bf e})$ is equal to $-1$ if $\overline\sigma<\overline\sigma'$ and equal to $+1$ if $\overline\sigma'<\overline\sigma$. Note that 
$$\widetilde g^K(z)\in \begin{cases} &L_a \enskip\mathrm{if}  \enskip x\in \beta\cup\beta',\\
&\widetilde \phi_b \enskip\mathrm{if} \enskip z\in \widetilde g^{1-K}(\delta),\\
&T^{\pm 1}(\widetilde \phi_b)\enskip \mathrm{if} \enskip z\in \widetilde g^{1-K}(\delta').
\end{cases}
$$
Using Homma's theorem, consider an orientation preserving plane homeomorphism $h$ such that 
$$h(\widetilde\phi _a)=\{0\}\times\R, \enskip h(T(\widetilde\phi _a))=\{1\}\times\R, \enskip h(\sigma)=(0,1)\times\{0\}, \enskip h(\sigma')=(0,1)\times\{1\}.$$
Note now that $\xi(z)=h\circ\widetilde g^{K}\circ h^{-1}(z)-z$ points: 
\begin{itemize}
\item on the right when $z$ belongs to $h(\partial U_{\bf e}\cap \beta)$ 
\item  on the left when $z$ belongs to $h(\partial U_{\bf e}\cap \beta')$,
\item  below when $z$ belongs to $h(\partial U_{\bf e}\cap \widetilde g^{1-K}(\delta))$,
\item  above when $z$ belongs to $h(\partial U_{\bf e}\cap \widetilde g^{1-K}(\delta'))$
\end{itemize} The complement of $\alpha\cup\alpha'\cup\sigma\cup\sigma'$ in $\partial U_{\bf e}$ is the union of pairwise disjoint closed intervals $I_{i}$, $1\leq i\leq 4$, such that $I_1$ lies between $\delta'$ and $\alpha$, $I_2$ lies between $\alpha$ and $\delta$, $I_3$ lies between $\delta$ and $\beta'$, and $I_4$ lies between $\beta'$ and $\sigma'$. Note know that
\begin{itemize} 
\item $\xi(z)$ points on the right or above and is not a positive multiple of $(-1,-1)$ if $z\in h(I_1)$;
\item $\xi(z)$ points on the right or below and is not a positive multiple of $(-1,1)$ if $z\in h(I_2)$;
\item $\xi(z)$ points on the left or below and is not a positive multiple of $(1,1)$ if $z\in h(I_3)$;
\item $\xi(z)$ points on the left or above and is not a positive multiple of $(1,-1)$ if $z\in h(I_4)$.
\end{itemize}
It is sufficient to prove the first item, which is the consequence of the fact that every point in $I_1$ belongs either to $\beta$ or to $\widetilde g^{1-K}(\delta')$. One can construct a continuous function  $\xi': h(\partial U_{\bf e})\to \R^2\setminus\{0\}$  such that
\begin{itemize} 
\item $\xi(z)=(-1,-1)$ if $z\in h(I_1)$;
\item $\xi(z)=(-1,1)$ if $z\in h(I_2)$;
\item $\xi(z)=(1,1)$ if $z\in h(I_3)$;
\item $\xi(z)=(1,-1)$ if $z\in h(I_4)$;
\item $\xi(z)$ points on the left if $z\in h(\beta)$,
\item $\xi(z)$ points on the right if $z\in h(\beta')$,
\item $\xi(z)$ points above if  $z\in h(\sigma)$,
\item $\xi(z)$ points below if  $z\in h(\sigma')$.
\end{itemize}
The degree of $\xi'$ is equal to $-1$, moreover $\xi(z)$ is not a multiple of $\xi(z)$ for every $z\in h(\partial U_{\bf e})$. This implies that $i(\widetilde g^k, \partial U_{\bf e})=-1$.
In case $\delta'<\delta$, on can prove that $i(\widetilde g^k, \partial U_{\bf e})=1$ by an analogous method, like in the proof of Proposition \ref{prop: preliminary-realization}.
\end{proof}}

\begin{remark} \textcolor{black}{By looking more carefully at the proof of Lemma \ref{lemma:Markov}, we could be able to construct a Jordan domain $U_{\bf e}$ satisfying the conditions of Lemma  \ref{lemma:Markov} and such that $i(\widetilde g^k, \partial U_{\bf e})=\varepsilon$, where $\varepsilon\in\{-1,1\}$ depends {explicitly} on the sequence  ${\bf e}=(\varepsilon_{k})_{0\leq k<K}$ and on the relative position
of $\Delta_1$ and $\Delta_2$. Suppose for instance that $\delta_1<\delta_2<\delta_3<\delta_4$, meaning that $i(\widetilde g, \partial \Delta_1)=+1$ and $i(\widetilde g, \partial \Delta_2)=-1$. Set $K=2$ and consider the sequence $(1,2)$. The fact that $\widetilde g(\delta_2)$ is on $\widetilde\phi_b$ and $\widetilde g(\delta_4)$ is on $T^{-1}(\widetilde\phi_b)$  tells us that the sides $\delta$, $\delta'$ mentioned in the proof of Lemma \ref{lemma:periodique} are respectively $\delta_3$ and $\delta_4$. The fact that $\widetilde g(\delta_1)$ is on $T^{-1}(\widetilde\phi_b)$ and $\widetilde g(\delta_2)$ is  on $\widetilde\phi_b$ implies that $U_{\bf e}$ can be constructed in such a way that $\sigma'<\sigma$. Consequently we would have $i(\widetilde g^{2}, \partial U_{\bf e}) =1$. Similarly, in case the sequence is $(1,1)$ or $(2,2)$, then $U_{\bf e}$ can be constructed such that $i(\widetilde g^{2}, \partial U_{\bf e}) =-1$. More generally, in the case where $\delta_1<\delta_2<\delta_3<\delta_4$, for a given sequence  ${\bf e}=(\varepsilon_{k})_{0\leq k<K}$ one can find $U_{\bf e}$ such  $i(\widetilde g^{K}, \partial U_{\bf e}) =(-1)^r$, where $r$ is the {cardinality of the set of indices $0\le k <K$ such that $\varepsilon_{k}=1$.} What has been done in this section, in particular what is explained in this remark, is a very simple example of  {\it Conley index theory}, more precisely of {\it homological Conley index theory}.  This theory describes some aspects of the dynamics in a neighborhood of an isolated compact invariant set. What happens in case this invariant set can be partitioned into two compact isolated invariant sets (as it is the case here) is well described (in a general abstract situation) in \cite{Szymczak}. The proof given in this present article is a special case of what has been done in the unpublished article \cite{LeCalvezYoccoz2}.}
\end{remark}

\subsection{Existence of horseshoe in an annular covering space}
We will construct other ``geometric horseshoes'' in this subsection, not in the universal covering space of the domain of a maximal isotopy but in an annular covering space. The first interest of these constructions is that they will permit us to get the constant $\log 4/3q$ in the statement of Theorem  \ref{th: horseshoe}. The second interest is that the horsehoes that we will construct are {\it rotational horseshoes}, as defined for instance in \cite{PassegiPotrieSambarino}, and we will deduce some additional dynamical properties related to rotation numbers.

We consider in this subsection, the annular covering $\mathring{\mathrm{dom}}(I)=\widetilde{\mathrm{dom}}(I)/T^2$. Denote $\mathring I$ the induced identity isotopy, $\mathring f$ the induced lift of $f$, and $\mathring{\mathcal{F}}$ the induced foliation.

\begin{proposition} \label{prop: horseshoe-annular}
 If $q\geq 2$, there exists a compact set $\mathring Z_{q}\subset\mathring{\mathrm{dom}}(I)$, invariant by $\mathring f^q$ such that
\begin{itemize}
\item the restriction of $\mathring f^q$ to $\mathring Z_{q}$ is an extension of the Bernouilli shift $\sigma :\{1,\dots, 2q-2\}^{\Z}\to \{1,\dots, 2q-2\}^{\Z}$;
\item every $s$-periodic sequence of $\{1,\dots, 2q-2\}^{\Z}$ has a preimage by the factor map $\mathring H_{q}:\mathring Z_{q}\to \{1,\dots, 2q-2\}^{\Z}$ that is a $s$-periodic point of $\mathring f^q$.
\end{itemize}
\end{proposition}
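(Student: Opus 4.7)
The plan is to imitate Proposition \ref{prop: horseshoe-universal} in the annular cover, producing $2q-2$ ``columns'' and a $(2q-2)\times(2q-2)$ homological Conley index matrix with nonzero entries. First I will choose the horizontal paths. Using Lemma \ref{lemma: paths}(1) to guarantee that each $\mathcal X_{p,q}$ is nonempty and iterating Lemma \ref{lemma: paths}(3) to slot in intermediate paths, I will select in $\widetilde{\mathrm{dom}}(I)$ an ordered family containing one path of $\mathcal X_{1,q}$, one of $\mathcal X_{q,q}$, and two paths of $\mathcal X_{p,q}$ for every $2\le p\le q-1$, arranged in the pattern $\delta_1\prec\delta_2\prec\delta_2'\prec\cdots\prec\delta_{q-1}\prec\delta_{q-1}'\prec\delta_q$ already exploited in Proposition \ref{prop: preliminary-realization}. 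Consecutive pairs whose orders differ by one bound $q-1$ rectangles inside the strip between $\widetilde\phi_a$ and $T(\widetilde\phi_a)$, the ``same-order'' gaps $(\delta_p,\delta_p')$ being left out. Their $T$-translates will furnish $q-1$ more rectangles between $T(\widetilde\phi_a)$ and $T^2(\widetilde\phi_a)$, and since the two strips together form a fundamental domain for the deck group $\langle T^2\rangle$ of $\widetilde{\mathrm{dom}}(I)\to\mathring{\mathrm{dom}}(I)$, projecting will yield $2q-2$ rectangles $\mathring\Delta_1,\dots,\mathring\Delta_{2q-2}\subset\mathring{\mathrm{dom}}(I)$ with pairwise disjoint interiors.

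The heart of the argument will be to verify that $\bigsqcup_i\overline{\mathring\Delta_i}$ is a Kennedy--Yorke topological horseshoe for $\mathring f^q$ on $2q-2$ symbols. Given a continuum $\widetilde K$ in one of the universal-cover rectangles with endpoints on its two horizontal sides, the image $\widetilde f^q(\widetilde K)$ will be a continuum whose endpoints lie on $T^p(\widetilde\phi_b)$ and $T^{p+1}(\widetilde\phi_b)$ and whose interior avoids every $R(T^k(\widetilde\phi_a))$, by the basic Brouwer-line property of $\widetilde{\mathcal F}$. Combining this with Lemma \ref{lemma:accessibility} (which ensures $\widetilde f^q(T^j(\widetilde\phi_a))\cap T^{p}(\widetilde\phi_b)\ne\emptyset$ for all $j\in\{0,\dots,q\}$) and with Lemma \ref{lemma:following}, one checks that after projection to $\mathring{\mathrm{dom}}(I)$ the image contains a sub-continuum crossing every $\mathring\Delta_j$ from its bottom to its top. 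Applying \cite{KennedyYorke} will then produce the compact $\mathring f^q$-invariant set $\mathring Z_q$ together with a factor map $\mathring H_q:\mathring Z_q\to\{1,\dots,2q-2\}^{\Z}$ semiconjugating $\mathring f^q\vert_{\mathring Z_q}$ to the full shift.

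To lift every periodic symbolic sequence to a periodic point I will proceed as in Proposition \ref{prop: horseshoe-universal} via the discrete Conley index. Collapsing the union of the horizontal sides of all $\overline{\mathring\Delta_i}$ to a common basepoint $*$ produces a pointed compact ANR with an index map $F$ fixing $*$ that represents the Conley index of the maximal invariant set in $\bigsqcup_i\overline{\mathring\Delta_i}$. Its induced endomorphism $F_{*,1}$ of first homology will be encoded by a $(2q-2)\times(2q-2)$ transition matrix $M=(m_{ij})$ with $m_{ij}=\pm 1$ for every pair $(i,j)$, each entry being nonzero by the Kennedy--Yorke covering above and its sign determined exactly as in Proposition \ref{prop: preliminary-realization}. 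For any $s$-periodic sequence $(\varepsilon_k)\in\{1,\dots,2q-2\}^{\Z}$, the associated cylinder is a locally maximal invariant set of $\mathring f^{qs}$ whose rank-$1$ homological Conley index equals $\prod_{k=0}^{s-1}m_{\varepsilon_k,\varepsilon_{k+1}}=\pm 1\ne 0$; the Lefschetz--Dold formula will then furnish a fixed point of $\mathring f^{qs}$ inside the cylinder, i.e., an $s$-periodic point of $\mathring f^q$ lying in $\mathring H_q^{-1}(\{(\varepsilon_k)\})$.

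The main obstacle will be the second step: controlling how $\widetilde f^q$ stretches each rectangle over precisely the $2q-2$ projected ones. Geometrically $\widetilde f^q$ should map the strip between $\widetilde\phi_a$ and $T(\widetilde\phi_a)$ into a region stretching from $T^p(\widetilde\phi_b)$ to $T^{p+1}(\widetilde\phi_b)$ while, by the combinatorics of Lemmas \ref{lemma:accessibility} and \ref{lemma:following}, winding horizontally across every chosen rectangle and its $T$-translate; confirming that after projection modulo $T^2$ the image hits each of the $2q-2$ rectangles with a non-vanishing, sign-controlled crossing is the most delicate bookkeeping of the proof.
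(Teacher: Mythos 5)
Your proposal follows essentially the same route as the paper: the same selection of $2q-2$ paths (one of order $1$, one of order $q$, two of each intermediate order), the same $q-1$ rectangles bounded by consecutive pairs of orders differing by one together with their $T$-translates projected to $\widetilde{\mathrm{dom}}(I)/T^2$, the same Kennedy--Yorke-type crossing verification (images of crossing continua join $T^l(\widetilde\phi_b)$ to $T^{l+1}(\widetilde\phi_b)$ inside $L_a$ and hence cross every rectangle), and the same discrete Conley index computation with a $(2q-2)\times(2q-2)$ matrix all of whose entries are $\pm1$, so that every cylinder over a periodic sequence has nonzero index. The argument is correct as outlined.
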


\begin{proof} We suppose that $q\geq 2$. By Lemma \ref{lemma: paths}, we can find a decreasing or increasing sequence of pairwise disjoint paths in $\mathcal X$, denoted  $(\delta_i)_{1\leq i\leq 2q-2}$, such that, for every $l\in\{1,\dots, q-1\}$, one has
$$\delta_{2l-1}\in \mathcal X_{l,q},\enskip \delta_{2l}\in \mathcal X_{l+1,q}.$$
The rectangles of the family $\left(T^k\left(\overline\Delta_{\delta_{2l-1,2l}}\right)\right)_{1\leq l<q, k\in\Z}$ are pairwise disjoint and define by projection in $\mathring{\mathrm{dom}}(I)$  a family $(\mathring\Delta_i)_{1\leq i\leq 2q-2}$ of rectangles such that 
\begin{itemize}
\item $\mathring\Delta_i$ is the projection of $\overline\Delta_{\delta_{2i-1,2i}}$ if $1\leq i\leq q-1$,
\item $\mathring\Delta_i$ is the projection of $T(\overline\Delta_{\delta_{2(i-q+1)-1,2(i-q+1)}})$ if $q\leq i\leq 2q-2$.
\end{itemize}
See Figure \ref{figure_horseshoe_annular2}.

\begin{figure}[ht!]
\hfill
\includegraphics [height=48mm]{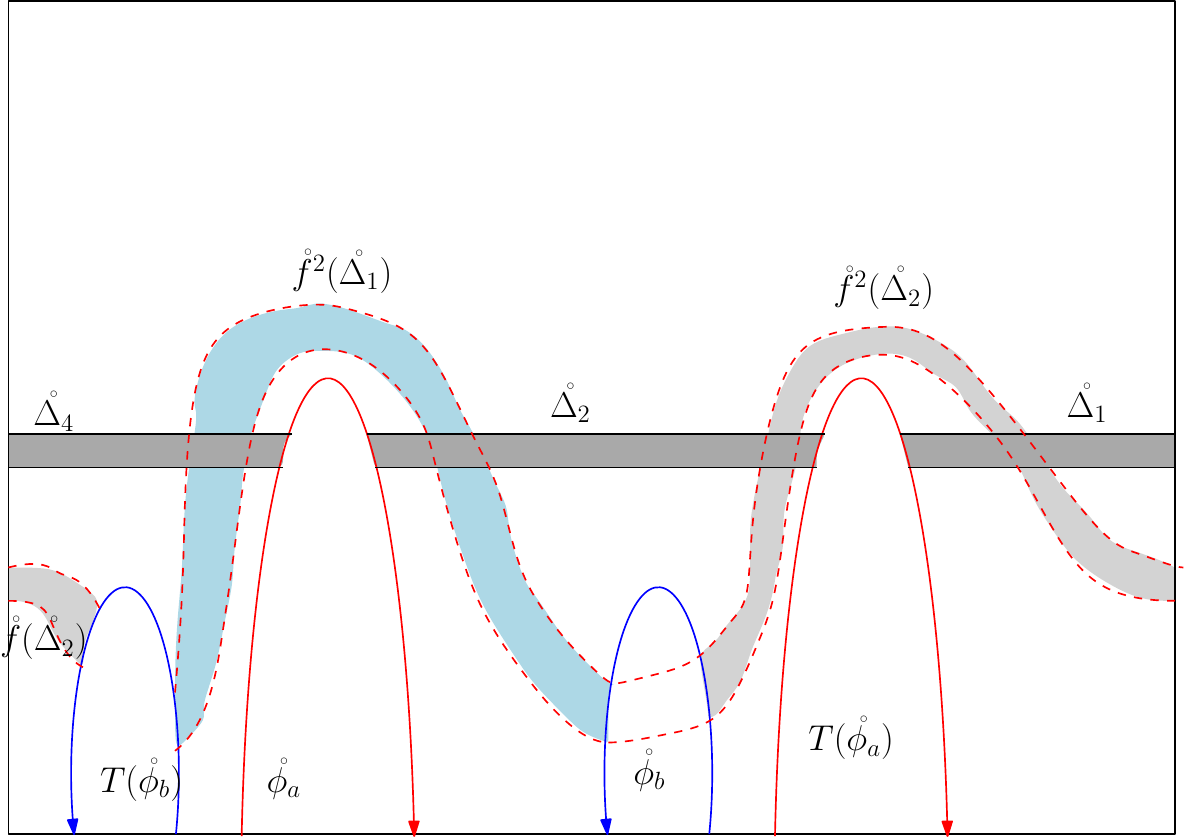}
\hfill{}
\caption{\small Horseshoe in the annular covering $\mathring{\mathrm{dom}}(I)$ - Case $q$=2.}
\label{figure_horseshoe_annular2}
\end{figure}

Here again $\bigcup_{1\leq i\leq 2q-2} \mathring\Delta_i$ is an isolating block and $\mathring Z_q=\bigcap_{k\in\Z} \mathring f^{-kq}(\bigcup_{1\leq i\leq 2q-2} \mathring\Delta_i)$ is a compact set included in the interior of $\bigcup_{1\leq i\leq 2q-2} \mathring\Delta_i$. Denote $\mathring H_q: \mathring Z_q\to \{1,\dots, 2q-2\}^{\Z}$ the map that associates to a point $\mathring z\in\mathring Z_q$ the sequence $(\varepsilon_k)_{k\in\Z}$ such that $\mathring f^{qk}(\mathring z)\in \Delta_{\varepsilon_k}$. It is continuous and satisfies $\mathring H_q\circ\mathring f^q=\sigma\circ \mathring H_q$. \textcolor{black}{ If we prove that every periodic sequence of period $K$ is the image by $\mathring H_q$ of a periodic point of $\mathring f^q$ of period $K$, it will implies that $\mathring H_q$ is onto, because its image is compact and we will get a complete proof of Proposition \ref{prop: horseshoe-annular}. In fact, adapting the proofs of Lemma \ref{lemma:Markov} and Lemma \ref{lemma:periodique} to this present situation, we can prove that  for every sequence ${\bf e}=(\varepsilon_{k})_{0\leq k<K}$ in $\{1,\dots, 2q-2\}^K$ there exists a Jordan domain $U_{\bf e}\subset\bigcap_{ 0\leq k<K} \mathring f^{-qk}( \Delta_{\varepsilon_k})$ such that $i(\mathring f^{qK}, \partial U_{\bf e})$ is equal to $+1$ or $-1$. Consequently $U_{\bf e}$ contains a fixed point of $\mathring f^{qK}$.}\end{proof}
 
 \begin{remark} The full topological horseshoe $\mathring Z_q$ constructed in Proposition \ref{prop: horseshoe-annular} on the double annular covering 
 $\mathring{\mathrm{dom}}(I)$ is relevant to obtain the bound on the topological entropy, but we are also interested in applications concerning rotation numbers. This is easier to do on the annular covering space $\widehat{\mathrm{dom}}(I)=\widetilde{\mathrm{dom}}(I)/T$ for which $\mathring{\mathrm{dom}}(I)$ is a $2$-fold covering. One can consider the projection $P:\mathring{\mathrm{dom}}(I)\to\widehat{\mathrm{dom}}(I)$, and the set $\widehat Z_q=P(\mathring Z_q)$, which is invariant by $\widehat f^q$, where $\widehat f$ is the natural lift of $f\vert_{\mathrm {dom}(I)}$ to $\widehat{\mathrm{dom}}(I)$. Furthermore, if we define $\widehat H_q:\widehat Z^q\to \{1, \dots, q-1\}^{\Z}$, so that the $l$-th letter of $\widehat H_q(\widehat z)$ is $i$ if  the $l$-th letter of $\mathring H_q(\mathring z)$ is either $i$ or $i+q-1$, where $\mathring{z}\in P^{-1}(\widehat{z})$, then the restriction of $\widehat f^q$ to $\widehat Z_q$ is semi-conjugated, by $\widehat H_q$, to the Bernouilli shift on $\{1, \dots, q-1\}^{\Z}$.
 
We will identify a compact invariant subset $\widehat Z_q'$ of $\widehat Z_q$, such that the restriction of $\widehat{f}^q$ to this set is still semi-conjugated by $\widehat{H_q}$ to the Bernouilli shift, and that is a {\it rotational horseshoe},  in the sense that the existence of a rotation number for a point $\widehat z\in\widehat Z_q'$ (and its value) can be determined by the knowledge of the sequence $\widehat H_q(\widehat z)$. 
  
Keeping the notations of Proposition \ref{prop: horseshoe-annular}, let $\overline \Delta_0\subset \widetilde{\mathrm{dom}}(I)$ be the set bounded by $\delta_1, \delta_{2q-2},\widetilde \phi_a$ and $T(\widetilde \phi_a)$, and note that, for $1\le i\le q-1$, $\widetilde f^{q}(\overline \Delta_{\delta_{2i-1},\delta_{2i}})$ intersects both $T^{i}(\Delta_0)$ and $T^{i+1}(\Delta_0)$. We set $\overline \Delta_i'= \overline \Delta_{\delta_{2i-1},\delta_{2i}}\cap \widetilde f^{-q}(T^{i}(\Delta_0))$, which projects, in $\widehat{\mathrm{dom}}(I)$ to a subset $\widehat \Delta_i'$ of $P(\mathring \Delta_i)$, see figure \ref{rotationalHorseshoe}. The set $\widehat Z_q'$ is defined as $\bigcap_{k\in\Z}\widehat f^{-kq}\left(\bigcup_{1\le i\le q-1}\widehat \Delta_i'\right)$.  Fix $\widehat z\in \widehat Z_q'$ and write  $(\varepsilon_k)_{k\in\Z}=\widehat H_q(\widehat z)$. Fix $k\geq 1$ and set $\alpha_n=\sum_{0\leq k<n} \varepsilon_k$. If $\widetilde z$ is the lift of $\widehat z$ that belongs to $\overline\Delta_{\delta_{2\varepsilon_0-1,2\varepsilon_0}}$, then $\widehat  f^{nq}(\widehat z)$ belongs to $T^{\alpha_n}(\overline\Delta_{\delta_{2\varepsilon_n-1,2\varepsilon_n-1}})$. In particular $\widehat z$ has a rotation number $\rho$ (for the lift $\widetilde f$) if and only if the sequence $(\alpha_n/n)_{n\geq 1}$ converges to $q\rho$. We deduce the following result:\end{remark}

\begin{figure}[ht!]
\hfill
\includegraphics [height=78mm]{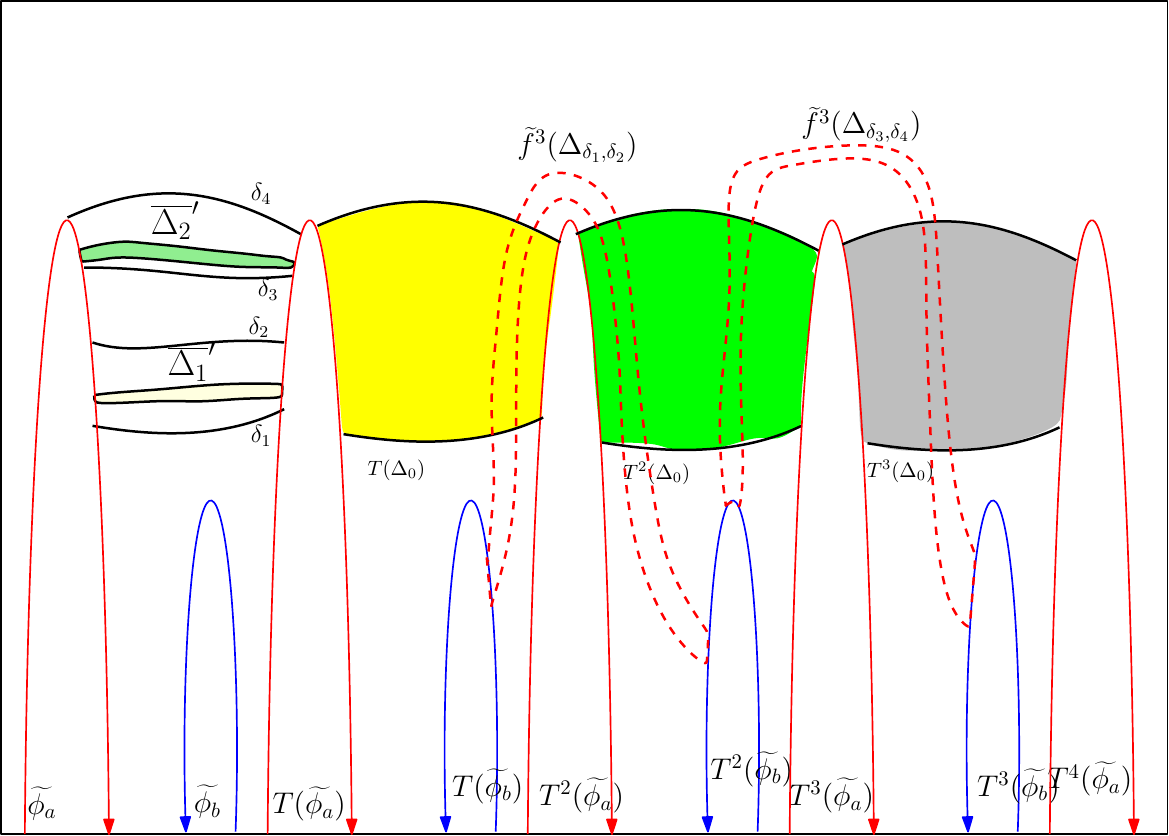}
\hfill{}
\caption{\small Rotational horseshoe for $q=3$. The sets $\overline{\Delta_1}'$ and $\overline{\Delta_2}'$ appear respectively as a light yellow shaded region and a light green shaded region.}
\label{rotationalHorseshoe}
\end{figure}

\begin{proposition}\label{prop: irrational compact set}
For any $0<\rho\le 1$, there exists a nonempty minimal set $\widehat N_{\rho}$ such that, for each $\widehat z\in \widehat N_{\rho}$, one has that $\mathrm{rot}_{\widetilde f}(\widehat z)=\rho.$
\end{proposition}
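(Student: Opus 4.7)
The plan is to treat $\rho\in\Q$ and $\rho\notin\Q$ separately, and in both cases to construct $\widehat N_\rho$ as a minimal $\widehat f$-invariant subset of a compact $\widehat f$-invariant set all of whose points have rotation number $\rho$.

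If $\rho=p/q\in(0,1]\cap\Q$ is written in irreducible form, Corollary~\ref{cor: preliminary-realization} provides $\widetilde z\in\widetilde{\mathrm{dom}}(I)$ with $\widetilde f^{q}(\widetilde z)=T^{p}(\widetilde z)$. Its image $\widehat z$ under the covering projection $\widetilde{\mathrm{dom}}(I)\to\widehat{\mathrm{dom}}(I)=\widetilde{\mathrm{dom}}(I)/T$ is periodic under $\widehat f$, and directly from the definition $\mathrm{rot}_{\widetilde f}(\widehat z)=p/q=\rho$. The finite $\widehat f$-orbit of $\widehat z$ is already minimal and will serve as $\widehat N_\rho$.

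For $\rho\in(0,1)\setminus\Q$, I will work inside the rotational horseshoe $\widehat Z_q'$ of the remark preceding the proposition, choosing $q\ge 2$ large enough that $1<q\rho<q-1$, which is possible since $\rho\in(0,1)$. Let $\Sigma\subset\{1,\dots,q-1\}^{\Z}$ be the Sturmian subshift obtained as the shift-closure of the mechanical sequence $\varepsilon_k=\lfloor(k+1)q\rho\rfloor-\lfloor kq\rho\rfloor$. Classical facts on mechanical sequences ensure that $\Sigma$ is minimal, uniquely ergodic, and that the Birkhoff sums $\alpha_n=\sum_{0\le k<n}\varepsilon_k$ satisfy $\alpha_n=nq\rho+O(1)$ uniformly on $\Sigma$, so in particular $\alpha_n/n\to q\rho$ uniformly. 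The Conley-index computation in the proof of Proposition~\ref{prop: horseshoe-annular} already shows that every periodic sequence lies in the image of $\widehat H_q$; approximating an arbitrary sequence of $\Sigma$ by periodic ones and using compactness of $\widehat Z_q'$ together with continuity of $\widehat H_q$, one deduces that $\widehat H_q^{-1}(\Sigma)\cap\widehat Z_q'$ is a nonempty, compact, $\widehat f^{q}$-invariant set. By the remark, every $\widehat z$ in this set satisfies $\mathrm{rot}_{\widetilde f}(\widehat z)=\lim\alpha_n/(nq)=\rho$.

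To obtain minimality under $\widehat f$ rather than $\widehat f^q$, I pick any $\widehat f^{q}$-minimal subset $N_0\subset \widehat H_q^{-1}(\Sigma)\cap\widehat Z_q'$, set $K=\bigcup_{0\le k<q}\widehat f^{k}(N_0)$, which is compact and $\widehat f$-invariant, and let $\widehat N_\rho$ be any $\widehat f$-minimal subset of $K$, guaranteed by Zorn's lemma. Since the rotation number is constant along $\widehat f$-orbits, every point of $\widehat N_\rho$ retains rotation number $\rho$. The main obstacle is not producing a single point with rotation number $\rho$, which would already follow from Krylov--Bogoliubov applied to any shift-invariant measure of mean $q\rho$ on $\{1,\dots,q-1\}^{\Z}$, but rather ensuring that \emph{every} point of the resulting set has the same rotation number; this is exactly what the unique ergodicity of the Sturmian subshift delivers, by upgrading almost-everywhere convergence of Birkhoff averages to genuinely uniform convergence along every orbit of the shift.
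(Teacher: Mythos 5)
Your proof is correct and follows essentially the same route as the paper: dispose of rational $\rho$ via the periodic points of Corollary \ref{cor: preliminary-realization}, and for irrational $\rho$ pull back a minimal, uniquely ergodic Sturmian subshift with uniform symbol average $q\rho$ through the factor map $\widehat H_q$ of the rotational horseshoe, then pass to an $\widehat f$-minimal subset of the union of iterates. The only (cosmetic) difference is the coding: you use the mechanical sequence of $q\rho$ with the two adjacent symbols $\lfloor q\rho\rfloor,\lceil q\rho\rceil$ under the condition $1<q\rho<q-1$, whereas the paper rescales a $\{0,1\}$-valued Sturmian sequence of angle $\rho'=\tfrac{q}{q-2}\rho-\tfrac{1}{q-2}$ onto the extreme symbols $\{1,q-1\}$ — the admissibility condition on $q$ is the same in both cases.
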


\begin{proof} If $\rho$ is rational, the result is an immediate consequence of Proposition \ref{prop: preliminary-realization} and $\widehat N_{\rho}$ can be chosen as a periodic orbit. We suppose by now that $\rho$ is irrational.
It suffices to show that there exists an integer $q\geq 3$ and a nonempty closed set $X_{\rho}$ of $\{1, \dots, q-1\}^{\Z}$, invariant by the Bernouilli shift and such that for every $(\varepsilon_k)_{k\in\Z}\in X_{\rho}$, one has 
$$\lim_{n\to+\infty} {1\over n} \sum_{k=0}^n \varepsilon _k=q\rho.$$
Indeed, every minimal subset of the compact set $\bigcup_{0\leq l<q}\widehat f^l(\widehat H_q^{-1}(X_{\rho}))$ will satisfy the prescribed properties.

The existence of $X_{\rho}$ is a well known fact concerning Sturmian sequences. Set 
$$\rho'= {q\over q-2}\,\rho-{1\over q-2},$$choosing $q$ large enough to insure that $\rho'\in(0,1)$.
 Consider the partition of $[0,1[=[0, 1-\rho'[\cup [1-\rho', 1[$ and define a sequence ${\bf e}'=(\varepsilon'_k)_{k\in\Z}$ in the following way
$$ \varepsilon'_k= \begin{cases}0 & \mathrm {if}\enskip k\rho'-[k\rho']\in [0,1-\rho'[,\\1 & \mathrm {if}\enskip k\rho'-[k\rho']\in [1-\rho',1[.\\
\end{cases}$$
Every sequence ${\bf e}=(\varepsilon_k)_{k\in\Z}$ in the closure $X'$ of the orbit of ${\bf e}'$ by the Bernouilli shift defined on $\{0,1\}^{\Z}$ satisfies
$$\lim_{n\to+\infty} {1\over n} \sum_{k=0}^n \varepsilon _k=\rho'.$$ 
So the set
$$X_{\rho}=\{(1+(q-2)\varepsilon_k)_{k\in\Z}\,\vert \, (\varepsilon_k)_{k\in\Z}\in X'\}$$ satisfies the prescribed properties.
\end{proof}

\subsection{Proofs of Theorem \ref{th: realization} and Theorem \ref{th: horseshoe}}\label{subsec:TheoMandN}

 Let us begin by proving the realization theorem (Theorem  \ref{th: realization}). Let us remind the statement:
 
\begin{theorem}
Let $M$ be an oriented surface, $f$ a homeomorphism of $M$ isotopic to the identity, $I$ a maximal identity isotopy of $f$ and $\mathcal F$ a foliation transverse to $I$. Suppose that  $\gamma:[a,b]\to \mathrm{dom}(I)$ is an admissible path of order $r$ with a $\mathcal{F}$-transverse self intersection at $\gamma(s)=\gamma(t)$, where $s<t$. Let $\widetilde \gamma$ be a lift of $\gamma$ to the universal covering space $\widetilde{\mathrm{dom}}(I)$ of $\mathrm{dom}(I)$ and $T$ the covering automorphism such that $\widetilde\gamma$ and $T(\widetilde\gamma)$ have a $\widetilde{\mathcal F}$-transverse intersection at  $\widetilde\gamma(t)=T(\widetilde\gamma)(s)$. Let $\widetilde f$ be the lift of $f\vert_{\mathrm{dom}(I)}$ to $\widetilde{\mathrm{dom}}(I)$, time-one map of the identity isotopy that lifts $I$, and $\widehat f$ the homeomorphism of the annular covering space $ \widehat{\mathrm{dom}}(I)=\widetilde {\mathrm{dom}}(I)/T$ lifted by $\widetilde f$. Then we have the following:
 \begin{enumerate}
 \item For every rational number $p/q\in(0,1]$, written in an irreducible way, there exists a point $\widetilde z\in \widetilde{\mathrm{dom}}(I)$ such that $\widetilde f^{qr}(\widetilde z)=T^p(\widetilde z)$ and such that $\widetilde I_{\widetilde{\mathcal F}}^{\Z}(\widetilde z)$ is equivalent to $\prod_{k\in\Z} T^k(\widetilde\gamma_{[s,t]})$.
  \item For every irrational number $\rho\in[0,1/r]$, there exists a compact set $\widehat Z_{\rho}\subset\widehat{\mathrm{dom}}(I)$ invariant by $\widehat f$, such that every point $\widehat z\in \widehat Z_{\rho}$ has a rotation number $\mathrm{rot}_{\widetilde f}(\widehat z)$ equal to $\rho$. Moreover if $\widetilde z\in\widetilde{\mathrm{dom}}(I)$ is a lift of $\widehat z$, then $\widetilde I_{\widetilde{\mathcal F}}^{\Z}(\widetilde z)$ is equivalent to $\prod_{k\in\Z} T^k(\widetilde\gamma_{[s,t]})$.
 \end{enumerate}
\end{theorem}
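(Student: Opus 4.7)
The plan is to derive both assertions from the horseshoe constructions already established in this section, after first carrying out two cosmetic reductions.

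First, I would reduce to the case $r=1$ by replacing $\widetilde f$ with $\widetilde f^{\,r}$ everywhere in Lemmas \ref{lm:pqintersections}--\ref{lemma: paths} and Propositions \ref{prop: preliminary-realization}--\ref{prop: irrational compact set}. Since $\widetilde f$ sends each leaf $\widetilde\phi$ of $\widetilde{\mathcal F}$ strictly to the left, in the sense that $\widetilde f(\overline{L(\widetilde\phi)})\subset L(\widetilde\phi)$ and $\overline{R(\widetilde\phi)}\subset \widetilde f(R(\widetilde\phi))$, the same property holds with $\widetilde f$ replaced by $\widetilde f^{\,r}$, so every Brouwer-line argument transposes without change; the rectangles $\overline\Delta_{\delta_0,\delta_1}$ are built from sets $\mathcal X_{p,q}$ redefined with $\widetilde f^{-qr}$ in place of $\widetilde f^{-q}$. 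The normalization $k_0=1$ causes no loss of generality either: the automorphism $T$ furnished by the theorem is the one with respect to which the annular cover $\widehat{\mathrm{dom}}(I)=\widetilde{\mathrm{dom}}(I)/T$ is defined, and the horseshoe constructions take place in that cover.

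With these reductions in hand, part (1) follows from Corollary \ref{cor: preliminary-realization}: for each irreducible $p/q\in(0,1]$, one obtains $\widetilde z$ with $\widetilde f^{\,qr}(\widetilde z)=T^{p}(\widetilde z)$. The point $\widetilde z$ sits inside an explicit cell whose horizontal sides belong to $T^{i}\circ\widetilde f^{-qr}(\widetilde\phi_b)$ for consecutive integers $i$ and whose vertical sides lie on $\widetilde\phi_a$ and $T(\widetilde\phi_a)$. Tracking the $\widetilde f$-orbit of $\widetilde z$ through the $T$-translates of these cells shows that each segment of transverse trajectory between consecutive rectangle visits is equivalent to a single $T^{k}(\widetilde\gamma_{[s,t]})$ for the appropriate $k$, so that $\widetilde I_{\widetilde{\mathcal F}}^{\Z}(\widetilde z)$ is equivalent to the full product $\prod_{k\in\Z}T^{k}(\widetilde\gamma_{[s,t]})$.

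For part (2), given an irrational $\rho\in(0,1/r]$, I would apply the Sturmian construction inside the proof of Proposition \ref{prop: irrational compact set} to $\widetilde f^{\,r}$ with target rotation parameter $r\rho\in(0,1]$. This produces a nonempty minimal set $\hat N\subset\widehat{\mathrm{dom}}(I)$, invariant by $\widehat f^{\,r}$, such that every $\widehat z\in\hat N$ satisfies $\mathrm{rot}_{\widetilde f^{\,r}}(\widehat z)=r\rho$; in view of Proposition \ref{prop: rotation numbers powers}, this is equivalent to $\mathrm{rot}_{\widetilde f}(\widehat z)=\rho$. The set $\widehat Z_{\rho}=\bigcup_{0\leq j<r}\widehat f^{\,j}(\hat N)$ is then the required $\widehat f$-invariant compact set. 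The coding of Proposition \ref{prop: horseshoe-annular} identifies each orbit in $\hat N$ with a bi-infinite sequence recording its successive visits to the cells $\mathring\Delta_i$, and the analysis from part (1) translates this itinerary into a transverse trajectory equivalent to $\prod_{k\in\Z}T^{k}(\widetilde\gamma_{[s,t]})$. The main obstacle I anticipate is precisely this transverse trajectory equivalence: the Conley-index and Kennedy--Yorke arguments supply the points and their rotation numbers but do not explicitly track which leaves are crossed. The decisive step is to show that an $\widetilde f^{\,r}$-orbit segment traversing a single rectangle crosses exactly the leaves swept by $\widetilde\gamma_{[s,t]}$, up to $T$-translation, and no extra leaves; this should follow from combining the explicit placement of the cells, the stability of transverse trajectories (Proposition \ref{prop:stability}), and Proposition \ref{pr: fundamental}, which constrains the admissible paths that can be produced from the given transverse self-intersection.
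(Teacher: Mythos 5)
Your skeleton matches the paper's: reduce to $r=1$ by working with $\widetilde f^{\,r}$, get the periodic points from Corollary \ref{cor: preliminary-realization} and the irrational sets from the Sturmian construction of Proposition \ref{prop: irrational compact set}, then identify the transverse trajectories. But there are two genuine gaps, and the second is the heart of the theorem.

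First, you cannot simply declare the normalization $k_0=1$ harmless. All of Lemmas \ref{lm:pqintersections}--\ref{lemma: paths} and Propositions \ref{prop: preliminary-realization}--\ref{prop: irrational compact set} were established under the standing hypothesis that $\gamma$ and $T^k(\gamma)$ have no transverse intersection for $k>k_0=1$; if $k_0>1$ the disjointness properties of the translates $T^k(\widetilde\phi_a)$, $T^k(\widetilde\phi_b)$ used throughout fail for $T$ itself. The paper handles this by applying the whole machinery to $\widetilde f^{\,r}$ and $T^{k_0}$ (after checking, via the $k_0$-fold cover, that $\prod_k T^{kk_0}(\widetilde\gamma\vert_{[s',t']})$ is equivalent to $\prod_k T^k(\widetilde\gamma\vert_{[s,t]})$), obtaining a fixed point of the $k_0$-th iterate of $\widetilde f^{\,qr}\circ T^{-p}$, and then invoking Brouwer theory to descend to a fixed point of $\widetilde f^{\,qr}\circ T^{-p}$ itself. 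None of this is in your proposal.

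Second, and more seriously, the trajectory-equivalence claim does not follow from cell-tracking. The rectangles only record where the orbit sits at times that are multiples of $qr$; between two consecutive visits the transverse trajectory is a concatenation of $qr$ elementary pieces, and nothing in the placement of the cells, in Proposition \ref{prop:stability}, or in Proposition \ref{pr: fundamental} prevents it from leaving $U_{\widetilde\gamma}$ and meeting extra leaves. You correctly flag this as the main obstacle, but the tools you propose are not the ones that close it. The paper's argument is global and foliation-theoretic: on the sphere $\widehat{\mathrm{dom}}(I)\cup\{N,S\}$ the periodic point $\widehat z$ has a whole transverse trajectory equivalent to the natural lift of an essential transverse simple loop, and one shows either that every essential transverse loop is a power of $\widehat\Gamma$ (when every leaf of $U_{\widehat\Gamma}$ is adherent to $N$), or that $\mathrm{Fr}(U_{\widehat\Gamma})$ contains a closed leaf, in which case $\widehat A=\bigcup_{k\in\Z}\widehat f^{\,kr}(U_{\widehat\Gamma})$ is an invariant essential annulus with connected, simply connected preimage $\widetilde A$ containing the index curve $C$ of Proposition \ref{prop: preliminary-realization}, so the periodic point can be chosen inside $\widetilde A$ and its orbit projects into $U_{\widehat\Gamma}$, after which property (2) of transverse simple loops forces the equivalence. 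Without this dichotomy your proof of the equivalence statement, in both parts (1) and (2), is incomplete.
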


\begin{proof} We keep the notations introduced at the beginning of the section (but replacing $f$ with $f^r$). In particular we have a foliation $\widehat{\mathcal F}$ defined on the sphere $\widehat{\mathrm{dom}}(I)\cup\{S,N\}$ with two singular points $S$ and $N$. We will frequently use classical properties of foliations on a sphere in what follows. The path $\widetilde\gamma\vert_{[s,t]}$  projects onto a closed path of $\widehat{\mathrm{dom}}(I)$ that defines naturally an essential loop $\widehat{\Gamma}$. This loop is simple because it is transverse to $\widehat{\mathcal F}$. There is no loss of generality by supposing that that $S$ is on the right of $\widehat\Gamma$ and  $N$ on its left. Write $U_{\widehat\Gamma}$ for the union of leaves that meet $\widehat\Gamma$, it is an open annulus. We have the following results:
 \begin{itemize}
 \item Every leaf in $U_{\widehat\Gamma}$ has a $\omega$-limit set equal to $S$.
\item  \textcolor{black}{In the case where $\widehat{\mathcal F}$ has a closed leaf, the frontier of $U_{\widehat \Gamma}$ contains a closed leaf $\widehat \phi$ that is the $\alpha$-limit set of every leaf in $U_{\widehat\Gamma}$. Moreover every simple loop transverse to $\widehat{\mathcal F}$ that is not equivalent to $\widehat \Gamma$ is essential, disjoint from $U_{\widehat\Gamma}$ and separated from this set by $\widehat\phi$.}
\item  \textcolor{black}{ In the case where $\widehat{\mathcal F}$ has no closed leaf, then every leaf of $U_{\widehat\Gamma}$ is adherent to $N$.}
\end{itemize}
The first property can be deduced from the fact that $\widehat \gamma$ joins $\widehat\phi_{\widehat\gamma(a)}$, which is homoclinic to $S$, to $\widehat\phi_{\widehat\gamma(b)}$ which is also homoclinic to $S$.

Recall that there exists $k_0\geq 1$ such that $\widetilde\gamma$ and $T^k(\widetilde\gamma)$ have a $\widetilde{\mathcal{F}}$-transverse intersection if $1\leq k\leq k_0$ and no transverse intersection if $k>k_0$.  As explained above, one gets a simple loop  $\widehat\Gamma_{k_0}$ of $\widehat{\mathrm{dom}}(I)_{k_0}=\widetilde{\mathrm{dom}}(I)/T^{k_0}$ transverse to the foliation $\widehat{\mathcal F}_{k_0}$ naturally defined on $\widehat{\mathrm{dom}}(I)_{k_0}$. Of course $\widehat{\mathrm{dom}}(I)_{k_0}$ is the $k_0$-fold finite covering of  $\widehat{\mathrm{dom}}(I)$ and $\widehat{\mathcal F}_{k_0}$ the lifted foliation. Applying $(1)$ both to $\widehat\Gamma$ and $\widehat\Gamma_{k_0}$ and $(2)$ to $\widehat\Gamma_{k_0}$, one deduces that $\widehat\Gamma_{k_0}$ is equivalent to the lift of $\widehat\Gamma$ to  $\widehat{\mathrm{dom}}(I)_{k_0}$. This means that if $\widetilde\gamma$ and $T^k(\widetilde\gamma)$ have a $\widetilde{\mathcal{F}}$-transverse intersection  at $\widetilde \gamma(t') =T^{k_0}(\widetilde \gamma(s'))$, then $s'<t'$ and $\prod_{k\in\Z} T^k(\widetilde\gamma\vert_{[s,t]})$ and $\prod_{k\in\Z} T^{kk_0}(\widetilde\gamma\vert_{[s',t']})$ are equivalent.

One can apply what has been done in this section to $\widetilde f^r$ and $T^{k_0}$. Applying Corollary \ref{cor: preliminary-realization}, one can find for every $p/q\in(0,1]$ written in an irreducible way, a point $\widetilde z_{k_0}\in \widetilde{\mathrm{dom}}(I)$ such that $\widetilde f^{qk_0r}(\widetilde z_{k_0})=T^{pk_0}(\widetilde z_{k_0})$. Indeed $p/qq_0\in(0,1]$. This implies that the $k_0$-th iterate of $\widetilde f^{qr}\circ T^{-p}$ has a fixed point. By Brouwer theory, one deduces that $f^{qr}\circ T^{-p}$ itself has fixed point, which means a point $\widetilde z\in \widetilde{\mathrm{dom}}(I)$ such that $\widetilde f^{qr}(\widetilde z)=T^p(\widetilde z)$. The point projects onto a periodic point $\widehat z$ whose transverse trajectory defines an essential transverse loop. 

Suppose first that \textcolor{black}{we are in the case where $\widehat{\mathcal F}$ has no closed leaf and every leaf of $U_{\widehat\Gamma}$ is adherent to $N$}. In this case, there is no essential transverse loop but the powers of $\widehat\Gamma$, so $\widetilde I_{\widetilde{\mathcal F}}^{\Z}(\widetilde z)$ is equivalent to $\prod_{k\in\Z} T^k(\widetilde\gamma_{[s,t]})$. 

Suppose now that \textcolor{black}{we are in the case where the frontier of $U_{\widehat \Gamma}$ contains a closed leaf $\widehat \phi$.} One of the sets $\widehat f(\overline{U_{\widehat\Gamma}})$ or $\widehat f^{-1}(\overline{U_{\widehat\Gamma}})$ is included in $U_{\widehat \Gamma}$. In that case, $\widehat A=\bigcup_{k\in\Z} \widehat f^{r}(U_{\widehat\Gamma})$ is an essential invariant annulus and its preimage $\widetilde A=\widehat\pi^{-1}(\widehat A)$ is a connected and simply connected domain. But $\widetilde A$ contains the closed curve $C$ that appears in the proof of Proposition \ref{prop: preliminary-realization} and so the point $\widetilde z_{k_0}$ can be chosen to belong to $\widetilde A$. It is the same for $\widetilde z$ because one can apply Brouwer theory to the restriction of $\widetilde f^{qr}\circ T^{-p}$ to $\widetilde A$. But this implies that  the orbit of $\widetilde z$ projects onto a periodic orbit included in $U_{\widehat\Gamma}$. One deduces that $\widetilde I_{\widetilde{\mathcal F}}^{\Z}(\widetilde z)$ is equivalent to $\prod_{k\in\Z} T^k(\widetilde\gamma_{[s,t]})$. 

The item (2) can be proven similarly using Proposition \ref{prop: irrational compact set}.
\hfill$\Box$

We conclude by proving Theorem \ref{th: horseshoe}. Let us
remind its statement.

\begin{theorem}  \label{th: horseshoe2}Let $M$ be an oriented surface, $f$ a homeomorphism of $M$ isotopic to the identity, $I$ a maximal identity isotopy of $f$ and $\mathcal F$ a foliation transverse to $I$. If there exists a point $z$ in the domain of $I$ and an integer $r\geq 1$ such that the transverse trajectory $I_{\mathcal F}^{r}(z)$ has a $\mathcal F$-transverse self-intersection, then $f$ has a topological horseshoe. Moreover, the entropy of $f$ is  at least equal to $\log 4/3r$.
\end{theorem}

\begin{proof} Let us apply Proposition \ref{prop: horseshoe-universal}: there exist a covering transformation $T$ of the universal covering space $\widetilde{\mathrm{dom}}(I)$ and for every $q\geq 3$ and $2\leq p<q$ a compact set $\widetilde Z_{p,q}\subset\widetilde{\mathrm{dom}}(I)$, invariant by $\widetilde f^{qr}\circ T^{-p}$  such that
\begin{itemize}
\item the restriction of $\widetilde f^{qr}\circ T^{-p}$ to $\widetilde Z_{p,q}$ is an extension of the Bernouilli shift $\sigma :\{1,2\}^{\Z}\to \{1,2\}^{\Z}$;
\item every $s$-periodic sequence of $\{1,2\}^{\Z}$ has a preimage by the factor map $\widetilde H_{p,q}:\widetilde Z_{p,q}\to \{1,2\}^{\Z}$ that is a $s$-periodic point of $\widetilde f^{qr}\circ T^{-p}$.
\end{itemize}
The covering projection $\widetilde\pi: \widetilde{\mathrm{dom}}(I)\to \mathrm{dom}(I)$ induces a semi-conjugacy between $\widetilde f^{qr}\circ T^{-p}\vert _{\widetilde Z_{p,q}}$ and $f^{qr}\vert _{\widetilde\pi(\widetilde Z_{p,q})}$. The set $\widetilde Z_{p,q}$ being compact, projects onto a compact subset $Z_{p,q}$ of  $\mathrm{dom}(I)$ and every point in $Z_{p,q}$ has finitely many lifts in $\widetilde Z_{p,q}$, with an uniform upper bound. The set  $Z_{p,q}$ is a topological horseshoe, as defined in the introduction.

One could have applied Proposition \ref{prop: horseshoe-annular}: for every $q\geq 2$, there exists a topological horseshoe $Z_{q}$, image by the covering projection $\mathring \pi: \mathring{\mathrm{dom}}(I)\to \mathrm{dom}(I)$ of the set $\mathring Z_{q}$ defined by Proposition \ref{prop: horseshoe-annular}. One has
$$ h(f)\geq  {1\over qr} h(f^{qr}\vert_{Z_q})={1\over qr} h(\mathring f^{qr}\vert_{\mathring Z_q})\geq {1\over qr}\log (2q-2).$$
Noting that the function $$ q\mapsto {1\over q}\log (2q-2)$$ reaches its maximum in $\N$ for $q=3$, one concludes that $$ h(f)\geq  {\log 4\over 3r}.$$ \end{proof}

\section{Paths on the sphere with no $\mathcal F$ transverse self-intersection and applications}

 \subsection{Definitions} We suppose in this section that $\mathcal F$ is a singular oriented foliation on the sphere $\S^2$.

\begin{definition*}Let $J_1$ and $J_2$ be two real intervals. We will say that $J_1$ is {\it on the left} of $J_2$ (equivalently $J_2$ is {\it on the right} of $J_1$) if the lower end of $J_2$ is not smaller than the lower end of $J_1$ and the upper end of $J_2$ is not smaller that the upper end of $J_1$.  In particular $J_1$ is on the left and on the right of itself.
\end{definition*}

\begin{definitions*} Let $\gamma:J\to\mathrm {dom} ({\mathcal F})$ be a transverse path and  $\Gamma_0$ a transverse loop. 

\begin{itemize}
\item The path $\gamma$ {\it is drawn on} $\Gamma_0$ if it is equivalent to a sub-path of the natural lift $\gamma_0$ of $\Gamma_0$; 
\item The path $\gamma$ {\it draws} $\Gamma_0$ if there exist $a<b$ in $J$ and $t\in\R$ such that $\gamma\vert_{[a,b]}$ is equivalent to $\gamma_0\vert_{[t,t+1]}$.
\item The path $\gamma$ {\it draws infinitely} $\Gamma_0$ if there exist $c\in J$ and $t\in\R$ such that $\gamma\vert_{(\inf J,c]}$ is equivalent to $\gamma_0\vert_{(-\infty, t]}$,  or if $\gamma\vert_{[c,\sup J)}$ is equivalent to $\gamma_0\vert_{[t,+\infty)}$. 
\item The path $\gamma$ {\it exactly draws} $\Gamma_0$ if it draws $\Gamma_0$ and is drawn on $\Gamma_0$.
\item The path $\gamma$ {\it exactly draws infinitely } $\Gamma_0$ if it draws infinitely $\Gamma_0$ and is drawn on $\Gamma_0$.
\end{itemize}
\end{definitions*}

\begin{remark*}
In case $\Gamma_0$ is a transverse simple loop, $\gamma$ draws $\Gamma_0$ if there exist $a<b$ in $J$ such that $\gamma\vert_{[a,b]}$ is included in $U_{\Gamma_0}$ and meets every leaf at least once (or equivalently meets a leaf twice). One proves easily that a path that draws a transverse loop draws a transverse simple loop (see \cite{LeCalvezTal}).
\end{remark*}

 \begin{definitions*}Let $\gamma:J\to\mathrm {dom} ({\mathcal F})$ be a transverse path and  $\Gamma_0$ a transverse simple loop. We set $$J_{\Gamma_0}=\{t\in J\,\vert\, \gamma(t)\in U_{\Gamma_0}\}.$$ \begin{itemize}
 
 \item A  connected component $J_0$ of $J_{\Gamma_0}$ is a {\it drawing component} if $\gamma\vert_{J_0}$ draws $\Gamma_0$. 
\item A  connected component $J_1$ of $J_{\Gamma_0}$ is a {\it crossing component} and $\gamma\vert_{J_1}$ {\it crosses} $\Gamma_0$ if both ends $a$, $b$ of  $J_1$ are in $J$ and $\gamma(a)$ and $\gamma(b)$ belong to different components of $\S^2\setminus U_{\Gamma}$. 

\item The path $\gamma$ {\it crosses} $\Gamma_0$ if there is at least one crossing component.\end{itemize}
 \end{definitions*}

\subsection{Intersection of paths and loops}

\begin{proposition}
\label{prop:drawing-crossing}
Suppose that $\gamma:J\to\mathrm {dom} ({\mathcal F})$ is  a transverse path with no $\mathcal F$-transverse self-intersection. Then:

\begin{enumerate}\item if $\gamma$ draws a transverse simple loop $\Gamma_0$, there exists a unique drawing component of $J_{\Gamma_0}$;

\item if $\gamma$ draws and crosses a transverse simple loop $\Gamma_0$, there exists a unique crossing component of $J_{\Gamma_0}$ and it is the drawing component of $J_{\Gamma_0}$;

\item if $\gamma$ draws a transverse simple loop $\Gamma_0$ and does not cross it, then the drawing component \textcolor{black}{contains a neighborhood of at least one end of 
$J$};

\item if $\gamma$ draws two non-equivalent transverse simple loops $\Gamma_0$ and $\Gamma_1$, then the drawing component  of $J_{\Gamma_0}$ is on the right of the drawing component  of $J_{\Gamma_1}$ or on its left.
\end{enumerate} 
\end{proposition}

\begin{proof} 
\medskip
\textcolor{black}{We will begin by proving $(1)$ in  the case where $\gamma$ does not cross $\Gamma_0$.} Suppose that $J_0$ and $J_1$ are two drawing components of $J_{\Gamma_0}$ and that $J_1$ is on the right of $J_0$. The upper end $b_0$ of $J_0$ and the lower end $a_1$ of $J_1$ are finite, belong to $J$, and none of the points $\gamma(b_0)$, $\gamma(a_1)$ belong to $U_{\Gamma_0}$. The fact that $\gamma$ does not cross $\Gamma_0$ implies that $\gamma(b_0)$ and $\gamma(a_1)$ belong to the same connected component of $\S^2\setminus U_{\Gamma_0}$. \textcolor{black}{It follows that there exist, on the same connected component $X$ of $\S^2\setminus U_{\Gamma_0}$,} \textcolor{black}{two boundary leaves,  such that $U_{\Gamma_0}$ is  locally on the right of one of them and on the left of the other one, and so there is no spiraling phenomena: the leaves of $U_{\Gamma_0}$ cannot accumulate on the boundary leaves that are in $X$.  This implies that for every $c_0\in J_0$ and $c_1\in J_1$ the paths $\gamma\vert_{[c_0,b_0)}$ and $\gamma\vert_{(a_1, c_1]}$ meet every leaf of $U_{\Gamma_0}$ finitely many times and one can choose $c_0$ and $c_1$ such that every leaf is met exactly once by $\gamma_{[c_0,b_0)}$ and $\gamma\vert_{(a_1,c_1]}$. }   Consequently, there exist $t_0$, $t_1$, with $t_0\leq t_1<t_0+1$ such that $\gamma\vert_{[c_0,b_0)}$ is equivalent to $\gamma_0\vert_{[t_0,t_0+1)}$ and $\gamma\vert_{(a_1,c_1]}$ is equivalent to $\gamma_0\vert_{(t_1,t_1+1]}$. \textcolor{black}{Now, fix a lift $\widetilde\gamma_0$ of $\gamma_0$ to the universal  covering space $\widetilde{\mathrm {dom}}({\mathcal F})$ of $\mathrm {dom}({\mathcal F})$ and denote $U_{\widetilde \gamma_0}$ the union of the leaves of the lifted foliation $\widetilde{\mathcal F}$ that meet $\widetilde \gamma_0$. There exists a unique lift $\widetilde \gamma$ of $\gamma$ such that $\widetilde\gamma\vert_{[c_0,b_0)}$ is equivalent to $\widetilde\gamma_0\vert_{[t_0,t_0+1)}$ and a unique lift $\widetilde\gamma'$ of $\gamma$  such that $\widetilde\gamma'\vert_{(a_1,c_1]}$ is equivalent to $\widetilde\gamma_0\vert_{(t_1,t_1+1]}$. \textcolor{black} {For any given $t\in (t_1,t_0+1)$, there exists $d_0=d_0(t)\in (c_0,b_0)$ and $d_1=d_1(t)\in (a_1,c_1)$} such that $\phi_{\widetilde\gamma(d_0)}=\phi_{\widetilde\gamma'(d_1)}=\phi_{\widetilde\gamma_0(t)}$.  Suppose first that $\gamma(b_0)$ and $\gamma(a_1)$ are on the right of $\Gamma_0$. In that case, when $t$ is close to $t_1$, then the paths $\widetilde\gamma\vert_{[c_0,d_0]}$ and $\tilde\gamma\vert_{[a_1,d_1]}$ are disjoint and $\widetilde\gamma(d_0)$ is located before $\widetilde\gamma'(d_1)$ on the leaf  
$\phi_{\widetilde\gamma_0(t)}$. So $\phi_{\widetilde\gamma(c_0)}$  is below $\phi_{\widetilde\gamma'(a_1)}$ relative to $\phi_{\widetilde\gamma(t)}$, for every $t\in(t_1,t_0+1)$ because it is the case when $t$ is close to $t_1$.  When $t$ is close to $t_0+1$, then the paths $\tilde\gamma\vert_{[d_0,b_0]}$ and $\tilde\gamma\vert_{[d_1,c_1]}$ are disjoint and $\widetilde\gamma(d_0)$ is located after $\widetilde\gamma'(d_1)$ on 
$\phi_{\widetilde\gamma_0(t)}$. So  $\phi_{\widetilde\gamma(b_0)}$  is above $\phi_{\widetilde\gamma'(c_1)}$ relative to $\phi_{\widetilde\gamma(t)}$  for every $t\in(t_1,t_0+1)$ because it is the case when $t$ is close to $t_0+1$.  In this situation, there exists at least one value of $t$ such that $\widetilde\gamma(d_0)=\widetilde\gamma'(d_1)$ and $\widetilde\gamma$ and $\widetilde \gamma'$ have a $\widetilde{\mathcal F}$-transverse intersection at $\widetilde\gamma(d_0)=\widetilde\gamma'(d_1)$. Consequently, $\gamma$ has a ${\mathcal F}$-transverse intersection which contradicts the hypothesis. In the case where  $\gamma(b_0)$ and $\gamma(a_1)$ are  both on the \textcolor{black}{left} of $\Gamma_0$, we get the same contradiction by proving that $\phi_{\widetilde\gamma(c_0)}$  is above $\phi_{\widetilde\gamma'(a_1)}$ relative to $\phi_{\widetilde\gamma(t)}$ and $\phi_{\widetilde\gamma(b_0)}$  is below $\phi_{\widetilde\gamma'(c_1)}$ relative to $\phi_{\widetilde\gamma(t)}$ for every $t\in(t_1,t_0+1)$,} \textcolor{black}{ see Figure \ref{figure_noncrossingcomponent}.}

\begin{figure}[ht!]
\hfill
\includegraphics [height=48mm]{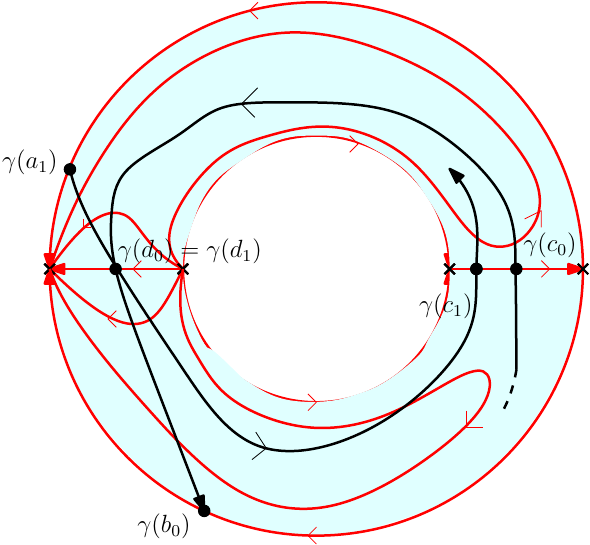}
\hfill{}
\caption{\small Proposition \ref{prop:drawing-crossing}. Depiction of why item (1) holds, when $\gamma$ does not cross $\Gamma_0$. Leafs of the foliation are drawn in red, transversal paths in black, and the annulus $U_{\Gamma_0}$ is the shaded region, both $\gamma(a_1)$ and $\gamma(b_0)$ are on the right of $\Gamma_0$.}
\label{figure_noncrossingcomponent}
\end{figure}

\textcolor{black}{ Now, let us suppose that $\gamma$ draws and crosses $\Gamma_0$. If we prove that every drawing component and every crossing component coincide, we will deduce that there is a unique drawing component, a unique  crossing component and that they coincide. We will obtain in \textcolor{black}{this} way the end of the proof of (1) and the proof of (2). We will argue by contradiction and suppose that it is not the case. Denote $\gamma_0$ the natural lift of $\Gamma_0$. Consider a drawing component $J_0$ and a crossing component $J_1$ and suppose that $J_0\not=J_1$. Like in the proof explained above, there are different cases to consider (in fact four). Nevertheless all situations are equivalent and we will restrict ourselves to a unique one.  First we will suppose  that $J_1=(a_1,b_1)$ is on the right of $J_0=(a_0,b_0)$. This implies that the upper end $b_0$ of $J_0$ is finite,  belongs to $J$, and that $\gamma(b_0)\not\in U_{\Gamma_0}$. One of the points $\gamma(a_1)$ or $\gamma(b_1)$ belongs to the same connected component of $\S^2\setminus U_{\Gamma_0}$ as $\gamma(b_0)$. We can suppose that it is $\gamma(a_1)$ \textcolor{black}{because otherwise, since $\gamma(b_0)$ and $\gamma(a_1)$ lie in different connected components of $\S^2\setminus U_{\Gamma_0}$ as $\gamma(b_0)$, there must exists another crossing component $J_2=(a_2, b_2)$ to the right of $J_0$ and to the left of $J_1$ such that $\gamma(b_0)$ and $\gamma(a_2)$ lie in the same connected components of $\S^2\setminus U_{\Gamma_0}$ }. Moreover we will suppose that $\gamma(b_0)$ and $\gamma(a_1)$ are on the right of $\Gamma_0$ (See Figure \ref{figure_crossingcomponent} for a depiction of construction below). Here again, for the reasons explained above in the proof of (1) for every $c_0\in J_0$ and $c_1\in J_1$, the paths $\gamma\vert_{[c_0,b_0)}$ and $\gamma\vert_{(a_1, c_1]}$ meet every leaf of $U_{\Gamma_0}$ finitely many times. Consequently one can choose $c_0$ such that every leaf is met exactly once by $\gamma\vert_{[c_0,b_0)}$.  Consequently, there exist $t_0$, $t_1$, $t'_1$, with $t_0\leq t_1<t_0+1$ such that $\gamma\vert_{[c_0,b_0)}$ is equivalent to $\gamma_0\vert_{[t_0,t_0+1)}$ and $\gamma\vert_{(a_1,b_1)}$ is equivalent to $\gamma_0\vert_{(t_1,t'_1)}$. There exists a unique lift $\widetilde \gamma$ of $\gamma$ such that $\widetilde\gamma\vert_{[c_0,b_0)}$ is equivalent to $\widetilde\gamma_0\vert_{[t_0,t_0+1)}$ and a unique lift $\widetilde\gamma'$ of $\gamma$  such that $\widetilde\gamma'\vert_{(a_1,b_1)}$ is equivalent to $\widetilde\gamma_0\vert_{(t_1,t'_1)}$. Set $t_2=\min(t'_1, t_0+1)$. \textcolor{black}{ As before, if $t\in (t_1,t_2)$ then there exist $d_0=d_0(t)\in (c_0,b_0)$ and $d_1=d_1(t)\in (a_1,b_1)$ such that $\phi_{\widetilde\gamma(d_0)}=\phi_{\widetilde\gamma'(d_1)}=\phi_{\widetilde\gamma_0(t)}$. By the same argument as before, one has that} $\phi_{\widetilde\gamma(c_0)}$  is below $\phi_{\widetilde\gamma'(a_1)}$ relative to $\phi_{\widetilde\gamma(t)}$, for every $t\in(t_1,t_2)$. \textcolor{black}{Also as before, when $t$} is close to $t_2$, \textcolor{black}{irregardless if $t_2$ is equal to $t'_1$ or to $t_0+1$,} then the paths $\tilde\gamma\vert_{[d_0,b_0]}$ and $\tilde\gamma\vert_{[d_1,b_1]}$ are disjoint and $\widetilde\gamma(d_0)$ is located after $\widetilde\gamma'(d_1)$ on 
$\phi_{\widetilde\gamma_0(t)}$ . So  $\phi_{\widetilde\gamma(b_0)}$  is above $\phi_{\widetilde\gamma'(b_1)}$ relative to $\phi_{\widetilde\gamma(t)}$  for every $t\in(t_1,t_2)$ because it is the case when $t$ is close to $t_2$.  In this situation, there exists at least one value of $t$ such that $\widetilde\gamma(d_0)=\widetilde\gamma'(d_1)$ and $\widetilde\gamma$ and $\widetilde \gamma'$ have a $\widetilde{\mathcal F}$-transverse intersection at $\widetilde\gamma(d_0)=\widetilde\gamma'(d_1)$. Consequently, $\gamma$ has a ${\mathcal F}$-transverse intersection which contradicts the hypothesis. }

\begin{figure}[ht!]
\hfill
\includegraphics [height=48mm]{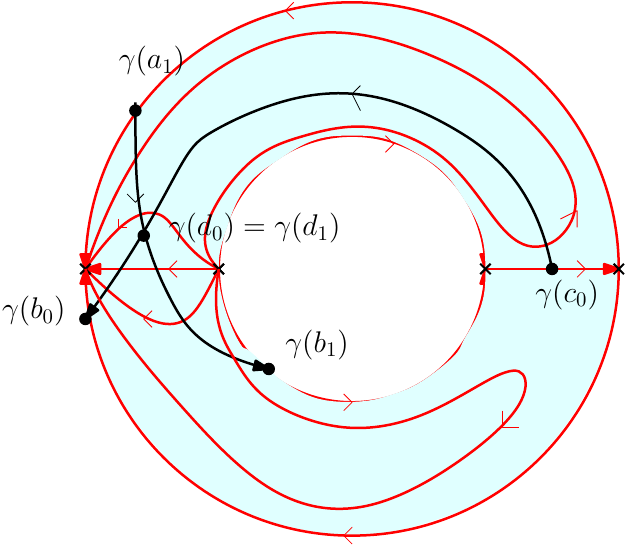}
\hfill{}
\caption{\small Proposition \ref{prop:drawing-crossing}. Depiction of why item (2) holds. Leafs of the foliation are drawn in red, transversal paths in black, and the annulus $U_{\Gamma_0}$ is the shaded region.}
\label{figure_crossingcomponent}
\end{figure}

\medskip
Let us prove $(3)$. Suppose that $\gamma$ draws a transverse simple loop $\Gamma_0$ and does not cross it. If the drawing component $J_0$ of $J_{\Gamma_0}$ does not \textcolor{black}{contain a neighborhood of at least one end of 
$J$,} then the two ends $a_0$ and $b_0$ of $J_0$ are finite and $\gamma(a_0)$ and $\gamma(b_0)$ do not belong to $U_{\Gamma}$. More precisely  $\gamma(a_0)$ and $\gamma(b_0)$ belong 
to the same connected component of $\S^2\setminus U_{\Gamma}$ because $\gamma$ does not cross $\Gamma_0$.  Here again, for every $c_0\in J_0$ the paths $\gamma\vert_{(a_0,c_0]}$ and $\gamma\vert_{[c_0, b_0)}$ meet every leaf of $U_{\Gamma_0}$ finitely many times and one can choose $c_0$ and $c'_0$ in $J_0$ such that every leaf is met exactly once by $\gamma\vert_{(a_0,c_0]}$ and $\gamma\vert_{[c'_0, b_0)}$.  Consequently, there exist $t_0$, $t'_0$, with $t'_0\leq t_0< t'_{0}+1$ such that $\gamma\vert_{(a_0,c_0]}$ is equivalent to $\gamma_0\vert_{(t_0,t_0+1]}$ and $\gamma\vert_{[c'_0,b_0)}$ is equivalent to $\gamma_0\vert_{[t'_0,t'_0+1)}$. \textcolor{black}{By working in the universal covering space and using arguments similar to what has been done in the proofs of (1) and (2) we can prove that there exist $d_0\in (a_0,c_0)$ and $d'_0\in (c'_0,b_0)$ such that $\gamma(d_0)=\gamma(d'_0)$ and such} that $\gamma\vert_{[a_0,c_0]}$ and $\gamma\vert_{[c'_0,b_0]}$ have a $\mathcal F$-transverse intersection at $\gamma(d_0)=\gamma(d'_0)$. This contradicts the hypothesis. 
\medskip

It remains to prove (4). Suppose that $\gamma$ draws two non equivalent transverse simple loops $\Gamma_0$, $\Gamma_1$. Denote $J_0$, $J_1$ the drawing components of $J_{\Gamma_0}$, $J_{\Gamma_1}$ respectively. The loops are not equivalent, so there is a leaf that is met by $\gamma\vert _{J_{\Gamma_0}}$ and not by $\gamma\vert _{J_{\Gamma_1}}$ and a leaf that is met by $\gamma\vert _{J_{\Gamma_1}}$ and not by $\gamma\vert _{J_{\Gamma_0}}$. Consequently, none of the inclusions $J_0\subset J_1$ or $J_1\subset J_0$ occurs and the conclusions follows.
\end{proof}

\begin{corollary}
\label{cor:decomposition}
Suppose that $\gamma:[a,b]\to\mathrm {dom} ({\mathcal F})$ is  a transverse path with no $\mathcal F$-transverse self-intersection such that $\phi_{\gamma(a)}=\phi_{\gamma(b)}$. Then, there exists a subdivision $(c_i)_{0\leq i\leq r}$ of $[a,b]$ such that:

\begin{enumerate}
\item $\phi_{\gamma(c_i)}=\phi_{\gamma(a)}$ for every $i\in\{0,\dots,r\}$;

\item $\phi_{\gamma(t)}\not=\phi_{\gamma(t')}$ if $c_i\leq t<t'<c_{i+1}$ and $i\in\{0,\dots,r-1\}$.

\end{enumerate}

\end{corollary}

\textcolor{black}{\begin{proof} The path $\gamma$ defines naturally a transverse loop $\Gamma$ met by $\phi_{\gamma(a)}$. As reminded in 
the beginning of \textcolor{black}{Subsection \ref{subsect:rotnumb}}, this loop meets every leaf finitely many times. Consequently, there exists a subdivision $(c_i)_{0\leq i\leq r}$ of $[a,b]$ such that:
\begin{itemize}
\item $\phi_{\gamma(c_i)}=\phi_{\gamma(a)}$ for every $i\in\{0,\dots,r\}$;
\item $\phi_{\gamma(t)}\not=\phi_{\gamma(a)}$ if $c_i\leq t<c_{i+1}$ and $i\in\{0,\dots,r-1\}$.
\end{itemize}
We would like to prove that $\phi_{\gamma(t)}\not=\phi_{\gamma(t')}$ if $c_i\leq t<t'<c_{i+1}$. It is sufficient to prove that if $a'<b'$ in $[a,b]$ satisfy
$\phi_{\gamma(a')}=\phi_{\gamma(b')}$, then there exists $c'\in[a',b']$ such that $
\phi_{\gamma(c')}=\phi_{\gamma(a)}$. As explained in the proof of Proposition 2 in  \cite{LeCalvezTal}, there exist $a''<b''$ in $[a',b']$ such that $
\phi_{\gamma(a'')}=\phi_{\gamma(b'')}$ and such that $\phi_{\gamma(t)}\not=\phi_{\gamma(t')}$ if $a''\leq t<t'<b''$. Indeed, every leaf that meets $\gamma$ is wandering and consequently, if $t$ and $t'$ are sufficiently close, one has $\phi_{\gamma(t)}\not=\phi_{\gamma(t')}$.  Moreover, because $\gamma$ is positively transverse to $\mathcal F$, one cannot find an increasing sequence $(a_n)_{n\geq 0}$ and a decreasing sequence $(b_n)_{n\geq 0}$ such that $\phi_{\gamma(a_n)}=\phi_{\gamma(b_n)}$. This implies the existence of $a''$ and $b''$ mentioned above. The path $\gamma\vert_{[a'',b'']} $ defines naturally a simple loop $\Gamma_0$ and $\gamma$ draws $\Gamma_0$. Let us explain, arguing by contradiction,  why there exists $c''\in[a'',b'']$ such that $
\phi_{\gamma(c'')}=\phi_{\gamma(a)}$.
Suppose that $\phi_{\gamma(a)}\not\subset U_{\Gamma_0}$. Then, $\gamma$ crosses $\Gamma_0$ by assertion $(3)$ of Proposition \ref{prop:drawing-crossing}. But it must have at least two crossing components because $\phi_{\gamma(a)}=\phi_{\gamma(b)}$
 and this contradicts assertion $(2)$ of Proposition \ref{prop:drawing-crossing}.\end{proof} }

\begin{remark*}Note that the conclusion of Corollary \ref{cor:decomposition} does not say that the paths $\gamma\vert_{[c_i,c_{i+1}]}$ must be equivalent, \textcolor{black}{as this is not always the case}.
\end{remark*}

Fix a transverse path $\gamma:J\to\mathrm {dom} ({\mathcal F})$  with no $\mathcal F$-transverse self-intersection. Suppose that $\gamma$ draws a transverse simple loop $\Gamma_0$ but is not drawn on it. Denote $J_0$ the drawing component of $J_{\Gamma_0}$. If the lower end $a_0$ of $J_0$ is finite and belongs to $J$, then $\gamma(a_0)$ does not belong to $U_{\Gamma_0}$. We denote $K^{\alpha}_{\Gamma_0}$ the connected component of $\S^2\setminus U_{\Gamma_0}$ that contains $\gamma(a_0)$ and $K^{\omega}_{\Gamma_0}$ the other one. Similarly, if the upper end $b_0$ of $J_0$ is finite and belongs to $J$, we denote $K^{\omega}_{\Gamma_0}$ the connected component of $\S^2\setminus U_{\Gamma_0}$ that contains $\gamma(b_0)$  and $K^{\alpha}_{\Gamma_0}$ the other one. Note that if the two ends are finite, the notations agree because $J_0$ must be a crossing component  by assertion $(3)$ of Proposition \ref{prop:drawing-crossing}. 
Note that in all cases, $$\begin{cases}\gamma(t)\in U_{\Gamma_0}\cup K^{\alpha}_{\Gamma_0},& \mathrm{if}\enskip t<a_0,\\ \gamma(t)\in U_{\Gamma_0}\cup K^{\omega}_{\Gamma_0},& \mathrm{if}\enskip t>b_0,\\ \end{cases}$$ otherwise $\gamma$ would have at least two crossing components.

\begin{proposition}
\label{prop:order-on-loops}Suppose that $\gamma:[a,b]\to\mathrm {dom} ({\mathcal F})$ is a transverse path with no $\mathcal F$-transverse self-intersection that draws two transverse simple loops $\Gamma_0$ and $\Gamma_1$ and denote $J_0$, $J_1$ the drawing components of $J_{\Gamma_0}$, $J_{\Gamma_1}$ respectively. If $J_1$ is on the right of $J_0$, then $K^{\alpha}_{\Gamma_0}\subset K^{\alpha}_{\Gamma_1}$ and  $K^{\omega}_{\Gamma_1}\subset K^{\omega}_{\Gamma_0}$.
\end{proposition}

\begin{proof} We can suppose that $\Gamma_0$ and $\Gamma_1$ are not equivalent, otherwise the result is trivial as: 
$$J_0=J_1, \enskip K^{\alpha}_{\Gamma_0}=K^{\alpha}_{\Gamma_1}, \enskip K^{\omega}_{\Gamma_0}=K^{\omega}_{\Gamma_1}.$$ 
Note that, by item (4) of Proposition \ref{prop:drawing-crossing}, either $J_0$ is on the right of $J_1$ or the converse is true, and we are assuming the latter. By the comments made just before the statement of Proposition \ref{prop:order-on-loops}, we have:
$$U_{\Gamma_1}\cap K^{\alpha}_{\Gamma_0}= U_{\Gamma_0}\cap K^{\omega}_{\Gamma_1}=\emptyset.$$ The set $K^{\alpha}_{\Gamma_0}$ is included in $\S^2\setminus U_{\Gamma_1}$. Being connected, it is contained in $K^{\alpha}_{\Gamma_1}$ or in $K^{\omega}_{\Gamma_1}$. Suppose that it is contained in $K^{\omega}_{\Gamma_1}$. It is a closed and open subset of $K^{\omega}_{\Gamma_1}$ because $U_{\Gamma_0}\cup  K^{\alpha}_{\Gamma_0}$ is a neighborhood of  $K^{\alpha}_{\Gamma_0}$ and $U_{\Gamma_0}\cap K^{\omega}_{\Gamma_1}=\emptyset$. So, $K^{\alpha}_{\Gamma_0}$ and $K^{\omega}_{\Gamma_1}$ are equal.
Since the loop $\Gamma_1$ is simple, it separates the two ends of $U_{\Gamma_1}$ and intersects every leaf in $U_{\Gamma_1}$ exactly once. Therefore, every leaf $\phi\in U_{\Gamma_1}$ is adjacent to both connected components of the complement of $U_{\Gamma_1}$, and we deduce that every leaf $\phi\in U_{\Gamma_1}$ is adjacent to $K^{\alpha}_{\Gamma_0}$ but disjoint from this set. So $\phi$ meets $U_{\Gamma_0}$, because $U_{\Gamma_0}\cup  K^{\alpha}_{\Gamma_0}$ is a neighborhood of  $K^{\alpha}_{\Gamma_0}$, and this implies that $\phi$ is contained in $U_{\Gamma_0}$. One deduces that $U_{\Gamma_1}$ is included in $U_{\Gamma_0}$, which is impossible   because $\Gamma_0$ and $\Gamma_1$ are not equivalent. In conclusion, one gets the inclusion $K^{\alpha}_{\Gamma_0}\subset K^{\alpha}_{\Gamma_1}$. The proof of the other inclusion is similar.\end{proof}

\subsection{Applications}

\begin{proposition}
\label{prop:non-wandering} Let $f$ be an orientation preserving homeomorphism of $\S^2$ with no topological horseshoe. Let $I$ be a maximal isotopy of $f$ and $\mathcal F$ a foliation transverse to $I$. 
If $z\in\mathrm{dom}(I)$ is a non-wandering point of $f$, then:

\begin{itemize}
\item  either $I^{\Z}_{\mathcal F} (z)$ never meets a leaf twice,
\item or $I^{\Z}_{\mathcal F} (z)$ draws a unique transverse simple loop $\Gamma_0$ (up to equivalence) and exactly draws it.
\end{itemize}
\end{proposition}

\begin{proof}Suppose that $I^{\Z}_{\mathcal F} (z)$ meets a leaf twice. In that case, it draws a transverse loop $\Gamma_0$, that can be supposed simple by Corollary \ref{cor:decomposition}.  If $I^{\Z}_{\mathcal F} (z)$ is not drawn on $\Gamma_0$, one can find integers $m<n$ such that $\gamma=I_{\mathcal F}^{\Z} (z)\vert_{[m,n]}=I_{\mathcal F}^{n-m}(f^m(z))$ draws $\Gamma_0$ and is not drawn on $\Gamma_0$: there exists $t\in[m,n]$ such that $\phi_{\gamma(t)}\not\subset U_{\Gamma_0}$. By Proposition \ref{prop:stability}, there exists a neighborhood $W$ of $z$ such that for every $z'\in W$,  $\gamma$ is a sub-path of $I_{\mathcal F}^{\Z} (z)\vert_{[m-1,n+1]}$. Consequently, $I_{\mathcal F}^{\Z} (z')\vert_{[m-1,n+1]}$ draws $\Gamma_0$ and is not drawn on $\Gamma_0$. The point $z$ being non-wandering, one can find $z'\in W$ and $l>n-m+2$ such that $f^{l}(z')\in W$. So $I_{\mathcal F}^{\Z} (z')\vert_{[m-1+l,n+1+l]}$ draws $\Gamma_0$ and is not drawn on $\Gamma_0$. More precisely, there exists an interval $J_0\subset [m-1,n+1]$ and an interval $J_1\subset [m-1+l,n+1+l]$ such that $I_{\mathcal F}^{\Z} (z')\vert_{J_0}$ and $I_{\mathcal F}^{\Z} (z')\vert_{J_1}$ are equivalent to $\gamma$. There exists at least two drawing components,  separated by the point $t_0\in J_0$ corresponding to $t$, if $t$ is at the right of the drawing component of $\gamma$,  separated  by the point $t_1\in J_1$ corresponding to $t$, if $t$ is at the left of the drawing component of $\gamma$. By assertion $(1)$ of Proposition \ref{prop:drawing-crossing}, we deduce that $I^{\Z}_{\mathcal F} (z)$ has a $\mathcal F$-transverse self intersection. This contradicts Theorem \ref{th: horseshoe}.\end{proof}

\begin{remarks*} The first case only appears in case $\alpha(z)$ and $\omega(z)$ are included in $\mathrm{fix}(I)$. We will define
$$\mathrm{ne}(I)=\{z\in \mathrm{dom}(I) \, \vert \, \alpha(z)\cup\omega(z)\not\subset\mathrm{fix}(I)\}.$$ 
Note that if $z\in  \mathrm{ne}(I)$, there exists at least one leaf that is met infinitely many times by $I^{\Z}_{\mathcal F} (z)$. By Corollary \ref{cor:decomposition}, $I^{\Z}_{\mathcal F} (z)$ 
 draws at least one transverse simple loop. Moreover, this loop is unique and drawn infinitely if $z\in\Omega(f)$.
\end{remarks*}
We can generalize the previous result.

\begin{proposition} 
\label{prop:Birkhoff-cycles}Let $f$ be an orientation preserving homeomorphism of $\S^2$ with no topological horseshoe. Let $I$ be a maximal isotopy of $f$ and $\mathcal F$ a foliation transverse to $I$. 
Let $(z_i)_{i\in\Z/r\Z}$ be a Birkhoff cycle in $\mathrm{ne}(I)$. Then there exists a transverse simple loop $\Gamma$ such that, for every $i\in \Z/r\Z$, $\Gamma$ is the unique simple loop (up to equivalence) \textcolor{black}{drawn} by $I^{\Z}_{\mathcal F} (z_i)$.  \textcolor{black}{Moreover $I^{\Z}_{\mathcal F} (z_i)$ exactly draws infinitely $\Gamma$.} 

\end{proposition}

\begin{proof} In the case where there exists a periodic orbit that contains all the $z_i$, we can apply Proposition \ref{prop:non-wandering}. Otherwise,  \textcolor{black}{using the first assertion of Proposition  \ref{prop: birkhoff connexions}}, and taking a subsequence if necessary, one can suppose that for every $i\in\Z/r\Z$, the point $z_{i+1}$ is not the image of $z_i$ by a positive iterate of $f$. Likewise, we may assume that $r\ge 2$, otherwise the unique point $z_0$ must be non-wandering and the result follows again by Proposition \ref{prop:non-wandering}. Note that, since each $z_i\in \mathrm{ne}(I)$, then each $I^{\Z}_{\mathcal{F}}(z_i)$ draws at least a simple transverse loop  \textcolor{black}{$\Gamma_i$. It suffices to show that $\Gamma_i$ and $\Gamma_j$ are equivalent if $i \not=j$ and that $I^{\Z}_{\mathcal F} (z_i)$ exactly draws $\Gamma_i$. Indeed $z_i$ belongs to $\mathrm{ne}(f)$ and so $I^{\Z}_{\mathcal F} (z_i)$ exactly draws infinitely $\Gamma_i$}. 

 The fact that $(z_i)_{i\in\Z/r\Z}$ is a Birkhoff cycle \textcolor{black}{and that $z_{i+1}$ does not belong to the positive orbit of $z_i$} implies that, \textcolor{black}{for every $N\geq 1$, if we choose for every $i$ a neighborhood $W_i$ of $z_i$, there exists $z'_i\in W_i$ and $n_i\geq N$ such that $f^{n_i}(z'_i)\in W_{i+1}$. In particular, if the $W_i$ are chosen small enough,} $I^{\Z}_{\mathcal F} (z'_i)$ draws $\Gamma_i$ and $\Gamma_{i+1}$  and moreover, the drawing component of $\Gamma_{i+1}$ is on the right of the drawing component of $\Gamma_{i}$. 
 For every \textcolor{black}{$i\in\Z/r\Z$ we denote:
 \begin{itemize} 
 \item $(a_i,b_i)$ the drawing component of $I^{\Z}_{\mathcal F} (z_i)$ with $\Gamma_i$,
 \item $(a'_i,b'_i)$ the drawing component of $I^{\Z}_{\mathcal F} (z'_i)$ with $\Gamma_i$, 
\item $(a''_i,b''_i)$ the drawing component of $I^{\Z}_{\mathcal F} (z'_{i-1})$ with $\Gamma_i$.
\end{itemize}}

\textcolor{black}{
Let us begin with the following preliminary result.
\begin{lemma} \label{lemma:equal loops} Suppose that $\Gamma_i=\Gamma_{i+1}$. Then $b_i=+\infty$ and  $a_{i+1}=-\infty$.
\end{lemma} 
\begin{proof}  Suppose that $b_i<+\infty$. Fix $c_i$ in $(a_i, b_i)$  and $e_{i+1}$, $f_{i+1}$ in $(a_{i+1}, b_{i+1})$ satisfying $e_{i+1}<f_{i+1}$, such that $I^{\Z}_{\mathcal F} (z_i)\vert_{[c_i, b_i]}$  and  $I^{\Z}_{\mathcal F} (z_{i+1})\vert_{[e_{i+1}, f_{i+1}]}$ draw $\Gamma$.  If the neighborhoods $W_{i}$ and $W_{i+1}$ are chosen sufficiently small and $n_i$ sufficiently large, there exist $c'_i< d'_i<e'_i<f'_i$  such that $I^{\Z}_{\mathcal F} (z'_i)\vert_{[c'_i, d'_i]}$ is equivalent to  $I^{\Z}_{\mathcal F} (z_i)\vert_{[c_i, b_i]}$ and $I^{\Z}_{\mathcal F} (z'_i)\vert_{[e'_i, f'_i]}$ is equivalent to $I^{\Z}_{\mathcal F} (z_{i+1})\vert_{[e_{i+1}, f_{i+1}]}$. Therefore both $[c'_i, d'_i]$ and $[e'_i, f'_i]$ intersect a drawing component of $I^{\Z}_{\mathcal F} (z'_i)$ with $\Gamma$. But since $I^{\Z}_{\mathcal F} (z_i)(b_i)$ does not belong to $U_{\Gamma}$,  $I^{\Z}_{\mathcal F} (z'_i)(d'_i)$ also does not belong to $U_{\Gamma}$, and therefore there must be at least two drawing components of $I^{\Z}_{\mathcal F} (z'_i)$ with $\Gamma$, a contradiction with the assertion $(1)$ of Proposition \ref{prop:drawing-crossing}. One proves in the same way that $a_{i+1}=-\infty$. \end{proof}}

Consequently, if we prove that the $\Gamma_i$ are all equal, we will deduce that $I^{\Z}_{\mathcal F} (z_i)$ exactly draws $\Gamma_i$.

Suppose for a contradiction that there exists \textcolor{black}{$i$ such that $\Gamma_{i}\not=\Gamma_{i+1}$}. As explained in Proposition \ref{prop:order-on-loops}, for each $i$ such that $\Gamma_{i}\not=\Gamma_{i+1}$, exactly one of the two connected components of the complement of $U_{\Gamma_i}$ meets $\Gamma_{i+1}$, which we denote it $K^{\omega}_{i\to i+1}$, and exactly one of the two connected components of the complement of $U_{\Gamma_{i+1}}$ meets $\Gamma_{i}$, which we denote it $K^{\alpha}_{i\to i+1}$. We can extend the above sets in a unique way to obtain two global families $(K^{\alpha}_{i\to i+1})_{i\in\Z/r\Z}$ and $(K^{\omega}_{i\to i+1})_{i\in\Z/r\Z}$ by imposing that, whenever
$$\Gamma_i=\Gamma_{i+1}\Rightarrow K^{\alpha}_{i\to i+1}=K^{\alpha}_{i-1\to i} \enskip\mathrm{and} \enskip K^{\omega}_{i\to i+1}=K^{\omega}_{i+1\to i+2}.$$

\begin{lemma}  There exists $i$ such that $K^{\alpha}_{i-1\to i}=K^{\omega}_{i\to i+1}$.
\end{lemma}
\begin{proof} If $K^{\alpha}_{i-1\to i}\not=K^{\omega}_{i\to i+1}$ for every $i\in\Z/r\Z$, then, the sequence $(K^{\alpha}_{i\to i+1})_{i\in\Z/r\Z}$ is non-decreasing while the sequence $(K^{\omega}_{i\to i+1})_{i\in\Z/r\Z}$ is non-increasing by Proposition \ref{prop:order-on-loops}. So the two sequences are constant, as is the sequence $(U_{\Gamma_i})_{i\in \Z/r\Z}$ because $U_{\Gamma_i}=\S^2\setminus \left(K^{\omega}_{i\to i+1}\cup K^{\alpha}_{i\to i+1}\right)$. This contradicts our assumption that there exists $i$ such that $\Gamma_{i}\not=\Gamma_{i+1}$.
\end{proof}

We deduce that there exists a transverse simple loop $\Gamma$, a connected component $K$ of $\S^2\setminus U_{\Gamma}$
and \textcolor{black}{$i_0\leq i_1$} such that 
\begin{itemize}
\item $\Gamma_{i}=\Gamma$ for every $i\in\{i_0,\dots,i_1\}$;
\item $\Gamma_{i_0-1}\not=\Gamma$;
\item $\Gamma_{i_1+1}\not=\Gamma$; 
\item the point $z'_{i_0-1}$ draws $\Gamma_{i_0-1}$ and $\Gamma$ and the drawing component of the first loop is on the left of the drawing component of the second one;
\item  the point $z'_{i_1}$ draws $\Gamma$ and $\Gamma_{i_1+1}$ and the drawing component of the first loop is on the left of the drawing component of the second one;
\item $K^{\alpha}_{i_0-1\to i_0}=K^{\omega}_{i_1\to i_1+1}=K$.
\end{itemize}

 We know that 
 $$-\infty<a''_{i_0}, \enskip b'_{i_1}<+\infty, \enskip I^{\Z}_{\mathcal F} (z'_{i_0-1})(a''_{i_0})\in K, \enskip I^{\Z}_{\mathcal F} (z'_{i_1})(b'_{i_1})\in K.$$ 

\begin{lemma} \label{lemma:different loops} There exists $i\in\{i_0,\dots, i_1\}$ such that: 
\begin{itemize}
\item $-\infty<a''_i$;
\item $ I^{\Z}_{\mathcal F} (z'_{i-1})(a''_{i})\in K$;
\item $I^{\Z}_{\mathcal F} (z'_{i-1})\vert_{(a''_i, b''_i)}$ meets every leaf of $U_{\Gamma}$ at least twice. 
\end{itemize}
\end{lemma} 
\begin{proof} Suppose first that $i_0=i_1$. We know that  $a''_{i_0}$ is finite and that  $I^{\Z}_{\mathcal F} (z'_{i_0-1})(a''_{i_0})\in K$. We will prove that $I^{\Z}_{\mathcal F} (z_{i_0})\vert_{(a_{i_0},b_{i_0})}$ meets every leaf of $U_{\Gamma}$ at least twice. It will also be the case for $I^{\Z}_{\mathcal F} (z'_{i_0-1})\vert_{(a''_{i_0}, b''_{i_0})}$  if the neighborhood $W_{i_0}$ is chosen sufficiently small and $n_{i_0-1}$ sufficiently large. So, the lemma will be proved. In fact we will prove that $I^{\Z}_{\mathcal F} (z_{i_0})$ is drawn on $\Gamma$. The point  $z_{i_0}$ belonging to $\mathrm{ne}(f)$, $I^{\Z}_{\mathcal F} (z_{i_0})$ will draw infinitely $\Gamma$.

Suppose that  $a_{i_0}$ is finite. In that case, $a'_{i_0}$ is also finite and  $I^{\Z}_{\mathcal F} (z'_{i_0-1})(a''_{i_0})$, $I^{\Z}_{\mathcal F} (z_{i_0})(a_{i_0})$ and $I^{\Z}_{\mathcal F} (z'_{i_0})(a' _{i_0})$ are on the same leaf. One deduces that  $I^{\Z}_{\mathcal F} (z'_{i_0})(a' _{i_0})\in K$. By hypothesis, one knows that $I^{\Z}_{\mathcal F} (z'_{i_0})(b' _{i_0})\in K$. This contradicts the assertion $(3)$ of Proposition \ref{prop:drawing-crossing}. Suppose that  $b_{i_0}$ is finite. In that case, $b''_{i_0}$ is also finite and  $I^{\Z}_{\mathcal F} (z'_{i_0-1})(b''_{i_0})$, $I^{\Z}_{\mathcal F} (z_{i_0})(b_{i_0})$ and $I^{\Z}_{\mathcal F} (z'_{i_0})(b' _{i_0})$ are on the same leaf. One deduces that  $I^{\Z}_{\mathcal F} (z'_{i_0})(b'' _{i_0})\in K$. By hypothesis, one knows that $I^{\Z}_{\mathcal F} (z'_{i_0})(a'' _{i_0})\in K$. We get the same contradiction.
In conclusion, $I^{\Z}_{\mathcal F} (z_{i_0})$ is drawn on $\Gamma$. 

 Suppose now that $i_0<i_1$. \textcolor{black}{Observe that $(a'_i,b'_i)=(a''_{i+1}, b''_{i+1})$ if $i_0\leq i<i_1$.} We have to study the case where $I^{\Z}_{\mathcal F} (z_{i_0})\vert_{(a_{i_0},b_{i_0})}$ does not meet every leaf of $U_{\Gamma}$ at least twice.
We have seen in \textcolor{black}{Lemma \ref{lemma:equal loops}} that $b_{i_0}$ is infinite. So $a_{i_0}$ is finite. In that case, $a'_{i_0}$ is also finite and  $I^{\Z}_{\mathcal F} (z'_{i_0-1})(a''_{i_0})$, $I^{\Z}_{\mathcal F} (z_{i_0})(a_{i_0})$ and $I^{\Z}_{\mathcal F} (z'_{i_0})(a' _{i_0})$ are on the same leaf and so $  \textcolor{black}{ I^{\Z}_{\mathcal F} (z'_{i_0})(a'' _{i_0+1})}=I^{\Z}_{\mathcal F} (z'_{i_0})(a' _{i_0})\in K$. Replacing $i_{0}$ by $i_0+1$, all what was done above is still valid.  If $I^{\Z}_{\mathcal F} (z_{i_0+1})$ is drawn on $\Gamma$, the lemma will be valid with $i_0+1$. If not, we go to the following index. The process will stop until reaching $i_1$. 
\end{proof}

By a similar reasoning,  there exists $j\in\{i_0,\dots, i_1\}$ such that 
\begin{itemize}
\item $b'_j<+\infty$,
\item $ I^{\Z}_{\mathcal F} (z'_{j})(b'_{j})\in K$,
\item $I^{\Z}_{\mathcal F} (z'_{j})\vert_{(a'_j, b'_j)}$ meets every leaf of $U_{\Gamma}$ at least twice. 
\end{itemize}

\textcolor{black}{
\begin{lemma}\label{lem:newmeettwo}
Let $\beta_0:[\overline a_0, \overline b_0)\to \mathrm{dom}(\mathcal{F})$ and $\beta_1:(\overline a_1, \overline b_1]\to \mathrm{dom}(\mathcal{F})$ be two admissible transverse paths such that:
\begin{itemize}
\item $\beta_0(\overline a_0)$ and $\beta_1(\overline b_1)$ are in the same connected component of $\S^2\setminus U_{\Gamma}$;
\item Both $\beta_0\vert_{(\overline a_0, \overline b_0)}$ and  $\beta_1\vert_{(\overline a_1, \overline b_1)}$ are in $U_{\Gamma}$, and both paths meet every leaf of $U_{\Gamma}$ at least twice.
\end{itemize}
Then there exists an admissible path with a $\mathcal F$-transverse self-intersection
\end{lemma}
\begin{proof}
Denote $\gamma$ the natural lift of $\Gamma$. Here again, for every $\overline c_0\in (\overline a_0, \overline b_0)$ and $\overline c_1\in (\overline a_1, \overline b_1)$, the paths $\beta_0\vert_{(\overline a_0,  \overline c_0]}$ and $\beta_1\vert_{[\overline c_1,  \overline b_1)}$ meet every leaf of $U_{\Gamma}$ finitely many times and one can choose $\overline c_0$ and $\overline c_1$ such that every leaf is met exactly twice by $\beta_0\vert_{(\overline a_0,  \overline c_0]}$ and by $\beta_1\vert_{[\overline c_1,  \overline b_1)}$. Consequently, there exist $t_0$ and $t_1$,  with $t_0-1<t_1\leq t_0$, such that $\beta_0\vert_{(\overline a_0,  \overline c_0]}$ is equivalent to $\gamma\vert_{(t_0,t_0+2]}$ and $\beta_1\vert_{[\overline c_1,  \overline b_1)}$ is equivalent to $\gamma\vert_{[t_1,t_{1}+2)}$.
Lets consider $\overline d_0\in (\overline a_0, \overline c_0)$ and $d_1\in (\overline c_1, \overline b_1)$ uniquely defined by the fact that  $\beta_0\vert_{(\overline a_0,  \overline c_0]}$ is equivalent to $\gamma\vert_{(t''_i,t''_i+1]}$ and $\beta_1\vert_{[\overline d_1,  \overline b_1)}$ is equivalent to $\gamma\vert_{[t''_i+1,t'_j+2)}$. One can suppose, by eventually changing the paths in their equivalence class, that $\beta_0(\overline d_0)= \beta_1(\overline d_1)$. Note that the paths $\beta_0\vert_{[\overline a_0,  \overline c_0]}$ and $\beta_1\vert_{[\overline c_1,  \overline b_1]}$ have a $\mathcal F$-transverse intersection at $\beta_0(\overline d_0)= \beta_1(\overline d_1)$. By Proposition \ref{pr: fundamental}, the path $\beta_0\vert_{[\overline a_0, \overline d_0]}\beta_1\vert_{[\overline d_1, \overline b_1]}$ is admissible, and contains a subpath that is equivalent to $\gamma\vert_{(t_0, t_1+2)}$. In particular, since $t_1>t_0-1$, this path draws $\Gamma$, does not cross it and its ends are not in $U_{\Gamma}$. By Proposition \ref{prop:drawing-crossing}, it has a $\mathcal F$-transverse self intersection.
\end{proof}
}
\textcolor{black}{We can therefore apply the previous lemma to the paths $\beta_0=I^{\Z}_{\mathcal F} (z''_{i-1})\vert_{[a''_i, b''_i)}$ and $\beta_1= I^{\Z}_{\mathcal F} (z'_{j})\vert_{(a'_j,b'_j]}$,
 then one deduces the existence of an admissible path with a $\mathcal F$-transverse self-intersection, and Theorem \ref{th: horseshoe} implies that $f$ has a topological horseshoe, a contradiction proving Proposition \ref{prop:Birkhoff-cycles}.}
 \end{proof}

\section{Paths on the annulus with no $\mathcal F$ transverse self-intersection}

\subsection{Definitions, positive and negative loops} A singular oriented foliation $\mathcal F$ on the annulus $\A=\T^1\times\R$ can be extended to a singular foliation on $\S^2$ by adding two singular points, the upper end $N$ and the lower end $S$. So one can apply the results of the previous section.  Let us study now this particular class of foliations. Write $[\Gamma]\in H_1(\A,\Z)$ the homology class of a loop $\Gamma$ of $\A$. Consider the loop $\Gamma_*:x\mapsto(x,0)$, which means the circle $\T^1\times\{0\}$ oriented with an increasing angular coordinate, so that the upper end is on its left side and  the lower end on its right side. One gets  a generator of $H_1(\A,\Z)$ by considering $[\Gamma_*]$. Say that a loop $\Gamma$ is essential if $[\Gamma]\not=0$. Say that $\Gamma$ is a {\it positive} loop if there exists $p>0$ such that $[\Gamma]=p[\Gamma_*]$ and define in the same way {\it negative, non positive and non negative} loops. Note that if $\Gamma$ is a simple loop, there exists $p\in\{-1,0,1\}$ such that $[\Gamma]=p[\Gamma_*]$.  Let us remind that if  a multi-loop $\Gamma=\sum_{1\leq i\leq p}\Gamma_i$  in $\A$ is homologous to zero in $\A$, one can choose the dual function (defined on $\S^2)$ to vanish in a neighborhood of $N$ and $S$.  If $\check \A=\R\times\R$ is the universal covering space of $\A$, we will denote $T:(x,y)\mapsto(x+1, y)$ the covering automorphism naturally associated with the loop $\Gamma_*$.

Suppose now that $\gamma:[a,b]\to\mathrm{dom}(I)$ is a transverse path such that $\phi_{\gamma(a)}=\phi_{\gamma(b)}=\phi$. One can find a transverse path $\gamma':[a,b]\to\mathrm{dom}(I)$ equivalent to $\gamma$ such that $\gamma'(a)=\gamma'(b)$. The leaf $\phi_{\gamma(a)}$ cannot be a closed leaf and so, the equivalence class of the transverse loop naturally defined by $\gamma'$ depends only on $\gamma$: we will called such a transverse loop, the {\it natural transverse loop} defined by $\gamma$.

\begin{proposition}
\label{prop:positive-negative} Suppose that $\gamma:J\to\mathrm {dom} ({\mathcal F})$ is a transverse path with no $\mathcal F$-transverse self-intersection that draws a positive transverse simple loop $\Gamma_0$ and a negative transverse simple loop $\Gamma_1$. Then $U_{\Gamma_0}\cap U_{\Gamma_1}=\emptyset$. 
\end{proposition}

\begin{proof} Of course $\Gamma_0$ and $\Gamma_1$ are not equivalent. Denote $J_0$, $J_1$ the drawing components of $J_{\Gamma_0}$, $J_{\Gamma_1}$ respectively. There is no loss of generality by supposing that  $J_1$ is on the right of $J_0$.  The path $\gamma\vert_{J_1}$ does not cross $\Gamma_0$. Otherwise $J_1$ would contain a crossing component of $J_{\Gamma_0}$, but the only eventual crossing component of $J_{\Gamma_0}$ 
is $J_0$, the drawing component,  by assertion $(2)$ of Proposition \ref{prop:drawing-crossing}. So, there exists a transverse path equivalent to $\gamma\vert_{J_1}$ that is disjoint from $\Gamma_0$ and consequently a transverse loop $\Gamma'_1$ equivalent to $\Gamma_1$ that is disjoint from $\Gamma_0$. The multi loop $\Gamma_{0}+\Gamma'_{1}$ is homologous to zero and its dual function can be chosen to take its values in $\{0,1\}$ because the  loops are disjoint. This implies that a leaf cannot meet both loops.\end{proof}

\begin{corollary}
\label{cor:positive-negative1} Suppose that $\gamma:J\to\mathrm {dom} ({\mathcal F})$ is a transverse path with no $\mathcal F$-transverse self-intersection such that $\phi_{\gamma(t_0)}=\phi_{\gamma(t_1)}=\phi_{\gamma(t_2)}$ where $t_0<t_1<t_2$. If the transverse loop naturally defined by $\gamma\vert_{[t_0,t_1]}$ is positive, then the transverse loop naturally defined by $\gamma\vert_{[t_1,t_2]}$ is non negative. Similarly if the first loop  is negative, the second one is non positive.
\end{corollary}

\begin{proof} Suppose that $\gamma\vert_{[t_0,t_1]}$ is positive and $\gamma\vert_{[t_1,t_2]}$ negative. Using Corollary \ref{cor:decomposition}, one can find $t_0\leq t'_0<t'_1\leq t_1$ and $t_1\leq t''_1<t''_2\leq {t_2}$ such that 
\begin{itemize}
\item $\phi_{\gamma(t'_0)}=\phi_{\gamma(t'_1)}=\phi_{\gamma(t''_1)}=\phi_{\gamma(t''_2)}=\phi_{\gamma(t_0)}$;
\item the transverse loop naturally defined by $\gamma\vert_{[t'_0,t'_1]}$ is a positive transverse simple loop; 
\item the transverse loop naturally defined by $\gamma\vert_{[t''_1,t''_2]}$ is a negative transverse simple loop; 
\item these two last loops are drawn by $\gamma$.
\end{itemize}
It remains to note that $\phi_{\gamma(t_0)}$ meets both loops, which contradicts Proposition \ref{prop:positive-negative}.\end{proof}

\begin{corollary}
\label{cor:positive-negative2} Suppose that $\gamma:J\to\mathrm {dom} ({\mathcal F})$ is a transverse path with no $\mathcal F$-transverse self-intersection such that $\phi_{\gamma(t_0)}=\phi_{\gamma(t_1)}$ and $\phi_{\gamma(t_2)}=\phi_{\gamma(t_3)}$, where $t_0<t_2<t_3<t_1$. If the transverse loop naturally defined by $\gamma\vert_{[t_0,t_1]}$ is positive, then the transverse loop naturally defined by $\gamma\vert_{[t_2,t_3]}$ is non negative. Similarly if the first loop  is negative, the second one is non positive.
\end{corollary}

\begin{proof} Suppose that $\gamma\vert_{[t_0,t_1]}$ is positive and $\gamma\vert_{[t_2,t_3]}$ negative. Using Corollary \ref{cor:decomposition}, one can find $t_0\leq t'_0<t'_1\leq t_1$ and $t_2\leq t'_2<t'_3\leq {t_3}$ such that 
\begin{itemize}
\item $\phi_{\gamma(t'_0)}=\phi_{\gamma(t'_1)}=\phi_{\gamma(t_0)}$;
\item $\phi_{\gamma(t'_2)}=\phi_{\gamma(t'_3)}=\phi_{\gamma(t_2)}$;
\item the transverse loop naturally defined by $\gamma\vert_{[t'_0,t'_1]}$ is a positive transverse simple loop; 
\item the transverse loop naturally defined by $\gamma\vert_{[t'_2,t'_3]}$ is a negative transverse simple loop; 
\item these two last loops are drawn by $\gamma$.
\end{itemize}
As explained in the proof of Corollary \ref{cor:decomposition}, the leaf $\phi_{\gamma(t_0)}$ meets both loops, which contradicts Proposition \ref{prop:positive-negative}.\end{proof}

\bigskip
Let us give an application.

\begin{corollary}
\label{cor:free-disks}
Let $f$ be a homeomorphism of $\A$ isotopic to the identity and $\check f$ a lift of $f$ to $\check {\A}$. Let $D$ be a topological open disk of $\A$ and $\check D$ a lift of $D$. We suppose that $\check D$ is a free disk of $\check f$ and that there exists a point $\check z\in \check D$ and positive integers $n_0,n_1,m_0,m_1$ such that 
$$\check z\in \check D, \,\check f^{n_0}(\check z)\in T^{m_0}(\check D),\, \check f^{n_0+n_1}(\check z)\in T^{m_0-m_1}(\check D).$$Then $f$ has a topological horseshoe.
\end{corollary}

\begin{proof} Choose an identity isotopy $I^*$ of $f$ that is lifted on $\check \A$ to an identity isotopy of $\check f$. Then choose a maximal isotopy $I$ such that $I^*\preceq I$. Set $z=\pi(\check z)$. As explained in \textcolor{black}{Proposition 59 of \cite{LeCalvezTal}}, one can find a singular foliation ${\mathcal F}$ transverse to $I$ such that $z$, $f^{n_0}(z)$ and $f^{n_0+n_1}(z)$ are on the same leaf of the restricted foliation ${\mathcal F}\vert_{D}$. Setting $\gamma=I^{n_0+n_1}_{{\mathcal F}}(z)$, one gets

\begin{itemize}
\item $\phi_{\gamma(0)}=\phi_{\gamma(n_0)}=\phi_{\gamma(n_0+n_1)}$;

\item the transverse loop naturally defined by $\gamma\vert_{[0,n_0]}$ is positive;

\item  the transverse loop naturally defined by $\gamma\vert_{[n_0,n_0+n_1]}$ is negative.
\end{itemize} 

By Corollary \ref{cor:positive-negative1}, one deduces that $I^{n_0+n_1}_{{\mathcal F}}(z)$ has a $\mathcal F$-transverse self-intersection, and by Theorem  \ref{th: horseshoe2}, that $f$ has a topological horseshoe.\end{proof}

\subsection{Rotation numbers}

Let us remind the theorem we want to prove, Theorem \ref{thmain:rotation-number} of the introduction.

\begin{theorem}
\label{th:rotation-number}
Let $f$ be a homeomorphism of $\A$ isotopic to the identity and $\check f$ a lift of $f$ to $\R^2$. We suppose that $f$ has no topological horseshoe. Then
\begin{enumerate}
\item each point $z\in\mathrm{ne}^+(f)$  has a {well defined} rotation number $\mathrm{rot}_{\check f}(z)$;
\item for every $z,z'\in\mathrm{ne}^+(f)$ such that $z'\in\omega(z)$ we have $\mathrm{rot}_{\check f}(z')=\mathrm{rot}_{\check f}(z)$;
\item if $z\in \mathrm{ne}^+(f)\cap \mathrm{ne}^-({f})$ is non-wandering, then $\mathrm{rot}_{\check f^{-1}}(z)=-\mathrm{rot}_{\check f}(z)$;
\item the map $\mathrm{rot}_{\check f^{\pm}} : \Omega(f) \cap \mathrm{ne}(f)\to\R$ is continuous, where
$$ \mathrm{rot}_{\check f^{\pm}}(z) = \begin{cases}\mathrm{rot}_{\check f}(z) &\mathrm{if}\enskip  z\in \Omega(f) \cap  \mathrm{ne}^+(f),\\
-\mathrm{rot}_{\check f^{-1}}(z) &\mathrm{if}\enskip  z\in \Omega(f) \cap  \mathrm{ne}^-(f).
\end{cases}$$

\end{enumerate} 
\end{theorem}

\begin{remark*} \textcolor{black}{Before starting the proof of Theorem \ref{th:rotation-number}, let us state a fact that will be often used in the article. Let $f$ be a homeomorphism of $\A$ isotopic to the identity and $\check f$ a lift of $f$ to $\R^2$. Let $I$ be a maximal isotopy of $f$ that is lifted to an identity isotopy $\check I$ of $\check f$ and  ${\mathcal F}$ a singular foliation transverse to $I$ lifted to $\check{\mathcal F}$. Suppose that $z$ belongs to the domain of $I$ and that $I_{\mathcal F}^{\N}(z)$ exactly draws infinitely a positive loop $\Gamma$. Then if $z\in\mathrm{ne}_+(I)$ and if $\mathrm{rot}_{\check f}(z)$ is well defined,
 then  $\mathrm{rot}_{\check f}(z)\geq 0$. More precisely, if there exists $z'\in\mathrm{dom}(I)$,  a sub-sequence $(f^{n_k}(z))_{k\geq 0}$ of the forward orbit of $z$ that converges to a point $z'\in\mathrm{dom}(I)$ and if there exists $\rho\in \R\cup\{-\infty, +\infty\}$ such that 
$$\lim_{k\to+\infty} {1\over n_k} (\pi_1(\check f^{n_k} (\check z) -\pi_1(\check z))= \rho, $$ \textcolor{black}{where $\check z$ is a lift of $z$.} Then it holds that $\rho\geq 0$. Indeed, \textcolor{black}{there exists some ${\check z}'$ a lift of $z'$ and $k_0$ sufficiently large such that $\check I_{\check {\mathcal F}}^{n_{k_0}}(\check z)$ meets $\phi_{\check z'}$.} If $\rho<0$, then there would exist $m$ arbitrarily large such that $\check I_{\check {\mathcal F}}^{\N}(\check z)$ \textcolor{black}{also meets $T^{-m}(\phi_{\check z'})$ and so $I_{\mathcal F}^{\N}(z)$ will draw a negative transverse loop, which is impossible.}}
\end{remark*}

\begin{proof} We will begin by proving $(1)$ by contradiction. Suppose that the $\omega$-limit set of $z$ is not empty and that $z$ has no rotation number.
There are two possibilities. 

\medskip
\noindent {Case 1:}\enskip there exists a compact set $K\subset\A$, two numbers $\rho_{-}<\rho_+$ in $\R\cup\{-\infty,\infty\}$ and two increasing sequences of integers $(n_k)_{k\geq 0}$  and $(m_k)_{k\geq 0}$ such that $f^{n_k}(z)\in K$, $f^{m_k}(z)\in K$ and such that for any lift $\check z$ of $z$, one has
$$\lim_{k\to+\infty}\frac{1}{n_k}\left( \pi_1(\check f^{n_k}(\check z))- \pi_1(\check z)\right)=\rho_{-}$$ 
and
$$\lim_{k\to+\infty}\frac{1}{m_k}\left( \pi_1(\check f^{m_k}(\check z))- \pi_1(\check z)\right)=\rho_{+}.$$

\medskip
\noindent {Case 2:}\enskip there exists a compact set $K\subset\A$, a number $\rho\in \{-\infty,\infty\}$ and an increasing sequence of integers $(n_k)_{k\geq 0}$ such that $f^{n_k}(z)\in K$ and such that for any lift $\check z$ of $z$, one has
$$\lim_{k\to\infty}\frac{1}{n_k}\left( \pi_1(\check f^{n_k}(\check z))- \pi_1(\check z)\right)=\rho.$$ 

\medskip
Let us begin by the first case. Taking subsequences if necessary, one can suppose that there exist $z_-$ and $z_+$ in $K$ such that $\lim_{k\to+\infty} f^{n_k}(z)=z_-$ and $\lim_{k\to+\infty} f^{m_k}(z)=z_+$. Let us choose two integers $p$ and $q>0$ relatively prime such that $\rho_-<p/q<\rho_+$ and such that both $z_-$ and $z_+$ are not fixed points of $f^q$ with rotation number $p/q$. Write $n_k=qn'_k+r_k$ and $m_k=qm'_k+s_k$, where $r_k$ and $s_k$ belong to $\{0,\dots, q-1\}$. Taking subsequences if necessary, one can suppose that there exist $r$ and $s$ in $\{0,\dots, q-1\}$ such that  $r_k=r$ and $s_k=s$ for every $k\in\Z$. Setting
$$z'_-=f^{-r}(z_-),\,\,z'_+=f^{-s}(z_+),\,\,f'=f^q,\,\check f'=\check f^q\circ T^{-p},$$
one knows that
$$\lim_{k\to+\infty} f'^{n'_k}(z)=z'_-,\,\,\lim_{k\to+\infty} f'^{m'_k}(z)=z'_+.$$Moreover
$$\lim_{k\to+\infty}\frac{1}{n'_k}\left( \pi_1(\check f'^{n'_k}(\check z))- \pi_1(\check z)\right)=q\rho_{-}-p<0$$ 
and
$$ \lim_{k\to+\infty}\frac{1}{m'_k}\left( \pi_1(\check f'^{m'_k}(\check z))- \pi_1(\check z)\right)=q\rho_{+}-p>0.$$

Choose an identity isotopy $I'^*$ of $f'$ that is lifted to an identity isotopy of $\check f'$, then a maximal isotopy $I'$ such that $I'^*\preceq I'$, and finally a singular foliation ${\mathcal F}'$ transverse to $I'$. The singular points of $I'$ are periodic points of $f$ of period $q$ and rotation number $p/q$. So $z'_-$ and $z'_+$ are non singular points. The trajectory $\gamma'=I'^{\N}_{{\mathcal F}'}(z)$ meets the leaves $\phi_{z'_-}$ and $\phi_{z'_+}$ infinitely many times. Denote $(t_{k}^-)_{k\geq 0}$ and $(t_{k}^+)_{k\geq 0}$ the (increasing) sequences of meeting times. One can find  $t_{k_0}^+< t_{k_2}^-<t_{k_3}^-<t_{k_1}^+$ such that $\gamma\vert_{[t_{k_0}^+,t_{k_1}^+]}$ is positive and $\gamma\vert_{[t_{k_2}^-,t_{k_3}^-]}$ negative. 
By Corollary \ref{cor:positive-negative2}, one deduces that $\gamma'$ has a ${\mathcal F}'$-transverse self-intersection, and by Theorem  \ref{th: horseshoe2}, that $f$ has a topological horseshoe.

\medskip
 Let us study now the second case. We will suppose that $\rho=-\infty$, the case where $\rho=+\infty$ being analogous. Taking a subsequence if necessary, one can suppose that there exists $z'\in K$ such that $\lim_{k\to+\infty} f^{n_k}(z)=z'$. 
 
 \medskip
 Suppose first that $z'$ is a fixed point and denote $p\in\Z$ its rotation number. Choose an identity isotopy $I'^*$ of $f$ that is lifted to an identity isotopy of $\check f'=\check f\circ T^{-p+1}$, then a maximal isotopy $I'$ such that $I'^*\preceq I'$, and finally a singular foliation ${\mathcal F}'$ transverse to $I'$. One knows that $\rho_{\check f'} (z')=1$. \textcolor{black}{By Proposition \ref{prop:non-wandering},} this means that the transverse loop $\Gamma$ associated with $z'$ is a positive simple loop. The fact that $z'\in\omega(z)$ implies that for every $k\geq0$, the path $I'^{\N}_{{\mathcal F}'}(f^k(z))$ draws $\Gamma$. One deduces from Proposition \ref{prop:drawing-crossing} that $I'^{\N}_{{\mathcal F}'}(f^k(z))$ \textcolor{black}{exactly} draws infinitely $\Gamma$, if $k$ is sufficiently large. This contradicts the fact that $\rho=-\infty$.  
 
 \medskip
 Suppose now that $z'$ is not a fixed point and consider a free disk $D$ containing $z'$. The orbit $(f^n(z))_{n\geq 0}$ meets the disk $D$ infinitely many times. Denote $(n_{k})_{k\geq 0}$  the (increasing) sequence of meeting times. Fix a lift $\check z$ of $z$. For every $k\geq 0$, there exists $p_k\in\Z$ such that $\check f^{n_{k}}(\check z)\in T^{p_k}(\check D)$. Moreover, $\lim_{k\to+\infty}  p_k/n_k=-\infty$.  Choose an integer $r$ such that $(p_1-p_0)+r(n_1-n_0)>0$. Consider the point $\check z''=\check f^{n_0}(\check z)$, the disk $\check D''=T^{p_0}(\check D)$ and the lift  $\check f''=\check f\circ T^{r}$.  We have
 $$\check z''\in \check D'', \,\check f''^{n_1-n_0}(\check z'')\in T^{p_1-p_0+r(n_1-n_0)}(\check D''),\, f''^{n_k-n_0}(\check z'')\in T^{p_k-p_0+r(n_k-n_0)}(\check D'').$$
 So $p_k-p_0+r(n_k-n_0)<0$, if $k$ is large enough. By Corollary \ref{cor:free-disks}, one deduces that $f$ has a horseshoe.

\bigskip
We will also prove $(2)$ by contradiction. Suppose that  $z$ and $z'$ belong to $\mathrm{ne}^+(f)$ and that $z'\in\omega(z)$ and suppose for instance that $\mathrm{rot}_{\check f}(z)<\mathrm{rot}_{\check f}(z')$. Choose two integers $p$ and $q>0$ relatively prime such that $\mathrm{rot}_{\check f}(z)<p/q<\mathrm{rot}_{\check f}(z')$. There exists $r\in\{0,\dots, q-1\}$ such that $f^r(z')$ is in the $\omega$-limit set of $z$ for $f^q$. Choose a maximal isotopy $I'$ of $f^q$ that is lifted to an identity isotopy of $\check f'=\check f^q\circ T^{-p}$ and a singular foliation ${\mathcal F}'$ transverse to $I'$. The point $f^r(z')$ being non-wandering and having a positive rotation number for $\check f'$, one deduces, by Proposition \ref{prop:non-wandering} that there exists a positive simple transverse loop $\Gamma$ such that $I'^{\Z}_{{\mathcal F}'}(f^r(z'))$ is drawn on $\Gamma$ and that $I'^{\N}_{{\mathcal F}'}(f^r(z')))$ draws infinitely $\Gamma$. The fact that $z'\in\omega_{f'}(z)$ implies that for every $k\geq 0$, $I'^{\N}_{{\mathcal F}'}(f'{}^k(z))$ draws $\Gamma$. By Proposition \ref{prop:drawing-crossing}, there exists $k\geq0$ such that $I'^{\N}_{{\mathcal F}'}(f'{}^k(z))$ is drawn on  $\Gamma$. This contradicts the fact that $\mathrm{rot}_{\check f'}(z)<0$.

\bigskip
The proof of $(3)$ is very close. Let $z$ be in $\Omega(f)\cap \mathrm{ne}^+(f)\cap \mathrm{ne}^-(f)$.  Suppose for instance that $-\mathrm{rot}_{\check f^{-1}}(z)<\mathrm{rot}_{\check f}(z)$. \textcolor{black}{Choose $z_-\in \alpha(z)$ and $z_+\in \omega(z)$ and then choose two integers $p$ and $q>0$ relatively prime such that $-\mathrm{rot}_{\check f^{-1}}(z)<p/q<\mathrm{rot}_{\check f}(z)$ and such that $\check f^q(z_ -)\not=T^p(z_-)$ and $\check f^q(z_ +)\not=T^p(z_+)$.} Choose a maximal isotopy $I'$ of $f^q$ that is lifted to an identity isotopy of $\check f'=\check f^q\circ T^{-p}$ and a singular foliation ${\mathcal F}'$ transverse to $I'$. The point $z$ being non-wandering for $f$ is \textcolor{black}{Birkhoff recurrent for $f$. By assertion (4) of Proposition \ref{prop: birkhoff connexions}, it is also Birkhoff recurrent for $f^q$. More precisely, there exists a Birkhoff circle $(z'_j)_{j\in\Z/p\Z}$ of $f^q$ such that every $z'_j$ is in the $f$-orbit of $z$ (and of course conjugating by power of $f$ one can suppose that $z'_0=z$). Moreover, \textcolor{black}{ it holds that  $z\in \mathrm{ne}^+(I')\cap \mathrm{ne}^-(I')$, because its $\alpha$-limit for $f^q$ contains an iterate of $z_{-}$ and because its omega-limit for $f^q$ contains an iterate of $z_+$.} One deduces by Proposition \ref{prop:Birkhoff-cycles}} that there exists a simple transverse loop $\Gamma$ such that $I'^{\Z}_{{\mathcal F}'}(z')$ is equivalent to the natural lift of $\Gamma$. This contradicts the fact that$-\mathrm{rot}_{\check f'^{-1}}(z)<0<\mathrm{rot}_{\check f'}(z)$.

It remains to prove $(4)$. Fix $z\in\Omega(f)\cap \mathrm{ne}(f)$. \textcolor{black}{Choose $z_{\pm}\in \alpha(z)\cup  \omega(z)$. If $p/q<\mathrm{rot}_{f^{\pm}}(z)$, written in an irreducible way, is sufficiently close to $\mathrm{rot}_{f^{\pm}}(z)$, then $\check f^q(z_ {\pm})\not=T^p(z_{\pm})$. Fix such a rational number $p/q$, }then choose a maximal isotopy $I'$ of $f^q$  that is lifted to an identity isotopy of $\check f^q\circ T^{-p}$ and a singular foliation ${\mathcal F}'$ transverse to $I'$. As explained \textcolor{black}{above, one knows that $z\in \mathrm{ne}(I')$ and so one can use Proposition \ref{prop:Birkhoff-cycles} and deduce that there exists a positive simple transverse loop $\Gamma$ such that $I'^{\Z}_{{\mathcal F}'}(z)$ exactly draws infinitely $\Gamma$. There exists a neighborhood $W$ of $z$ such that $I'{}^{\Z}_{{\mathcal F}'} (z')$ draws $\Gamma$ for every $z'\in W$. Suppose that $z'\in \textcolor{black}{\Omega(f)}\cap \mathrm{ne}(f)\cap W$. If $z'\not\in \mathrm{ne}(I')$, then  $\alpha(z')\cup  \omega(z')$ contains a fixed point of $\check f^q\circ T^{-p}$ and so $\mathrm{rot}_{\check f}(z')= p/q$.  If $z'\in \mathrm{ne}(I')$, it shares the same properties as $z$ and so $I'^{\Z}_{{\mathcal F}'}(z')$ exactly draws infinitely $\Gamma$. This implies that $\mathrm{rot}_{\check f}(z')\geq p/q$.} We have a similar result supposing  $p/q>\mathrm{rot}(z)$. So the function $\mathrm{rot}_{f^{\pm}}$ is continuous on $\Omega(f)\cap \mathrm{ne}(f)$.\end{proof}

\subsection{Birkhoff recurrent points and Birkhoff recurrent classes}  

Let us remind the theorem we want to prove, Theorem \ref{thmain:birkhoffcycles} of the introduction.
\begin{theorem}\label{th:birkhoffcycles}
Let $f$ be a homeomorphism of $\A$ isotopic to the identity and $\check f$ a lift of $f$ to $\R^2$.  We suppose that $f$ has no topological horseshoe. If $\mathcal B$ is a Birkhoff recurrence class of $f_{\mathrm{sp}}$, there exists $\rho\in\R$ such that,
$$\begin{cases}  \mathrm{rot}_{\check f}(z) = \rho&\mathrm{if}\enskip  z\in {\mathcal B}\cap\mathrm{ne}^+(f),\\
 \mathrm{rot}_{\check f^{-1}}(z) = -\rho&\mathrm{if}\enskip  z\in {\mathcal B}\cap\mathrm{ne}^-(f).
\end{cases}$$
\end{theorem}

Of course one can suppose that ${\mathcal B}\cap\mathrm{ne}(f)$ is not empty. Otherwise there is nothing to prove, every $\rho\in\R$ is suitable.  We will divide the proof into three cases, the case where $\mathcal B$ contains neither $N$ nor $S$, the case where $\mathcal B$ contains both $N$ and $S$ and the remaining case,  the most difficult one, where $\mathcal B$ contains exactly one end. Let us begin with the following lemma, that is nothing but Corollary \ref{corollarymain:birkhoffclasses} of the introduction.

\begin{lemma} \label{lemma:Birkhoff-cycles} Let $f$ be a homeomorphism of $\A$ isotopic to the identity with no topological horseshoe and $\check f$ a lift of $f$ to the universal covering space.  We suppose that $(z_i)_{i\in\Z/r\Z}$ is a Birkhoff cycle in $\mathrm{ne}(f)$ and $(\rho_i)_{i\in\Z/r\Z}$ a family of real numbers such that:
\begin{itemize} 
\item either $z_i\in\mathrm{ne}^+(f)$ and $\rho_i=\mathrm{rot}_{\check f}(z_i)$,
\item or $z_i\in\mathrm{ne}^-(f)$ and $\rho_i=-\mathrm{rot}_{\check f^{-1}}(z_i)$.
\end{itemize}
Then all $\rho_i$ are equal.
\end{lemma}

\begin{proof}  We will argue by contradiction.  Suppose that the $\rho_i$ are not equal. One can find $p\in\Z$ and $q>0$ relatively prime
such that
\begin{itemize}
\item  $\rho_i\not=p/q$, for every $i\in \Z$;
\item there exist $i_0,i_1$ in $\Z$ satisfying $\rho_{i_0}<p/q<\rho_{i_1}$;

\end{itemize}

Choose a maximal isotopy $I'$ of $f^q$ that is lifted to an identity isotopy of $\check f^q\circ T^{-p}$ and a singular foliation ${\mathcal F}'$ transverse to $I'$. The orbit of  each $z_i$ is included in the domain of $I'$ because the singular points of $I'$ are periodic points of period $q$ and rotation number $p/q$. Moreover, for every $i\in\Z/r\Z$ and every $k\in\Z$, it holds that $\check f^k(z_i)\in \mathrm{ne}(I')$. Using assertion (4) of Proposition \ref{prop: birkhoff connexions}, one can construct a Birkhoff cycle $(z'_j)_{j\in\Z/mr\Z}$ of $f^q$ such that $z'_j$ belongs to the orbit of $z_i$ if $j \equiv i \pmod r$. By Proposition  \ref{prop: rotation numbers powers}, one knows that 
\begin{itemize}
\item $z'_j\in\mathrm{ne}^+(f^q)$ if $z_i\in\mathrm{ne}^+(f)$ and $\mathrm{rot}_{\check f^q\circ T^{-p}}(z'_j)= q\rho_i-p$;
\item $z'_j\in\mathrm{ne}^-(f^q)$ if $z_i\in\mathrm{ne}^-(f)$ and $-\mathrm{rot}_{\check f^{-q}\circ T^p}(z'_j)= q\rho_i-p$.
\end{itemize}
\textcolor{black}{In particular $z'_j\in \mathrm{ne}^+(I')$ in the first case and $z'_j\in \mathrm{ne}^+(I')$ in the second case.} 	Consequently, $I'^{\Z}_{{\mathcal F}'}(z'_j)$ draws a negative transverse simple loop if $j \equiv  i_0 \pmod r$ and $I'^{\Z}_{{\mathcal F}'}(z'_j)$ draws a positive transverse simple loop if $j \equiv  i_1 \pmod r$.  This contradicts Proposition \ref{prop:Birkhoff-cycles}, \textcolor{black}{where it is said that there exists a transverse simple loop $\Gamma$ such that each trajectory $I'^{\Z}_{{\mathcal F}'}(z'_j)$ exacly draws infinitely $\Gamma$. }\hfill$\Box$

We will also use the following result
\begin{lemma}\label{lemma:periodic-orbits}
Let $f$ be a homeomorphism of $\A$ isotopic to the identity and $\check f$ a lift of $f$ to $\R^2$. Suppose that there exist $z_0$ and $z_1$ in $ \mathrm{ne}(f)$ and $\rho_0<\rho_1$ such that such for every $i\in\{0,1\}$:
\begin{itemize} 
\item either $z_i\in\mathrm{ne}^+(f)$ and $\rho_i=\mathrm{rot}_{\check f}(z_i)$,
\item or $z_i\in\mathrm{ne}^-(f)$ and $\rho_i=-\mathrm{rot}_{\check f^{-1}}(z_i)$.
\end{itemize}
Suppose moreover that $z_0$ and $z_1$ are in the same Birkhoff recurrence class of $f_{\mathrm{sp}}$, or that $N$ and $S$ are in the same Birkhoff recurrence class of $f_{\mathrm{sp}}$. Then, for every rational number $p/q\in(\rho_0,\rho_1)$, written in an irreducible way, there exists a periodic point $z$ of period $q$ and rotation number $\mathrm{rot}_{\check f}(z)=p/q$.
\end{lemma}

\begin{proof} Fix a rational number $p/q\in(\rho_0,\rho_1)$, written in an irreducible way, then choose a maximal isotopy $I'$ of $f^q$ that is lifted to an identity isotopy of $\check f^q\circ T^{-p}$ and a foliation ${\mathcal F}'$ transverse to $I'$.  It is sufficient to prove that if ${\mathcal F}'$ is non singular, then $z_0$ and $z_1$ are not in the same Birkhoff recurrence class of $f_{\mathrm{sp}}$, and $N$ and $S$ are not in the same Birkhoff recurrence class of $f_{\mathrm{sp}}$. By Proposition  \ref{prop: rotation numbers powers}, if $z'_i$ belongs to the orbit of $z_i$, then
\begin{itemize}
\item $z'_i\in\mathrm{ne}^+(f^q)$ if $z_i\in\mathrm{ne}^+(f)$ and $\mathrm{rot}_{\check f^q\circ T^{-p}}(z'_i)= q\rho_i-p$;
\item $z'_i\in\mathrm{ne}^-(f^q)$ if $z_i\in\mathrm{ne}^-(f)$ and $-\mathrm{rot}_{\check f^{-q}\circ T^p}(z'_i)= q\rho_i-p$.
\end{itemize}  
\textcolor{black}{In particular $z'_i\in \mathrm{ne}^+(I')$ in the first case and $z'_i\in \mathrm{ne}^+(I')$ in the second case. Consequently, there exists a  negative transverse loop $\Gamma_0$ that contains a point $f^{k_0q}(z'_0)$, $k_0\in\Z$,  and a positive transverse loop $\Gamma_1$ that contains a point $f^{k_1q}(z'_1)$, $k_1\in\Z$.}  One can find positive integers $n_0$ and $n_1$ such that the multi-loop $n_0\Gamma_0+n_1\Gamma_1$ is homologous to zero. Write $\delta$ for its dual function that vanishes in a neighborhood of the ends of $\A$. Either the minimal value $m_-$ of $\delta$ is negative or the maximal value $m_+$ is positive. We will look at the first case, the second one being analogous. The fact that $\delta$ decreases along the leaves of ${\mathcal F}'$ implies that the closure of a connected component $U$ of $\A\setminus (\Gamma_0\cup\Gamma_1)$ where $\delta$ is equal to $m_-$ is a topological manifold whose boundary is transverse to ${\mathcal F}'$, the leaves going inside $U$. By connectedness of $\Gamma_0$ and $\Gamma_1$, we know that $\overline U$ is a topological disk or an essential annulus. The first situation is impossible because we should have a singular point of ${\mathcal F}'$ inside $U$. So $\overline U$ is an annulus and by Poincar\'e-Bendixson Theorem, it contains an essential closed leaf. This leaf \textcolor{black}{must separate $\Gamma_0$ and $\Gamma_1$} and is disjoint from its image by $f^q$. So  $f^{k_0q}(z'_0)$ and $f^{k_1q}(z'_1)$, {which belong to $\Gamma_0$ and $\Gamma_1$ respectively} are not in the same Birkhoff recurrence class of $f_{\mathrm{sp}}^q$. \textcolor{black}{By assertions (2) and (4) of Proposition \ref{prop: birkhoff connexions}, one deduces first that $z'_0$ and $z'_1$ are not in the same Birkhoff recurrence class of $f^q_{\mathrm{sp}}$ and then that $z_0$ and $z_1$ are not in the same Birkhoff recurrence class of $f_{\mathrm{sp}}$. }Note also that $N$ and $S$ are not in the same Birkhoff recurrence class of $f_{\mathrm{sp}}$. \end{proof}

Let us go now to the proof of Theorem \ref{th:birkhoffcycles}. We suppose that there exist $z^*_0$ and $z^*_1$ in ${\mathcal  B}\cap \mathrm{ne}(f)$ and $\rho_0<\rho_{1}$ such that such for every $i\in\{0,1\}$:
\begin{itemize} 
\item either $z^*_i\in\mathrm{ne}^+(f)$ and $\rho_i=\mathrm{rot}_{\check f}(z^*_i)$,
\item or $z^*_i\in\mathrm{ne}^-(f)$ and $\rho_i=-\mathrm{rot}_{\check f^{-1}}(z^*_i)$.
\end{itemize}
We want to find a contradiction.

\bigskip

{\it First case: the Birkhoff class $\mathcal B$ does not contain $N$ or $S$.}

There exists a Birkhoff cycle  $(z_i)_{i\in\Z/r\Z}$ of $f_{\mathrm{sp}}$ that contains $z^*_0$ and $z^*_1$ and does not contain $N$ and $S$. So, it is a  Birkhoff cycle of $f$. Using item (1) of Proposition  \ref{prop: birkhoff connexions}, one knows that for every $i\in\Z/r\Z$, the closure of $O_{f_{\mathrm{sp}}}(z_i)$ is contained in $\mathcal B$ and so does not contain either $N$ or $S$. In particular $z\in \mathrm{ne}^+(f)\cap\mathrm{ne}^-(f)$. We can apply  Lemma \ref{lemma:Birkhoff-cycles} and obtain a contradiction.

\bigskip
{\it Second case: the Birkhoff class $\mathcal B$ contains $N$ and $S$.}

 In that case, we will get the contradiction from the following result that is nothing but Proposition \ref{propositionmain_regionofinstability} and does not use the fact that $z^*_0$ and $z^*_1$ are in the same Birkhoff recurrence class as $N$ and $S$.
\begin{proposition}\label{prop:PB}
Let $f$ be a homeomorphism of $\A$ isotopic to the identity, $\check f$ a lift of $f$ to the universal covering space and $f_{\mathrm{sp}}$ be the continuous extension of $f$ to $\S^2=\A\cup\{N,S\}$.   We suppose that $\rho_0<\rho_1$ are two real numbers and that:
\begin{itemize} 
\item there exists $z_0^*\in\mathrm{ne}^+(f)$ such that $\rho_0=\mathrm{rot}_{\check f}(z_0^*)$ or there exists $z_0^*\in\mathrm{ne}^{-}(f)$ such that $\rho_0=-\mathrm{rot}_{\check f^{-1}}(z_0^*)$;
\item there exists $z_1^*\in\mathrm{ne}^+(f)$ such that $\rho_1=\mathrm{rot}_{\check f}(z_1^*)$ or there exists $z_1^*\in\mathrm{ne}^-(f)$ such that $\rho_1=-\mathrm{rot}_{\check f^{-1}}(z_1^*)$;
\item $N$ and $S$ are in the same Birkhoff recurrence class of $f_{\mathrm{sp}}$.
\end{itemize}
Then, $f$ has a topological horseshoe.
\end{proposition}

\begin{proof} We will argue by contradiction, supposing that $f$ has no topological horseshoe. In that case, there exists a Birkhoff cycle  $(z_i)_{i\in\Z/r\Z}$ of $f_{\mathrm{sp}} $ that contains $N$ and $S$. Let $p/q<p'/q'$ be rational numbers in $(\rho_0,\rho_1)$ such that 
\begin{itemize}
\item there is no $i\in\Z/r\Z$ such that $z_i\in\mathrm{ne}^+(f)$ and $\mathrm{rot}_{\check f}(z_i)\in\{p/q,p'/q'\}$;
\item there is no $i\in\Z/r\Z$ such that $z_i\in\mathrm{ne}^-(f)$ and $\mathrm{rot}_{\check f^{-1}}(z_i)\in\{-p/q,-p'/q'\}$.
\end{itemize}
Since $N$ and $S$ belong to the same Birkhoff recurrence class of $f_{\mathrm{sp}}$, we can apply Lemma \ref{lemma:periodic-orbits} and find a periodic {point} $z\in \A$ of period $q$ and rotation number $\mathrm{rot}_{\check f}(z)=p/q$, a periodic point $z'\in\A$ of period $q'$ and rotation number $\mathrm{rot}_{\check f}({z'})=p'/q'$. \textcolor{black}{Using Theorem \ref{th:birkhoffcycles}  and the assumptions above, one know that
for every $i\in\Z/r\Z$ and every $k\in\Z$, it holds that}
$$\omega_{f_{\mathrm{sp}}^{qq'}}(f^{k}(z_i))\cap\{z,z'\}=\alpha_{f_{\mathrm{sp}}^{qq'}}(f^{k}(z_i))\cap\{z,z'\}=\emptyset.$$
By Proposition \ref{prop: birkhoff connexions}, one can construct a Birkhoff cycle $(z'_j)_{j\in\Z/mr\Z}$ of $f_{\mathrm{sp}}^{qq'}$ such that $z'_j$ belongs to the orbit of $z_i$ if $j \equiv i \pmod r$, and we know that $z'_j$ belong to $\mathrm{ne}(f_{\mathrm{sp}}^{qq'}\vert_{\S^2\setminus\{z, z'\}})$.
Applying Lemma \ref{lemma:Birkhoff-cycles}, we deduce that if $\check g$ is a lift of $f_{\mathrm{sp}}^{qq'}\vert_{\S^2\setminus\{z, z'\}}$ to the universal covering space, if $\kappa\in H_1(\S^2\setminus\{z, z'\},\Z)$ is the generator given by the oriented boundary of a small disk containing $z$ in its interior, then  $\mathrm{rot}_{\check g,\kappa}(N)=\mathrm{rot}_{\check g,\kappa}(S)$. The contradiction will come from the following equality
$$\mathrm{rot}_{\check g,\kappa}(S)-\mathrm{rot}_{\check g,\kappa}(N)= \mathrm{rot}_{\check f^{qq'}}(z)-\mathrm{rot}_{\check f^{qq'}}(z')= q'p-qp'\not=0.$$
There are different explanations of the previous equality (see Franks [F] for example). Let us mention the following one. By conjugacy, one can always suppose that $\S^2$ is the Riemann sphere. So one can define the cross ratio $[z_1,z_2,z_3,z_4]$ of four distinct points. Let us choose  an identity isotopy $(g_t)_{t\in[0,1]}$ of \textcolor{black}{$f^{qq'}_{\mathrm{sp}}$}. The closed path
$$t\mapsto[g_t(z), g_t(z'), g_t(S),g_t(N)]$$ defines a loop $\Gamma$ on $\S^2\setminus \{0,1,\infty\}$ whose homotopy class is independent of the chosen isotopy, two isotopies being homotopic. The integer $r=\delta_{\Gamma}(0)-\delta_{\Gamma}(\infty)$, where  $\delta_{\Gamma}$ is a dual function of $\Gamma$, also does not depend on the isotopy. The integer $\mathrm{rot}_{\check f^{qq'}}(z)-\mathrm{rot}_{\check f^{qq'}}(z')$ is independent of the lift $\check f$. In fact it is equal to $r$. Indeed, one can suppose that $(z',S,N)=(1,0,\infty)$ and that $(g_t)_{t\in[0,1]}$ fixes $z'$, $S$ and $N$. So it defines a lift $\check g^*$ of $f^{qq'}$ to the universal covering space of $\A$ such that $\mathrm{rot}_{\check g^*}(z')=0$. But one has $[g_t(z), g_t(z'), g_t(S),g_t(N)]=g_t(z)$. So $\mathrm{rot}_{\check g^*}(z)$ is nothing but $r$. As we know that  $[g_t(z), g_t(z'), g_t(S),g_t(N)]=[g_t(S), g_t(N), g_t(z),g_t(z')]$, we deduce for the same reasons that $\mathrm{rot}_{\check g,\kappa}(S)-\mathrm{rot}_{\check g,\kappa}(N)=r$. \end{proof}

Let us state now a corollary that will useful while studying the last case and that says that $z_0^*$ and $z_1^*$ cannot be periodic.

\begin{corollary}\label{cor:periodic}
Let $f$ be a homeomorphism of $\A$ isotopic to the identity, $\check f$ a lift of $f$ to the universal covering space and $f_{\mathrm{sp}}$ be the continuous extension of $f$ to $\S^2=\A\cup\{N,S\}$.   We suppose that there exist two periodic points $z_0^*$ and $z_1^*$ of $f$ such that
\begin{itemize}
\item $\mathrm{rot}_{\check f}(z_0^*)<\mathrm{rot}_{\check f}(z_1^*)$;
\item $z_0^*$ and $z_1^*$ are in the same Birkhoff recurrence class of $f_{\mathrm{sp}}$.
\end{itemize}
Then, $f$ has a topological horseshoe.
\end{corollary}
\begin{proof} Taking a power of $f$ instead of $f$, one can suppose that $z_0^*$ and $z_1^*$ are fixed points. As explained in the proof of the Proposition \ref{prop:PB}, if $\check g$ is a lift of $f_{\mathrm{sp}}\vert_{\S^2\setminus\{z_0^*, z_1^*\}}$ to the universal covering space, if $\kappa\in H_1(\S^2\setminus\{z_0^*, z_1^*\},\Z)$ is the generator given by the oriented boundary of a small disk containing $z_0^*$ in its interior, then  
$$\mathrm{rot}_{\check g,\kappa}(S)-\mathrm{rot}_{\check g,\kappa}(N)= \mathrm{rot}_{\check f}(z_0^*)-\mathrm{rot}_{\check f}(z_1^*)\not=0.$$
So, one can apply Proposition \ref{prop:PB} to deduce that $f$ has a topological horseshoe.
\end{proof}

\bigskip
{\it Third case: the Birkhoff class $\mathcal B$ contains $S$ but does not contain $N$.}

Applying Lemma \ref{lemma:periodic-orbits} and Corollary \ref{cor:periodic}, one can find a rational number $p/q\in(\rho_0,\rho_1)$ such that if $z\in \A$ is a periodic point of period $q$ and rotation number $\mathrm{rot}_{\check f}(z)=p/q$, then: 
\begin{itemize}
\item for every $k\in\Z$, one has $\omega_{f_{\mathrm{sp}}^q}(f^k(z_0^*))\not=\{z\}$ and $\alpha_{f_{\mathrm{sp}}^q}(f^k(z_0^*))\not =\{z\}$;
\item for every $k\in\Z$, one has $\omega_{f_{\mathrm{sp}}^q}(f^k(z_1^*))\not=\{z\}$ and $\alpha_{f_{\mathrm{sp}}^q}(f^k(z_1^*))\not =\{z\}$;
\item $z$ does not belong to $\mathcal B$.
\end{itemize}
By Proposition \ref{prop: birkhoff connexions}, $z^*_0$ and $z_1^*$  belong to the same Birkhoff recurrence class of $f^q_{\mathrm{sp}}$ {that $S$ belongs to}. Choose a maximal isotopy $I'$ of $f^q$ that is lifted to an identity isotopy of $\check f^q\circ T^{-p}$ and a singular foliation ${\mathcal F}'$ transverse to $I'$. The singular points of  ${\mathcal F}'$ are periodic points of period $q$ and rotation number $\mathrm{rot}_{\check f}(z)=p/q$, plus the two ends $N$ and $S$ if we consider the foliation as defined on the sphere. If $z$ and $z'$ are two singular points, the isotopy $I'\vert_{\S^2\setminus\{z,z'\}}$ can be lifted to an identity isotopy of  a lift of $f^q\vert_{\S^2\setminus\{z,z'\}}$. We write $\check g_{z,z'}$ for this lift. We will consider 
a small disk containing $z$ in its interior to define rotation number. Of course $\mathrm{rot}_{\check g_{z,z'}}(z'')=0$ if $z''$ is a third singular point. The first case studied above tells us that if neither $z$ nor $z'$ is equal to $S$, then $z^*_0$, $z^*_1$ and $S$ belong to $\mathrm{ne}(f^q\vert_{\S^2\setminus\{z,z'\}})$
and that $$\mathrm{rot}_{\check g_{z,z'}}(z^*_0)=\mathrm{rot}_{\check g_{z,z'}}(z^*_1)=\mathrm{rot}_{\check g_{z,z'}}(S)=0.$$
By Proposition \ref{prop: rotation numbers}, one deduces that if $z'$ is a singular point distinct from $S$, one has $$\mathrm{rot}_{\check g_{S,z'}}(z^*_0)=\mathrm{rot}_{\check g_{S,N}}(z^*_0)-\mathrm{rot}_{\check g_{z',N}}(z^*_0)=q\rho_0-p.$$
Similarly, one gets
$$\mathrm{rot}_{\check g_{S,z'}}(z^*_1)=q\rho_1-p,$$ and consequently 
$$\mathrm{rot}_{\check g_{S,z'}}(z^*_0)<\mathrm{rot}_{\check g_{S,z'}}(z^*_1).$$ 

By assumption, $\mathrm{dom}(I')\cap (\alpha_{f_{\mathrm{sp}}^q}(z^*_0)\cup \omega_{f_{\mathrm{sp}}^q}(z^*_0))$ is non empty. Choose a point $z_0$ in this set and denote $\phi_0$ the leaf of ${\mathcal F}'$ that contains this point. Similarly, choose a point $z_1$ in $\mathrm{dom}(I')\cap (\alpha_{f_{\mathrm{sp}}^q}(z^*_1)\cup \omega_{f_{\mathrm{sp}}^q}(z^*_1))$ and denote $\phi_1$ the leaf  that contains this point. Note that both $z_0$ and $z_1$ belong to the same Birkhoff recurrence class of $f_{\mathrm{sp}}^q$ as $S$, $z_0^*$ and $z_1^*$. The fact that $\mathrm{rot}_{\check g_{S,z'}}(z^*_0)<0<\mathrm{rot}_{\check g_{S,z'}}(z^*_1)$ implies that for every singular point $z'\not=S$ there exists a simple transverse loop $\Gamma_{0}$ crossing $\phi_0$ and such that $z'\in L(\Gamma_0)$ and $S\in R(\Gamma_0)$ and a simple transverse loop $\Gamma_{1}$ crossing $\phi_1$ and such that $z'\in R(\Gamma_1)$ and $S\in L(\Gamma_1)$. Denote $\widehat\omega(\phi_0)$ the complement of the connected component of $\S^2\setminus\omega(\phi_0)$ that contains $\phi_0$. It is a saturated cellular closed set that contains $\omega(\phi_0)$ and that contains at least one singular point. Define in similar way $\widehat\alpha(\phi_0)$, $\widehat\omega(\phi_1)$ and $\widehat\alpha(\phi_1)$. The fact that for every singular point there exists a simple transverse loop $\Gamma_{0}$ crossing $\phi_0$ and such that $z'\in L(\Gamma_0)$ and $S\in R(\Gamma_0)$ implies that $S$ is the unique singular point in $\widehat\omega(\phi_0)$. Indeed $\Gamma_0$ is contained in $\S^2\setminus\omega(\phi_0)$, as is $L(\Gamma_0)$.  Similarly $S$ is the unique singular point contained in $\widehat\alpha(\phi_1)$. We deduce that $\phi_1$ does not meet $\Gamma_0$, otherwise $\Gamma_0$ is contained in $\S^2\setminus\alpha(\phi_1)$, as is $L(\Gamma_0)$. Similarly $\phi_0$ does not meet $\Gamma_1$.

Suppose that $S\not\in\omega(\phi_0)$. In that case there is no singular point in  $\omega(\phi_0)$, which implies that $\omega(\phi_0)$ is a closed leaf $\phi'_0$. But $\phi_0'$ separates $z_0$ and $S$, and this is impossible since both belong to the same Birkhoff recurrence class. Therefore $S\in{\omega}(\phi_0)$ and one can likewise show that $S\in\alpha(\phi_1)$. Let us prove now that $\alpha(\phi_0)$ does not contain any singular point. Indeed if $z'\in\alpha(\phi_0)\cap \mathrm{Fix}(I')$, there exists a transverse simple loop $\Gamma_1$ that separates $S$ and $z'$ and that does not meet $\phi_0$. So $\alpha(\phi_0)$ is a closed leaf $\phi''_0$. Similarly, $\omega(\phi_1)$ is a closed leaf $\phi''_1$. Note that there is no closed leaf that separates $\phi_0$ and $\phi_1$ because both $z_0$ and $z_1$ are in the same Birkhoff recurrence class of $f^q$. So the leaves $\phi''_0$ and $\phi''_1$ are equal. But this is impossible because this leaf cannot be both attracting and repelling from the side that contains $S$. \textcolor{black}{This contradiction completes the proof of Theorem \ref{th:birkhoffcycles}.}\end{proof}

The following Corollary will be later used to prove a broader result in Corollary \ref{cr:onlytwoperiods}
\begin{corollary}\label{cr:divideperiods} Let $f$ be an orientation preserving homeomorphism of $\S^2$. We suppose that there exists a Birkhoff recurrence class $\mathcal{B}$ which contains two periodic points, $z_0, z_1$, with respective smallest periods $q_0<q_1$, and that 
$q_{0}$ does not divide $q_{1}$. Then $f$ has a topological horseshoe.
\end{corollary}

\begin{proof}We suppose for a contradiction that $f$ has no topological horseshoe. By Proposition \ref{prop: birkhoff connexions}, one can find a point in the orbit of $z_0$ and a point in the orbit of $z_1$ that belong to the same Birkhoff recurrence class of $f^{q_1}$ and there is no loss of generality by supposing that this is the case for $z_0$ and $z_1$. Note that for every $k\in\Z$, the points $f^k(z_0)$ and $f^k(z_1)$ belong to the same Birkhoff recurrence class of $f^{q_1}$. Suppose that $q_0$ does not divide $q_1$ and denote $s$ the remainder of the Euclidean division of $q_{1}$ by $q_{0}$. It implies that $f^{q_1}(z_{0})=f^s(z_{0})\not=z_0$. Since $q_{1}$ is not a multiple of $q_{0}$, it is larger than two and $f^{q_{1}}$ must have at least three distinct fixed points. Choose a maximal isotopy $I'$ of $f^{q_{1}}$ whose singular set contains contains $z_1$, $f^{s}(z_1)$ and at least a third fixed point of $f^{q_{1}}$, then consider a singular foliation ${\mathcal F}'$ transverse to $I'$.  The point $z_{0}$ belongs to the domain of $I'$ and $I'_{{\mathcal F}'}{}^{\Z}(z_{0})$ is equivalent to the natural lift of a simple transverse loop $\Gamma$. Otherwise, one knows by Proposition \ref{prop:non-wandering} that $f$ has a horseshoe. Moreover $I'_{{\mathcal F}'}{}^{\Z}(z_{0})$ coincides with $I'_{{\mathcal F}'}{}^{\Z}(f^s(z_{0}))$ because $f^s(z_{0})=f^{q_{1}}(z_{0})$. One can find two fixed points $z$ and $z'$ of $I'$ separated by $\Gamma$, and since $I'$ has at least three fixed points, one can assume that $\{z, z'\}\not=\{z_{1},f^s(z_{1})\}$. The isotopy $I'$ can be lifted to an identity isotopy of a certain lift $\check g$ of $f^{q_{1}}\vert_{\S^2\setminus\{z, z'\}}$ and $\mathrm{rot}_{\check g}(z_0)=\mathrm{rot}_{\check g}(f^s(z_0))\not=0.$
Moreover, $\mathrm{rot}_{\check g}(z'')=0,$ for every fixed point $z''$ of $I'$ that is different from $z$ and $z'$.
So, if $z_{1}\not\in\{z, z'\}$, we have $\mathrm{rot}_{\check g}(z_0)\not=\mathrm{rot}_{\check g}(z_1)$ and if $f^s(z_{1})\not\in\{z, z'\}$, we have $\mathrm{rot}_{\check g}(f^s(z_0))\not=\mathrm{rot}_{\check g}(f^s(z_1))$.  By Theorem \ref{th:birkhoffcycles}, this contradicts the fact that $f$ has no topological horseshoe. \end{proof}

\subsection{Circloids}
Let us remind the statement of Theorem \ref{thmain:circloids} of the introduction.

\begin{theorem}\label{th:circloids} Let $f$ be a homeomorphism of $\A$ isotopic to the identity and $\check f$ a lift of $f$ to $\R^2$. We suppose that $f$ has no topological horseshoe. Let $X$ be an invariant circloid. If $f$ is dissipative or if $X$ is locally stable, then the function \textcolor{black}{$\mathrm{rot}_{\check f}$}, which is well defined on $X$, is constant on this set. Moreover, the
sequence of maps
$$\check z\mapsto \frac{ \pi_1(\check f^{n}(\check z))- \pi_1(\check z)}{n}$$ 
converges uniformly to this constant on $\pi^{-1}(X)$.
\end{theorem}

\begin{proof} 
We suppose that $f$ and $X$ satisfy the hypothesis of the theorem. In particular $f$ is dissipative or $X$ is locally stable.  We denote $U_S$ and $U_N$ the non relatively compact connected components of $\A\setminus X$,  neighborhoods of $S$ and $N$ respectively. By Theorem \ref{th:rotation-number}, the function \textcolor{black}{$\mathrm{rot}_{\check f}$} is well defined on $X$, and it is continuous when restricted to $\Omega(f)\cap X$. To prove that it is constant, it is sufficient to prove that the
sequence of maps
$$\check z\mapsto \frac{ \pi_1(\check f^{n}(\check z))- \pi_1(\check z)}{n}$$ 
converges uniformly to  a constant on $\pi^{-1}(X)$.  Assume for a contradiction that it is not the case. Koropecki proved in \cite{Koropecki} that there exist real numbers $\rho<\rho'$ such that, for every $p/q\in (\rho,\rho')$ written in an irreducible way, there exists a periodic point of period $q$ and rotation number $p/q$, extending the method of Barge-Gillette used in the special case of cofrontiers (see  \cite{BargeGillette}). Let us consider  in $(\rho,\rho')$ a decreasing sequence $(p_n/q_n)_{n\geq 0}$ converging to $\rho$ and an increasing sequence $(p'_n/q'_n)_{n\geq 0}$ converging to $\rho'$. For every $n$, choose a periodic point $z_n$ of period $q_n$ and rotation number $p_n/q_n$ and denote $O_n$ its orbit. Similarly, choose a periodic point $z'_n$ of period $q'_n$ and rotation number $p'_n/q'_n$ and denote $O'_n$ its orbit. Taking subsequences if necessary, one can suppose that the sequences $(z_n)_{n\geq 0}$ and $ (z'_n)_{n\geq 0}$ converge to points $z$ and $z'$ in $X$, respectively. Since the rotation number is continuous on the nonwandering set of $f$ restricted to $X$, we get that $\rho_{\check f}(z)=\rho$ and $\rho_{\check f}(z)=\rho'$.  We will prove now that there is a Birkhoff connection from $z$ to $z'$. Of course, one could prove in the same way that there is a Birkhoff connection from $z'$ to $z$. So, by Theorem \ref{th:birkhoffcycles}, it implies that $f$ has a topological horseshoe, in contradiction with the hypothesis.

Say an open set $U\subset\A$ is {\it essential} if it contains an essential loop. Then
\begin{lemma}
\label{lemma:essential} If $W$ is a forward (respectively backward) invariant open set that contains $z$, then the connected component of $z$ in $W$ is essential and also forward (respectively backward) invariant.  
\end{lemma}

\begin{proof}
Let $W$ be a forward invariant open set that contains $z$. The connected component $V$ of $W$ that contains $z$ contains the periodic points $z_n$ if $n$ is large, so it is forward invariant by a power $f^q$ of $f$ and contains the $f^q$-orbit of $z_n$. Let $\check V$ be a connected component of $\check \pi^{-1}(V)$. If $n$ is large, $\check V$ contains a point $\check z_n$ such that $\check f^{qq_n}(\check z_n)=T^{qp_n}(\check z_n)$ and so $\check f^{qq_n}(\check V)\subset T^{qp_n}(\check V)$. Let us fix $n_0<n_1$ large enough. One deduces that $\check f^{qq_{n_0}q_{n_1}}(\check V)$ is included both in $T^{qp_{n_0}q_{n_1}}(\check V)$ and in $T^{qq_{n_0}p_{n_1}}(\check V)$, which implies that $T^{qp_{n_0}q_{n_1}}(\check V)=T^{qq_{n_0}p_{n_1}}(\check V)$. As $qp_{n_0}q_{n_1}\not=qq_{n_0}p_{n_1}$, it implies that $V$ is essential. Furthermore, since $V$ is a connected component of $W$ and $W$ is forward invariant, then either $f(V)\subset V$, or $f(V)$ is disjoint from $V$. But any open essential set of $\A$ whose image is disjoint from itself must be wandering, and since $V$ has periodic points, this implies that $V$ is forward invariant. We have a similar proof in case $W$ is backward invariant.\end{proof}

Let $U$ be a neighborhood of $z$ and $U'$ be a neighborhood of $z'$ such that $U\cap U'=\emptyset$. Set $V$ to be the connected component of $\bigcup_{n\geq 0} f^n(U)$ that contains $z$ and $V'$ to be the connected component of $\bigcup_{n\geq 0} f^{-n}(U')$ that contains $z'$. We will prove that $V\cap V'\not=\emptyset$. By Lemma \ref{lemma:essential}, $V$ is a connected essential and forward invariant open set that contains $z$. Similarly, $V'$ is an essential connected backward invariant open set that contains $z'$.

We will suppose from now that $V\cap V'=\emptyset$ and will find a contradiction. The connected component $K'$ of $\A\setminus V$ that contains $V'$ is a neighborhood of one end of the annulus, $N$ for instance, and is backward invariant. Its complement $W$ is an open neighborhood of $S$ that contains $V$ and is disjoint from $V'$. More precisely, it is a forward invariant open annulus. Similarly, the connected component $K$ of $\A\setminus V'$ that contains $V$ is a neighborhood of $S$ and is forward invariant. Moreover it contains $W$ because $V'\subset K'$. Its complement $W'$ is a backward invariant annulus, neighborhood of $N$ that contains $V'$. Moreover, one has $W\cap W'=\emptyset$ because $W\subset K$.

\begin{figure}[ht!]
\hfill
\includegraphics [height=48mm]{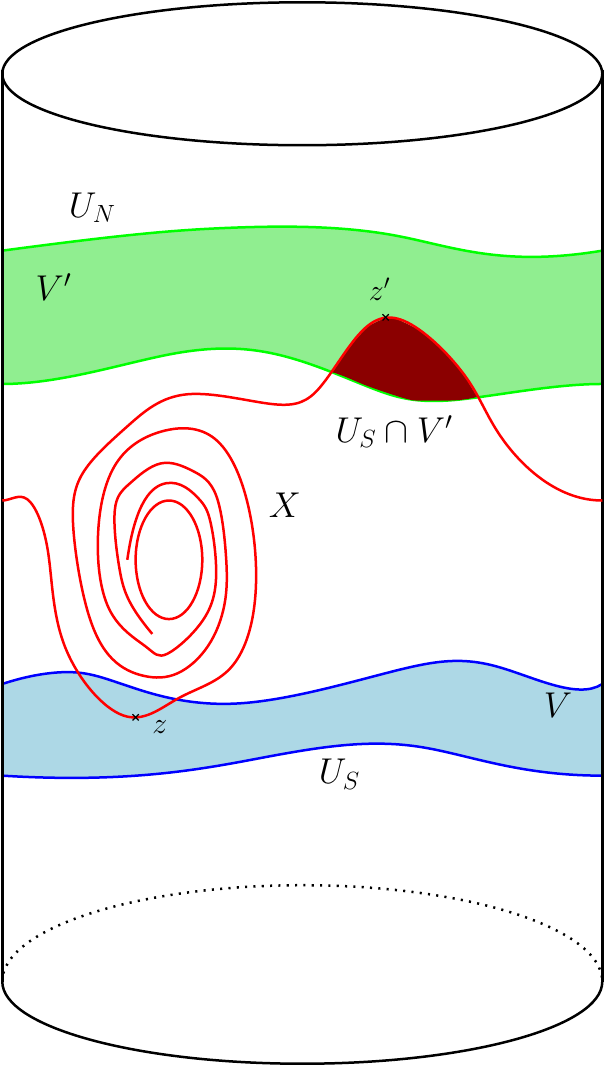}
\hfill{}
\caption{\small Theorem \ref{th:circloids}, the case where $f$ is dissipative.}
\label{figure_circloid}
\end{figure}

\medskip

The fact that $X$ is a circloid implies that $\overline U_S$ intersects $V'$. So $V'\cap U_S$ is a non empty open set, relatively compact and backward invariant (see Figure \ref{figure_circloid}). Consequently, $f$ is not dissipative.  The open set $W'\cap U_S$ is not empty because $V'\subset W'$. It implies that $W'\cup U_S$ is connected and contains a topological line $\lambda$ that joins $N$ to $S$. If $U$ is a sufficiently small neighborhood of $X$, then $U\cap\lambda\subset W'$. So, $W\cap U$ does not meet $\lambda$ because $W\cap U\cap\lambda\subset W\cap W'=\emptyset$, and is not essential. If $X$ is locally stable, one can choose such  a neighborhood $U$ to be forward invariant. This contradicts Lemma \ref{lemma:essential} because $W\cap U$ contains $z$ and is not essential. \end{proof}

\subsection{Annular homeomorphisms with fixed point free lifts} 
Let us finish this section with the proof of Proposition \ref{prop:zero-rotation} that we remind:

\begin{proposition}\label{prop:recurrentzero-rotation}
Let $f$ be a homeomorphism \textcolor{black}{of $\A$} isotopic to the identity and $\check f$ a lift of $f$ to the universal covering space.  We suppose that $\check f$ is fixed point free and that there exists  $z_*\in\Omega(f)\cap \mathrm{ne}^+(f)$ such that  $\mathrm{rot}_{\check f}(z_*)$ is well defined and equal to $0$.

\begin{itemize}
\item[i)]{Then, at least one of the following situation occurs:
\begin{enumerate}
\item  there exists $q$ arbitrarily large such that $\check f^q\circ T^{-1}$ has a fixed point;  
\item there exists $q$ arbitrarily large such that $\check f^q\circ T$ has a fixed point.
\end{enumerate}}
\item[ii)]{Moreover, if $z_*$ is recurrent, then $f$ has a horseshoe.}
\end{itemize}
\end{proposition}

\begin{proof} We begin with the proof of $i)$. Let $I$ be a maximal isotopy of $f$ that is lifted to an identity isotopy $\check I$ of $\check f$. Choose a transverse foliation $\mathcal F$ of $I$ and write $\check {\mathcal F}$ for the lifted foliation. Note that $\mathcal F$ and $\check {\mathcal F}$ are not singular because $\check f$ has no fixed point. By hypothesis $\omega_f(z_*)$ is non empty and $I_{\mathcal F}^{\N}(z_*)$ meets infinitely often every leaf that contains a point of $\omega_f(z_*)$. Fix such a leaf $\phi$ and a lift $\check z_*$ of $z_*$. There exist two integers $p_0\in\Z$  and $q_0\geq 2$ and a lift $\check \phi$ of $\phi$ such that  $\check I_{\check{\mathcal F}}^{q_0-1}(\check f(\check z_*))$ contains as a sub-path a transverse path that goes from $\check \phi$ to $T^{p_0}(\check \phi)$. The foliation $\check {\mathcal F}$ being non singular,  every transverse path meets a leaf at most once and so $p_0\not=0$. For the same reason, $\check z_*$ is not periodic and consequently $z_*$ itself is not periodic (because  $\mathrm{rot}_{\check f}(z_*)=0$). We will prove the first assertion in case $p_0>0$ (and would obtain the second one in case $p_0<0$).

\medskip
One can find an open disk $D$ containing $z_*$ such that:

\begin{itemize}
\item$f^k(D)\cap D=\emptyset$ for every $k\in\{1,\dots, q_0\}$;

\item if $\check D$ is the lift of $D$ that contains $\check z_*$, then for every $\check z\in\check D$, the path $\check I_{\check{\mathcal F}}^{q_0-1}(\check f(\check z_*))$ is a sub-path of $\check I_{\check{\mathcal F}}^{q_0+1}( \check z)$ (up to equivalence) . 
\end{itemize}

The second item tells us that there exists a transverse path from every point of $\check D$ to $\check \phi$ and a transverse path from $T^{p_0}(\check \phi)$ to every point of $\check f^q(D)$ if $q>q_0$.  

The point $z_*$ being non-wandering, there exists $q>q_0$ arbitrarily large such that $f^{q}(D)\cap D\not=\emptyset$. We will prove that $\check f^q\circ T^{-1}$ has a fixed point. There exists $p\in\Z$ such that $\check f^q(\check D)\cap T^p(\check D)\not=\emptyset$. Let us prove that $p>p_0$.  There exists a transverse path from $\check \phi$ to $T^{p_0}(\check \phi)$ and so a transverse path from $\check \phi$ to $T^{np_0}(\check \phi)$ for every $n\geq 1$. There exists a transverse path from every point of $\check D$ to $\check \phi$ and a transverse path from $T^{p_0}(\check \phi)$ to every point of $\check f^q(D)$.  So, if $\check f^q(\check D)\cap T^p(\check D)\not=\emptyset$, there exists a transverse path from $T^{p_0}(\check\phi)$ to $T^{p}(\check\phi)$, which implies that there is a transverse path from  $\check\phi$ to $T^{p-p_0}(\check\phi)$ and for every $m\geq 1$ a transverse path from $\check\phi$ to $T^{m(p-p_0)}(\check\phi)$. Consequently, there exists a transverse path from $\check\phi$ to $T^{np_0+m(p-p_0)}(\check\phi)$. If $p\leq p_0$, one can choose suitably \textcolor{black}{$n\geq 0$ and $m\geq 1$}, such that $np_0+m(p-p_0)=0$, but this is impossible, as otherwise there would be a transverse path beginning and ending at $\check \phi$, which contradicts the fact that $\check{\mathcal{F}}$ is nonsingular. In particular, \textcolor{black}{$\check f^q(\check D)\cap T^{p'}(\check D)=\emptyset$ if $p'\leq 1$}.
 
 \medskip

Let $I'$ be a maximal isotopy of $f'=f^q$ that is lifted to an identity isotopy $\check I'$  of $\check f'=\check f^q\circ T^{-1}$. By definition of $q$, the set $D\cap f'{}^{-1}(D)$ is not empty. Fix a point $z$ in this set. \textcolor{black}{By Proposition 59 of \cite{LeCalvezTal}}, \textcolor{black}{the fact that $\check D$ is a free disk of $\check f'$ implies that there exists} a foliation ${\mathcal F}'$ transverse to $I'$ such that $z_*$, $z$ and $f'(z)$ belong to the same leaf $\phi'_*$ of $\mathcal F'\vert_{D}$.  We want to prove that ${\mathcal F}'$ has singularities. Write $\check z$ for the lift of $z$ that belongs to $\check D$ and $\check \phi'_*$ for the lift of $\phi'_*$ that contains $\check z_*$ and $\check z$. Note that $\check f'(\check z')\in T^{p-1}(\check \phi_*)$, because $\check f'(\check z')\in T^{p-1}(\check D)$ by hypothesis. So there exists a transverse path from $\check \phi'_*$ to $T^{p-1}(\check \phi'_*)$ and consequently a positive loop $\Gamma_+$ transverse to ${\mathcal F}'$  \textcolor{black}{ that contains $z_*$. The fact that $z_*\in\mathrm{ne}^+(f)$ and that $\mathrm{rot}_{\check f} (z_*)=0$ implies that $z_*\in\mathrm{ne}^+(I')$ and that $\mathrm{rot}_{\check f'} (z_*)=-1$.  So every leaf of ${\mathcal F}'$ that contains a point of the $\omega$ limit set of $z_*$ for $f'$ is met infinitely often by ${I'_{{\mathcal F}'}}^{\N}(z_*)$. Consequently, there exists a negative transverse loop $\Gamma_-$ that contains a point $f'{}^n(z_*)$, where $n\geq 1$. There exists $n_-\geq 1$ and $n_+\geq 1$ such that the multi-loop $n_{+}\Gamma_{+}+n_{-}\Gamma_{-}$ is homologous to zero.}
  As explained in the proof of Proposition \ref{prop:PB}, we can suppose that its dual function $\delta$ that vanishes in a neighborhood of the ends of $\A$ has a negative minimal value $m_-$. We have seen in the proof of Proposition \ref{prop:PB} that a connected component $U$ of $\A\setminus (\Gamma_-\cup\Gamma_+)$ where $\delta$ is equal to $m_-$ contains a singular point of ${\mathcal F}'$ if it is a topological disk. If it is not disk, it is an open annulus separating $\Gamma_-$ and $\Gamma_+$ \textcolor{black}{and this annulus contains a closed leaf $\phi'$. This leaf is disjoint from its image by $f'=q$ and separates $z_*$ and $f'{}^{n}(z_*)$.  The point $z_*$ being a non wandering point of $f$ is a Birkhoff recurrent point of $f'$ and its Birkhoff class is invariant by $f'$. \textcolor{black}{But} the existence of $\phi'$ implies that there is no Birkhoff connection from $f'{}^{n}(z_*)$ to $z_*$. We have got our contradiction.}

To get the second assertion of the proposition, note that the point $z$ can be chosen equal to $z_*$ in case $z_*$ is recurrent. \textcolor{black}{This implies that $I'{}_{\mathcal F'}^{\Z}(z_*)$ draws a positive loop $\Gamma_+$.  It can be easily proven that $z_*$ is a recurrent point of $f'$. Consequently, if $f$ has no horsehoe, then ${I'_{{\mathcal F}'}}^{\N}(z_*)$ exactly draws infinitely a simple transverse loop $\Gamma$. Moreover this loop must be negative because   $\mathrm{rot}_{\check f'} (z_*)=-1$. We have a contradiction.} Note that we can also give a proof by using Corollary \ref{cor:free-disks} applied to $\check D$ and $\check f'$. \end{proof}

\begin{remark*} Note that the first item is not true if we suppose that $z\in\mathrm{ne}^+(f)\setminus\Omega(f)$. Consider on $\S^2=\A\cup\{N,S\}$ a flow satisfying:
 \begin{itemize}
 \item the end $N$ and $S$ are the only fixed points,
 \item there exists an orbit $O$ homoclinic to  $S$,
 \item the disk bounded by $O\cup \{S\}$ that does not contain $N$ is foliated by orbits homoclinic to $S$,
 \item the $\alpha$-limit set of a point disjoint from this disk is equal to $N$ and its $\omega$- limit set to $O\cup \{S\}$ if the orbit is not reduced to $N$,\end{itemize}
Now, choose $f$ to be the time one map of this flow, reduced to $\A$, and $\check f$ the time one map of the lifted flow.
 
 \end{remark*}

\bigskip

\section{ Homeomorphisms of the sphere with no topological horseshoe}

The goal of this section is to prove Theorem \ref{thmain:global-structure} of the introduction, the fundamental result that permits to describe the structure of homeomorphisms of the sphere with no topological horseshoe. We will introduce the notion of positive an squeezed annuli in Sub-section \ref{subsection: annuli},  which will be fundamental in our study, then will state and prove a ``local version'' of the theorem in Sub-sections \ref{subsection: prooflocal} and {\ref{subsection:auxilliary}},  which means a version related to a maximal isotopy. The proof of  Theorem \ref{thmain:global-structure} will be given in Sub-section \ref{subsection: proofglobal}

\subsection{Positive and squeezed annuli} \label{subsection: annuli}

\begin{definition*}Let $f$ be a homeomorphism of $\A$ isotopic to the identity and $\check f$ a lift of $f$ to the universal covering space.   A  {\it $\check f$-positive disk}  is a topological open disk $D\subset\A$ such that: 
\begin{itemize}
\item every lift $\check D$ of $D$ is $\check f$-free;
\item  there exist $p>0$ and $q>0$ satisfying $\textcolor{black}{\check f^q(\check D)}\cap T^p(\check D)\not=\emptyset$. 
\end{itemize}
Replacing the condition $p>0$ by $p<0$ in the definition
one defines similarly  a {\it $\check f$-negative disk}. 
\end{definition*}

Of course a disk $D\subset\A$ that contains a $\check f$-positive disk is a $\check f$-positive disk, if it is lifted to $\check f$-free disks.  Franks' lemma (see \cite{Franks}) asserts that if $\check f$ is fixed point free, then a disk $D\subset\A$ that is lifted to $\check f$-free disks cannot be both $\check f$-positive and $\check f$-negative. Moreover, every lift of $D$ is wandering. This implies, again assuming $\check f$ is fixed point free, that every point $z\in\Omega(f)$ admits a fundamental system of neighborhoods by $\check f$-positive disks or a fundamental system of neighborhoods by $\check f$-negative disks.  In the first situation, every disk containing $z$ and lifted to $\check f$-free disks is positive, and we will say that $z$ is {\it $\check f$-positive}. We will use the following easy result:
\begin{proposition}\label{prop: positive} Let $f$ be a homeomorphism of $\A$ isotopic to the identity and $\check f$ a lift of $f$ to the universal covering space. We suppose that $f$ has no horseshoe and that $\check f$ is fixed point free. Let $I$ be a maximal isotopy of $f$ that is lifted to an identity isotopy of $\check f$ and $\mathcal F$ a transverse foliation of $I$. Then if $z\in\Omega(f)$ is $\check f$-positive, there exists a positive transverse loop that meets the leaf $\phi_z$.
 \end{proposition}
 \begin{proof} Write $\check{\mathcal F}$ for the lifted foliation to $\R^2$. Suppose that $z\in\Omega(f)$ is $\check f$-positive. If $D$ is a topological open disk containing $z$ sufficiently small, if $\check z\in\R^2$ is a lift of $z$ and $\check D$ the lift of $D$ containing $z$, then $I^{2}_{\check {\mathcal F}} ( \check f^{-1}(\check z'))$ meets $\phi_{\check z}$ for every $\check z'\in\check D$. By hypothesis, one can find $\check z'\in \check D$, $n\geq 2$ and $p>0$ such $\check f^n(\check z')\in T^p(\check D)$. Consequently $I^{n+2}_{\check {\mathcal F}} ( \check f^{-1}(\check z'))$ contains as a sub-path a transverse path joining $\phi_{\check z}$ to $T^p(\phi_{\check z})$.\end{proof}

\begin{definition*}Let $f$ be a homeomorphism of $\A$ isotopic to the identity such that $\Omega(f)\not=\emptyset$. We will say that a lift $\check f$ of $f$ to the universal covering space,  is a {\it positive lift} of $f$ if:
\begin{itemize}
\item$\check f$ is fixed point free,
\item  every point $z\in\Omega(f) $ is positive.
\end{itemize}
\end{definition*}
 We can define similar objects replacing ``positive'' by ``negative''. Note that if $\check f$ is a  positive lift of $f$, then the rotation number of a non-wandering point is non negative when it is defined.

\begin{definition*}Let $f$ be a homeomorphism of $\A$ isotopic to the identity. We will say that $f$ is {\it squeezed} if there exists a lift $\check f$ such that
\begin{itemize}
\item$\check f$ is positive,
\item $\check f\circ T^{-1}$ is negative.
\end{itemize}
\end{definition*}
 
Note that a squeezed homeomorphism is fixed point free.

   \begin{definition*}Let $f$ be an orientation preserving homeomorphism of $\S^2$. A {\it squeezed annulus } of $f$ is an invariant open annulus $A\subset \S^2$ such that $f\vert _A$ is conjugate to a squeezed homeomorphism of $\A$.  \end{definition*} 
 
Note that if $A$ is a fixed point free invariant annulus, then the two connected components of $\S^2\setminus A$ are fixed. Otherwise they are permuted by $f$, which implies that $f$ is fixed point free, in contradiction with the fact that it preserves the orientation. Consequently, $f\vert_A$ is isotopic to the identity. Note that if $A\subset A' $ are squeezed annuli, then 
$A$ is essential in $A'$ (which means that it contains a loop non homotopic to zero in $A'$). Otherwise, if we add to $A$ the compact connected component $K$ of $A'\setminus A$, we obtain an invariant disk included in $A'$ that contains a non-wandering point (because $K$ is compact and invariant) and such a disk must contain a fixed point. So, if $(A_j)_{j\in J}$ is a totally ordered family of squeezed annuli, then $\bigcup_{j\in J} A_j$ is a squeezed annulus. Consequently, by Zorn's lemma, every squeezed annulus is included in a maximal squeezed annulus. It follows from a classical fact that if $f$ has no wandering point, the squeezed annuli are the fixed point free invariant annuli. Let us explain why. As explained above, if $A$ is a fixed point free invariant annulus, then $f\vert_A$ is isotopic to the identity. So to prove that $A$ is squeezed, it is sufficient to prove that a homeomorphism $f$ of $\A$ isotopic to the identity that has no wandering point and is fixed point free, is squeezed. Fix a lift $\check f$ and a free disk $D$ and consider the first integer $q>1$ such that $f^q(D)\cap D\not=\emptyset$. If $\check D$ is a lift of $D$, there exists $p\in\Z$ such that $\check f^q(\check D)\cap T^p(\check D)\not=\emptyset$. Replacing $\check f$ by another lift if necessary, one can suppose that $0\leq p<q$. As explained above, $p$ is different from $0$ and $D$ is a $\check f$-positive disk and a $\check f\circ T^{-1}$-negative disk. The set of positive points of $\widetilde f$ is open, being the union of $\widetilde f$-positive disks. Similarly, the set of negative points of $\widetilde f$ is open. So, by connectedness of $\A$, one deduces that every point of $\A$ is a positive point of $\widetilde f$ and for the same reason is a negative point of $\widetilde f\circ T^{-1}$.

Let us define now positive invariant annuli of an orientation preserving homeomorphism $f$ of $\S^2$. They will be defined relative to a maximal isotopy $I$ of $f$. It has been explained in the introduction (just before the statement of Theorem \ref{thmain:rotation-number}) that if $A\subset \S^2$ is a topological open annulus invariant by $f$, such that $f$ fixes the two connected components of $\S^2\setminus A$, one can define rotation numbers of $f\vert_A$ as soon a generator $\kappa$ of $H_1(A,\Z)$ is chosen. Remind that $\kappa_*$ is the generator of $H_1(\A,\Z)$ induced by the loop $\Gamma_* :t\mapsto (t,0)$.
If $U\subset V$ are two open sets of $\S^2$ we will  denote $\iota_*: H_1(U,\Z)\to H_1(V,\Z)$ the morphism induced by the inclusion map $\iota: U\to V$. Let $I$ be a maximal isotopy of $f$. Remind that $\widetilde \pi :\widetilde{\mathrm{dom}}(I)\to\mathrm{dom}(I)$ is the universal covering projection and that $I$ can be lifted to $\widetilde{\mathrm{dom}}(I)$ into an identity isotopy of a certain lift $\widetilde f$ of $f\vert_{\mathrm{dom}(I)}$. Let $A\subset \mathrm{dom}(I)$ be a topological open annulus invariant by $f$ such that each connected component of $\S^2\setminus A$ contains at least \textcolor{black}{one} fixed point of $I$. This implies that the two connected components of $\S^2\setminus A$ are fixed.  Let $\widetilde A$ be a connected component of $\widetilde\pi^{-1}(A)$. Consider a generator $T$ of the stabilizer of $\widetilde A$ in the group of covering automorphisms. There exists a covering automorphism $S$ such that $\widetilde f(\widetilde A)=S(\widetilde A)$ and one must have \textcolor{black}{$S\circ T\circ S^{-1}=T^k$ for some integer $k$  because $\widetilde f$ commutes with the covering automorphisms. }
Of course, one can suppose that $\mathrm{dom}(I)$ is connected, which implies that the group of covering automorphisms is a free group. Consequently $S$ is a power of $T$ and $\widetilde A$ is invariant by $\widetilde f$. Moreover $\widetilde A$ is the universal covering space of $A$ and $\widetilde f\vert_{\widetilde A}$ is a lift of $f\vert_A$.

\begin{definitions*}
Let $f$ be an orientation preserving homeomorphism of $\S^2$ and $I$ a maximal isotopy of $f$. A {\it positive annulus } of $I$ is an open annulus $A\subset \S^2$ invariant by $f$ verifying:
\begin{itemize}
\item each connected component of $\S^2\setminus A$ contains at least a fixed point of $I$;
\item there exists a homeomorphism $h:A\to \A$ such that $\widetilde f\vert_{\widetilde A}$ is a positive lift of  $\widetilde h\circ \textcolor{black}{\widetilde f\vert_{\widetilde A}}\circ \widetilde h^{-1}$, where $\widetilde h:\widetilde A\to \R^2$ is a lift of $h$ between the two universal covering spaces (we keep the notations introduced above to define $\widetilde f$ and $\widetilde A$). 
\end{itemize}
In that case the {\it positive class} of $ H_1(A,\Z)$ is the generator $\kappa$ such that $h_*(\kappa)=\kappa_*$, its inverse being the {\it negative class}. 
\end{definitions*} 

\begin{remarks*}
If $A\subset A' $ are two positive annuli, then $A$ is essential in $A'$. So, if $(A_j)_{j\in J}$ is a totally ordered family of positive annuli, then $\bigcup_{i\in I} A_i$ is a positive annulus, which implies that  every positive annulus of $I$ is included in a maximal positive annulus. Note also that if $A\subset A' $ are two positive annuli, the positive class of $H_1(A,\Z)$ is sent onto the positive class of $H_1(A',\Z)$ by the morphism $\iota_*: H_1(A,\Z)\to H_1(A',\Z)$.
\end{remarks*}

\subsection{Local version of Theorem \ref{thmain:global-structure}} \label{subsection: prooflocal}If $f$ is an orientation preserving homeomorphism of $\S^2$ we set
$$\Omega'(f)=\{z\in\Omega(f)\,\vert\, \alpha(z)\cup\omega(z)\not\subset\mathrm{fix}(f)\}.$$Moreover, if $I$ is a maximal isotopy of $f$, we set
$$\Omega'(I)=\{z\in\Omega(f)\,\vert\, \alpha(z)\cup\omega(z)\not\subset\mathrm{fix}(I)\}.$$
In this long sub-section, we will prove the following result:
\begin{theorem}
\label{th:local-structure}
Let $f:\S^2\to\S^2$ be an orientation preserving homeomorphism that has no topological horseshoe, and $I$ a maximal isotopy of $f$. Then $\Omega'(I)$ is covered by  positive annuli.\end{theorem}
We suppose in \ref{subsection: prooflocal} that $f$ is an orientation preserving homeomorphism of $\S^2$ with no topological horseshoe and that $I$ is a maximal isotopy of $f$. We want to cover $\Omega'(I)$ by positive annuli. To construct these annuli, we need to use a foliation $\mathcal F$ transverse to $I$. Recall that, \textcolor{black}{by Proposition \ref{prop:non-wandering} and the remark following it,} the whole transverse trajectory of a point $z\in\Omega'(I)$ draws a unique transverse simple loop \textcolor{black}{$\Gamma$} and exactly draws it infinitely. In particular, it is contained in the open annulus $U_{\Gamma}$, union of leaves that meet $\Gamma$. We want to construct a positive annulus that contains $z$. The simplest case where it is possible to do so is the case where $U_{\Gamma}$ itself is invariant. This is a very special case because it happens if and only if $U_{\Gamma}$ is a connected component of $\mathrm{dom}(I)$. Another case where it is not difficult to construct such an annulus is the case where $U_{\Gamma}$ is bordered by two closed leaves $\phi$ and $\phi'$ . The reader should convince {himself} that the set of points whose whole transverse trajectory meets $U_{\Gamma}$, which is nothing but
 the set 
 $$U_{\Gamma}\cup \left(\bigsqcup_{k\in\Z} R(f^{k+1}(\phi))\setminus R(f^k(\phi))\right)\cup\left(\bigsqcup_{k\in\Z} R(f^{k+1}(\phi'))\setminus R(f^k(\phi'))\right)$$ is a positive annulus containing $z$. The general case is much more complicated. Roughly speaking{, one} will consider{, for a given connected component of the complement of $U_{\Gamma}$, either} the set of points that ``enter in the annulus $U_{\Gamma}$'' {from that connected component or the set that ``leave it'' to that component, but not both, and similarly the points that enter or leave from the other connected component of the complement of $U_{\Gamma}$}. We will need many of the results previously stated in this article, like the result about general regions of instability (Proposition \ref{prop:PB})for instance, to succeed  in constructing our annulus. 
 
\textcolor{black}{Let us now define, for an arbitrary transverse simple closed loop $\Gamma$}, the following sets
\begin{itemize}
\item $r_{\Gamma}$ is the connected component of $\S^2\setminus
U_{\Gamma}$ that lies on the right of $\Gamma$;
\item $l_{\Gamma}$ is the connected component $\S^2\setminus
U_{\Gamma}$ that lies on the left of $\Gamma$;
\item $\Omega'(I)_{\Gamma}$ is the set of points of $z\in\Omega'(I)$ such that $I_{\mathcal F}^{\Z}(z)$ draws $\Gamma$;
\item $V_{\Gamma}$ is the set of points $z$ such that $I_{\mathcal F}^{\Z}(z)$ draws $\Gamma$;
\item $W_{\Gamma}$ is the set of points $z$ such that $I_{\mathcal F}^{\Z}(z)$ meets $U_{\Gamma}$;
\item  $W_{\Gamma}^{\rightarrow r}$ is the set of points $z$ such that $I_{\mathcal F}^{\Z}(z)$ meets a leaf $\phi\subset r_{\Gamma}\cap \overline U_{\Gamma}$ where $U_{\Gamma}$ is locally on the right of $\phi$;
\item $W_{\Gamma}^{r\rightarrow }$ is the set of points $z$ such that $I_{\mathcal F}^{\Z}(z)$ meets a leaf $\phi\subset r_{\Gamma}\cap \overline U_{\Gamma}$ where $U_{\Gamma}$ is locally on the left of $\phi$;
\item  $W_{\Gamma}^{\rightarrow l}$ is the set of points $z$ such that $I_{\mathcal F}^{\Z}(z)$ meets a leaf $\phi\subset l_{\Gamma}\cap \overline U_{\Gamma}$ where $U_{\Gamma}$ 
is locally on the right of $\phi$;
\item $W_{\Gamma}^{l\rightarrow }$ is the set of points $z$ such that $I_{\mathcal F}^{\Z}(z)$ meets a leaf $\phi\subset l_{\Gamma}\cap \overline U_{\Gamma}$ where $U_{\Gamma}$ is locally on the left of $\phi$.
\end{itemize}
Equivalently, $W_{\Gamma}^{\rightarrow r}$ is the set of points $z$ such that there exist $s<t$ verifying 
$$I_{\mathcal F}^{\Z}(z)(s)\in U_{\Gamma}, \enskip I_{\mathcal F}^{\Z}(z)(t)\in r_{\Gamma}$$
 and $W_{\Gamma}^{r\rightarrow }$ is the set of points $z$ such that there exist $s<t$ verifying $$I_{\mathcal F}^{\Z}(z)(s)\in r_{\Gamma},\enskip I_{\mathcal F}^{\Z}(z)(t)\in  U_{\Gamma}.$$ Note that the six last sets are  open and invariant, and that:
\begin{itemize}
\item the whole transverse trajectory of $z\in W_{\Gamma}\setminus (W_{\Gamma}^{r\rightarrow}\cup W_{\Gamma}^{\rightarrow l})$ meets $U_{\Gamma}$ in a unique real interval $(a,b)$, moreover $I_{\mathcal F}^{\Z}(z)(a)\in l_{\Gamma}$ if $a>-\infty$ and $I_{\mathcal F}^{\Z}(z)(b)\in r_{\Gamma}$ if $b<+\infty$; 

\item the whole transverse trajectory of $z\in W_{\Gamma}\setminus (W_{\Gamma}^{l\rightarrow}\cup W_{\Gamma}^{\rightarrow r})$ meets $U_{\Gamma}$ in a unique real interval $(a,b)$, moreover $I_{\mathcal F}^{\Z}(z)(a)\in r_{\Gamma}$ if $a>-\infty$ and $I_{\mathcal F}^{\Z}(z)(b)\in l_{\Gamma}$ if $b<+\infty$; 

\item the whole transverse trajectory of $z\in W_{\Gamma}\setminus (W_{\Gamma}^{\rightarrow r}\cup W_{\Gamma}^{\rightarrow l})$ meets $U_{\Gamma}$ in a unique real interval $(a,+\infty)$ where $a\geq -\infty$; 

\item the whole transverse trajectory of $z\in W_{\Gamma}\setminus (W_{\Gamma}^{r\rightarrow}\cup W_{\Gamma}^{l\rightarrow })$ meets $U_{\Gamma}$ in a unique real interval $(-\infty,b)$ where $b\leq +\infty$.

\end{itemize}
 
 We will define the four following invariant open sets:
 
 $$W_{\Gamma}^{l\rightarrow r}=W_{\Gamma}\setminus (\overline{W_{\Gamma}^{r\rightarrow}}\cup \overline{W_{\Gamma}^{\rightarrow l}})$$
 $$W_{\Gamma}^{r\rightarrow l}=W_{\Gamma}\setminus (\overline{W_{\Gamma}^{l\rightarrow}}\cup \overline{W_{\Gamma}^{\rightarrow r}}),$$
 $$W_{\Gamma}^{r,l\rightarrow }=W_{\Gamma}\setminus (\overline{W_{\Gamma}^{\rightarrow r}}\cup \overline{W_{\Gamma}^{\rightarrow l}}),$$
 $$W_{\Gamma}^{\rightarrow r,l}=W_{\Gamma}\setminus (\overline{W_{\Gamma}^{r\rightarrow}}\cup \overline{W_{\Gamma}^{l\rightarrow }}).$$

\bigskip
We will prove the following result, which immediately implies Theorem \ref{th:local-structure}:

\begin{proposition}
\label{prop:local-structure}
There exists a positive annulus $A_{\Gamma}$ that contains $\Omega'(I)_{\Gamma}$ and whose positive generator is the homology class of a simple loop freely homotopic to $\Gamma$ in $\mathrm{dom}(I)$. More precisely, at least one of the set $W_{\Gamma}^{l\rightarrow r}$, $W_{\Gamma}^{r\rightarrow l}$, $W_{\Gamma}^{r,l\rightarrow }$ or $W_{\Gamma}^{\rightarrow r,l }$ has a connected component that contains $\Omega'(I)_{\Gamma}$, and if we add to this component the connected components of its complement that have no singular points, we obtain an open annulus $A_{\Gamma}$ verifying the properties above.\end{proposition}

Before proving Proposition \ref{prop:local-structure}, we will study more carefully the sets related to $\Gamma$ that we have introduced. Replacing $\mathrm{dom}(I)$ by a connected component, we can always suppose that $\mathrm{dom}(I)$ is connected. Denote by $\widetilde\pi: \widetilde{\mathrm{dom}}(I)\to\mathrm{dom}(I)$ the universal covering projection, by $\widetilde I$ the lifted identity isotopy, by $\widetilde f$ the induced lift of $f$, by $\widetilde{\mathcal{F}}$ the lifted foliation.

Let $\gamma$ be the natural lift of $\Gamma$. For every lift $\widetilde\gamma$ of $\gamma$ in $\widetilde{\mathrm{dom}}(I)$,  define the following objects:
 
 \begin{itemize}
 \item $T_{\widetilde \gamma}$ is the covering automorphism such that $\widetilde\gamma(t+1)=T_{\widetilde \gamma}(\widetilde\gamma(t))$ for every $t\in\R$;
 
 \item $U_{\widetilde\gamma}$ is the union of leaves that meet $\widetilde\gamma$;
 \item $r_{\widetilde\gamma}$ is the union of leaves that are not in $U_{\widetilde\gamma}$ and are on the right of $\widetilde \gamma$;
  \item $l_{\widetilde\gamma}$ is the union of leaves that are not in $U_{\widetilde\gamma}$ and are on the left of $\widetilde \gamma$;
 \item $\Omega'(I)_{\widetilde\gamma}$ is the set of points $\widetilde z$ that lift a point of $\Omega'(I)$ and such that $\widetilde I_{\widetilde{\mathcal F}}^{\Z}(\widetilde z)$ is included in $U_{\widetilde\gamma}$;
 \item $V_{\widetilde\gamma}$ is the set of points $\widetilde z$ such that $\widetilde I_{\widetilde{\mathcal F}}^{\Z}(\widetilde z)$ contains a sub-path of $\widetilde\gamma$ that projects onto $\Gamma$;
\item $W_{\widetilde\gamma}$ is the set of points $\widetilde z$ such that $\widetilde I_{\widetilde{\mathcal F}}^{\Z}(\widetilde z)$ meets $U_{\widetilde\gamma}$;
\item $W_{\widetilde\gamma}^{\rightarrow r}$ is the set of points $\widetilde z\in W_{\widetilde\gamma}$ such that $\widetilde I_{\widetilde{\mathcal F}}^{\Z}(\widetilde z)$ meets a leaf $\widetilde\phi\subset r_{\widetilde\gamma}\cap \overline U_{\widetilde\gamma}$ where $U_{\widetilde\gamma}$ is on the right of $\widetilde \phi$;
\item   $W_{\widetilde\gamma}^{r\rightarrow }$, $W_{\widetilde\gamma}^{\rightarrow l}$ and $W_{\widetilde\gamma}^{l\rightarrow}$ are defined in a similar way;
\item $W_{\widetilde\gamma}^{l\rightarrow r}$ is the set of points $\widetilde z\in W_{\widetilde\gamma}$ that lift a point of $W_{\Gamma}^{l\rightarrow r}$;
\item $W_{\widetilde\gamma}^{r\rightarrow l}$, $W_{\widetilde\gamma}^{r,l\rightarrow }$ and  $W_{\widetilde\gamma}^{\rightarrow r,l }$ are defined in a similar way.

\end{itemize}  
Note that, since $U_{\Gamma}$ is an annulus, $U_{\widetilde \gamma}$ is a connected component of $\widetilde \pi^{-1}(U_{\Gamma})$ and therefore if $T$ is a covering transformation, then $T(U_{\widetilde \gamma})\cap U_{\widetilde \gamma}$ is not empty if and only if $T$ is a power of $T_{\widetilde\gamma}$. Furthermore,  as told above, if $z$ belongs to $W_{\Gamma}^{l\rightarrow r}$, it meets $U_{\Gamma}$ in a unique real interval. This implies that, given a lift $\widetilde z$ of a point $z$ in $ W_{\Gamma}^{l\rightarrow r}$ that belongs to $W_{\widetilde\gamma}$, the whole transverse trajectory $\widetilde I_{\widetilde{\mathcal F}}^{\Z}(\widetilde z)$ can only meet a single connected component of $\widetilde\pi^{-1}(U_{\Gamma})$. Therefore, the only possible lifts of $z$ that are in $W_{\widetilde\gamma}$ are the $T_{\widetilde\gamma}^k(\widetilde z)$, $k\in\Z$.  Note that for every covering transformation $T$,  we have  $T(W_{\widetilde\gamma}^{l\rightarrow r})=W_{T(\widetilde\gamma)}^{l\rightarrow r}$ and that $W_{\widetilde\gamma}^{l\rightarrow r}\cap T(W_{\widetilde\gamma}^{l\rightarrow r})=\emptyset$ if $T$ is not a power of $T_{\widetilde \gamma}$.  We have similar properties for the three other sets
  $W_{\widetilde\gamma}^{r\rightarrow l}$, $W_{\widetilde\gamma}^{r,l\rightarrow }$ and $W_{\widetilde\gamma}^{\rightarrow r,l }$.

\begin{lemma} Suppose that $V_{\Gamma}\cap U_{\Gamma}\not=\emptyset$ and fix $z\in V_{\Gamma}\cap U_{\Gamma}$. Then the image of the morphism $i_*:\pi_1(z,V_{\Gamma})\to \pi_1(z,\mathrm{dom}(I))$ induced by the inclusion $i:V_{\Gamma}\to \mathrm{dom}(I)$ is included in the image of the morphism $j_*:\pi_1(z,U_{\Gamma})\to \pi_1(z,\mathrm{dom}(I))$ induced by the inclusion $j:U_{\Gamma}\to \mathrm{dom}(I)$.
\end{lemma}

\begin{proof} Note that $V_{\widetilde \gamma}$ is invariant by $T_{\widetilde\gamma}$ and projects onto $V_{\Gamma}$ and conversely that $\widetilde \pi^{-1}(V_{\widehat\Gamma})$ is the union of the $V_{\widetilde \gamma}$, indexed by the set of lifts. For every point $ z\in V_{\Gamma}$, the path $I_{{\mathcal F}}^{\Z}(z)$ has no ${\mathcal F}$-transverse self-intersection and draws $\Gamma$. By Proposition \ref{prop:drawing-crossing}, there is a unique drawing component and so, for every lift $\widetilde z$ of $z\in V_{\Gamma}$, there exists a unique lift $\widetilde \gamma$ of $\gamma$ (up to a composition by a power of $T_{\widetilde\gamma}$) such that $\widetilde z$
 belongs to $V_{\widetilde \gamma}$. In particular $T(V_{\widetilde \gamma})\cap V_{\widetilde \gamma}=\emptyset$ if $T$ is not a power of $T_{\widetilde\gamma}$. This implies that $V_{\widetilde \gamma}$ is open and closed in $\widetilde \pi^{-1}(V_{\Gamma})$. Consequently, the stabilizer in the group of covering automorphisms of a connected component of $\widetilde \pi^{-1}(V_{\Gamma})$ is a sub-group of the stabilizer of a lift $\widetilde\gamma$. \end{proof}

\bigskip 
To prove Proposition \ref{prop:local-structure} we need to work on the annular covering $\widehat{\mathrm{dom}}(I)=\widetilde{\mathrm{dom}}(I)/T_{\widetilde \gamma}$. Denote by $\pi: \widetilde{\mathrm{dom}}(I)\to \widehat{\mathrm{dom}}(I)$ and $\widehat\pi: \widehat{\mathrm{dom}}(I)\to\mathrm{dom}(I)$ the covering projections, by $\widehat I$ the induced identity isotopy, by $\widehat f$ the induced lift of $f$, by $\widehat{\mathcal{F}}$ the induced foliation. The line $\widetilde\gamma$  projects onto the natural lift of a transverse simple loop $\widehat\Gamma$.
Write $$U_{\widehat\Gamma}, \enskip r_{\widehat\Gamma}, \enskip l_{\widehat\Gamma}, \enskip \Omega'(I)_{\widehat\Gamma}, \enskip V_{\widehat\Gamma},\enskip W_{\widehat\Gamma},\enskip W_{\widehat\Gamma}^{\rightarrow r}, \enskip W_{\widehat\Gamma}^{r\rightarrow }, \enskip W_{\widehat\Gamma}^{\rightarrow l}, \enskip W_{\widehat\Gamma}^{l\rightarrow}, \enskip W_{\widehat\Gamma}^{l\rightarrow r},\enskip W_{\widehat\Gamma}^{l\rightarrow r}, \enskip W_{\widehat\Gamma}^{r,l\rightarrow }, \enskip W_{\widehat\Gamma}^{\rightarrow r,l }$$ for the respective projections of $$U_{\widetilde\gamma}, \enskip r_{\widetilde\gamma}, \enskip l_{\widetilde\gamma}, \enskip \Omega'(I)_{\widetilde\gamma},\enskip W_{\widetilde\gamma}, \enskip W_{\widetilde\gamma},\enskip W_{\widetilde\gamma}^{\rightarrow r},\enskip W_{\widetilde\gamma}^{r\rightarrow },\enskip W_{\widetilde\gamma}^{\rightarrow l},\enskip W_{\widetilde\gamma}^{l\rightarrow},\enskip W_{\widetilde\gamma}^{l\rightarrow r},\enskip W_{\widetilde\gamma}^{l\rightarrow r},\enskip W_{\widetilde\gamma}^{r,l\rightarrow },\enskip W_{\widetilde\gamma}^{\rightarrow r,l }.$$ 

We have the following:
\begin{itemize}
\item The set $U_{\widehat\Gamma}$ is the union of leaves
 that meet $\widehat\Gamma$ and is the unique annular connected component of $\widehat\pi^{-1}(U_{\Gamma})$. 
 \item The two connected components of $\widehat{\mathrm{dom}}(I)_{\textcolor{black}{\mathrm {sp}}}\setminus U_{\widehat \Gamma}$ are $r_{\widehat\Gamma}\sqcup\{R\}$ and $l_{\widehat\Gamma}\sqcup\{L\}$, where $\widehat{\mathrm{dom}}(I)_{\textcolor{black}{\mathrm {sp}}}$ is the sphere obtained by adding the end $R$ of $\widehat{\mathrm{dom}}(I)$ on the right of $\widehat\Gamma$ and the end $L$ on its left.

 \end{itemize}
 
 We will often refer to the the following classification, for the right side of $U_{\Gamma}$:
\begin{enumerate}
\item the set $\mathrm{Fr}(U_{\widehat \Gamma})\cap r_{\widehat\Gamma}$ is empty;
\item the set $\mathrm{Fr}(U_{\widehat \Gamma})\cap r_{\widehat\Gamma}$ is reduced to a closed leaf $\widehat\phi$ such that $U_{\widehat\Gamma}\subset R(\widehat\phi)$;
\item the set $\mathrm{Fr}(U_{\widehat \Gamma})\cap r_{\widehat\Gamma}$ is reduced to a closed leaf $\widehat\phi$ such that $U_{\widehat\Gamma}\subset L(\widehat\phi)$;
\item the set $\mathrm{Fr}(U_{\widehat \Gamma})\cap r_{\widehat\Gamma}$ is a non empty union of leaves \textcolor{black}{$(\widehat\phi_{i})_{i\in I}$, all of them} homoclinic to $R$. 
 \end{enumerate}
 In situation (1), one has
 $$W_{\widehat\Gamma}^{r\to}=W_{\widehat\Gamma}^{\to r}=\emptyset.$$
 In situation (2), one has
 $$W_{\widehat\Gamma}^{r\to}=\emptyset,\enskip W_{\widehat\Gamma}^{\to r} =\bigsqcup_{k\in\Z} R(\widehat f^{k+1}(\widehat \phi))\setminus R(\widehat f^k(\widehat\phi)).$$ In situation (3), one has 
 $$W_{\widehat\Gamma}^{\to r}=\emptyset,\enskip W_{\widehat\Gamma}^{r\to } =\bigsqcup_{k\in\Z} R(\widehat f^{k+1}(\widehat \phi))\setminus R(\widehat f^k(\widehat\phi)).$$ 
 In situation (4), one has $$\textcolor{black}{W_{\widehat\Gamma}^{\rightarrow r}\cup W_{\widehat\Gamma}^{r\rightarrow }=\bigcup_{i\in I}\bigsqcup_{k\in\Z} R(\widehat f^{k+1}(\widehat \phi_{i}))\setminus R(\widehat f^k(\widehat\phi_{i})).}$$
 
 Note that in all situations, every whole transverse trajectory meets $\mathrm{Fr}(U_{\widehat \Gamma})\cap r_{\widehat\Gamma}$ at most twice. Of course one has a similar classification for $\mathrm{Fr}(U_{\widehat \Gamma})\cap l_{\widehat\Gamma}$. Note also that every transverse trajectory that meets $U_{\widehat \Gamma}$ has a unique interval of intersection with $U_{\widehat \Gamma}$. 
 
Now define the sets $X^{\rightarrow R}_{\widehat\Gamma}$, $X^{R\rightarrow }_{\widehat\Gamma}$, $X^{\rightarrow L}_{\widehat\Gamma}$, $X^{L\rightarrow }_{\widehat\Gamma}$ as follows:
 $$\widehat z\in X^{\rightarrow R}_{\widehat\Gamma}\Leftrightarrow \lim_{n\to+\infty} \widehat f^{n}(\widehat z)=R,$$
  $$\widehat z\in X^{R\rightarrow }_{\widehat\Gamma}\Leftrightarrow \lim_{n\to+\infty} \widehat f^{-n}(\widehat z)=R,$$
   $$\widehat z\in X^{\rightarrow L}_{\widehat\Gamma}\Leftrightarrow \lim_{n\to+\infty} \widehat f^{n}(\widehat z)=L,$$
    $$\widehat z\in X^{L\rightarrow }_{\widehat\Gamma}\Leftrightarrow \lim_{n\to+\infty} \widehat f^{-n}(\widehat z)=L.$$
So we have
$$\widehat{\mathrm{dom}}(I)\setminus \left(X^{\rightarrow R}_{\widehat\Gamma}\cup X^{\rightarrow L}_{\widehat\Gamma}\right)=\mathrm{ne}^{+}(\widehat f), \enskip \widehat{\mathrm{dom}}(I)\setminus \left(X^{R\rightarrow }_{\widehat\Gamma}\cup X^{L\rightarrow }_{\widehat\Gamma}\right)=\mathrm{ne}^{-}(\widehat f).$$

\begin{lemma}\label{lem:omegalimits}
\textcolor{black}{If situation (4) holds then 
$$W_{\widehat\Gamma}^{\rightarrow r}\subset X^{\rightarrow R}_{\widehat\Gamma}, \enskip W_{\widehat\Gamma}^{r\rightarrow }\subset X^{R\rightarrow }_{\widehat\Gamma}, \textcolor{black}{\enskip r_{\widehat \Gamma}\setminus W_{\widehat\Gamma}\subset X^{\rightarrow R}_{\widehat\Gamma}\cap X^{R\rightarrow }_{\widehat\Gamma}, \enskip r_{\widehat \Gamma}\subset X^{\rightarrow R}_{\widehat\Gamma}\cup X^{R\rightarrow }_{\widehat\Gamma}}.$$}
\end{lemma}
\begin{proof}
Indeed, if $z\in W_{\widehat\Gamma}^{\rightarrow r}$, then the whole transverse trajectory of $z$ meets a leaf $\widehat\phi\subset \mathrm{Fr}(U_{\widehat \Gamma})\cap r_{\widehat\Gamma}$, uniquely defined, homoclinic to $R$ and such that $L(\widehat \phi)$ is simply connected. There exists $n$ such that $\widehat I^{\N}_{\widehat{\mathcal F}}(\widehat{f}^n(z))\subset L(\phi)$. In particular $\widehat I^{\N}_{\widehat{\mathcal F}}(\widehat{f}^n(z))$ never meets a leaf more than once because $L(\widehat\phi)$ is simply connected and $\widehat{\mathcal F}$ non singular. So $\omega(z)=\emptyset$. But this means that $\lim_{n\to+\infty} \widehat f^{n}(\widehat z)=R$. \textcolor{black}{That $W_{\widehat\Gamma}^{r\rightarrow }\subset X^{R\rightarrow }_{\widehat\Gamma}$ follows from a similar argument. \textcolor{black}{If $z$ is in $r_{\widehat \Gamma}\setminus W_{\widehat\Gamma}$, then its whole transverse trajectory is contained in a single connected component of $r_{\widehat \Gamma}$ and as such it never beets the boundary of $r_{\widehat \Gamma}$ . Since the interior of $r_{\widehat \Gamma}$ is simply connected, the third assertion follows again from similar arguments.} The final assertion comes from $r_{\widehat \Gamma}\subset \left(r_{\widehat \Gamma}\setminus W_{\widehat\Gamma}\right) \cup 
W_{\widehat\Gamma}^{\rightarrow r} \cup W_{\widehat\Gamma}^{r\rightarrow }$.}
\end{proof}

 In the rest of the section, when $\widehat\phi$ is a closed leaf of $\widehat{\mathcal F}$ we will write
$$A_{\widehat\phi}= \bigsqcup_{k\in\Z} R(\widehat f^{k+1}(\widehat \phi))\setminus R(\widehat f^k(\widehat\phi)),$$
noting that we get an invariant open annulus. Similarly, if  $\phi$ is a closed leaf of $\mathcal F$ we will write
$$A_{\phi}= \bigsqcup_{k\in\Z} R(f^{k+1}(\phi))\setminus R(f^k(\phi)).$$
Let us state three preliminary results:
\begin{lemma}The set $\Omega'(I)_{\widehat \Gamma}$ is included in \textcolor{black}{$\Omega'(\widehat f_{\mathrm{sp}})$}.
\end{lemma}

\begin{proof} First note that, as $\Omega'(I)_{\widetilde\gamma}\subset U_{\widetilde\gamma}$, we have that $\Omega'(I)_{\widehat\Gamma}\subset U_{\widehat\Gamma}$. Let $\widehat z$ be in $\Omega'(I)_{\widehat \Gamma}$. It projects by $\widehat \pi$  onto a point $z \in \Omega'(I)_{\Gamma}$. One can suppose that $z\in\Omega(I)$ and that $\omega(z)\not\in\mathrm{fix}(I)$ for instance. This implies  that there exists a point $z'\in\ U_{\Gamma}$ and a subsequence $(f^{n_k}(z))_{k\geq 0}$ of $(f^n(z))_{n\geq 0}$ that converges to $z'$. The trajectory $I^{n_k}(z)$ is homotopic to $I^{n_k}_{\mathcal F}(z)$ and this last path is contained in $U_{\Gamma}$. Lifting our paths to $\widehat{\mathrm{dom}}(I)$ we deduce that $(\widehat f^{n_k}(\widehat z))_{k\geq 0}$ converges to $\widehat z'$, the lift of $z'$ that belongs to $U_{\widehat \Gamma}$. So $\widehat z$ belongs to $\mathrm{ne}^{+}(\widehat f)$. Moreover, if $W$ is a neighborhood of $z$ sufficiently small, there exists $m\geq 1$ such that  $I_{\mathcal F}^{m}(z')$ is included in $U_{\Gamma}$ and draws $\Gamma$, for every $z'\in W$. We deduce that if $\textcolor{black}{z''}\in W\cap f^{-n}(W)$, both $I_{\mathcal F}^m(\textcolor{black}{z''})$ and $I_{\mathcal F}^m(f^n(\textcolor{black}{z''}))$ draw $\Gamma$, and since, \textcolor{black}{by Proposition \ref{prop:drawing-crossing},} $I_{\mathcal F}^{\Z}(\textcolor{black}{z''})$ has a single drawing component for $\Gamma$, then $I_{\mathcal F}^n(\textcolor{black}{z''})$ is included in $U_{\Gamma}$. Lifting our paths to to $\widehat{\mathrm{dom}}(I)$ and using the fact that $I^{n}(\textcolor{black}{z''})$ is homotopic to $I^{n}_{\mathcal F}(\textcolor{black}{z''})$, we deduce that $z\in\Omega(\widehat f)$. Consequently, $\Omega'(I)_{\widehat \Gamma}$ is included in \textcolor{black}{$\Omega'(\widehat f_{\mathrm{sp}})$}.\end{proof}

\begin{lemma}
\label{lemma:existence} If $\Omega'(I)_{\widehat \Gamma}\not=\emptyset$, there is a point $\widehat z\in \Omega'(\widehat f_{\mathrm{sp}})$ such that $\mathrm{rot}_{\widetilde f, [\widehat\Gamma]}(\widehat z)>0$, where $[\widehat\Gamma]\in H_1(\widehat{\mathrm{dom}}(I),\Z)$ is the homology class of $\widehat\Gamma$.
\end{lemma}

\begin{proof} Observe first that the lift $\widetilde f$ of $\widehat f$ is fixed point free and that $\widehat f$ has no topological horseshoe because it is the case for $f$. So, every point in $\mathrm{ne}(\widehat f)$ has a rotation number by Theorem \ref{th:rotation-number}. Choose a point $\widehat z\in\Omega'(I)_{\widehat \Gamma}$. Its rotation number is well defined because $\widehat z$ belongs to $\mathrm{ne}(\widehat f)$. The rotation number cannot be negative because $\widehat I_{\widehat{\mathcal F}}^{\Z}(\widehat z)$ is drawn on $\widehat\Gamma$. If it is positive, the conclusion of the lemma holds. If if it zero, one can apply Proposition \ref{prop:recurrentzero-rotation} to ensure the existence of a periodic point whose rotation number is non zero, noting that the proof of Proposition \ref{prop:recurrentzero-rotation} tells us that this rotation number is positive.\end{proof}

\begin{lemma} 
\label{lemma:no-Birkhoff-cycle}If $\Omega'(I)_{\widehat \Gamma}\not=\emptyset$ and $U_{\widehat\Gamma}\not= \widehat{\mathrm{dom}}(I)$, then either there is no Birkhoff connection from $R$ to $L$ or there is no Birkhoff connection from $L$ to $R$.
\end{lemma}

\begin{proof} By Lemma \ref{lemma:existence}, there is a point $\widehat z\in \Omega'(\widehat f)$ such that $\mathrm{rot}_{\widetilde f, [\widehat\Gamma]}(\widehat z)>0$. The assumption $U_{\widehat\Gamma}\not= \widehat{\mathrm{dom}}(I)$ means that the sets $r_{\widehat\Gamma}$ and  $l_{\widehat\Gamma}$ are not both empty. Let us suppose for instance that $r_{\widehat\Gamma}\not=\emptyset$. This means that we are not in situation (1). In case we are in situation (2) or (3), there exists an essential closed leaf $\widehat \phi$ and this leaf is disjoint from its image by $\widehat f$. In that case the conclusion is immediate. It remains to study the case where situation (4) occurs. In that case, there exists a leaf $\widehat \phi$ homoclinic to $R$. We deduce that $\widehat\phi$ is disjoint from  all its images by the iterates of $\widehat f$. 
By a result of B\'eguin-Crovisier-Le Roux (see\cite{LeRoux}, \textcolor{black}{Proposition 2.3.3}) one can blow up the end $R$ by adding a circle $\widehat \Sigma_R$ to $\widehat {\mathrm {dom}}(I)$ in a neighborhood of $R$ to get a semi-open annulus $\widehat {\mathrm {dom}}(I)_{\mathrm {ann}}$ such that $\widehat f$ extends to a homeomorphism $\widehat f_{\mathrm {ann}}$ having a fixed  point $\widehat z_*$ on $\widehat \Sigma_R$ and also a non-fixed point. \textcolor{black}{One can also assume that $\widehat z_*$ is the unique fixed point on $\widehat \Sigma_R$, or else one can just collapse the complement of a maximal interval without fixed points in $\widehat \Sigma_R$ to a single point, again obtaining an extension of $\widehat f$ to a semi-open annulus, now with a single fixed point in the boundary.} Write $\widetilde{\mathrm {dom}}(I)_{\mathrm {ann}}=\widetilde{\mathrm {dom}}(I)\sqcup\widetilde\Sigma_S$ for the universal covering space of $\widehat {\mathrm {dom}}(I)_{\mathrm {ann}}$, where $\widetilde\Sigma_R$ is the universal covering space of $\widehat\Sigma_R$  and keep the notation $T_{\widetilde \gamma}$ for the natural covering automorphism. There is a unique  lift $\widetilde f_{\mathrm {ann}}$ of $\widehat f_{\mathrm {ann}}$ that extends  $\widetilde f$ and that fixes the preimages of $\widehat z_*$. One can construct an open annulus  $\widehat {\mathrm {dom}}(I)_{\mathrm {double}}$ by pasting two copies of $\widehat {\mathrm {dom}}(I)_{\mathrm {ann}}$ on $\widehat \Sigma_{R}$ and naturally define a homeomorphism $\widehat f_{\mathrm {double}}$ on  $\widehat {\mathrm {dom}}(I)_{\mathrm {double}}$. Its universal covering space $\widetilde {\mathrm {dom}}(I)_{\mathrm {double}}$ can be constructed by pasting two copies of $\widetilde {\mathrm {dom}}(I)_{\mathrm {ann}}$ on $\widetilde \Sigma_{R}$ and one gets a natural lift $\widetilde f_{\mathrm {double}}$. We keep the notation $T_{\widetilde\gamma}$ for the natural covering automorphism. Suppose that there exists a Birkhoff connection from $R$ to $L$ and a Birkhoff connection from $L$ to $R$. In that case $\widehat f$ satisfies the \textcolor{black}{curve intersection property, that is, the image by $\widehat f$ of any essential simple loop $\Gamma'$ must also intersect $\Gamma'$}. It is straightforward to prove that $\widehat f_{\mathrm {double}}$ itself has the curve intersection property. Consequently, \textcolor{black}{by  a classical result (see, for instance, Theorem 9.1 of \cite{LeCalvez1}}, for every integers $p$ and $q>0$ relatively prime, such that $p/q\in(0,\mathrm{rot}_{\widetilde f} (\widehat z))$, there exists a periodic point of $\widehat f_{\mathrm {double}}$ of period $q$ and rotation number $p/q$ and consequently there exists a periodic point of $\widehat f$ of period $q$ and rotation number $p/q$. Applying 
Proposition \ref{prop:PB}, one deduces that $\widehat f$ has a horseshoe. We have got a contradiction. \end{proof}

 We will suppose from now on that $U_{\widehat\Gamma}\not= \widehat{\mathrm{dom}}(I)$, or equivalently that the boundary of $U_{\widehat\Gamma}$ is not empty and contains at least one leaf of $\widehat{\mathcal F}$. In the other case, $U_{\widehat\Gamma}$ coincides with $U_{\Gamma}$, it is a connected component of $ {\mathrm{dom}}(I)$. Note that it is a positive annulus.

\bigskip
Let us state now the most important lemma.

 \bigskip

\begin{lemma}
\label{lemma:homoclinic}
\textcolor{black}{If $\Omega'(I)_{\widehat \Gamma}\not=\emptyset$ then at} least one of the sets  $\overline{W_{\widehat\Gamma}^{r\rightarrow }}$, $\overline {W_{\widehat\Gamma}^{\rightarrow r}}$ is contained in $X^{R\rightarrow }_{\widehat\Gamma}\cap X^{\rightarrow R}_{\widehat\Gamma}$, and one of the sets  $\overline{W_{\widehat\Gamma}^{l\rightarrow }}$, $\overline {W_{\widehat\Gamma}^{\rightarrow l}}$ is contained in $X^{L\rightarrow }_{\widehat\Gamma}\cap X^{\rightarrow L}_{\widehat\Gamma}$.
\end{lemma} 

\begin{proof} 
 We can suppose that both sets $W_{\widehat\Gamma}^{r\rightarrow }$ and $W_{\widehat\Gamma}^{\rightarrow r}$ are non empty, otherwise the statement is obvious. Consequently we are in situation (4): the set $U_{\widehat\Gamma}$ is adherent to $R$ and each leaf on the right of $U_{\widehat\Gamma}$ is a leaf homoclinic to $R$. Moreover, one has 
$W_{\widehat\Gamma}^{r\rightarrow }\subset X^{R\rightarrow }_{\widehat\Gamma}$ and $W_{\widehat\Gamma}^{\rightarrow r}\subset X^{\rightarrow R}_{\widehat\Gamma}$. We will consider two cases, depending whether there exists a whole transverse trajectory  that meets four times a leaf of $U_{\widehat \Gamma}$ or not.

\bigskip
{\it First case: there is no whole transverse trajectory that meets four times a leaf of $U_{\widehat \Gamma}$.}

By Lemma \ref{lemma:no-Birkhoff-cycle}, we know that at least one of the sets $X^{R\rightarrow }_{\widehat\Gamma}$ or $X^{\rightarrow R}_{\widehat\Gamma}$ is not adherent to $L$. Let us suppose for example that this is the case for $X^{R\rightarrow }_{\widehat\Gamma}$. The set 
$W_{\widehat\Gamma}^{r\rightarrow }$ shares the same property because $W_{\widehat\Gamma}^{r\rightarrow }\subset X^{R\rightarrow }_{\widehat\Gamma}$. Of course its closure $\overline{W_{\widehat\Gamma}^{r\rightarrow }}$ is not adherent to $L$. Let us suppose that $\overline{W_{\widehat\Gamma}^{r\rightarrow }}\not\subset (X^{R\rightarrow }_{\widehat\Gamma}\cap X^{\rightarrow R}_{\widehat\Gamma})$ and choose $\widehat z\in \overline{W_{\widehat\Gamma}^{r\rightarrow }}\setminus (X^{R\rightarrow }_{\widehat\Gamma}\cap X^{\rightarrow R}_{\widehat\Gamma})$. The fact that the orbit of $\widehat z$ is non adherent to $L$ and that $\widehat z\not\in X^{R\rightarrow }_{\widehat\Gamma}\cap X^{\rightarrow R}_{\widehat \Gamma}$ implies that $\widehat z\in \mathrm{ne}(\widehat f)$. Moreover, $\omega(\widehat z)$ and $\alpha(\widehat z)$ are not included in $\widehat U_{\Gamma}$ if not empty, otherwise\textcolor{black}{, by Proposition \ref{prop:non-wandering} and the following remark,} $\widehat I_{\widehat{\mathcal F}}(\widehat z)$ would draw $\widehat\Gamma$ infinitely, in contradiction with the hypothesis. The limit sets are included in $l_{\widehat\Gamma}$, more precisely, they are contained in $l_{\widehat\Gamma}\setminus \mathrm{Fr}(U_{\widehat\Gamma})$ and so the whole transverse trajectory of $\widehat z$ meets $l_{\widehat\Gamma}$. Of course $\widehat z\not\in W_{\widehat\Gamma}^{l \rightarrow }$ because $\widehat z\in \overline{W_{\widehat\Gamma}^{r\rightarrow }}$ and because $W_{\widehat\Gamma}^{r\rightarrow }$ and $ W_{\widehat\Gamma}^{l\rightarrow }$ are disjoint open sets. So $\widehat z$ belongs to $W_{\widehat\Gamma}^{\rightarrow l}$ and $\omega(\widehat z)\not=\emptyset$. \textcolor{black}{By Lemma \ref{lem:omegalimits}, situation (4) cannot occur and so situation (3) must hold on the left side:} there exists a closed essential leaf $\widehat\phi\subset \mathrm{Fr}(U_{\widehat \Gamma})\cap l_{\widehat \Gamma}$ such that $U_{\widehat \Gamma}\subset R(\widehat\phi)$. But this implies that $X^{\rightarrow R}_{\widehat\Gamma}$ is not adherent to $L$. Let us explain now why $\overline{W_{\widehat\Gamma}^{\rightarrow r}}\subset X^{R\rightarrow }_{\widehat\Gamma}\cap X^{\rightarrow R}_{\widehat\Gamma}$. Indeed, replacing $\widehat z$ by $\widehat z'\in \overline{W_{\widehat\Gamma}^{\rightarrow r}}\setminus (X^{R\rightarrow }_{\widehat\Gamma}\cap X^{\rightarrow R}_{\widehat\Gamma})$ in the previous argument will lead to a contradiction because $W_{\widehat\Gamma}^{l\rightarrow }=\emptyset$ and $\overline{W_{\widehat\Gamma}^{\rightarrow r}}\cap W_{\widehat\Gamma}^{\rightarrow l}=\emptyset$.

\bigskip
{\it Second case: there exists a whole transverse trajectory that meets four times a leaf of $U_{\widehat \Gamma}$. }

Let suppose that $\widehat\phi\subset U_{\widehat \Gamma}$ is met four times by a whole transverse trajectory. Fix a lift $\widetilde\phi\subset U_{\widetilde \gamma}$. We know that $$\bigcup_{n\geq 1} \widetilde f^{-n}(L(T^3(\widetilde \phi))\cap R(\widetilde \phi)\not=\emptyset.$$

\begin{sub-lemma} If $\overline{W_{\widehat\Gamma}^{r\rightarrow }}\not\subset (X^{R\rightarrow }_{\widehat\Gamma}\cap X^{\rightarrow R}_{\widehat\Gamma})$, there exists an admissible path $\widetilde\gamma_0: [a_0,b_0]\to \widetilde{\mathrm{dom}}(I)$ such that 
$$\widetilde\gamma_0(a_0)\in r(\widetilde\gamma), \enskip
\widetilde\gamma_0(a_0)\in L(\widetilde\phi)\cap R(T(\widetilde\phi)), \enskip
\widetilde\gamma_{(a_0,b_0]}\subset U_{\widetilde\gamma}, \enskip \widetilde\gamma(b_0)\in T^3(\widetilde\phi).$$

 \end{sub-lemma}
 \begin{proof} One can construct a line $\widetilde\lambda\subset\bigcup_{n\geq 1} \widetilde f^{-n}(L(T^3(\widetilde \phi))\cup R(\phi)$
that coincides  with $\widetilde \gamma$ outside a compact set, then one can find a neighborhood $V$ of $L$ in $\widehat {\mathrm{dom}}(I)$ whose preimage in $\widetilde {\mathrm{dom}}(I)$ belongs to $L(\widetilde\lambda)$.
Suppose that $\overline{W_{\widehat\Gamma}^{r\rightarrow }}\not\subset (X^{R\rightarrow }_{\widehat\Gamma}\cap X^{\rightarrow R}_{\widehat\Gamma})$,
and choose $\widehat z\in \overline{W_{\widehat\Gamma}^{r\rightarrow }}\setminus (X^{R\rightarrow }_{\widehat\Gamma}\cap X^{\rightarrow R}_{\widehat\Gamma})$. As explained in the proof given in the first case, there are three possibilities

\begin{itemize}

\item the orbit of $\widehat z$ is adherent to $L$;
\item there exists a closed essential leaf $\widehat\phi'\subset \mathrm{Fr}(U_{\widehat \Gamma})\cap l_{\widehat \Gamma}$ such that $U_{\widehat \Gamma}\subset R(\widehat\phi')$;
\item $\widehat I_{\widehat{\mathcal F}}(\widehat z)$ draws $\widehat\Gamma$ infinitely.
\end{itemize}

In the last case, one can find $\widehat z'\in W_{\widehat\Gamma}^{r\rightarrow }$ such that $\widehat I_{\widehat{\mathcal F}}(\widehat z')$ meets $\widehat\phi$ at least three times. Denote $\widehat\phi''$ the unique leaf on the boundary of $U_{\widehat\Gamma}$ met by  $\widehat I_{\widehat{\mathcal F}}(\widehat z')$  such that $U_{\widehat \Gamma}$ is on its left and let $\widetilde\phi''$  be the unique lift of $\widehat\phi''$ between $\widetilde\phi$ and $T(\widetilde\phi)$. The lift of  $\widehat I_{\widehat{\mathcal F}}(\widehat z')$ that intersects $\widetilde\phi''$ contains, as a sub-path, a path satisfying the conclusion of the sub-lemma.

\textcolor{black}{Assume then that the last case does not hold.} In the first case the orbit of $\widehat z$ meets $V$ and so, one can find $\widehat z'\in W_{\widehat\Gamma}^{r\rightarrow }$ whose orbit meets $V$. In the second case, one can find $\widehat z'\in W_{\widehat\Gamma}^{r\rightarrow }$ whose orbit meets $L(\widehat \phi')$ and one can observe that the lift $\widetilde \phi'$ of $\widehat \phi'$ is included in $L(\widetilde\lambda)$. In both cases, denote $\widehat\phi''$ the unique leaf on the boundary of $U_{\widehat\Gamma}$ met by  $\widehat I_{\widehat F}(\widehat z')$  such that $U_{\widehat \Gamma}$ is on its left and let $\widetilde\phi''$ be the unique lift of $\widehat\phi''$ lying between $\widetilde\phi$ and $T(\widetilde\phi)$. The set $\bigcup_{n\geq 0} \widetilde f^n(R(\widetilde\phi'')) $ meets $\widetilde \lambda$ but does not meet $R(\widetilde\phi)$, so it meets $\bigcup_{n\geq 1} \widetilde f^{-n}(L(T^3(\widetilde \phi)))$. It implies that the conclusion of the sub-lemma is true in both cases. \end{proof}

 Similarly, if  $\overline{W_{\widehat\Gamma}^{\rightarrow r}}\not\subset (X^{R\rightarrow }_{\widehat\Gamma}\cap X^{\rightarrow R}_{\widehat\Gamma})$, there exists an admissible path $\widetilde\gamma_1: [a_1,b_1]\to \widetilde{\mathrm{dom}}(I)$ such that 
$$\widetilde\gamma_1(b_1)\in r(\widetilde\gamma), \enskip
\widetilde\gamma_1(b_1)\in L(T^2(\widetilde\phi))\cap R(T^3(\widetilde\phi)), \enskip
\widetilde\gamma_{[a_1,b_1)}\subset U_{\widetilde\gamma}, \enskip \widetilde\gamma(a_1)\in \widetilde\phi.$$
The paths $\widetilde\gamma_0$ and $\widetilde\gamma_1$ project onto transverse loops that meet every leaf of $U_{\widehat\gamma}$ at least twice. Such a situation has been studied in \textcolor{black}{Lemma \ref{lem:newmeettwo}}
We can construct an admissible path with a $\widehat{\mathcal F}$-transverse self-intersection, in contradiction with the fact that $f$ has no horseshoe. \end{proof}

We are ready now to construct the positive annulus we are looking for. Let us begin with a simple result.

\begin{lemma}
\label{lemma:fundamental-group}There exists at most one fixed essential connected component of $W_{\widehat\Gamma}^{l\rightarrow r}$. We have a similar result for $W_{\widehat\Gamma}^{r\rightarrow l}$, $W_{\widehat\Gamma}^{\rightarrow r, l}$ and $W_{\widehat\Gamma}^{r,l \rightarrow }$.

\end{lemma}

\begin{proof} 
Let us prove by contradiction that there exists at most one essential connected component of $W_{\widehat\Gamma}^{l\rightarrow r}$ that is fixed. Indeed, if $V_1$ and $V_2$ are two such components, the complement of $V_1\cup V_2$ in $\widehat{\mathrm{dom}}(I)$ has a compact connected component $K$ and this component is fixed. This implies that for every $\widehat z\in K$, the sequences $(\widehat f^n(\widehat z))_{n\geq 0}$ and $(\widehat f^{-n}(\widehat z))_{n\geq 0}$ do not converge to an end of $\widehat{\mathrm{dom}}(I)$.

 Let us prove that $K\cap r_{\widehat\Gamma}=\emptyset$. This is clear in situation (1). It is also true in situation (4) because $K\cap X^{R\rightarrow }_{\widehat\Gamma}= K\cap X^{\rightarrow R }_{\widehat\Gamma} =\emptyset$ and $r_{\widehat\Gamma}\subset X^{R\rightarrow }_{\widehat\Gamma}\cup X^{\rightarrow R }_{\widehat\Gamma}$. Suppose now that we are in situation (2), write $\widehat\phi$ for the closed leaf included in $\mathrm{Fr}(U_{\widehat\Gamma})\cap r_{\widehat\Gamma}$, \textcolor{black}{and recall that, in this case, $U_{\widehat \Gamma}$ is contained in $R(\widehat \phi)$}. Choose an essential loop $\Gamma_1$ in $V_1$ and an essential loop in $\Gamma_2$ in $V_2$. Using the fact that $\Gamma_1$ and $\Gamma_2$ are compact and included in $W_{\widehat\Gamma}$, one knows that there exists $n\geq 0$ such that $\widehat f^{-n}(\Gamma_1)$ and $\widehat f^{-n}(\Gamma_2)$ are both contained in $R(\widehat\phi)$. This implies that $K$ is contained in $R(\widehat\phi)$. In case situation (3) occurs, we conclude with a similar argument. 
 
A similar proof tells us that $K\cap l_{\widehat\Gamma}=\emptyset$ and consequently that $K$ is included in $U_{\widehat \Gamma}$. This implies that $K$ is contained in $W_{\widehat\Gamma}$ and disjoint from $W_{\widehat\Gamma}^{r\rightarrow }$ and $W_{\widehat\Gamma}^{\rightarrow l}$. But $V_1\cup V_2\cup K$ is a neighborhood of $K$ that is disjoint from $W_{\widehat\Gamma}^{r\rightarrow }$ and $W_{\widehat\Gamma}^{\rightarrow l}$, and so $K$ is disjoint from $\overline{W_{\widehat\Gamma}^{r\rightarrow }}$ and $\overline{W_{\widehat\Gamma}^{\rightarrow l}}$. Consequently, $V_1\cup V_2\cup K$ is included in $W_{\widehat\Gamma}^{l\rightarrow r}$. We have got our final contradiction.
 
 The proof for the other sets is similar.
 \end{proof}

\begin{proposition} \label{prop: existence} If not empty, the set $\Omega'(I)_{\widehat \Gamma}$ is contained in an essential connected component of $W_{\widehat\Gamma}^{l\rightarrow r}$, $W_{\widehat\Gamma}^{r\rightarrow l}$, $W_{\widehat\Gamma}^{r,l\rightarrow }$ or $W_{\widehat\Gamma}^{\rightarrow r,l }$. Moreover this component will contain $A_{\widehat\phi}$, if $\widehat\phi$ is a closed leaf in $\mathrm{Fr}(U_{\widehat \Gamma})$.
\end{proposition}

\noindent{\it Proof.}\enskip\enskip 
 We will suppose that $\Omega'(I)_{\widehat \Gamma}\not=\emptyset$. \textcolor{black}{By Lemma \ref{lemma:homoclinic}, there are four cases to consider:
\begin{itemize}
\item[(a)]{$\overline{W_{\widehat\Gamma}^{r\rightarrow }}\subset X^{R\rightarrow }_{\widehat\Gamma}\cap X^{\rightarrow R}_{\widehat\Gamma}, \enskip \overline {W_{\widehat\Gamma}^{\rightarrow l}}\subset X^{L\rightarrow }_{\widehat\Gamma}\cap X^{\rightarrow L}_{\widehat\Gamma}$}
\item[(b)]{$\overline{W_{\widehat\Gamma}^{r\rightarrow }}\subset X^{R\rightarrow }_{\widehat\Gamma}\cap X^{\rightarrow R}_{\widehat\Gamma}, \enskip \overline {W_{\widehat\Gamma}^{l\rightarrow}}\subset X^{L\rightarrow }_{\widehat\Gamma}\cap X^{\rightarrow L}_{\widehat\Gamma}$}
\item[(c)]{$\overline{W_{\widehat\Gamma}^{\rightarrow r }}\subset X^{R\rightarrow }_{\widehat\Gamma}\cap X^{\rightarrow R}_{\widehat\Gamma}, \enskip \overline {W_{\widehat\Gamma}^{\rightarrow l}}\subset X^{L\rightarrow }_{\widehat\Gamma}\cap X^{\rightarrow L}_{\widehat\Gamma}$}
\item[(d)]{$\overline{W_{\widehat\Gamma}^{\rightarrow r }}\subset X^{R\rightarrow }_{\widehat\Gamma}\cap X^{\rightarrow R}_{\widehat\Gamma}, \enskip \overline {W_{\widehat\Gamma}^{l\rightarrow}}\subset X^{L\rightarrow }_{\widehat\Gamma}\cap X^{\rightarrow L}_{\widehat\Gamma}$}
\end{itemize}
Assume that we are in the case (a).}  We will prove that 
\begin{itemize}
\item there exists a fixed essential component $W$ of $W_{\widehat\Gamma}^{l\rightarrow r}$ (unique by Lemma \ref{lemma:fundamental-group}); 
\item $W$ contains $\Omega'(I)_{\widehat \Gamma}$;
\item $W$ contains $A_{\widehat\phi}$, if $\widehat\phi$ is a closed leaf in $\mathrm{Fr}(U_{\widehat \Gamma})$.
\end{itemize}
If $\widehat\phi$ is a closed leaf in $\mathrm{Fr}(U_{\widehat \Gamma})\cap r_{\widehat \Gamma}$, the assumption $\overline{W_{\widehat\Gamma}^{r\rightarrow }}\subset X^{R\rightarrow }_{\widehat\Gamma}\cap X^{\rightarrow R}_{\widehat\Gamma}$ \textcolor{black}{tells us that $\overline{W_{\widehat\Gamma}^{r\rightarrow }}$ has to be empty, and so} we are in situation (2) on the right side and we know that $A_{\widehat\phi}\subset W_{\widehat\Gamma}^{l\rightarrow r}$. So  there exists a fixed essential component of $W_{\widehat\Gamma}^{l\rightarrow r}$  and this component contains $A_{\widehat\phi}$. Similarly, if $\widehat\phi$ is a closed leaf in $\mathrm{Fr}(U_{\widehat \Gamma})\cap l_{\widehat \Gamma}$, we are in situation (3) on the left side and there exists a fixed essential component of $W_{\widehat\Gamma}^{l\rightarrow r}$  that contains $A_{\widehat\phi}$.

Note now that every essential connected component of $W_{\widehat\Gamma}^{l\rightarrow r}$ that meets $\Omega'(I)_{\widehat \Gamma}$ is fixed. Indeed, remember that $\Omega'(I)_{\widehat \Gamma}\subset \Omega(\widehat f)$. Moreover the set $W_{\widehat\Gamma}^{l\rightarrow r}$ being invariant by $\widehat f$, every connected component of $W_{\widehat\Gamma}^{l\rightarrow r}$ that meets $\Omega(\widehat f)$ is periodic.  Consequently it is fixed if supposed essential. We know that \textcolor{black}{$\Omega'(I)_{\widehat \Gamma} \subset W_{\widehat\Gamma}\cap \mathrm{ne}(\widehat f)$. Since we are in case (a), the assumptions on $\overline{W_{\widehat\Gamma}^{r\rightarrow }}$ and $\overline {W_{\widehat\Gamma}^{\rightarrow l}}$ imply these sets are disjoint from $\mathrm{ne}(\widehat f)$. We also know that $\mathrm{ne}(\widehat f)$ is disjoint from $(X^{\rightarrow R}_{\widehat\Gamma}\cap X^{L\rightarrow }_{\widehat\Gamma})$. One deduces that:}
$$\Omega'(I)_{\widehat \Gamma}\subset W_{\widehat\Gamma}\setminus (X^{\rightarrow R}_{\widehat\Gamma}\cap X^{L\rightarrow }_{\widehat\Gamma})\subset W_{\widehat\Gamma}^{l\rightarrow r}.$$
Choose $\widehat z\in \Omega'(I)_{\widehat \Gamma}$ and denote $\widehat W$ the connected component of $ W_{\widehat\Gamma}^{l\rightarrow r}$ that contains $\widehat z$. We will prove that $\widehat W$ is essential. Choosing another point  of $\Omega'(I)_{\widehat \Gamma}$ would give us the same set $\widehat W$ by Lemma \ref{lemma:fundamental-group}. So, $\widehat W$ contains $\Omega'(I)_{\widehat \Gamma}$. We will argue by contradiction and suppose that $\widehat W$ is not essential. The point $\widehat z$ being a non-wandering point of $\widehat f$,  we know that $\widehat W$ is periodic. We denote $q$ its minimal period. One can fill $\widehat W$, adding the compact connected components of its complement, to get a disk $\widehat W'$. The fact that $\widehat z$ is a non-wandering point of $\widehat f$ \textcolor{black}{contained in the open set $\widehat W$, and that $\widehat f^{i}(\widehat W)$ is disjoint from $\widehat W$ if $0\le i\le q$, implies that $\widehat z$ is also a non-wandering point of $\widehat f^{q}_{\widehat W'}$}. So, $\widehat f^q\vert _{\widehat W'}$ has a fixed point $\widehat z'$ by Brouwer's theory. There exists an integer $p\in\Z$ such that every lift $\widetilde W'$ of $\widehat W'$ satisfies $\widetilde f^{q}(\widetilde W')=T_{\widetilde \gamma}^p(\widetilde W')$. Moreover, the lift of $\widetilde z'$ that is in $\widetilde W'$ verifies $\widetilde f^q(\widetilde z')=T^p(\widetilde z')$. In other terms, there exists a periodic point $\widehat z'\in \widehat W'$ of period $q$ and rotation number $p/q$. 
Note that $p$ is positive. Indeed it is not negative because $\widehat z\in \Omega'(I)_{\widehat \Gamma}$ and does not vanish because $\widehat z$ cannot be lifted to a non wandering point of $\widetilde f$, this map being fixed point free. 

A point of the frontier of $W_{\widehat\Gamma}^{l\rightarrow r}$ in $\widehat{\mathrm{dom}}(I)$ belongs to $\overline{W_{\widehat\Gamma}^{r\rightarrow }}$, to $\overline{W_{\widehat\Gamma}^{\rightarrow l}}$, or does not belong to $W_{\widehat \Gamma}$. In the last case it belongs to $\bigcap_{k\in\Z} \widehat f^k(r_{\widehat\Gamma})$ or to $\bigcap_{k\in\Z} \widehat f^k(l_{\widehat\Gamma})$. By assumption, $\overline{W_{\widehat\Gamma}^{r\rightarrow }}$ is included in $X^{R\rightarrow }_{\widehat\Gamma}\cap X^{\rightarrow R}_{\widehat\Gamma}$ and $\overline{W_{\widehat\Gamma}^{\rightarrow l}}$ is included in $X^{L\rightarrow }_{\widehat\Gamma}\cap X^{\rightarrow L}_{\widehat\Gamma}$. So the frontier of $\widehat W'$ in $\widehat{\mathrm{dom}}(I)_{\textcolor{black}{\mathrm {sp}}}$ is contained in the frontier of $W_{\widehat\Gamma}^{l\rightarrow r}$, which means in the union of the two disjoint closed sets $\{R\}\cup\overline{W_{\widehat\Gamma}^{r\rightarrow }}\cup\left(\bigcap_{k\in\Z} \widehat f^k(r_{\widehat\Gamma})\right)$ and $\{L\}\cup\overline{W_{\widehat\Gamma}^{\rightarrow l}}\cup\left(\bigcap_{k\in\Z} \widehat f^k(l_{\widehat\Gamma})\right)$. Moreover, one knows that this frontier is connected because $\widehat W'$ is a topological disk. So it is contained in one of the two sets and there is no loss of generality by supposing that it is contained in $\{R\}\cup\overline{W_{\widehat\Gamma}^{r\rightarrow }}\cup\left(\bigcap_{k\in\Z} \widehat f^k(r_{\widehat\Gamma})\right)$. Of course the frontier is not reduced to $R$, because  $\widehat W'$ does not contain $L$ and so $r_{\widehat\Gamma}$ is not empty, we are not in situation (1). The set $\mathrm{Fr}(U_{\widehat \Gamma})\cap r_{\widehat\Gamma}$ cannot be reduced to a closed leaf $\widehat\phi$. Indeed, as explained above, we would be in situation (2) and $W$ should be disjoint from $A_{\widehat\phi}$ because it is not essential. Its closure should be disjoint from $r_{\widehat\Gamma}$ because $\widehat W$ meets $U_{\widehat \Gamma}$ (recall that $\widehat z$ belongs to $U_{\widehat \Gamma}$) and the same occurs for $\widehat W'$. We conclude that we are in situation (4), the set $\mathrm{Fr}(U_{\widehat \Gamma})\cap r_{\widehat\Gamma}$ is a non empty union of leaves homoclinic to $R$. Recall that in this situation, we have $\bigcap_{k\in\Z} \widehat f^k(r_{\widehat\Gamma})\subset X^{R\rightarrow }_{\widehat\Gamma}\cap X^{\rightarrow R}_{\widehat\Gamma}$ and so 
 the frontier of $\widehat W'$ in $\widehat{\mathrm{dom}}(I)$ is contained  in $X^{R\rightarrow }_{\widehat\Gamma}\cap X^{\rightarrow R}_{\widehat\Gamma}$.
 
 As in the proof of Lemma \ref{lemma:no-Birkhoff-cycle} we blow up the end $R$ by adding a circle $\widehat \Sigma_R$ to $\widehat {\mathrm {dom}}(I)$, we construct $\widehat f_{\mathrm {ann}}$ on the semi-open annulus $\widehat {\mathrm {dom}}(I)_{\mathrm {ann}}$ and $\widehat f_{\mathrm {ann}}$ on the annulus $\widehat {\mathrm {dom}}(I)_{\mathrm{double}}$. We remind that  $\widehat f_{\mathrm {double}}$ has a unique fixed point $\widehat z_*$ on $\widehat \Sigma_R$ and denote $\widetilde f_{\mathrm {double}}$ the lift that fixes the preimages of $\widehat z_*$. Consider a maximal isotopy $\widehat I'$ of $\widehat f^q_{\mathrm {double}}$ that is lifted to a maximal isotopy $\widetilde I'$ of $T_{\widetilde\gamma}^{-p}\circ\widetilde f^q_{\mathrm {double}}$ such that $\widehat z'\in  \mathrm{sing}(\widehat I')$ and then a foliation $\widehat{\mathcal{F}'}$ transverse to $\widehat I'$. 
The point $\widehat z_*$ belongs to the domain of $\widehat I'$. Being a fixed point, \textcolor{black}{its whole trajectory is equivalent to the natural lift of a loop $\widehat \Gamma '$, which we can assume to be simple by Proposition \ref{prop:non-wandering},} as  $\widehat f_{\mathrm double}$ has no horseshoe. The $\alpha$-limit set and the $\omega$-limit set of a point of $\widehat \Sigma_R$ being reduced to $\widehat z^*$ and $\widehat f_{\mathrm double}$ having no horseshoe, we know that the whole transverse trajectory of  any point of  $\widehat \Sigma_R$  is equivalent to the natural lift of $\widehat \Gamma'$. The same occurs for a point on $\mathrm{Fr}(\widehat W')$ because its $\alpha$-limit set and its $\omega$-limit set are contained in $\widehat \Sigma_R$. Consequently, the set $V_{\widehat\Gamma'}$ of points whose whole transverse trajectory draws $\widehat\Gamma'$ is an open set that contains $\widehat \Sigma_R$ and $\mathrm{Fr}(\widehat W')$. As $V_{\widehat \Gamma'}$ is contained in the open annulus $U_{\widehat \gamma'}$ of leaves met by $\Gamma'$ and as $\widehat W'$ is a topological disk with $\mathrm{Fr}(\widehat W') \subset U_{\widehat \gamma'}$, it follows that $\widehat W'$ itself must be contained in this annulus, a contradiction since $\widehat z'$ is a singular point. The proof is finished in this case.

Suppose now that \textcolor{black} {case (b) holds, that is,}$$\overline{W_{\widehat\Gamma}^{r\rightarrow }}\subset X^{R\rightarrow }_{\widehat\Gamma}\cap X^{\rightarrow R}_{\widehat\Gamma}, \enskip \overline {W_{\widehat\Gamma}^{l\rightarrow }}\subset X^{L\rightarrow }_{\widehat\Gamma}\cap X^{\rightarrow L}_{\widehat\Gamma}.$$  
 We will \textcolor{black}{again} prove that 
\begin{itemize}
\item[\textcolor{black}{(i)}] there exists a fixed essential component $W$ of $W_{\widehat\Gamma}^{\rightarrow r,l}$ (unique by Lemma \ref{lemma:fundamental-group}); 
\item[\textcolor{black}{(ii)}] $W$ contains $\Omega'(I)_{\widehat \Gamma}$;
\item[\textcolor{black}{(iii)}] $W$ contains $A_{\widehat\phi}$, if $\widehat\phi$ is a closed leaf in $\mathrm{Fr}(U_{\widehat \Gamma})$.
\end{itemize}
If $\widehat\phi$ is a closed leaf in $\mathrm{Fr}(U_{\widehat \Gamma})\cap r_{\widehat \Gamma}$, the assumption $\overline{W_{\widehat\Gamma}^{r\rightarrow }}\subset X^{R\rightarrow }_{\widehat\Gamma}\cap X^{\rightarrow R}_{\widehat\Gamma}$ tells us that we are in situation (2) on the right side and the assumption $\overline {W_{\widehat\Gamma}^{l\rightarrow }}\subset X^{L\rightarrow }_{\widehat\Gamma}\cap X^{\rightarrow L}_{\widehat\Gamma}$ tells us that $A_{\widehat\phi}$ does not meet $\overline {W_{\widehat\Gamma}^{l\rightarrow }}$ and so that $A_{\widehat\phi}\subset W_{\widehat\Gamma}^{\rightarrow r,l}$. So  there exists a fixed essential component of $W_{\widehat\Gamma}^{\rightarrow r, l}$  and this component contains $A_{\widehat\phi}$. Similarly, if $\widehat\phi$ is a closed leaf in $\mathrm{Fr}(U_{\widehat \Gamma})\cap l_{\widehat \Gamma}$, we are in situation (2) on the left side and there exists a fixed essential component of $W_{\widehat\Gamma}^{\rightarrow r,l}$  that contains $A_{\widehat\phi}$. \textcolor{black}{
This show that (iii) holds provided (i) holds. The arguments above showing that items (i) and (ii) hold for case (a) can be applied to case (b) with minor modifications.
As before, since we are in case (b), the assumptions on $\overline{W_{\widehat\Gamma}^{r\rightarrow }}$ and $\overline {W_{\widehat\Gamma}^{l\rightarrow}}$ imply these sets are disjoint from $\mathrm{ne}(\widehat f)$, so that $\Omega'(I)_{\widehat \Gamma}$ is contained in $W_{\widehat\Gamma}^{\rightarrow r,l}$. The proof of the invariance of any essential connected component of $W_{\widehat\Gamma}^{\rightarrow r,l}$ that meets $\Omega'(I)_{\widehat \Gamma}$ is identical.
The final step, showing that any connected component $\widehat W$ of $W_{\widehat\Gamma}^{\rightarrow r,l}$ that meets $\Omega'(I)_{\widehat \Gamma}$ must be essential is also similar, just noting that $\widehat W$ will again be $f^q$-invariant for some integer $q\ge 1$, and that if we assume by contradiction that it is not essential, then $\widehat W$ may be filled to a set $\widehat W'$ having a periodic point of $f$,
 whose boundary is either entirely contained in $\{R\}\cup\overline{W_{\widehat\Gamma}^{r\rightarrow }}\cup\left(\bigcap_{k\in\Z} \widehat f^k(r_{\widehat\Gamma})\right)$ or it is entirely contained in $\{L\}\cup\overline{W_{\widehat\Gamma}^{rl\rightarrow }}\cup\left(\bigcap_{k\in\Z} \widehat f^k(l_{\widehat\Gamma})\right)$. The finals arguments then apply in the same way. Case (c) and (d) are dealt similarly}.
\end{proof}

 \begin{remark*} Note that if $U_{\widehat\Gamma}$ is bordered by two closed leaves $\widehat\phi$ and $\widehat\phi'$, then the annulus $A_{\widehat\phi}\cup U_{\widehat\Gamma}\cup A_{\widehat\phi'}$ coincides with one the sets $W_{\widehat\Gamma}^{l\rightarrow r}$, $W_{\widehat\Gamma}^{r\rightarrow l}$, $W_{\widehat\Gamma}^{r,l\rightarrow }$ or $W_{\widehat\Gamma}^{\rightarrow r,l }$ and is equal to $\widehat W$. In case $\mathrm{Fr}(U_{\widehat \Gamma})$ contains a closed leaf $\widehat\phi$,  then $A_{\widehat\phi}$ does not separate $\widehat W\setminus A_{\widehat\phi} $.  
\end{remark*}

\bigskip

\noindent{\it Proof of Proposition \ref{prop:local-structure}}\enskip\enskip Here again we suppose that $\overline{W_{\widehat\Gamma}^{r\rightarrow }}$ is contained in $X^{R\rightarrow }_{\widehat\Gamma}\cap X^{\rightarrow R}_{\widehat\Gamma}$ and $\overline {W_{\widehat\Gamma}^{\rightarrow l}}$ contained in $X^{L\rightarrow }_{\widehat\Gamma}\cap X^{\rightarrow L}_{\widehat\Gamma}$. Write $\widehat W$ for the essential component of $W_{\widehat\Gamma}^{l\rightarrow r }$ that contains $\Omega'(I)_{\widehat \Gamma}$. One gets an invariant annulus $\widehat A$ by filling $\widehat W$, which means by adding the compact connected components of its complement. The choice of the class $[\widehat \Gamma]$ as a generator of $H_1(\widetilde{\mathrm{dom}}(I),\Z)$ permits us to give a partition of $\Omega(\widehat f)$ in $\widetilde f$-positive or $\widetilde f$-negative points. Let us prove that $\widehat A$ is a positive annulus of $\widehat I$ and $\kappa\in H_{1}(\widehat A,\Z)$ its positive generator, where $\iota_*(\kappa)=[\widehat \Gamma]$. Suppose that $\widehat z\in\Omega(\widehat f)$ is $\widetilde f$-negative. As stated in Proposition \ref{prop: positive}, the transverse loop associated with $\widehat z$ is negative. This loop cannot meet $U_{\widehat \Gamma}$, so it is contained  in $r_{\widehat\Gamma}$ or in $l_{\widehat \Gamma}$. Suppose for instance that it is contained in $r_{\widehat\Gamma}$. This is possible only if $\mathrm{Fr}(U_{\widehat \Gamma})\cap r_{\widehat\Gamma}$ is reduced to a closed leaf $\widehat\phi$, more precisely when situation (2) occurs. As explained in Proposition  \ref{prop: existence}, the annulus $A_{\widehat\phi}$ is included in $\widehat W$. So all compact components you need to add to $\widehat W$ to get $\widehat A$ are included in $R(\widehat\phi)$ and so $\widehat z$ does not belong to $\widehat A$. The fact that every point of $\widehat A\cap \Omega(\widehat f)$ is $\widetilde f$-positive, implies that $\widehat A$ is positive and $\kappa\in H_{1}(\widehat A,\Z)$ its positive generator, where $\iota_*(\kappa)=[\widehat \Gamma]$. 

Write $\widetilde W=\pi^{-1}(\widehat W)$ and $\widetilde A=\pi^{-1}(\widehat A)$. As a connected component of $\widetilde\pi^{-1}({W_{\Gamma}^{l\rightarrow r }})$ one knows that the stabilizer of $\widetilde W$  in the group of covering automorphisms is generated by $T_{\widetilde \gamma}$ and, as remarked after the definition of $W_{\widetilde\gamma}^{l\rightarrow r}$, that $T(\widetilde W)\cap \widetilde W=\emptyset$ if $T$ is not power of $T_{\widetilde\gamma}$. One deduces similar results for $\widetilde A$. This implies that $\widetilde A$ projects onto a positive annulus $A$ of $I$ that contains $\Omega'(I)_{\Gamma}$ and whose positive generator is given by the homology class of a simple loop freely homotopic to $\Gamma$ in $\mathrm{dom}(I)$. In fact $\widetilde W$ projects onto a connected component of $W_{\widehat\Gamma}^{l\rightarrow r }$ and $A$ is obtained from this component by adding the connected components of the complement that have no singular points. \hfill$\Box$

\subsection{Additional results} \label{subsection:auxilliary}

  We would like to state a result relied only on the isotopy $I$ and not on the transverse foliation $\mathcal F$. In this subsection, $f$ is an orientation preserving homeomorphism of $\S^2$ and $I$ a maximal isotopy of $f$. We keep the usual notations such as $\mathrm{dom}(I)$, $\widetilde{\mathrm{dom}}(I)$, $\widetilde \pi$, $\widetilde f$, .... \textcolor{black}{We say that a topological disk $D\subset \mathrm{dom}(I)$ is $I$-free if it holds that $\widetilde f(\widetilde D)$ is disjoint from $\widetilde D$ whenever $\widetilde D$ is a lift of $D$.} Let us begin with an useful lemma.

\begin{lemma}\label{lemma:homotopic annuli}
 If $A$ and $A'$ are open annuli in $\mathrm{dom}(I)$ invariant by $f$ such that:
 \begin{itemize}
 \item the two connected components of  $\S^2\setminus A$ contain a fixed point of $I$;
 \item the two connected components of  $\S^2\setminus A'$ contain a fixed point of $I$;
 \item $A\cap A'\cap \Omega(f)\not=\emptyset$.
 \end{itemize}
  Then $A$ and $A'$ are homotopic in $\mathrm{dom}(I)$ (meaning that an essential simple loop $\Gamma$ of $A$ and an essential loop $\Gamma'$ of $A'$ are freely homotopic in $\mathrm{dom}(I)$, up to a change of orientation).
\end{lemma}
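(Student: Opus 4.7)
The plan is to lift everything to the universal cover $\widetilde{\mathrm{dom}}(I)$ and show that, with appropriate choices, the generator $T$ of the stabilizer of the lift of $A$ equals $T'^{\pm 1}$, where $T'$ is the analogous generator for $A'$. Since an essential simple loop of $A$ represents $T$ in $\pi_1(\mathrm{dom}(I))$ (and similarly for $A'$), such an equality is precisely the free-homotopy statement sought. Concretely, I would pick a lift $\widetilde z$ of $z$, let $\widetilde A$ (resp.\ $\widetilde A'$) be the component of $\widetilde\pi^{-1}(A)$ (resp.\ $\widetilde\pi^{-1}(A')$) containing $\widetilde z$, and invoke the discussion preceding the definition of a positive annulus to see that these components are invariant by $\widetilde f$ with stabilizers generated by $T$ and $T'$. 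Non-triviality of $T,T'$ follows from the hypothesis that each complementary component of $A$ (resp.\ $A'$) contains a fixed point of $I$, so an essential loop of $A$ is non-contractible in $\mathrm{dom}(I)$.

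Next, I would choose a small connected open neighborhood $U$ of $z$ in $A\cap A'$ that lifts homeomorphically to $\widetilde U\ni\widetilde z$ inside $\widetilde A\cap\widetilde A'$. Using $z\in\Omega(f)$, pick sequences $z_k\to z$ in $U$ and $n_k\to\infty$ with $f^{n_k}(z_k)\in U$; let $\widetilde z_k,\widetilde w_k\in\widetilde U$ be the lifts of $z_k$ and $f^{n_k}(z_k)$. Since $\widetilde A,\widetilde A'$ are both invariant under $\widetilde f$, the point $\widetilde f^{n_k}(\widetilde z_k)$ lies in $\widetilde A\cap\widetilde A'$, so there exist $p_k,p_k'\in\Z$ with $\widetilde f^{n_k}(\widetilde z_k)=T^{p_k}(\widetilde w_k)=T'^{p_k'}(\widetilde w_k)$, giving $T^{p_k}=T'^{p_k'}$ by freeness of the action of the covering group.

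The main obstacle is ruling out the degenerate case $p_k=0$ for all large $k$. The key observation is that, since $I$ is maximal, $\widetilde f$ is fixed-point-free on $\widetilde{\mathrm{dom}}(I)$, so $\widetilde f|_{\widetilde A}$ is a fixed-point-free orientation-preserving homeomorphism of the topological plane $\widetilde A$ (recall $\widetilde A$ is the universal cover of the annulus $A$, hence a plane since $T\neq e$). Brouwer's plane translation theorem then guarantees that every point of $\widetilde A$ is wandering for $\widetilde f|_{\widetilde A}$. Were $p_k=0$ for all large $k$, we would have $\widetilde f^{n_k}(\widetilde z_k)=\widetilde w_k\to\widetilde z$, contradicting wanderingness of $\widetilde z$. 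Hence after extracting a subsequence $p_k\neq0$, and consequently $p_k'\neq 0$ as well.

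To finish, the relation $T^{p_k}=T'^{p_k'}$ with $p_k,p_k'\neq 0$ produces a non-trivial element of $\langle T\rangle\cap\langle T'\rangle$. Because $\pi_1(\mathrm{dom}(I))$ is a free group (its universal cover being a plane), any two non-trivial cyclic subgroups with non-trivial intersection lie in a common maximal cyclic subgroup $\langle S\rangle$. Since $T$ and $T'$ are each represented by a simple essential loop in an orientable surface, they are primitive elements (a classical fact), whence $T=S^{\pm1}=T'^{\pm1}$; equivalently, essential simple loops of $A$ and $A'$ are freely homotopic in $\mathrm{dom}(I)$ up to orientation.
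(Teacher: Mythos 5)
Your proof is correct and follows essentially the same route as the paper's: both lift to $\widetilde{\mathrm{dom}}(I)$, use the $\widetilde f$-invariance of the components $\widetilde A,\widetilde A'$ and a Brouwer-theoretic wandering argument (the paper via an $I$-free disk, you via the plane translation theorem on $\widetilde A$) to produce a nontrivial deck transformation lying in both stabilizers, and then conclude that $\langle T\rangle=\langle T'\rangle$. You are in fact slightly more careful than the paper at the last step, where the passage from a common nontrivial power to equality of the stabilizers requires the primitivity of $T$ and $T'$ that you invoke explicitly.
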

\begin{proof}Let us fix $z\in A\cap A'\cap\Omega(f)$ and a lift $\widetilde z\in \widetilde{\mathrm{dom}}(I)$. Write $\widetilde A$ and $\widetilde A'$ for the connected components of $\widetilde\pi^{-1}(A)$ and $\widetilde\pi^{-1}(A')$ respectively, that contain $\widetilde z$.  Fix a generator $T$ of the stabilizer of $\widetilde A$ in the group of covering automorphisms and a generator $T'$ of the stabilizer of $\widetilde A'$. We have seen at the beginning of this section that $\widetilde A$ and $\widetilde A'$ are invariant by $\widetilde f$. Let $D\subset A\cap A'$ be a topological $I$-free disk containing $z$. One can find an integer $n\geq 1$ and a point $z'\in D\cap f^{-n}(D)$. Write $\widetilde D$ for the lift of $D$ that contains $\widetilde z$, and $\widetilde z'$ the lift of $z'$ that lies in $\widetilde D$. There exists a non trivial covering transformation $T''$ such that $\widetilde f^n(\widetilde z)\in T''(\widetilde D)$ and so $T''(\widetilde D)\subset \widetilde A\cap\widetilde A'$. This implies that $T''$ is a non trivial power of $T$ and a non trivial power of $T'$. In other terms, the stabilizers of $\widetilde A$ and $\widetilde A'$ are equal. But this means that $A$ and $A'$ are homotopic. \end{proof}

 Let us explain now how to associate simple loops to points of $\Omega'(I)$ independently of a transverse foliation. Note first that for every point $z\in \Omega'(I)$, there exist $I$-free disks $D$ such that $f^k(z)\in D$ for infinitely many $k\in\Z$. Indeed, consider a point $z'\in\alpha(z)\cup\omega(z)$ that is not fixed by $I$. Any topological open disk containing $z'$ is $I$-free if sufficiently small, and meets the orbit of $z$ infinitely often.

\bigskip
\begin{proposition} \label{prop: free disks} \textcolor{black}{If $f$ has no topological horseshoe}, for every  $z\in\Omega'(I)$, there exists a simple loop $\Gamma \subset\mathrm{dom}(I)$, uniquely defined up to homotopy, such that: 
\begin{enumerate}
\item there exists a positive annulus $A$ containing $z$ such that an essential simple loop of $A$ defining the positive class of $A$ is homotopic to $\Gamma$;
\item every invariant annulus \textcolor{black}{$A'$ in $\mathrm{dom}(I)$} containing $z$ and such that the two connected components of  its complement contain a fixed point of $I$ is homotopic to $A$ in $\mathrm{dom}(I)$;
\item for every $I$-free disk $D$ that contains two points $f^n(z)$ and $f^m(z)$, $n<m$, and every path $\delta\subset D$ joining $f^m(z)$ to $f^n(z)$,  the loop $I^{m-n}(f^n(z))\delta$ is homotopic to a positive power of $\Gamma$.
\end{enumerate}
\end{proposition}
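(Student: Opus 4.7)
My approach is to construct $\Gamma$ via a transverse foliation and Proposition \ref{prop:local-structure}, which immediately gives item~(1); item~(2) and the uniqueness of $\Gamma$ then follow from Lemma \ref{lemma:homotopic annuli}; and most of the work is in item~(3), which I plan to settle by lifting to $\widetilde{\mathrm{dom}}(I)$ and combining a connectivity argument with Franks's lemma and the monotone crossing structure of the transverse trajectory.

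Fix a foliation $\mathcal F$ transverse to $I$. Since $z\in\Omega'(I)\subset\Omega(f)$, the remarks after Proposition \ref{prop:non-wandering} provide a unique (up to equivalence) simple transverse loop $\Gamma_0$ which $I^{\Z}_{\mathcal F}(z)$ exactly draws infinitely. Proposition \ref{prop:local-structure} applied to $\Gamma_0$ yields a positive annulus $A\ni z$ whose positive generator of $H_1(A,\Z)$ is represented by a simple loop $\Gamma\subset A$ freely homotopic to $\Gamma_0$ in $\mathrm{dom}(I)$, proving (1). For (2), Lemma \ref{lemma:homotopic annuli} applies because $z\in A\cap A'\cap\Omega(f)$ and each complement contains a fixed point of $I$, so $A$ and $A'$ are freely homotopic in $\mathrm{dom}(I)$; any essential simple loop of $A'$ is therefore freely homotopic to $\Gamma$ up to orientation, and positivity pins down the orientation since the shared stabilizer of $\widetilde A$ and $\widetilde{A'}$ has a unique generator making $\widetilde f$ act as a positive lift. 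This also gives the uniqueness of $\Gamma$ up to free homotopy.

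For (3), lift to $\widetilde{\mathrm{dom}}(I)$ and let $\widetilde A$ be the component of $\widetilde\pi^{-1}(A)$ containing a chosen lift $\widetilde z$ of $z$. Let $T$ be the generator of $\mathrm{Stab}(\widetilde A)$ for which $\widetilde f\vert_{\widetilde A}$ is a positive lift; by construction this $T$ represents $\Gamma$ positively oriented. Set $\widetilde z_k=\widetilde f^k(\widetilde z)\in\widetilde A$ and let $\widetilde D$ be the lift of $D$ containing $\widetilde z_n$, so $\widetilde z_m\in T'(\widetilde D)$ for a unique deck transformation $T'$; then $I^{m-n}(f^n(z))\delta$ lifts to a path from $\widetilde z_n$ to $T'(\widetilde z_n)$ and represents $T'$ in the deck group. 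Because the lifts of $A$ are disjoint components of $\widetilde\pi^{-1}(A)$ indexed by $\langle T\rangle$-cosets, and because the lifted trajectory $\widetilde I^{m-n}(\widetilde z_n)$ remains in $\widetilde A$ by invariance, I can track the translate of $\widetilde A$ that the return path $\widetilde\delta\subset T'(\widetilde D)$ passes through to conclude $T'\in\langle T\rangle$, say $T'=T^p$. Franks's lemma then yields $p\geq 0$: since $A$ is positive and $f^n(z)\in A\cap\Omega(f)$ is $\widetilde f$-positive, the $\widetilde f$-free lift $\widetilde D$ cannot be $\widetilde f$-negative.

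The main obstacle I foresee is strictly ruling out $p=0$, which Franks's lemma does not give. The plan is to exploit the monotone crossing structure of $\widetilde I^{\Z}_{\widetilde{\mathcal F}}(\widetilde z)$: since $I^{\Z}_{\mathcal F}(z)$ exactly draws $\Gamma_0$ infinitely, the lifted whole trajectory crosses each $T$-translate of a chosen leaf $\widetilde\phi_0\subset U_{\widetilde\gamma_0}$ exactly once and in strictly increasing $T$-order. If $p=0$, both $\widetilde z_n$ and $\widetilde z_m$ lie between the same consecutive translates $T^k(\widetilde\phi_0)$ and $T^{k+1}(\widetilde\phi_0)$, and $\widetilde I^{m-n}_{\widetilde{\mathcal F}}(\widetilde z_n)$ — being a sub-path of the infinite trajectory and also contained in the $T$-strip between these two leaves — stays in this strip; combined with the $\widetilde f$-freeness of $\widetilde D$, this allows an application of Corollary \ref{cor:free-disks} in the annular covering $\widetilde{\mathrm{dom}}(I)/\langle T\rangle$ (or, equivalently, the construction of a $\widetilde{\mathcal F}$-transverse self-intersection via Proposition \ref{pr: fundamental}) to produce a topological horseshoe for $f$, contradicting the hypothesis. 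Hence $p\geq 1$ and (3) follows.
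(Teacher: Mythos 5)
Items (1) and (2) are fine and match the paper's argument. The problem is in item (3), and it sits exactly at the step you pass over most quickly: the claim that $T'\in\langle T\rangle$. Your justification rests on two assertions that are both unavailable. First, the lifted isotopy trajectory $\widetilde I^{m-n}(\widetilde z_n)$ need not remain in $\widetilde A$: invariance of $A$ under $f$ says nothing about the intermediate points $f_t(w)$, $0<t<1$, of the isotopy. Second, and more seriously, $D$ is an \emph{arbitrary} $I$-free disk containing $f^n(z)$ and $f^m(z)$; it is not assumed to lie in $A$, so $T'(\widetilde D)$ need not meet $\widetilde\pi^{-1}(A)$ in a connected set, and the fact that $\widetilde z_m\in\widetilde A\cap T'(\widetilde D)$ does not force $T'(\widetilde z_n)\in\widetilde A$, i.e.\ does not force $T'(\widetilde A)=\widetilde A$. (Contrast this with Lemma \ref{lemma:homotopic annuli}, where the free disk is deliberately chosen inside $A\cap A'$.) The same issue undermines your use of Franks's lemma: the positive/negative dichotomy for free disks is defined inside the abstract annulus $A$ via the conjugacy $h$, so it applies to disks contained in $A$, not to $D$. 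Finally, your exclusion of $p=0$ assumes that the chord $\delta\subset D$ contributes nothing to the winding relative to the foliation $\mathcal F$; with a foliation chosen in advance, $\delta$ can cross leaves inside $D$ in an uncontrolled way, so ``both endpoints lie between the same consecutive translates of $\widetilde\phi_0$'' does not follow from $T'=\mathrm{id}$.

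The paper closes precisely this gap by changing the foliation rather than arguing with a fixed one: by a result of \cite{LeCalvezTal}, one may choose a transverse foliation $\mathcal F'$ of $I$ such that $f^n(z)$ and $f^m(z)$ lie on the same leaf of $\mathcal F'\vert_D$. Taking $\delta$ to be the corresponding leaf segment (the homotopy class of the loop is independent of $\delta$), the loop $I^{m-n}(f^n(z))\delta$ becomes homotopic to the transverse loop $I^{m-n}_{\mathcal F'}(f^n(z))\delta$, which starts and ends on the same leaf; since $I^{\Z}_{\mathcal F'}(z)$ exactly draws infinitely the simple transverse loop $\Gamma'$ associated with $z$, this loop is a positive power of $\Gamma'$. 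One then still has to check that $\Gamma'$ (built from $\mathcal F'$) is homotopic to $\Gamma$ and not to its inverse, which is done by re-running the argument of Lemma \ref{lemma:homotopic annuli} with a small disk and observing that the return automorphism $T''$ is a \emph{positive} power of both generators. If you want to keep your lifting framework, you must first supply an argument of this kind; as written, the conclusion $T'=T^p$ with $p\geq 1$ is asserted rather than proved.
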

\begin{proof} We have seen in Theorem \ref{th:local-structure} that there exists a positive annulus $A$ that contains $z$. More precisely, if $\mathcal F$ is a transverse foliation of $I$, one can choose for $\Gamma$ the transverse simple loop associated with $z$ (or any simple loop homotopic in $\mathrm{dom}(I)$). The assertion (1) is proved and the assertion (2) is an immediate consequence of Lemma \ref{lemma:homotopic annuli}. It remains to prove (3). Suppose that $D$ is a $I$-free disk that contains two points $f^n(z)$ and $f^m(z)$, $n<m$. \textcolor{black}{By Proposition 59 of \cite{LeCalvezTal},} there exists a transverse foliation ${\mathcal F}'$ of $I$ such that $f^n(z)$ and $f^m(z)$ belong to the same leaf of the restricted foliation ${\mathcal F}'\vert_D$. Let $\phi'$ be the leaf of ${\mathcal F}'$ that contains $f^n(z)$ and $f^m(z)$ and $\Gamma'$ the transverse loop associated with $z$. The homotopy class of $I^{m-n}(f^n(z))\delta$ does not depend on the choice of the path $\delta\subset D$ joining $f^m(z)$ to $f^n(z)$. Moreover, choosing for $\delta$ the segment of leaf of $\phi'\cap D$ that joins $f^m(z)$ to $f^n(z)$, and noting that $I^{m-n}(f^n(z))\delta$ is homotopic to $I^{m-n}_{{\mathcal F}'}(f^n(z))\delta$, one deduces that $I^{m-n}(f^n(z))\delta$ is homotopic to a positive power of $\Gamma'$. As seen in the proof of Theorem \ref{th:local-structure}, there exists a positive annulus $A'$ that contains $z$, such that $\Gamma'$ is homotopic in $\mathrm{dom}(I)$ to an essential simple loop of $A'$ that defines the positive class of $A'$. By $(2)$, one deduces that $\Gamma'$ is homotopic to $\Gamma$ or to its inverse. If we want to prove that $\Gamma'$ is homotopic to $\Gamma$, we have to look carefully at the proof of Lemma \ref{lemma:homotopic annuli}. If the disk $D$ is chosen very small, then the path $I^{n+2}_{\widetilde{\mathcal F}} (\widetilde f^{-1}(\widetilde z'))$ contains as a sub-path a transverse path joining the leaf $\widetilde\phi$ of $\widetilde{\mathcal F}$ that contains $\widetilde z$ to $T''(\widetilde\phi)$. Similarly, the path $I^{n+2}_{\widetilde{\mathcal F}'} (\widetilde f^{-1}(\widetilde z'))$ contains as a sub-path a transverse path joining the leaf $\widetilde\phi'$ of $\widetilde{\mathcal F}'$ that contains $\widetilde z$ to $T''(\widetilde\phi')$. We deduce that $T''$ is a positive power of $T$ and a positive power of $T'$. So $T$ and $T'$ are equal, which means that $\Gamma$ and $\Gamma'$ are homotopic in $\mathrm{dom}(I)$.\end{proof}

\bigskip

A simple loop $\Gamma\subset\mathrm{dom}(I)$ that satisfies the properties stated in Proposition \ref{prop: free disks} is said to be {\it associated with $z\in\Omega'(I)$}. 

\begin{theorem}\label{th:familyofannulilocal}
\textcolor{black}{If $f$ has no topological horseshoe, then} one can cover $\Omega'(I)$ by invariant annuli $A\subset\mathrm{dom}(I)$ satisfying the following:
a point $z\in\Omega'(I)$ belongs to $A$ if and only if one can choose as a loop  associated with $z\in\Omega'(I)$ an essential simple loop of $A$.
\end{theorem}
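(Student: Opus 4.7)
The plan is to parametrize the covering family by equivalence classes of $\Omega'(I)$ under the relation $z \sim z'$ iff the loops $\Gamma_z,\Gamma_{z'}$ associated with $z,z'$ by Proposition~\ref{prop: free disks} are freely homotopic in $\mathrm{dom}(I)$. For each equivalence class $\mathcal{C}$, I will build a single invariant annulus $A_{\mathcal{C}}\subset\mathrm{dom}(I)$ whose essential loops lie in the common homotopy class $[\Gamma_{\mathcal{C}}]$ and which contains all of $\mathcal{C}$. The family $\{A_{\mathcal{C}}\}$ will then cover $\Omega'(I)$ by Theorem~\ref{th:local-structure}.

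First I would fix a primitive covering automorphism $T$ of $\widetilde\pi\colon\widetilde{\mathrm{dom}}(I)\to\mathrm{dom}(I)$ in the conjugacy class corresponding to $[\Gamma_{\mathcal{C}}]$, and pass to the annular cover $\widehat{\mathrm{dom}}(I)_{\mathcal{C}}=\widetilde{\mathrm{dom}}(I)/\langle T\rangle$, which is topologically an open annulus. For each $z\in\mathcal{C}$, Theorem~\ref{th:local-structure} and Proposition~\ref{prop:local-structure} provide a positive annulus $A_z$ containing $z$ with positive generator representing $[\Gamma_{\mathcal{C}}]$; among the connected components of $\widetilde\pi^{-1}(A_z)$ exactly one has stabilizer $\langle T\rangle$---unique because $T$ is primitive in the free group $\pi_1(\mathrm{dom}(I))$, whence $N(\langle T\rangle)=\langle T\rangle$---and descends to an essential sub-annulus $\widehat{A_z}\subset\widehat{\mathrm{dom}}(I)_{\mathcal{C}}$. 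I would then define $\widehat{A}_{\mathcal{C}}$ as the union $\bigcup_{z\in\mathcal{C}}\widehat{A_z}$ together with every relatively compact connected component of its complement in $\widehat{\mathrm{dom}}(I)_{\mathcal{C}}$.

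Granted that $\widehat{A}_{\mathcal{C}}$ is connected (see the obstacle below), it is a $T$-invariant essential sub-annulus of $\widehat{\mathrm{dom}}(I)_{\mathcal{C}}$, and its stabilizer in $\pi_1(\mathrm{dom}(I))$ is a cyclic subgroup containing $\langle T\rangle$, hence equal to $\langle T\rangle$ by primitivity of $T$; it therefore projects homeomorphically onto an $f$-invariant topological annulus $A_{\mathcal{C}}\subset\mathrm{dom}(I)$ with essential loop class $[\Gamma_{\mathcal{C}}]$. For the ``iff'' characterization: if $z\in A_{\mathcal{C}}\cap\Omega'(I)$, then $A_{\mathcal{C}}$ is an invariant annulus containing $z$ whose two complementary disks each meet $\mathrm{fix}(I)$ (forced by non-triviality of $[\Gamma_{\mathcal{C}}]$ in $\pi_1(\mathrm{dom}(I))$), so by Proposition~\ref{prop: free disks}(2), $A_{\mathcal{C}}$ is homotopic to the associated annulus of $z$, whence $\Gamma_z$ can be realized as an essential simple loop of $A_{\mathcal{C}}$; conversely, if $\Gamma_z$ can be so chosen, then $[\Gamma_z]=[\Gamma_{\mathcal{C}}]$, so $z\in\mathcal{C}\subset A_{\mathcal{C}}$.

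The main obstacle will be establishing connectedness of $\widehat{A}_{\mathcal{C}}$, i.e.\ ruling out an unbounded $T$-invariant gap in $\widehat{\mathrm{dom}}(I)_{\mathcal{C}}$ separating two lifts $\widehat{A_z},\widehat{A_{z'}}$. Such a gap would project to an $f$-invariant region of $\mathrm{dom}(I)$ separating $A_z$ from $A_{z'}$ and would contain an essential closed leaf of any lifted transverse foliation realizing the class $[\Gamma_{\mathcal{C}}]$; this would produce two disjoint invariant annuli with the same essential loop class, and applying the ``if and only if'' test of the theorem to both would force a contradiction. I expect to exclude this by a direct argument using the construction of Proposition~\ref{prop:local-structure}: the positive annulus attached to $z$ there is large enough to absorb every essential invariant structure transverse to $\Gamma_z$, so $\widehat{A_z}$ cannot be strictly separated from $\widehat{A_{z'}}$ in $\widehat{\mathrm{dom}}(I)_{\mathcal{C}}$ whenever $z\sim z'$. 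Without connectedness, the stated characterization fails; with it, the family $\{A_{\mathcal{C}}\}$ satisfies all the required properties.
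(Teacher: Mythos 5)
Your overall strategy coincides with the paper's: group the points of $\Omega'(I)$ by the free homotopy class of the associated loop $\Gamma_z$, and for each class take a union of the positive annuli provided by Proposition~\ref{prop:local-structure}, filled in by the inessential complementary components. The ``if and only if'' characterization at the end is also handled the same way, via assertion (2) of Proposition~\ref{prop: free disks}.

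However, the step you yourself flag as the main obstacle --- connectedness of the union --- is a genuine gap, and the sketch you offer for it does not work as stated. First, part of your argument is circular: you propose to rule out a separating gap by ``applying the `if and only if' test of the theorem to both'' of two disjoint invariant annuli, i.e.\ you invoke the conclusion of the theorem inside its proof. Second, the assertion that ``the positive annulus attached to $z$ is large enough to absorb every essential invariant structure transverse to $\Gamma_z$'' is not established anywhere and is not what is true; what \emph{is} true, and what the paper actually uses, is the much more specific statement at the end of Proposition~\ref{prop: existence}: if $\widehat\phi$ is a \emph{closed leaf} in $\mathrm{Fr}(U_{\widehat\Gamma})$, then the positive annulus contains the invariant annulus $A_{\widehat\phi}=\bigsqcup_{k}R(\widehat f^{k+1}(\widehat\phi))\setminus L(\widehat f^{k}(\widehat\phi))$. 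The paper's proof exploits the transverse foliation to organize the class: when two non-equal loops $\Gamma_z,\Gamma_{z'}$ are homotopic, each $U_{\Gamma_{z'}}$ must be bordered by closed leaves (with at most two exceptions), the closed leaves homotopic to $\Gamma_z$ bound a linearly ordered family $\mathcal C$ of closed annuli, and the sets $A_C=A_\phi\cup C\cup A_{\phi'}$ overlap consecutively so that $A^*=\bigcup_{C\in\mathcal C}A_C$ is an open annulus; the at most two exceptional positive annuli are then glued on because each meets $A^*$ precisely in a set of the form $A_{\widehat\phi}$. Your proposal never identifies the closed leaves as the gluing mechanism, so the union you define is not shown to be connected, and without that the construction of $A_{\mathcal C}$ (and hence the theorem) is not obtained.
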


\begin{proof}
Let $\mathcal F$ be a foliation transverse to $I$. \textcolor{black}{If $\Gamma\subset \mathrm{dom}(I)$ is a simple loop, write $U(\Gamma)$ for the union of the leafs of $\mathcal F$ that intersect $\Gamma$, a topological annulus.} For every $z\in\Omega'(I)$, write $\Gamma_z$ for the transverse simple loop associated to $z$ and set $z\sim z'$ if $\Gamma_z$ and $\Gamma_{z'}$ are homotopic, up to the sign. If $\widehat{\mathrm{dom}}(I)$ is an annular covering space associated to $\Gamma_z$, as defined in the previous sub-section, then $z'\sim z$ if and only if $\Gamma_{z'}$ can be lifted as a simple loop of  $\widehat{\mathrm{dom}}(I)$.  In case, $z\sim z'\Leftrightarrow \Gamma_z=\Gamma_{z'}$, there is nothing to prove, one can choose for $A$ the annulus given by Proposition \ref{prop:local-structure}. We will suppose from now that this does not happen. This implies that for every $z'\sim z$, the annulus $U(\Gamma_{z'})$ has a closed leaf on its boundary. Moreover, there exist at most two different \textcolor{black}{points $z'\sim z$ such that $U(\Gamma_{z'})$ is not bordered by two closed leaves}.  Write $\mathcal C$ for the set of closed annulus bordered by two closed leaves homotopic to $\Gamma_z$, up to the sign, and eventually reduced to a single loop. For every $C\in{\mathcal C}$, define
$$A_C=A_{\phi}\cup C\cup A_{\phi'},$$ where $C$ is bordered by $\phi$ and $\phi'$. One gets an invariant open annulus homotopic to $U(\Gamma_{z})$. Now define
$$C^*=\bigcup_{C\in\mathcal C} C, \, A^*=\bigcup_{C\in\mathcal C} A_C.$$
The first set is an annulus, that can be open, semi open and bordered with a closed leaf, compact and bordered by two closed leaves or degenerate and reduced to unique closed leaf. The second set is an an invariant open annulus.

In case $C^*$ is open, it contains all sets \textcolor{black}{$U({\Gamma_{z'}})$}, $z'\sim z$, and consequently all points $z'\sim z$. We can choose $A= A^*$.

In case $C^*$ is semi open and contains all sets \textcolor{black}{$U({\Gamma_{z'}})$}, $z'\sim z$, we can choose $A=A^*$.

In case $C^*$ is semi open and does not contain all sets \textcolor{black}{$U({\Gamma_{z'}})$}, $z'\sim z$, there exists a unique \textcolor{black}{$U({\Gamma_{z_*'}})$}{, with $z_*\sim z$, such that \textcolor{black}{$U({\Gamma_{z_*'}})$}} is not contained in $C^*$ and the closed leaf $\phi$ that borders $C^*$ also borders  \textcolor{black}{$U({\Gamma_{z_*'}})$}. Using Proposition \ref{prop: existence} and the remark that follows, we know that the positive annulus $A_{\Gamma_{z_*}}$ constructed in the proof of Proposition \ref{prop:local-structure} contains $A_{\widehat\phi}$, more precisely
$A_{\Gamma_{z_*}} \cap A^*= A_{\widehat\phi}$. So we have two homotopic annuli whose intersection is a annulus homotopic to both of them, so the union is an annulus and we can choose $A=A_{\Gamma_{z_*}} \cup A^*$.

In case $C^*$ is compact, we similarly construct $A$ as the union of $A^*$ and two annuli of type $A_{\Gamma}$. \end{proof}

\subsection{Proof of Theorem \ref{thmain:global-structure}} \label{subsection: proofglobal}

Using the fact that every squeezed annulus is contained in a maximal squeezed annulus, Theorem \ref{thmain:global-structure} can be written as below:

\begin{theorem}\label{th:global-structure2}
Let $f:\S^2\to\S^2$ be an orientation preserving homeomorphism that has no topological horseshoe. Then $\Omega'(f)$ is covered by squeezed annuli.\end{theorem}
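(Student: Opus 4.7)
The plan is to promote the positive annuli produced by Theorem \ref{th:local-structure} to squeezed annuli by choosing an appropriate integer shift of the positive lift. First I would observe that for any isotopy $I$ of $f$ one has $\mathrm{fix}(I)\subseteq\mathrm{fix}(f)$, whence $\Omega'(f)\subseteq\Omega'(I)$. Fixing a maximal isotopy $I$ of $f$ and a point $z\in\Omega'(f)$, Theorem \ref{th:local-structure} yields an invariant positive annulus $A\subseteq\mathrm{dom}(I)$ of $I$ containing $z$. Choose a conjugation $h\colon A\to\A$ representing the positive generator $\kappa\in H_1(A,\Z)$, let $\check f$ be the corresponding positive lift of $h\circ f|_A\circ h^{-1}$, and let $T$ be the standard covering automorphism.

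Next I would use the rotation-number machinery of Section 5 to pin down an integer $n\geq 0$ such that $\check g=\check f\circ T^{-n}$ becomes a squeezed lift. By Theorem \ref{thmain:rotation-number}(1) the rotation number $\rho=\mathrm{rot}_{\check f}(z)$ exists, and positivity of $\check f$ forces $\rho\geq 0$. Proposition \ref{prop:recurrentzero-rotation}, applied to the fixed-point-free lift $\check f$, rules out any positively recurrent point of $\Omega(f|_A)$ having $\check f$-rotation number $0$; applied more generally to each lift $\check f\circ T^{-k}$ whenever that lift is fixed-point free, and combined with Corollary \ref{cor:periodic} to exclude two periodic orbits of distinct integer $\check f$-rotations in a common Birkhoff class, it forces the $\check f$-rotation number on the Birkhoff recurrence class of $z$ (which is constant by Theorem \ref{thmain:birkhoffcycles}) to lie in a single open interval $(n,n+1)$ with $n\in\Z_{\geq 0}$. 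The lift $\check g=\check f\circ T^{-n}$ then has rotation number in $(0,1)$ on this class.

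Finally I would assemble the squeezed annulus. Define $A_z$ as the connected component of $z$ in the open invariant set consisting of those points of $A$ whose non-wandering points in the $f|_A$-orbit closure all have $\check g$-rotation in $(0,1)$. Continuity of $\mathrm{rot}_{\check g^{\pm}}$ on $\Omega(f|_A)\cap\mathrm{ne}(f|_A)$, supplied by Theorem \ref{thmain:rotation-number}(4), makes this set open, and invariance is immediate from the invariance of rotation on orbits; the fact that $A_z$ is essential inside $A$ follows from the same essentiality argument used in the proof of Theorem \ref{th:circloids}, exploiting periodic points of $\check g$ with irreducible rotation numbers $p/q\in(0,1)$ produced by Theorem \ref{th: realization}. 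One then checks that $\check g$ is a positive lift and $\check g\circ T^{-1}$ a negative lift of $f|_{A_z}$ after identifying $A_z$ with $\A$, so that $f|_{A_z}$ is squeezed and $A_z$ is the desired squeezed annulus containing $z$.

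The main obstacle will be verifying that $A_z$ is a genuine invariant open annulus rather than a more complicated invariant open set, and that the squeezed condition propagates to every non-wandering point in $A_z$ and not only to those in the Birkhoff recurrence class of $z$. Any non-wandering point in $A_z$ whose $\check g$-rotation falls outside $(0,1)$ would, together with $z$, form a Birkhoff cycle in $\mathrm{ne}(f|_A)$ of non-constant rotation number, contradicting Corollary \ref{corollarymain:birkhoffclasses}; but making this argument uniform on $A_z$ while preserving essentiality requires careful use of Proposition \ref{propositionmain_regionofinstability} and Birkhoff-class control across the two ends of $A$. This uniform verification, ensuring we do not inadvertently lose essentiality when restricting from $A$ to $A_z$, is the delicate point of the proof.
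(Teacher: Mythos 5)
Your first step (getting a positive annulus $A$ containing $z$ from Theorem \ref{th:local-structure}) matches the paper, but after that the argument diverges and has two genuine gaps.

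First, the integer rotation number case is not handled. You invoke Proposition \ref{prop:recurrentzero-rotation} to exclude rotation number $0$, but that proposition only produces a horseshoe when the zero-rotation point is \emph{recurrent}; a point of $\Omega'(f)$ is merely non-wandering with $\alpha(z)\cup\omega(z)\not\subset\mathrm{fix}(f)$ and need not be recurrent. If $\mathrm{rot}_{\check f}(z)$ is an integer for every choice of bounding fixed points, there is no interval $(n,n+1)$ to work with and your shift $\check g=\check f\circ T^{-n}$ cannot be defined. This is precisely the situation the paper spends the second half of its proof on: it perturbs $f$ by an isotopy supported on a free disk to make $z$ periodic, applies the main result of \cite{LeCalvez3} to find a fixed point $z_0$ with nontrivial linking, and only then runs the annulus construction, using Corollary \ref{cor:free-disks} at the end to recover essentiality.

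Second, and more structurally, the squeezed condition is a free-disk condition: \emph{every} point of $\Omega(f\vert_{A_z})$ must be a positive point of $\check g$ and a negative point of $\check g\circ T^{-1}$. Rotation numbers in $(0,1)$ on $\Omega\cap\mathrm{ne}$ do not deliver this: they say nothing about non-wandering points whose forward and backward orbits escape to the ends of $A_z$ (where no rotation number is defined), and even on $\mathrm{ne}$ they do not by themselves produce the fundamental system of negative free disks required for $\check g\circ T^{-1}$. The paper obtains the negative half by a \emph{second} application of Theorem \ref{th:local-structure}: inside the first positive annulus $A_1$ it passes to the lift $\check f_1$ with $\mathrm{rot}_{\check f_1,\kappa_1}(z)<0$ and a new maximal isotopy, producing a positive annulus $A_2$ whose positive generator maps to $-\kappa_1$; this $A_2$ is the squeezed annulus. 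Your proposal has no substitute for this second application, and the set $A_z$ you define via rotation numbers of orbit closures is not shown to be open, connected, annular, or essential — you flag this as the delicate point, but it is exactly where the argument is missing rather than merely delicate.
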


\begin{proof}
Let $z$ be a point of $\Omega'(f)$. Suppose first that $z$ belongs to an invariant open annulus $A_0$ such that:
\begin{itemize}
\item the two connected components of $\S^2\setminus A_0$ are fixed by $f$;
\item if $\kappa_0$ is a generator of $H_1(A_0,\Z)$, then  $\mathrm{rot}_{ f\vert_{A_0},\kappa_0}(z)\not=0+\Z$.
\end{itemize}
We write $f_0=f\vert_{A_0}$ and denote $\check f_0 $ the lift of $f_0$ to the universal covering space $\check A_0$,  such that $\mathrm{rot}_{\check f_0,\kappa_0}(z)\in(0,1)$. Let  $I_0$ be a maximal  isotopy of $f_0$ that is lifted to an identity isotopy of $\check f_0$. By Theorem  \ref{th:local-structure},  there exists a positive annulus $A_1$ of $I_0$ that contains $z$. The fact that $\mathrm{rot}_{\check f_0,\kappa_0}(z)>0$ implies that $A_1$ is essential in $A_0$ and that the positive generator $\kappa_1$ of $H_1(A_1,\Z)$ is sent onto $\kappa_0$ by the morphism $i_*: H_1(A_1,\Z)\to H_1(A_0,\Z)$. Let $\check f_1$ be a lift of $f_1=f\vert_{A_1}$ to the universal covering space of $A_1$ such that $\mathrm{rot}_{\check f_1,\kappa_1} (z)=\mathrm{rot}_{\check f_0, \kappa_0}(z)-1$ and $I_1$ be a maximal isotopy of $f_1$ lifted to an identity isotopy of $\check f_1$. By Theorem  \ref{th:local-structure}, there exists a positive annulus $A_2$ of $I_1$ containing $z$. The fact that $\mathrm{rot}_{\check f_1,\kappa_1} (z)<0$ implies that $A_2$ is essential in $A_1$ and that the positive generator $\kappa_2$ of $H_1(A_2,\Z)$ is sent onto $-\kappa_1$ by the morphism $i_*: H_1(A_2,\Z)\to H_1(A_1,\Z)$. The annulus $A_2$ is a squeezed annulus containing $z$.

\medskip

Suppose now that such an annulus $A_0$ does not exist. We will slightly change the proof. By assumption,  $\alpha(z)\cup\omega(z)$ is not included in $\mathrm{fix}(f)$. Choose a point $z'$ of $\alpha(z)\cup\omega(z)$ that is not fixed. Any topological open disk containing $z'$ is free if sufficiently small, and meets the orbit of $z$ infinitely often. So, there exists a free open disk $D$ containing $z$ and $f^q(z)$, where $q\geq 2$. One can find a simple path $\delta$ in $D$ that joins $f^q(z)$ to $z$. There exists an identity isotopy $I_*=(h_t)_{t\in[0,1]}$ supported on $D$, such that $h=h_1$ sends  $f^{q}(z)$ on $z$. Moreover, one can suppose that the trajectory  $t\mapsto h_t(z)$ is the subpath of $\delta$ that joins $f^q(z)$ to $z$. The point $z$ is a periodic point of $h\circ f$, of period $q$. Moreover $f$ and $h\circ f$ have the same fixed points because $D$ is free. Fix $z_*\in\mathrm{fix}(f)$. The main result of \cite{LeCalvez3} tells us that $f$ has a fixed point $z_0\not=z_*$ such that $\mathrm{rot}_{h\circ f\vert_{A_0}, \kappa_0}(z)\not =0+\Z$, where $A_{0}=\S^2\setminus\{z_*,z_0\}$ and $\kappa_0$ is a generator of $H_1(A_0,\Z)$. Write $f_0=f\vert_{A_0}$, $h_0=h\vert _{A_0}$, denote $\check h_0$ the lift of $h_0$ to the universal covering space $\check A_0$ of $A_0$ naturally defined by $I_*$ and $\check f_0$ the lift of $f_0$ to $\check A_0$  such that $\mathrm{rot}_{\check f_0, \kappa_0}(z)=0$, which exists since by assumption $A_0$ does not satisfy the properties of the previous case. Replacing $\kappa_0$ with $-\kappa_0$ if necessary, one can suppose that $\mathrm{rot}_{\check h_0\circ\check f_0, \kappa_0}(z)>0$. Let  $I_0$ be a maximal isotopy of $f_0$ lifted to an identity isotopy of $\check f_0$. As explained in \textcolor{black}{in the proof of Proposition 59 of \cite{LeCalvezTal}}, there exists a foliation $\mathcal F_0$ transverse to $I_0$ such that $\delta$ is included in a leaf of $\mathcal F_0$. 

\textcolor{black}{By Proposition \ref{prop:non-wandering} and the remark following it, one has that $I_0{}_{{\mathcal F}_0}^{\Z}(z)$ exactly draws infinitely a transverse simple loop $\Gamma_1$, and therefore $z$ belongs to $\Omega'(I_0)_{\Gamma_1}$. Proposition \ref{prop:local-structure} tells us that:} 
\begin{itemize}
\item $z$ belongs to a positive annulus $A_1$;
\item the image of the positive class $\kappa_1$ of $A_1$  is sent onto the class of $\Gamma_1$ by the morphism $i_*: H_1(A_1,\Z)\to H_1(\mathrm{dom}(I),\Z)$. 
\end{itemize}

The homology class in $A_0$ of the loop naturally defined by $ I_0{}_{\mathcal F_0}^q(z)\delta$ is a positive multiple of $\kappa_0$ because  $\mathrm{rot}_{\check h_0\circ\check f_0, \kappa_0}(z)>0$ but it is a positive multiple of  the class of $\Gamma_1$. So $A_1$ is essential in $A_0$ and the positive generator $\kappa_1$ of $H_1(A_1,\Z)$ is sent onto $\kappa_0$ by the morphism $i_*: H_1(A_1,\Z)\to H_1(A_0,\Z)$. Set $f_1=f\vert_{A_1}$. The rotation number of $z$ for the lift of $f_1$ to the universal covering space of $A_1$ naturally defined by $I_0$ (as explained at the beginning of this section) is equal to zero because $\mathrm{rot}_{\check f_0, \kappa_0}(z)=0$. Let $\check f_1$ be the lift of $f_1$  such that $\mathrm{rot}_{\check f_1,\kappa_1} (z)=-1$ and $I_1$ be a maximal isotopy of $f_1$ that is lifted to an identity isotopy of $\check f_1$. By Theorem  \ref{th:local-structure},  there exists a positive annulus $A_2$ of $I_1$ that contains $z$. To ensure that $A_2$ is a squeezed annulus, one must prove that the positive generator $\kappa_2$ of $H_1(A_2,\Z)$ is sent onto $-\kappa_1$ by the morphism $i_*: H_1(A_2,\Z)\to H_1(A_1,\Z)$. It cannot be sent on $\kappa_1$ because $\mathrm{rot}_{\check f_1,\kappa_1} (z)<0$ so it remains to prove that $A_2$ is essential in $A_1$. If it is not essential, the compact connected component $K'$ of $A_1\setminus A_2$ contains a fixed point $z''$ of $I_1$ and we have $\mathrm{rot}_{\check f_1,\kappa_1} (z'')=0$. Let $\check{f}_0'$ be the lift of $f_0$ such that $\mathrm{rot}_{\check{f}_0',\kappa_0} (z)=-1$, which implies that $\mathrm{rot}_{\check{f}_0',\kappa_0}  (z'')=0$. The set $A_2\cup K'$ being an invariant open disk in $A_1\subset A_0$ containing $z$ and $z''$, one deduces that the pre-images of $A_2\cup K'$ by the projection from $\check{A}_0$ to $A_0$ are all invariant by $\check{f}_0'$, and as $z$ is nonwandering, any point $\check{z}$ projecting onto $z$ is also nonwandering for $\check{f_0}'$.  Since  $\mathrm{rot}_{\check{f}_0',\kappa_0} (z)=-1$, one can find a disk $\check D$ that is free for $\check{f}_0'$ and that contains $\check z$, and positive integers $n_0,m_0$ such that $(\check{f}_0')^{n_0}(\check z)\in T^{-m_0}(\check D)$. Since $\check z$ is nonwandering, there exists some point $\check z_1$ sufficiently close to $\check z$ and a positive integer $n_1$ such that both $\check z_1, (\check{f}_0')^{n_0+n_1}(\check z_1)$ belong to $\check D$ and such that $(\check{f}_0')^{n_0}(\check z_1)\in T^{-m_0}(\check D)$. We have got a contradiction with Corollary \ref{cor:free-disks}.\end{proof}

\bigskip
The following result can be easily deduced from the proof above.

\begin{proposition} 
\label{prop:linking-recurrent} Let $f:\S^2\to\S^2$ be an orientation preserving homeomorphism that has no topological horseshoe. Then for every recurrent point $z\not\in\mathrm{fix}(f)$ and every fixed point $z_*$, there exists a fixed point $z_0\not=z_*$ such that $\mathrm{rot}_{f\vert_{\S^2\setminus\{z_*,z_0\}},\kappa}(z)\not=0+\Z$, if $\kappa$ is a generator of $H_1(\S^2\setminus\{z_*,z_0\},\Z)$.\end{proposition}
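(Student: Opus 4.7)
The plan is to exhibit $z_0$ as a fixed point inside one of the two complementary disks of a squeezed annulus around $z$. Since $z$ is recurrent and not fixed, one has $z\in\omega(z)\not\subset\mathrm{fix}(f)$, so $z\in\Omega'(f)$ and Theorem \ref{th:global-structure2} provides a squeezed annulus $A$ with $z\in A$. By definition, $f|_A$ is fixed-point-free and isotopic to the identity, so $f$ preserves each of the two components of $\S^2\setminus A$. Since $z_*\in\mathrm{fix}(f)$ cannot lie in $A$, it sits in one component, which I call $K_*$; let $K_0$ denote the other.

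Next I would find the required $z_0\in K_0\cap\mathrm{fix}(f)$. The set $K_0$ is a compact connected $f$-invariant subset of $\S^2$ whose complement $A\cup K_*$ is connected, so $K_0$ is a cellular invariant continuum of the orientation preserving sphere homeomorphism $f$, and the Cartwright--Littlewood fixed point theorem (applied after removing $z_*$ from $\S^2$ to view $f$ as a plane homeomorphism) yields $z_0\in K_0\cap\mathrm{fix}(f)$. Clearly $z_0\neq z_*$. Setting $A_0=\S^2\setminus\{z_*,z_0\}$, the annulus $A\subset A_0$ separates $z_*$ from $z_0$, hence the inclusion is essential, sending the positive class $\kappa_A$ of $H_1(A,\Z)$ to $\pm\kappa$ for the chosen generator $\kappa$ of $H_1(A_0,\Z)$.

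It will then remain to show that $\mathrm{rot}_{f|_{A_0},\kappa}(z)\neq 0+\Z$. By squeezedness, there is a lift $\check f_A$ of $f|_A$ which is positive and such that $\check f_A\circ T^{-1}$ is negative; by Theorem \ref{th:rotation-number} the rotation $\rho=\mathrm{rot}_{\check f_A,\kappa_A}(z)\in\R$ is well defined, with $\rho\ge 0$ by positivity. Since $\check f_A$ is fixed-point-free and $z$ is positively recurrent in $\mathrm{ne}^+(f|_A)$, the second assertion of Proposition \ref{prop:recurrentzero-rotation} forbids $\rho=0$ under the no-horseshoe hypothesis; applying the same proposition to the fixed-point-free lift $\check f_A\circ T^{-1}$ of $f|_A$ forbids $\rho=1$. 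Hence $\rho\in(0,1)$, and by essentiality of the inclusion $\mathrm{rot}_{f|_{A_0},\kappa}(z)=\pm\rho+\Z\neq 0+\Z$, yielding the desired fixed point.

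The delicate ingredient is the use of Cartwright--Littlewood to place a fixed point inside $K_0$; everything else is a routine transfer of rotation numbers through essential annular inclusions, together with the already established Theorem \ref{th:rotation-number} and the second conclusion of Proposition \ref{prop:recurrentzero-rotation}.
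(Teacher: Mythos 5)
Your proof is correct, and it takes a genuinely different (and arguably more self-contained) route than the paper's. The paper argues by contradiction: assuming the conclusion fails, it re-runs the second half of the proof of Theorem \ref{th:global-structure2} --- the perturbation $h$ supported on an $f$-free disk together with the linking theorem of \cite{LeCalvez3} --- to manufacture a fixed point $z_0$, a maximal isotopy $I_0$ of $f\vert_{\S^2\setminus\{z_*,z_0\}}$ and a positive annulus $A_1$ in which $z$ has rotation number $0$ for the fixed point free lift defined by $I_0$, and then invokes Proposition \ref{prop:recurrentzero-rotation}(ii). You instead use only the \emph{statement} of Theorem \ref{th:global-structure2}: the squeezed annulus $A\ni z$, the fixed point $z_0$ supplied by Cartwright--Littlewood in the complementary continuum $K_0$ (note you could equally well use the paper's own Brouwer-theoretic argument: $\S^2\setminus K_*$ is an invariant open disk containing the invariant compact set $K_0$, hence carries a fixed point, which must lie in $K_0$ since $A$ is fixed point free --- this keeps the proof inside the paper's toolbox), and two applications of Proposition \ref{prop:recurrentzero-rotation}(ii), one to each distinguished lift of the squeezed annulus, to pin $\rho\in(0,1)$. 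The only step you state without justification is the transfer $\mathrm{rot}_{f\vert_{A_0},\kappa}(z)=\pm\rho+\Z$ through the essential inclusion $A\subset A_0$; this is legitimate (the returns of the recurrent orbit to a small compact neighbourhood of $z$ inside $A$ compute both rotation numbers, and the winding numbers coincide because $\iota_*$ sends generator to generator) and is exactly the principle the paper uses in Lemma \ref{lemma:differentclasses} and in Proposition \ref{pr:dissipativediffeomorfisms}. What each approach buys: yours avoids unwinding the proof of the structure theorem and the appeal to \cite{LeCalvez3} at this stage, at the cost of an extra classical fixed point argument; the paper's is shorter on the page because the relevant construction has literally just been carried out in the preceding proof.
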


\begin{proof}Indeed, if the conclusion fails, the second part of the proof of Theorem \ref{th:global-structure2} tells us that there exists a fixed point $z_0\not=z_*$, a maximal isotopy $I_0$ of $f\vert_{\S^2\setminus\{z_*,z_0\}}$ and a positive annulus $A_1$ of $I_0$ such that $\mathrm{rot}_{\check f_1,\kappa_1} (z)=0$, where $\check f_1$ is the lift of $f_1\vert_{A_1}$ to the universal covering space of $A_1$ defined by $I_0$ (as explained at the beginning of the section) and $\kappa_1$ the positive generator of $H_1(A_1,\Z)$. We have seen in Proposition \ref{prop:zero-rotation} that it implies that $f$ has a topological horseshoe.\end{proof}

\section{Birkhoff Recurrence classes and recurrent points of homeomorphisms of $\S^2$ with no topological horseshoe}

In this section we examine the possible dynamical behaviour of Birkhoff recurrence classes and transitive sets for homeomorphisms and diffeomorphisms of $\S^2$ with no topological horseshoe.

\subsection{Birkhoff recurrence classes}

We start by restating and proving Proposition \ref{propmain:birkoffclasses3fixedpoints}.

\begin{proposition}\label{prop_birkoffclasses3fixedpoints}
Suppose that $f$ is an orientation preserving homeomorphism of $\S^2$ with no topological horseshoe. \textcolor{black}{Assume $f$ has at least three distinct recurrent points.} If $\mathcal{B}$ is a Birkhoff recurrence class containing two fixed points $z_0$ and $z_1$, then 
\begin{itemize}
\item[(1)] either there exists $q\geq1$ such that \textcolor{black}{every recurrent point in $\mathcal B$ is a periodic point of period $q$ and} $\mathrm{fix}(f^q)\cap\mathcal{B}$ is an unlinked set of $f^q$;
\item[(2)] or $f$ is an irrational pseudo-rotation.
\end{itemize}
\end{proposition}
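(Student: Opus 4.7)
The plan is to analyze the dynamics on the open annulus $A = \S^2 \setminus \{z_0, z_1\}$ via a lift $\check f$ of $f\vert_A$ to the universal cover $\check A$, relative to a fixed generator $\kappa$ of $H_1(A,\Z)$. First I would invoke Proposition \ref{prop:PB} in the form specialized to this setting (with $z_0, z_1$ playing the role of $N, S$): since $f$ has no horseshoe and $z_0, z_1$ lie in the same Birkhoff recurrence class, $\check f$ admits a \emph{unique} rotation number $\rho$. In particular, every periodic point of $f$ in $A$, which necessarily lies in $\mathrm{ne}(f\vert_A)$, has rotation number $\rho$ under $\check f$.

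I then split on the arithmetic type of $\rho$. If $\rho$ is irrational, no periodic point of $f$ can lie in $A$, so $z_0$ and $z_1$ are the only periodic points of $f$ and both are fixed. Provided $\mathrm{ne}(f\vert_A)\neq\emptyset$, the three defining conditions of an irrational pseudo-rotation are immediately satisfied. The degenerate case $\mathrm{ne}(f\vert_A) = \emptyset$ forces $\mathcal B$ to reduce essentially to $\{z_0,z_1\}$, and the first alternative of the conclusion holds vacuously with $q=1$.

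If $\rho = p/q$ in lowest terms, every periodic point of $f$ in $A$ has rotation $p/q$ for $\check f$ and thus minimal period exactly $q$, giving the first assertion of alternative (i). To show every recurrent point of $\mathcal{B}$ is periodic, I argue by contradiction: suppose $z \in \mathcal{B}$ is recurrent and not periodic, and pick a maximal isotopy $I'$ of $f^q$ lifting to an identity isotopy of $\check f^q\circ T^{-p}$, together with a transverse foliation $\mathcal{F}'$ whose singular set consists of the period-$q$ periodic points of $f$ with rotation $p/q$. Then $z \in \mathrm{dom}(I')$ is nonwandering for $f^q$, and by Proposition \ref{prop:non-wandering} either $I'{}^{\Z}_{\mathcal{F}'}(z)$ never meets a leaf twice (impossible since $z \in \omega_{f^q}(z)$ would force $z \in \mathrm{fix}(I')$) or it exactly draws a unique transverse simple loop $\Gamma'$. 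Since $\mathrm{rot}_{\check f^q\circ T^{-p}}(z) = 0$, the loop $\Gamma'$ is non-essential in $A$; applying Brouwer theory to the disk it bounds produces a periodic point of $\check f^q\circ T^{-p}$ whose rotation number for $\check f$ differs from $\rho$, contradicting the uniqueness established in Step 1 via Proposition \ref{prop:PB}.

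Finally, the unlinkedness of $\mathrm{fix}(f^q) \cap \mathcal{B}$ for $f^q$ follows from the maximality of $I'$: by construction, $\mathrm{fix}(I')$ lifts to $\mathrm{fix}(\check f^q \circ T^{-p})$ in $\check A$, so it contains every periodic point of $f$ in $A$ with rotation $p/q$, together with $z_0$ and $z_1$. Hence $\mathrm{fix}(f^q)\cap\mathcal{B} \subset \mathrm{fix}(I')$, and $I'$ restricted to $\S^2 \setminus (\mathrm{fix}(f^q)\cap\mathcal{B})$ provides the required isotopy from the identity to $f^q$ on this complement. The main obstacle will be the contradiction step above: rigorously excluding non-periodic recurrent points from $\mathcal{B}$ in the rational case, which requires a delicate combination of Proposition \ref{prop:non-wandering}, Theorem \ref{th: horseshoe}, and the rotation-number uniqueness from Step 1 to manufacture a horseshoe in the non-periodic scenario.
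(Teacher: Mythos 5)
Your first half matches the paper: Proposition \ref{prop:PB} gives a unique rotation number $\rho$ for $\check f$, the irrational case yields a pseudo-rotation, and the rational case $\rho=p/q$ forces every periodic point in $A$ to have period $q$. The two remaining claims are where your argument breaks down.

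First, the step ``every recurrent point of $\mathcal B$ is periodic.'' Your contradiction never uses the hypothesis $z\in\mathcal B$ beyond nonwandering, and it cannot succeed without it: the statement is false for recurrent points outside $\mathcal B$. (Take the time-one map of a flow on $\S^2$ with a heteroclinic cycle joining $z_0$ and $z_1$ --- which puts them in one Birkhoff class with $\rho=0$ --- and, elsewhere, a center surrounded by closed orbits of irrational period; those points are recurrent, non-periodic, nonwandering, and their transverse trajectories draw an inessential loop exactly as in your setup, with no contradiction.) Moreover the contradiction you announce is spurious: a fixed point of $\check f^q\circ T^{-p}$ produced inside the disk bounded by $\Gamma'$ has $\check f$-rotation number exactly $p/q=\rho$, not a different one. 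The paper's route is different and genuinely needs $z\in\mathcal B$: Proposition \ref{prop:linking-recurrent} (which rests on \cite{LeCalvez3}) supplies fixed points $z_2,z_3$ of $f^q$ with $\mathrm{rot}_{f^q\vert_{\S^2\setminus\{z_2,z_3\}}}(z)\neq 0+\Z$, while one of $z_0,z_1$ lies in that annulus with rotation number $0+\Z$; since $z$, $z_0$ and $z_1$ share a Birkhoff class of $f^q$, Theorem \ref{th:birkhoffcycles} is violated.

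Second, unlinkedness. You assert that by maximality $\mathrm{fix}(I')$ contains every period-$q$ point of rotation number $p/q$ together with $z_0,z_1$. Maximality gives no such thing: it only says that a fixed point of $f^q$ \emph{not} in $\mathrm{fix}(I')$ has a non-contractible trajectory in $\mathrm{dom}(I')$; whether all the relevant fixed points can be made singular for a \emph{single} isotopy is exactly the unlinkedness statement you are supposed to prove, so your argument is circular. The paper proves it by introducing the quantities $\mathrm{rot}_{I,z'_0,z'_1}(z'_2)$ for triples of fixed points, showing they all vanish on $\mathcal B$ via Proposition \ref{prop: rotation numbers} and Theorem \ref{th:birkhoffcycles}, and then, assuming a linked fixed point $z'_2\in\mathrm{dom}(I)\cap\mathcal B$ exists, extracting from the dual function of its transverse loop a pair $z'_0,z'_1\in\mathrm{fix}(I)$ with $\mathrm{rot}_{I,z'_0,z'_1}(z'_2)=k_+-k_->0$, a contradiction. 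Neither of these two arguments is recoverable from your sketch as written.
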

\begin{proof}
\textcolor{black}{Since $f$ has at least three distinct recurrent points, note that $\mathrm{ne}(f\vert_{\S^2\setminus\{z_0,z_1\}})$ is not empty.} For a given generator of $H_1(\S^2\setminus\{z_0,z_1\},\Z)$ and a given lift of $f\vert_{\S^2\setminus\{z_0,z_1\}}$, there exists a unique rotation number by Proposition \ref{prop:PB}. If this number is irrational, $f$ is an irrational pseudo-rotation. Suppose that it is rational and can be written $p/q$ in an irreducible way. \textcolor{black}{Then all periodic points have rotation number $p/q$  and their period is a multiple of $q$.} If there is no recurrent point \textcolor{black}{in $\mathcal{B}$} but $z_0$ and $z_1$, item (1) is true. Suppose now that there exists \textcolor{black}{a recurrent point $z\in\mathcal{B}$}. By Proposition \ref{prop:recurrentzero-rotation}, one knows that the annulus $\S^2\setminus\{z_0,z_1\}$ contains fixed points of $f^q$. Let $z_2$ be such a point. The class $\mathcal B$, containing fixed points, is \textcolor{black}{also a} Birkhoff recurrence class of $f^q$. Let us prove by contradiction, that  $z$ is a fixed point of $f^q$.  By Proposition \ref{prop:linking-recurrent}, there exists a fixed point $z_3$ such that $\mathrm{rot}_{f\vert_{\S^2\setminus\{z_2,z_3\}},\kappa}(z)\not=0+\Z$, if $\kappa$ is a generator of $H_1(\S^2\setminus\{z_2,z_3\},\Z)$. Of course either $z_3\not=z_0$ or $z_3\not=z_1$.The assumption $z_3\not=z_0$ contradicts Theorem \ref{th:birkhoffcycles} because we would have 
$$\mathrm{rot}_{f^q\vert_{\S^2\setminus\{z_2,z_3\}},\kappa}(z)\not=0+\Z, \enskip\mathrm{rot}_{f^q\vert_{\S^2\setminus\{z_2,z_3\}},\kappa}(z_0)=0+\Z.$$
The assumption $z_3\not=z_1$ contradicts Theorem \ref{th:birkhoffcycles}, because we would have
 $$\mathrm{rot}_{f^q\vert_{\S^2\setminus\{z_2,z_3\}},\kappa}(z)\not=0+\Z, \enskip\mathrm{rot}_{f^q\vert_{\S^2\setminus\{z_2,z_3\}},\kappa}(z_1)=0+\Z.$$  

\textcolor{black}{We already know that if $f$ is not an irrational pseudo-rotation, then every recurrent point in $\mathcal{B}$ is fixed, so all that remains to be proved is that $\mathrm{fix}(f^q)\cap\mathcal{B}$ is an unlinked set of $f^q$. Consider then} a maximal isotopy $I$ of $f^q$ that fixes $z_0$, $z_1$ and $z_2$. If $z'_0$ and $z'_1$ are two different fixed points of $I$ and $z'_2$ is a third fixed point of $f^q$, denote $\mathrm{rot}_{I, z'_0,z'_1}(z'_2)$ the rotation number of $z'_2$ defined for the lift of $f^q\vert_{\S^2\setminus\{z'_0,z'_1\}}$ to the universal covering space that fixes the lifts of the remaining points of $\mathrm{fix}(I)$, a generator of $H_1(\S^2\setminus\{z'_0,z'_1\})$ being given by the oriented boundary of a small disk containing $z'_0$ in its interior.  In particular, one has $\mathrm{rot}_{I, z'_0,z'_1}(z'_2)=0$ if $z'_2\in \mathrm{fix}(I)$. By Proposition \ref{prop:PB}, one knows that $\mathrm{rot}_{I, z_0,z_1}(z'_2)=0$ for every fixed point $z'_2\in \S^2\setminus\{z_0,z_1\}$ of $f^q$  because $\mathrm{rot}_{I, z_0,z_1}(z_2)=0$. \textcolor{black}{We claim that, if exactly one of the points $z'_0$, $z'_1$ is equal to $z_0$ or $z_1$, then $\mathrm{rot}_{I, z'_0,z'_1}(z'_2)=0$ for every  $z'_2\in \left(\mathcal B\cap \mathrm{fix}(f^q)\right)\setminus\{z'_0,z'_1\}$. Indeed if, for instance,  one of the points $z'_0$, $z'_1$ is equal to $z_0$ and $z_1\notin\{z'_0, z'_1\}$, then  $\mathrm{rot}_{I, z'_0,z'_1}(z_1)=0$, and the claim follows from Theorem \ref{th:birkhoffcycles}.}

\textcolor{black}{Likewise, if neither $z_0$ nor $z_1$ belongs to $\{z'_0, z'_1\}$, then again $\mathrm{rot}_{I, z'_0,z'_1}(z'_2)=0$ for every  $z'_2\in \left(\mathcal B\cap \mathrm{fix}(f^q)\right)\setminus\{z'_0,z'_1\}$. Indeed, if $z'_2\not=z_0$, then  by Proposition \ref{prop: rotation numbers}},
$$\mathrm{rot}_{I, z'_0,z'_1}(z'_2)= \mathrm{rot}_{I, z'_0,z_0}(z'_2)-\mathrm{rot}_{I, z'_1,z_0}(z'_2)=0$$ and a similar argument, \textcolor{black}{using $z_1$ instead of $z_0$, can be done if $z'_2=z_0$}.

Assume now that $\mathcal{B}\cap\mathrm{fix}(f^q)$ is not unlinked. Then, one can find a fixed point $z'_2\in{\mathcal B}$ of $f^q$ which belongs to the domain of $I$. Let $\mathcal F$ be a foliation transverse to $I$. The closed curve $I_{\mathcal F}(z'_2)$ defines naturally a loop $\Gamma$. Let $\delta_{\Gamma}$ be a dual function. As seen before, one can find a point $z'_0\in\mathrm{fix}(I)$ where the minimal value $k_-$ of $\delta_{\Gamma}$ is reached and a point $z'_1\in\mathrm{fix}(I)$ where its maximal value $k_+$ is reached. Note now that
 $$\mathrm{rot}_{I, z'_0,z'_1}(z'_2)=k_+-k_->0.$$We have found a contradiction. 
\end{proof}

As a consequence, we obtain Corollary \ref{crmain:periodsofbirkhoffclasses}  from the introduction, which we restate here:

\begin{corollary}\label{cr:onlytwoperiods} Let $f$ be an orientation preserving homeomorphism of $\S^2$ with  no topological horseshoe. Let $\mathcal{B}$ be a Birkhoff recurrence class containing periodic points of different periods, then:
\begin{itemize}
\item there exist integers $q_1$ and $q_2$, with $q_1$ dividing $q_2$, such that every periodic point in $\mathcal{B}$ has a period either equal to $q_1$ or to $q_2$
\item if $q_1\geq 2$, there exists a unique periodic orbit of period $q_1$ in $\mathcal B$;
\item if $q_1=1$, there exist at most two fixed points in $\mathcal B$.
\end{itemize}
\end{corollary}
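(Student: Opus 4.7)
The plan is to combine Corollary \ref{cr:divideperiods} with iterated applications of Proposition \ref{prop_birkoffclasses3fixedpoints} to suitable powers of $f$, relying on the fact that the absence of a topological horseshoe for $f$ is inherited by every $f^{n}$. First I would invoke Corollary \ref{cr:divideperiods} on $\mathcal{B}$ to conclude that its set of periods is totally ordered by divisibility: any two periods $p<p'$ must satisfy $p\mid p'$, lest a horseshoe appear. Let $q_{1}$ be the smallest such period, realized by an orbit $O_{1}\ni z_{1}$, and let $q_{2}>q_{1}$ be any other occurring period (which exists by hypothesis), realized by an orbit $O_{2}\ni z_{2}$, with $q_{1}\mid q_{2}$.

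To rule out a third period $q_{3}$ --- necessarily of the form $mq_{2}$ with $m\geq 2$, realized by an orbit $O_{3}$ disjoint from $O_{1}\cup O_{2}$ --- I would apply assertion (4) of Proposition \ref{prop: birkhoff connexions} with exponent $q_{2}$ to find $0\leq j<q_{2}$ for which $z_{1}$ and $z_{2}^{*}:=f^{j}(z_{2})$ lie in a single Birkhoff recurrence class $\mathcal{B}^{*}$ of $f^{q_{2}}$; both points are fixed by $f^{q_{2}}$ (since $q_{1}\mid q_{2}$) and distinct (the orbits $O_{1},O_{2}$ being disjoint). Proposition \ref{prop_birkoffclasses3fixedpoints} applied to $f^{q_{2}}$ and $\mathcal{B}^{*}$ then offers two alternatives. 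The irrational pseudo-rotation case is excluded because $O_{1}\cup O_{2}\subset\mathrm{fix}(f^{q_{2}})$ yields $|\mathrm{fix}(f^{q_{2}})|\geq q_{1}+q_{2}\geq 3$, whereas an irrational pseudo-rotation has only two periodic points. In the remaining case there is some $q\geq 1$ such that every periodic point of $f^{q_{2}}$ outside $\{z_{1},z_{2}^{*}\}$ has $f^{q_{2}}$-period equal to $q$. Taking any element of $O_{1}\setminus\{z_{1}\}$ if $q_{1}\geq 2$, or any element of $O_{2}\setminus\{z_{2}^{*}\}$ if $q_{1}=1$ (so that $q_{2}\geq 2$), produces an $f^{q_{2}}$-fixed point distinct from $z_{1},z_{2}^{*}$, forcing $q=1$. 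Consequently every $f$-periodic point has $f$-period dividing $q_{2}$, contradicting the presence of the period $q_{3}=mq_{2}>q_{2}$ orbit $O_{3}$.

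The second item is proved by the same device applied to $f^{q_{1}}$. Assuming $q_{1}\geq 2$ and two distinct $f$-orbits $O_{1},O_{1}'$ of period $q_{1}$ in $\mathcal{B}$, Proposition \ref{prop: birkhoff connexions}(4) yields $z_{1}\in O_{1}$ and $z_{1}^{*}:=f^{j}(z_{1}')\in O_{1}'$ lying in a common Birkhoff recurrence class of $f^{q_{1}}$, both fixed by $f^{q_{1}}$ and distinct. Proposition \ref{prop_birkoffclasses3fixedpoints} applied to $f^{q_{1}}$ again excludes the pseudo-rotation alternative, since $O_{1}\cup O_{1}'$ alone contributes $2q_{1}\geq 4$ fixed points of $f^{q_{1}}$, and produces some $q\geq 1$ controlling the remaining $f^{q_{1}}$-periods; any point of $O_{1}\setminus\{z_{1}\}$ is $f^{q_{1}}$-fixed and distinct from $z_{1},z_{1}^{*}$, so $q=1$ and every $f$-periodic point has $f$-period dividing $q_{1}$, contradicting the existence of $O_{2}$ with period $q_{2}>q_{1}$. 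For the third item, under $q_{1}=1$ and three distinct fixed points $z_{1},z_{1}',z_{1}''\in\mathcal{B}$, Proposition \ref{prop_birkoffclasses3fixedpoints} applied directly to $f$ with the fixed pair $(z_{1},z_{1}')$ excludes the pseudo-rotation case (since $|\mathrm{fix}(f)|\geq 3$) and produces some $q\geq 1$ equal to the period of every other periodic point; the fixed point $z_{1}''$ lying outside $\{z_{1},z_{1}'\}$ forces $q=1$, so every periodic point of $f$ is fixed, again contradicting the period-$q_{2}$ orbit $O_{2}$. The main obstacle --- really the only nontrivial bookkeeping --- is arranging a common Birkhoff recurrence class of the chosen iterate $f^{n}$ containing two distinct fixed points of $f^{n}$, which is supplied uniformly by Proposition \ref{prop: birkhoff connexions}(4).
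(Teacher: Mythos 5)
Your proof is correct and follows essentially the same route as the paper: reduce to divisibility chains via Corollary \ref{cr:divideperiods}, pass to the appropriate power of $f$ using Proposition \ref{prop: birkhoff connexions}(4) to obtain two distinct fixed points of that power in a common Birkhoff recurrence class, and then let Proposition \ref{prop_birkoffclasses3fixedpoints} force the contradiction. The only (harmless) difference is that the paper uses the Birkhoff-cycle refinement of Proposition \ref{prop: birkhoff connexions}(4) to place representatives of all three orbits in one class of the power, whereas you place only the two needed fixed points and exploit the fact that the first alternative of Proposition \ref{prop_birkoffclasses3fixedpoints} constrains \emph{every} periodic point of the map, not just those in the class.
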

\begin{proof}
By Corollary \ref{cr:divideperiods}, we already know that, if $\mathcal{B}$ has periodic points of periods $q_1<q_2$, then $q_1$ divides $q_2$. Assume, for a contradiction, that $\mathcal{B}
$ contains a periodic point $z_1$ of period $q_1$, a periodic point $z_2$ of period $q_2$ and a periodic point $z_3$ of period $q_3$, where $q_1<q_2< q_3$. The homeomorphism $f^{q_2}$ has at least three fixed points ($z_1$, $z_2$ and $f(z_2)$). Moreover, by the item (4) of Proposition \ref{prop: birkhoff connexions}, there exists a Birkhoff  recurrence class of $f^{q_2}$ that contains a point $z'_1$ in the orbit of $z_1$, a point $z'_2$ in the orbit of $z_2$ and a point $z'_3$ in the orbit of $z_3$. Since $z'_1$ and $z'_2$ are fixed by $f^{q_2}$, and $z'_3$ is periodic but not fixed, $f^{q_2}$ must have a topological horseshoe by Proposition \ref{prop_birkoffclasses3fixedpoints}, as does $f$. 

Suppose that $q_1\geq 2$ and that $\mathcal B$ contains at least two periodic orbits of period $q_1$. One can find a Birkhoff recurrence class of $f^{q_1}$ that contains a point in each of these two orbits plus a point in a periodic orbit of period $q_2$. This last point being not fixed by $f^{q_1}$, this contradicts Proposition \ref{prop_birkoffclasses3fixedpoints}. The last item is an immediate consequence of Proposition \ref{prop_birkoffclasses3fixedpoints}.
\end{proof}

\subsection{Recurrent points and transitive sets.}
Let $f:\S^2\to\S^2$ be an orientation preserving homeomorphism with no horseshoe. If $z_0$ and $z_1$ are periodic points of $f$ and $z$ is recurrent and not periodic, we define $\mathrm{rot}_{f,z_0,z_1}(z)\in\T^1$ in the following way: we denote $q$ the smallest common period of $z_0$ and $z_1$, we choose the class $\kappa$ of the boundary of a small topological disk containing $z_0$ as a generator of $H_1(\S^2\setminus \{z_0,z_1\},\Z)$, and we set
$$\mathrm{rot}_{f,z_0,z_1}(z)=\mathrm{rot}_{f^{q}\vert_{\S^2\setminus \{z_0,z_1\}},\kappa}(z),$$
 which is well defined by Theorem \ref{th:rotation-number}. We will say that a recurrent  and non periodic point $z$ has a {\it rational type} if $\mathrm{rot}_{f,z_0,z_1}(z)\in\Q/\Z$ for every choice of $z_0$ and $z_1$ and has an {\it irrational type} otherwise.

\subsubsection{Recurrent points of rational type}

Let us show that, if $z$ is recurrent and of rational type, then $f$ is topologically infinitely renormalizable over $\Lambda=\omega(z)$.

\begin{proposition} 
\label{prop:rational}Let $f:\S^2\to\S^2$ be an orientation preserving homeomorphism with no topological horseshoe, $z$ a recurrent point of $f$ of rational type, and $\Lambda=\omega(z)$. There exists an increasing sequence $(q_n)_{n\geq 0}$ of positive integers and a decreasing sequence  $(D_n)_{n\geq 0}$ of open disks containing $z$ such that:
\begin{itemize}
\item $q_n$ divides $q_{n+1}$; 
\item $D_n$ is $f^{q_n}$ periodic;
\item the disks $f^k(D_n)$, $0\leq k<q_n$ are pairwise disjoint;
\item $\Lambda\subset \bigcup_{k=0}^{q_n-1}f^{k}(D_n)$.
\end{itemize}
Furthermore, $f$ has periodic points of arbitrarily large period.
\end{proposition}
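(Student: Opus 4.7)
The plan is to construct $(q_n, D_n)$ by induction, with the substantive content concentrated in the base case. Since $z$ is recurrent and non-periodic, $z \in \Omega'(f)$, and Theorem~\ref{thmain:global-structure} provides a squeezed annulus $A_0$ containing $z$; let $\check f$ denote the positive lift of $f|_{A_0}$ and $\rho_0 = \mathrm{rot}_{\check f}(z) \in [0,1]$. The rational type hypothesis forces $\rho_0 \in \Q$ (applied to periodic points in the two components of $\S^2 \setminus A_0$), while Proposition~\ref{prop:recurrentzero-rotation}(ii) applied to $\check f$ and to $\check f \circ T^{-1}$ (to cover $\rho_0 = 1$) rules out $\rho_0 \in \{0,1\}$, because $z$ is recurrent and $f$ has no horseshoe. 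Thus $\rho_0 = p_0/q_0$ in lowest terms with $q_0 \geq 2$. For the lift $\check g_0 = \check f^{q_0} \circ T^{-p_0}$ of $f^{q_0}|_{A_0}$, the rotation number of $z$ is $0$, and a further application of Proposition~\ref{prop:recurrentzero-rotation}(ii) ensures $\check g_0$ has a fixed point, providing the singular set of a maximal isotopy.

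Choose a maximal isotopy $I_0$ of $f^{q_0}|_{A_0}$ lifting to the identity isotopy of $\check g_0$ and a transverse foliation $\mathcal F_0$. Since $z \notin \mathrm{fix}(I_0)$ (else $z$ would be $f$-periodic) and $z \in \omega_{f^{q_0}}(z)$, Proposition~\ref{prop:non-wandering} forces $I^{\Z}_{0,\mathcal F_0}(z)$ to exactly draw a unique simple transverse loop $\Gamma_0$. The zero rotation of $z$ for $\check g_0$ combined with Proposition~\ref{prop:positive-negative} forbids $\Gamma_0$ from being essential in $A_0$, so $\Gamma_0$ is null-homologous and bounds a topological disk $B_0 \subset A_0$. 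Define $D_0$ as a slight open enlargement of $\bigcup_{k \in \Z} f^{q_0 k}(B_0)$ inside $A_0$; then $D_0$ is an open disk, $z \in D_0$, and $f^{q_0}(D_0) = D_0$. Pairwise disjointness of $f^k(D_0)$ for $0 \le k < q_0$ follows from the minimality of $q_0$: any overlap would yield an $f^d$-invariant disk around $z$ for some proper divisor $d$ of $q_0$, contradicting $\rho_0 = p_0/q_0$ in lowest terms via Proposition~\ref{prop: rotation numbers powers}. The inductive step repeats this argument inside $D_n$: Brouwer's theorem applied to $f^{q_n}|_{D_n}$ supplies a fixed point $w_n$, and $D_n \setminus \{w_n\}$ is viewed as an invariant open annulus in $\S^2$ with both complementary components $f^{q_n}$-fixed. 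Since rational type is inherited by iterates of $f$, the base case applied to $f^{q_n}$ yields $q' \geq 2$ and an open disk $D_{n+1} \subset D_n$ with $f^{q_n q'}(D_{n+1}) = D_{n+1}$ and disjoint $(f^{q_n})^k$-iterates for $0 \le k < q'$; setting $q_{n+1} = q_n q'$, the full disjointness $f^j(D_{n+1}) \cap D_{n+1} = \emptyset$ for $0 < j < q_{n+1}$ is verified by writing $j = a q_n + r$ and invoking the disjointness properties at levels $n$ and $n+1$.

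To establish $\Lambda \subset \bigcup_{0 \le k < q_n} f^k(D_n)$, any $z' \in \Lambda = \omega(z)$ is the limit of some $f^{m_j}(z)$; decomposing $m_j = a_j q_n + r_j$ with $0 \le r_j < q_n$ and passing to a subsequence with $r_j = r$ constant, one has $f^{m_j}(z) \in f^r(D_n)$, hence $z' \in \overline{f^r(D_n)}$, and a slight enlargement of each $B_0$ at every stage places $z'$ in the open union. The last assertion follows by applying Brouwer to $f^{q_n} \colon D_n \to D_n$: the resulting fixed point has $f$-period exactly $q_n$ by disjointness of the iterates $f^k(D_n)$, and $q_n \to \infty$ by construction (since each $q' \ge 2$). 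The main obstacle is the base case, specifically forcing the transverse loop $\Gamma_0$ to be null-homologous in $A_0$: here the zero rotation of $z$ for $\check g_0$, the analysis of transverse trajectories in Propositions~\ref{prop:non-wandering} and~\ref{prop:positive-negative}, and the no-horseshoe hypothesis must be combined carefully; once this is in place, the induction is a direct iteration of the base-case construction inside successively smaller periodic disks.
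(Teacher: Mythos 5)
Your base-case setup is sound and close in spirit to the paper's (which uses Proposition \ref{prop:linking-recurrent} instead of a squeezed annulus of $f$ to get the rational, non-integral rotation number $p/q$), but the heart of the construction --- producing an $f^{q_0}$-periodic disk with pairwise disjoint iterates covering $\omega(z)$ --- has a genuine gap. The pivotal claim that $\Gamma_0$ cannot be essential in $A_0$ is not justified: Proposition \ref{prop:positive-negative} concerns a trajectory drawing both a positive and a negative loop, which is not your situation, and a trajectory exactly drawn on an essential positive loop only forces the rotation number to be $\geq 0$, which is perfectly consistent with $\mathrm{rot}_{\check g_0}(z)=0$ (the winding could be sublinear). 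You cannot invoke Proposition \ref{prop:recurrentzero-rotation} here either, since you have just shown that $\check g_0$ has fixed points. The paper obtains the invariant disk by a different mechanism: Theorem \ref{thmain:global-structure} applied to $f^{q}$ gives a squeezed annulus $A$ of $f^q$ containing $z$, whose lift \emph{is} fixed point free, so Proposition \ref{prop:recurrentzero-rotation} forces $\mathrm{rot}_{f^q\vert_A,\kappa}(z)\neq 0+\Z$; comparing with the zero rotation number in $A_0$ shows $A$ is inessential in $A_0$, and $A$ together with the compact complementary component is the desired invariant disk. This is then used not directly as $D_1$ but to prove that the Birkhoff recurrence classes of $z, f(z),\dots,f^{q-1}(z)$ for $f^q$ are pairwise distinct (a delicate covering-space argument in the $q$-fold cover), after which Proposition \ref{prop: birkhoff classes iterate} produces pairwise disjoint open sets cyclically permuted by $f$ that contain the compact pieces of $\overline{O_f(z)}$.

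This last point also exposes two secondary defects of your construction even if $\Gamma_0$ were inessential. First, $\bigcup_{k\in\Z}f^{q_0k}(B_0)$ is a union of non-nested open disks and need not be simply connected (nor need its ``slight open enlargement'' be well defined, invariant, and disjoint from its iterates), and your disjointness argument produces an $f^d$-invariant connected open set that you would still need to show is inessential in $A_0$ before concluding anything about rotation numbers. Second, your covering argument only yields $\Lambda\subset\bigcup_k\overline{f^k(D_n)}$; points of $\omega(z)$ on $\mathrm{Fr}(D_n)$ are not handled, and enlarging after the fact destroys invariance and disjointness. The paper resolves both issues at once through the compactness/separation argument in Proposition \ref{prop: birkhoff classes iterate}, which is precisely why the Birkhoff-class route is taken. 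These are not cosmetic omissions: they are the substantive content of the proposition.
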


\begin{proof}We will construct the sequence by induction. The key point is the fact that $f$ is ``abstractly'' renormalizable over $\overline{O_f(z)}$, as explained in the remark following Proposition \ref{prop: birkhoff classes iterate}.
\textcolor{black}{Note that, as $f$ has a recurrent point which is not fixed, then it must also have at least two fixed points. Since $\Lambda$ is contained in a Birkhoff recurrence class and $z$ is of rational type, Proposition \ref{prop_birkoffclasses3fixedpoints} tells us that at least one of these points does not belong to $\Lambda$. We fix $z_*$ in $\mathrm{fix}(f)\setminus \Lambda$}
and set 
$$q_0=1, \enskip D_0=\S^2\setminus\{z_*\}.$$ Let us construct $q_1$ and $D_1$.  By Proposition \ref{prop:linking-recurrent}, there exists $z_0\in \mathrm{fix}(f)\cap D_0$ such that $\mathrm{rot}_{f,z_*,z_0}(z)\not=0+\Z$. By hypothesis, this number is rational and can be written $p/q+\Z$ where $p$ and $q>0$ are relatively prime. Let us state the key lemma, that will use Theorem \ref{th:global-structure2}
\begin{lemma} 
\label{lemma:differentclasses}The Birkhoff recurrence classes of $f^i(z)$, $0\leq i<q$,  for $f^q$, are all distinct.
\end{lemma}
\begin{proof} Set $A_0=\S^2\setminus\{z_*,z_0\}$ and write $\kappa_0$ for the generator of $H_1(A_0,\Z)$ defined by the boundary of a small topological disk containing $z_0$ in its interior. By Theorem \ref{th:global-structure2}, there exists a squeezed annulus $A$ of $f^q$ that contains $z$. Using Proposition \ref{prop:recurrentzero-rotation}, one deduces that $\mathrm{rot}_{f^q\vert A,\kappa}(z)\not=0+\Z$, if $\kappa$ is a generator of $H_1(A,\Z)$. We know that $\mathrm{rot}_{f^q\vert A_0,\kappa_0}(z)=0+\Z$ and so $A$ is not essential in $A_0$. The compact connected component $K$ of $A_0\setminus A$ contains at least a fixed point of $f^q$, because $A\cup K$ is a $f^q$-invariant open disk that contains a $f^q$-invariant compact set. Moreover, there exists $\rho\in\R/\Z$, equal to $\mathrm{rot}_{f^q\vert A,\kappa}(z)$ up to the sign, such that 
$$\begin{cases} 
\mathrm{rot}_{f^q,z',z^*} (z)=\rho&\mathrm{if }\,\, z'\in\mathrm{fix}(f^q)\cap K,\\
\mathrm{rot}_{f^q,z',z^*} (z)=0+\Z&\mathrm{if }\,\, z'\in\mathrm{fix}(f^q)\setminus K,\\
\end{cases}$$
 the last equality being due to the fact that $A\cup K$ is a $f^q$-invariant disk included in the annulus $\S^2\setminus\{z',z^*\}$ if $z'\in\mathrm{fix}(f^q)\setminus K$. Let us consider $0<i<q$. We immediately deduce that 
$$\begin{cases} 
\mathrm{rot}_{f^q,f^i(z'),z^*} (f^i(z))=\rho&\mathrm{if }\,\, z'\in\mathrm{fix}(f^q)\cap K,\\
\mathrm{rot}_{f^q,f^i(z'),z^*} (f^i(z))=0+\Z&\mathrm{if }\,\, z'\in\mathrm{fix}(f^q)\setminus K.\\
\end{cases}$$ 
To prove the lemma, it is sufficient to prove that the Birkhoff recurrence classes of $z$ and $f^i(z)$ for $f^q$ are distinct if $1\leq i<q$. We will argue by contradiction and suppose that there exists $i\in\{1,\dots, q-1\}$ such that the Birkhoff recurrence classes of $z$ and $f^i(z)$ for $f^q$ are equal. 
Let  $\mathring A_0$ be the $q$-tuple cover of $A_0$, let $\mathring \pi: \mathring A_0\to A_0$ be the covering projection and $\mathring T$ the generator of the group of covering automorphisms naturally defined by $\kappa_0$. Denote $\mathring z^* $ the end of $\mathring A_0$ corresponding to $z^*$ and $\mathring z_0 $ the end of $\mathring A_0$ corresponding to $z_0$. Fix a lift $\mathring{f}$ of $f\vert_{A_0}$ to $\mathring A_0$, set $\mathring g=\mathring f^q\circ \mathring T^{-p}$, and \textcolor{black}{ denote $\mathring g_{\mathrm{sp}}$ the extension of $g$ to $\mathring A_0\cup\{\mathring z^*,\mathring z_0\}$.} A simple finiteness argument permits us to say that if there exists a Birkhoff connection for $f^q$ from a point $z'\in A_0$ to a point $z''\in A_0$, there exists a Birkhoff connection for \textcolor{black}{$\mathring g_{\mathrm{sp}}$} from a given lift $\mathring z'\in \mathring{\pi}^{-1}(\{z'\})$ to a certain lift $\mathring z''\in \mathring{\pi}^{-1}(\{z''\})$. Fix $\mathring z\in\mathring{\pi}^{-1}(\{z\})$.
One deduces that there exists $0 \le j \le q-1$ such that $\mathring z\underset{\mathring f^q}\preceq \mathring T^j(\mathring f^i(\mathring z))$. The maps $\mathring f$ and $\mathring T$ commuting, one gets 
$$\mathring z\underset{\mathring g_{\mathrm{sp}}}\preceq \mathring T^j(\mathring f^i(\mathring z))\underset{\mathring g_{\mathrm{sp}}}\preceq \mathring T^{2j}(\mathring f^{2i}(\mathring z))\underset{\mathring g_{\mathrm{sp}}}\preceq \dots \underset{\mathring g_{\mathrm{sp}}}\preceq \mathring T^{(q-1)j}(\mathring f^{(q-1)i}(\mathring z))\underset{\mathring g_{\mathrm{sp}}}\preceq \mathring T^{qj}(\mathring f^{qi}(\mathring z))= \mathring f^{qi}(\mathring z)=\mathring g^i\circ \mathring T^{pi}(\mathring z).$$ Similarly, there exists $0 \le j' \le q-1$ such that $ \mathring T^{j'}(\mathring f^i(\mathring z))\underset{\mathring g_{\mathrm{sp}}}\preceq\mathring z$ and consequently
$$\mathring g^i\circ \mathring T^{pi}(\mathring z)=\mathring f^{qi}(\mathring z)=\mathring T^{qj'}(\mathring f^{qi}(\mathring z))\underset{\mathring g_{\mathrm{sp}}}\preceq \mathring T^{(q-1)j'}(\mathring f^{(q-1)i}(\mathring z))\underset{\mathring g_{\mathrm{sp}}}\preceq\dots \underset{\mathring g_{\mathrm{sp}}}\preceq \mathring T^{2j'}(\mathring f^{2i}(\mathring z))\underset{\mathring g_{\mathrm{sp}}}\preceq \mathring T^{j'}(\mathring f^i(\mathring z))\underset{\mathring g_{\mathrm{sp}}}\preceq \mathring z.$$

Consequently $\mathring z$ is a Birkhoff recurrent point of $\mathring g_{\mathrm{sp}}$, and so is every point $\mathring f^{m}\circ \mathring T^n(\mathring z)$, where $m$ and $n$ are integers. Moreover the Birkhoff recurrence class of $\mathring z$ is equal to the Birkhoff recurrence class of $\mathring g^i\circ \mathring T^{pi}(\mathring z)$ and so is equal to the Birkhoff recurrence class of  $\mathring T^{pi}(\mathring z)$.

 For every $l\in\{0,\dots,q\}$ denote $\mathring A^l$ the connected component of $\mathring\pi^{-1}(A)$ that contains $\mathring T^l(\mathring z)$ and $\mathring K^l$ the compact connected component of $\mathring A_0\setminus \mathring  A^l$ (which is itself a connected component of $\mathring\pi^{-1}(K)$). 
Every annulus $\mathring A^l$ is a squeezed annulus of $\mathring g_{\mathrm{sp}}$. Moreover every set $\mathring K^l$ contains at least one fixed point of $\mathring g$ and like in the annulus $A_0$, one has
$$\begin{cases} 
\mathrm{rot}_{\mathring g,\mathring z',\mathring z^*} (\mathring T^l(\mathring z))=\rho&\mathrm{if }\,\, \mathring z'\in\mathrm{fix}(\mathring g)\cap \mathring K^l,\\
\mathrm{rot}_{\mathring g,\mathring z',\mathring z^*} (\mathring T^l(\mathring z))=0+\Z&\mathrm{if }\,\, \mathring z'\in\mathrm{fix}(\mathring g)\setminus \mathring K^l.\\
\end{cases}$$
Note now that $pi$ is not a multiple of $q$ because $p$ and $q$ are relatively prime, and so the Birkhoff recurrence class of $\mathring z$ for \textcolor{black}{$\mathring g_{\mathrm{sp}}$} contains a translate $\mathring T^l(\mathring z)$, where $l\in\{1,\dots ,q-1\} $. This, and the the equalities above contradict Theorem \ref{th:birkhoffcycles}, \textcolor{black}{since if $ \mathring z'\in\mathrm{fix}(\mathring g)\cap \mathring K^0$, then $\mathrm{rot}_{\mathring g,\mathring z',\mathring z^*} (\mathring T^l(\mathring z))=0+\Z$ while $\mathrm{rot}_{\mathring g,\mathring z',\mathring z^*} (\mathring z)=\rho$}. 
\end{proof}

Now one can apply Proposition \ref{prop: birkhoff classes iterate} and the remarks that follow this proposition: there exists an integer $r\geq 1$ and a covering of $\Lambda$ by a family $(W^j)_{j\in \Z/rq\Z}$ of pairwise disjoint open sets of $\S^2$ that satisfies $f(W^j)=W^{j+1}$ for every $j\in\Z/rq\Z$. These open sets are included in $A_0$ and are not essential, because $rq\geq 2$ and $f(W^j)=W^{j+1}$ for every $j\in\Z/rq\Z$. For the same reason, if $j'\not=j$, the compact connected components of $A_0\setminus W^i$ do not contain any $W^{j'}$, $j'\not=j$. Adding the compact connected components of $A_0\setminus W^j$ to $W^j$, one gets a disk $D^j$. We have got a covering of $\Lambda$ by a family $(D^j)_{j\in \Z/rq\Z}$ of pairwise disjoint open disks of $\S^2$ that satisfies $f(D^j)=D^{j+1}$ for every $j\in\Z/rq\Z$. One can suppose that $z$ belongs to $D^0$. We set $D_1=D^0$ and $q_1=rq$. To construct $D_2$ and $q_2$ we do exactly the same replacing $D_0$ by $D_1$ and $f$ by $f^{q_1}\vert_{D_1}$ and continue the process.

Note that, since $D_n$ is homeomorphic to the plane and invariant by $f^{q_n}$, and since $f^{q_n}$ has a recurrent point in $D_n$, then by standard Brouwer Theory arguments $f^{q_n}$ must have a fixed point in $D_n$. Since the disks $f^k(D_n)$, $0\leq k<q_n$, are pairwise disjoint, this implies that $f$ has a periodic point of prime period $q_n$.

 \end{proof}
 
 As a direct consequence, we have:
 
 \begin{corollary}
 Let $f:\S^2\to\S^2$ be an orientation preserving homeomorphism with no horseshoe, $z$ a recurrent point of $f$ of rational type, and $\Lambda=\omega(z)$. Then the restriction of $f$ to $\Lambda$ is semiconjugated to an odometer.
 \end{corollary}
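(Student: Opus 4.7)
The plan is to read the corollary off directly from Proposition \ref{prop:rational}. Apply that proposition to obtain an increasing sequence $(q_n)_{n\ge 0}$ with $q_n\mid q_{n+1}$ and a decreasing sequence of open disks $(D_n)_{n\ge 0}$ with $f^{q_n}(D_n)=D_n$, whose $q_n$ pairwise disjoint iterates $f^k(D_n)$, $0\le k<q_n$, cover $\Lambda$. These two properties are exactly what is needed: the first furnishes the inverse system, the second furnishes the $n$-th stage of the factor map.

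Concretely, for each $n$ I would define $\pi_n:\Lambda\to \Z/q_n\Z$ by $\pi_n(w)=k$ whenever $w\in f^k(D_n)$. This is well defined by disjointness and surjectivity of the cover. Since the $D_n$ are open, $\pi_n$ is locally constant on $\Lambda$, hence continuous. The relation $f(f^k(D_n))=f^{k+1}(D_n)$ (indices mod $q_n$, using $f^{q_n}(D_n)=D_n$) immediately yields the intertwining
\[
\pi_n\circ f\vert_\Lambda=\sigma_n\circ \pi_n,\qquad \sigma_n:k\mapsto k+1\bmod q_n .
\]

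The second step is to assemble the $\pi_n$ into a single continuous semiconjugacy. For this I need compatibility, i.e.\ that $\pi_n(w)\equiv \pi_{n+1}(w)\pmod{q_n}$. Fixing $w\in\Lambda$ and writing $j=\pi_{n+1}(w)$, the inclusion $D_{n+1}\subset D_n$ gives $w\in f^j(D_{n+1})\subset f^j(D_n)$, and since $D_n$ is $f^{q_n}$-invariant one has $f^j(D_n)=f^{j\bmod q_n}(D_n)$; by disjointness of the cover at stage $n$ this forces $\pi_n(w)=j\bmod q_n$. Thus the $\pi_n$ are compatible with the bonding maps of the inverse system $\cdots\to \Z/q_{n+1}\Z\to \Z/q_n\Z\to\cdots$, and define a continuous map
\[
\pi=(\pi_n)_{n\ge 0}:\Lambda\longrightarrow \varprojlim_n \Z/q_n\Z,
\]
intertwining $f\vert_\Lambda$ with the $+1$ adding machine on the profinite group $\varprojlim_n\Z/q_n\Z$, which is by definition an odometer (with profile $(q_n)$). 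Surjectivity of $\pi$ follows from the compactness of $\Lambda$ and the fact that each $\pi_n$ is onto, since each of the $q_n$ cyclically permuted open disks contains a point of $\Lambda$ (the orbit of $z$ visits each of them because $z\in D_n$ and $\Lambda=\omega(z)$).

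There is essentially no obstacle here: every ingredient is already packaged in Proposition \ref{prop:rational}. The only mild care is the compatibility check between consecutive stages, which as shown above is forced by the nesting $D_{n+1}\subset D_n$ together with the $f^{q_n}$-invariance of $D_n$; once this is in place the conclusion is immediate from the universal property of the inverse limit.
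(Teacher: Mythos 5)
Your proof is correct and follows exactly the route the paper intends: the paper states the corollary as a direct consequence of Proposition \ref{prop:rational}, and your construction of the maps $\pi_n$, the compatibility check via the nesting $D_{n+1}\subset D_n$ and the $f^{q_n}$-invariance of $D_n$, and the surjectivity argument (using $z\in\Lambda\cap D_n$ and invariance of $\Lambda$) supply precisely the standard details that the paper leaves implicit.
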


\subsubsection{Recurrent points of irrational type}

\textcolor{black}{ To help with the understanding of this subsection, we begin first with a look at some particular recurrent points} of irrational type.

\begin{proposition} 
\label{prop:irrational} Let $f:\S^2\to\S^2$ be an orientation preserving homeomorphism with no horseshoe and  $z$ a recurrent point of $f$ of irrational type. We suppose that there exist two fixed points $z_0$ and $z_1$ such that $\mathrm{rot}_{f,z_0,z_1}(z)=\alpha+\Z$, where $\alpha\not\in\Q$. We denote $\check f$ the lift of $f\vert_{\S^2\setminus\{z_0,z_1\}}$ such that $\mathrm{rot}_{\check f}(z)=\alpha$ and such that $\kappa$ is the generator of $H_1(\S^2\setminus\{z_0,z_1\},\Z)$ defined by the boundary of a small topological disk containing $z_0$ in its interior. Let $(p_n/q_n)_{n\geq 0}$ and $(p'_n/q'_n)_{n\geq 0}$ be two sequences of rational numbers converging to $\alpha$, the first one increasing, the second one decreasing. There exists a decreasing sequence  $(A_n)_{n\geq 0}$ of invariant open annuli containing $z$ such that 

\begin{itemize}
\item $A_{n}$ is essential in $\S^2\setminus\{z_0,z_1\}$,
\item $A_n$ is a positive annulus of $\check f^{q_n}\circ T^{-p_n}$, with a positive generator sent onto $\kappa$  by the morphism $\iota_*: H_1(A_n,\Z)\to H_1(\S^2\setminus\{z_0,z_1\},\Z)$;
\item $A_n$ is negative annulus of  $\check f^{q'_n}\circ T^{-p'_n}$, with a negative generator sent onto $\kappa$  by the morphism $\iota_*: H_1(A_n,\Z)\to H_1(\S^2\setminus\{z_0,z_1\},\Z)$.
\end{itemize}
\end{proposition}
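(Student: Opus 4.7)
My plan is to produce each $A_n$ by applying the squeezing construction from the proof of Theorem \ref{th:global-structure2} to the iterate $f^{q_n q'_n}|_{A_0}$, where $A_0 = \S^2 \setminus \{z_0, z_1\}$, using the two lifts
\[
\check h_n := (\check g_n)^{q'_n} = \check f^{q_n q'_n} \circ T^{-p_n q'_n}, \qquad \check h'_n := (\check g'_n)^{q_n} = \check f^{q_n q'_n} \circ T^{-p'_n q_n}.
\]
By Proposition \ref{prop: rotation numbers powers} these satisfy $\mathrm{rot}_{\check h_n}(z) = q'_n(q_n\alpha - p_n) > 0$ and $\mathrm{rot}_{\check h'_n}(z) = q_n(q'_n\alpha - p'_n) < 0$. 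Since $\alpha$ is irrational, $z$ is not periodic for any iterate of $f|_{A_0}$, and therefore $z \in \Omega'(J)$ for every maximal isotopy $J$ of $f^{q_n q'_n}|_{A_0}$.

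Fix $n$. First I would take a maximal isotopy $J^+_n$ of $f^{q_n q'_n}|_{A_0}$ lifted to $\check h_n$; by Theorem \ref{th:local-structure} there is a positive annulus $B_n$ of $J^+_n$ containing $z$. Exactly as in the first case of the proof of Theorem \ref{th:global-structure2}, positivity of $\mathrm{rot}_{\check h_n}(z)$ forces $B_n$ to be essential in $A_0$ with its positive generator sent to $\kappa$ by $\iota_*$. The restriction of $\check h'_n$ to the universal cover $\widetilde B_n$ of $B_n$ (via the natural embedding $\widetilde B_n \hookrightarrow \check A_0$, well defined since $B_n$ is essential in $A_0$) is a lift of $f^{q_n q'_n}|_{B_n}$ with negative rotation number on $z$, so applying Theorem \ref{th:local-structure} to the inverse of a maximal isotopy of $f^{q_n q'_n}|_{B_n}$ lifted to this restriction yields an annulus $A_n \subset B_n$ containing $z$, essential in $B_n$, for which $\check h_n|_{\widetilde A_n}$ is a positive lift of $f^{q_n q'_n}|_{A_n}$ and $\check h'_n|_{\widetilde A_n}$ is a negative lift, with the positive (respectively negative) generator of $H_1(A_n, \Z)$ sent to $\kappa \in H_1(A_0, \Z)$.

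To upgrade the conclusion from $\check h_n, \check h'_n$ to $\check g_n, \check g'_n$: the lift $\check g_n|_{\widetilde A_n}$ is fixed point free (otherwise $\check h_n = (\check g_n)^{q'_n}$ would have a fixed point, contradicting positivity), and by Proposition \ref{prop: rotation numbers powers} every non-wandering $w$ of $f^{q_n}|_{A_n}$ satisfies $\mathrm{rot}_{\check g_n}(w) = \mathrm{rot}_{\check h_n}(w)/q'_n \geq 0$, which makes $\check g_n|_{\widetilde A_n}$ a positive lift of $f^{q_n}|_{A_n}$; the symmetric argument shows $\check g'_n|_{\widetilde A_n}$ is a negative lift of $f^{q'_n}|_{A_n}$. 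The full $f$-invariance of $A_n$ (rather than just $f^{q_n q'_n}$-invariance) follows from the homotopy uniqueness in Lemma \ref{lemma:homotopic annuli} and Theorem \ref{th:familyofannulilocal}: the translates $f^k(A_n)$, $0 \leq k < q_n q'_n$, are essential annuli through the $f$-orbit of $z$ lying in a single homotopy class of $A_0$, and filling in the bounded complementary components of their union yields an $f$-invariant annulus with the same homological and positivity properties.

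The nested sequence is constructed by induction: having built $A_n$, I repeat the construction above with $A_0$ replaced by $A_n$ and $(p_n, q_n), (p'_n, q'_n)$ replaced by $(p_{n+1}, q_{n+1}), (p'_{n+1}, q'_{n+1})$, producing $A_{n+1} \subset A_n$; the relevant lifts restricted to the universal cover of $A_n$ still straddle $\alpha$ in rotation number on $z$, so the argument carries over verbatim. The main obstacle lies in the upgrade from $f^{q_n q'_n}$-invariance to genuine $f$-invariance while preserving the homological identifications with $\kappa$, which relies crucially on the uniqueness of the essential homotopy class of loops associated with $z$ guaranteed by Proposition \ref{prop: free disks} and Theorem \ref{th:familyofannulilocal}.
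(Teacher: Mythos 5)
Your overall strategy is the right one (iterating the positive-annulus construction of Theorem \ref{th:local-structure} with lifts whose rotation numbers at $z$ straddle $\alpha$), but the detour through the common power $f^{q_nq'_n}$ creates a gap that the paper's proof deliberately avoids. The paper never forms $\check h_n=\check g_n^{q'_n}$: it applies Theorem \ref{th:local-structure} first to a maximal isotopy of $f^{q'_n}$ lifted to $\check g'_n=\check f^{q'_n}\circ T^{-p'_n}$ (since $\mathrm{rot}_{\check g'_n}(z)=q'_n\alpha-p'_n<0$, the resulting positive annulus has its positive class sent to $-\kappa$, i.e.\ it is the required negative annulus), and then to a maximal isotopy of $f^{q_n}$ restricted to that annulus and lifted to $\check g_n$, alternating at each stage. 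Each annulus is then, by construction, a positive (resp.\ negative) annulus for the correct map $f^{q_n}$ (resp.\ $f^{q'_n}$) and the correct lift, with no descent needed.

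The gap in your version is the ``upgrade'' paragraph. First, your $A_n$ is a priori only $f^{q_nq'_n}$-invariant, so $\check g_n|_{\widetilde A_n}$ is not yet a lift of a homeomorphism of $A_n$; the proposed repair by taking $\bigcup_k f^k(A_n)$ and filling in would enlarge the annulus and force you to re-verify everything on the larger set. Second, and more seriously, ``$\mathrm{rot}_{\check g_n}(w)\ge 0$ for all non-wandering $w$'' is not the definition of a positive lift: positivity of a point is a condition on free disks, a point of rotation number $0$ can perfectly well be a negative point, and the paper only asserts the implication positive $\Rightarrow$ nonnegative rotation number, not its converse. Nor can you transport positivity directly from $\check h_n$ to $\check g_n$: although every $\check h_n$-positive disk is $\check g_n$-positive (since $\check h_n^q=\check g_n^{qq'_n}$), the set $\Omega(f^{q_n}|_{A_n})$ may strictly contain $\Omega(f^{q_nq'_n}|_{A_n})$, so the positivity you inherit from the $\check h_n$-construction does not cover all the points on which the definition of a positive lift of $f^{q_n}|_{A_n}$ is quantified. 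To close the argument you should drop $\check h_n,\check h'_n$ and run the construction directly on $f^{q'_n}$ and $f^{q_n}$ as the paper does.
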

\begin{proof}Let $I_0$ be a maximal isotopy of $f^{q'_0}\vert_{\S^2\setminus\{z_0,z_1\}}$ that is lifted to an identity isotopy of $\check f^{q'_0}\circ T^{-p'_0}$.  By Theorem \ref{th:local-structure}, there exists a positive annulus $A'_0$ containing $z_0$.  It must be essential and its positive class is sent onto $-\kappa$ by the morphism $\iota_*: H_1(A'_0,\Z)\to H_1(\S^2\setminus\{z_0,z_1\},\Z)$. Let $I'_0$ be a maximal isotopy of $f^{q_0}\vert_{A'_0}$ that is lifted to an identity isotopy of the restriction of $\check f^{q_0}\circ T^{-p_0}$ to the lift of $A'_0$.  By Theorem \ref{th:local-structure}, there exists a positive annulus \textcolor{black}{$A_0\subset A'_0$} containing $z_0$. $A_0$ must be essential, \textcolor{black}{because otherwise $A_0$ would be contained in a $\check f^{q_0}$ invariant open topological disk $D_0\subset A'_0$ containing $z_0$, and this contradicts $\mathrm{rot}_{\check f}(z)=\alpha$. Also} its positive class is sent onto $\kappa$ by the morphism $\iota_*: H_1(A_0,\Z)\to H_1(\S^2\setminus\{z_0,z_1\},\Z)$. We continue this process and construct alternately two sequences $(A'_n)_{n\geq 0}$ and $(A_n)_{n\geq 0}$. The sequence $(A_n)_{n\geq 0}$ satisfies the conclusion of the proposition. \end{proof}

\begin{remarks*} Let $X=\bigcap_{n\geq 0}A_n$. Note that the complement of $A_n$ has exactly two connected components, $F_n$ that contains $z_0$ and $F_n'$ that contains $z_1$. Let us define $F=\bigcup_{n\geq 0}F_n$ and $F'=\bigcup_{n\geq 0}F'_n$,  which are connected, invariant and disjoint. This implies that the complement of $X$ is either connected (if $F\cup F'$ is connected) or has two connected components (if $F\cup F'$ is not connected). Note also that if $z'_0$ and $z'_1$ are periodic points that belong to $F$, then $\mathrm{rot}_{f,z'_0,z'_1}(z)=0+\Z$, because there exists $n$ such that $z$ belongs to an invariant open disk $A_n\cup F_n'$ of $\S^2\setminus\{z'_0,z'_1\}$. In case $z'_0$ belongs to $F$ while $z'_1$ belongs to $F'$,  then $\mathrm{rot}_{f,z'_0,z'_1}(z)=q\mathrm{rot}_{f,z_0,z_1}(z)$, where $q$ is the smallest common period of $z'_0$ and $z'_1$. Finally, one knows that every point of \textcolor{black}{$X$} has a rotation number equal to $\alpha$ when this rotation number is defined.  
 This is not necessarily the case for every point in $\overline X$. Indeed let us suppose that $f$ is a homeomorphism of $\S^2$ that contains an invariant closed annulus $A$ satisfying the following: 
\begin{itemize}
\item every point of the boundary of $A$ is fixed;
\item there exists an essential invariant loop $\Gamma$ in $A$ of irrational rotation number;
\item the dynamics is north-south in the open annuli bounded by $\Gamma$ and the boundary circles.
\end{itemize}

If $z_0$ and $z_1$ are chosen on different boundary components of $A$ and $z$ chosen on $\Gamma$, all sets $A_n$ coincide with the interior of $A$, for $n$ large enough, and consequently, there exist infinitely many fixed points in $\overline X$.

\medskip

In the previous example, there exists a compact invariant annular set of irrational rotation number. In the following situation, this will not be the case. Starting with a Denjoy counterexample, one can construct a homeomorphism $f$ of $\S^2$ that contains an invariant closed annulus $A$ satisfying the following: 

\begin{itemize}
\item every point of the boundary is fixed;
\item there exists an invariant Cantor set $K$ of irrational rotation number;
\item the orbit of a point $z$ in the interior of $A$ that does not belong to $K$ has either its $\alpha$ limit or its $\omega$ limit contained in one of the boundary circles.
\end{itemize}

\end{remarks*}

Let us explain now what happens for a general recurrent point of irrational type. 

\textcolor{black}{
\begin{proposition}
 Let $f:\S^2\to\S^2$ be an orientation preserving homeomorphism with no topological horseshoe and  $z$ a recurrent point of $f$ of irrational type. There exists some integer $q\ge 1$, such that, if $g=f^q$, then there exists two points $z_0, z_1$ that are fixed by $g$ and such that $\mathrm{rot}_{g,z_0,z_1}(z)=\alpha+\Z$, where $\alpha\not\in\Q$. We denote $\check g$ the lift of $f\vert_{\S^2\setminus\{z_0,z_1\}}$ such that $\mathrm{rot}_{\check g}(z)=\alpha$ and such that $\kappa$ is the generator of $H_1(\S^2\setminus\{z_0,z_1\},\Z)$ defined by the boundary of a small topological disk containing $z_0$ in its interior. Let $(p_n/q_n)_{n\geq 0}$ and $(p'_n/q'_n)_{n\geq 0}$ be two sequences of rational numbers converging to $\alpha$, the first one increasing, the second one decreasing. There exists a decreasing sequence  $(A_n)_{n\geq 0}$ of open annuli containing $z$ such that 
\begin{itemize}
\item $A_n$ is invariant by $g$, and $f^k(A_n)\cap A_n=\emptyset$ if $0<k<q$,
\item $A_{n}$ is essential in $\S^2\setminus\{z_0,z_1\}$,
\item $A_n$ is a positive annulus of $\check g^{q_n}\circ T^{-p_n}$, with a positive generator sent onto $\kappa$  by the morphism $\iota_*: H_1(A_n,\Z)\to H_1(\S^2\setminus\{z_0,z_1\},\Z)$;
\item $A_n$ is negative annulus of  $\check g^{q'_n}\circ T^{-p'_n}$, with a negative generator sent onto $\kappa$  by the morphism $\iota_*: H_1(A_n,\Z)\to H_1(\S^2\setminus\{z_0,z_1\},\Z)$.
\end{itemize}
\end{proposition}}

\begin{proof}
If there exist two fixed points $z_0$ and $z_1$ such that $\mathrm{rot}_{f,z_0,z_1}(z)\not\in\Q/\Z$ we can apply Proposition \ref{prop:irrational}. Suppose now that $\mathrm{rot}_{f,z_0,z_1}(z)\in \Q/\Z$ for all fixed points $z_0$ and $z_1$. Then starting from any fixed point $z_*$, one can construct a disk $D_1$ containing $z$ and an integer $q_1$  such that $f^{q_1}(D_1)=D_1$ and $f^k(D_1)\cap f^{k'}(D_1)=\emptyset$ if $0\leq k<k'<q_1$, like in the proof of Proposition \ref{prop:rational}.  Moreover we get that $\omega_{f^{q_1}}(z)$ is also contained in $D_1$. One deduces that $\mathrm{rot}_{f,z'_0,z'_1}(z)\in \Q/\Z$ for all periodic points $z'_0$ and $z'_1$ that do not belong to $D_1$. In particular $\mathrm{rot}_{f,z'_0,z'_1}(z)\in \Q/\Z$ if the periods of $z'_0$ and $z'_1$ are both smaller than $q_1$. Let us consider the map $f^{q_1}\vert_{D_1}$ or more precisely its extension $f_1$ to the Alexandroff compactification of $D_1$ writing $z_{1,*}$ the point at infinity.  Suppose that $\mathrm{rot}_{f_1,z_{1,*},z_1}(z)\in \Q/\Z$ for every fixed point $z_1\in D$ of $f_1$. Then we can continue the process and construct a disk $D_2$ containing $z$ and an integer $q_2>q_1$, multiple of $q_1$, such that $f^{q_2}(D_2)=D_2$ and $f^k(D_1)\cap f^{k'}(D_1)=\emptyset$ if $0\leq k<k'<q_2$. We know that $\mathrm{rot}_{f,z'_0,z'_1}(z)\in \Q/\Z$ if the periods of $z'_0$ and $z'_1$ are smaller than $q_2$. This implies the process must end at finite time, since $z$ is of irrational type. Consequently, one can apply Proposition \ref{prop:irrational} to a power $f^q$ of $f$ and obtain the result.
\end{proof}

\textcolor{black}{Before ending this section, there} is still some more information one can derive about the Birkhoff recurrence class of a recurrent point of irrational type:
\begin{corollary}\label{co: B.classes}
Let $f:\S^2\to\S^2$ be an orientation preserving homeomorphism with no topological horseshoe and  $z$ a recurrent point of $f$ of irrational type. We suppose that there exist two $q$-periodic fixed points $z_0$ and $z_1$ such that $\mathrm{rot}_{f,z_0,z_1}(z)=\alpha+\Z$, where $\alpha\not\in\Q$. Let $\mathcal{B}$ be the Birkhoff recurrence class of $z$ for $f$. Then:
\begin{itemize}
\item $\mathcal{B}$ does not contain any periodic point that is neither in the orbit of $z_0$ nor in the orbit of $z_1$;
\item if $\mathcal{B}$ contains the orbit of $z_0$, then for any pair of periodic points $z'_0, z'_1$ not containing $z_0$, $\mathrm{rot}_{f,z'_0,z'_1}(z)=0+\Z$;
\item if $\mathcal{B}$ contains both $z_0$ and $z_1$, then \textcolor{black} {$q=1$} and $f$ is a irrational pseudo-rotation. 
\end{itemize}
\end{corollary}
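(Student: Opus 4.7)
The plan is to reduce each of the three items to Theorem \ref{th:birkhoffcycles}, supplemented by Proposition \ref{prop_birkoffclasses3fixedpoints} for the last one, using item (4) of Proposition \ref{prop: birkhoff connexions} as the bridge that transports Birkhoff recurrence classes of $f$ to classes of an appropriate iterate $f^q$. In each case the extra input is that certain fixed (or periodic) points have integer rotation numbers while $z$ has an irrational rotation number coming from $\alpha$.

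For item (1), I would argue by contradiction. Assume $z'\in\mathcal{B}\setminus\{z_0,z_1\}$ is periodic of some period $q\geq1$. Item (4) of Proposition \ref{prop: birkhoff connexions} then supplies $j\in\{0,\dots,q-1\}$ such that $z$ and $z'':=f^j(z')$ lie in the same Birkhoff recurrence class of $f^q$; note that $z''$ is a fixed point of $f^q$ distinct from $z_0,z_1$. Viewing $\S^2\setminus\{z_0,z_1\}$ as an annulus, so that the sphere extension of $f^q|_{\S^2\setminus\{z_0,z_1\}}$ coincides with $f^q$, I would apply Theorem \ref{th:birkhoffcycles} to this common class: since both $z$ and $z''$ belong to $\mathrm{ne}^+(f^q)$, this forces $\mathrm{rot}_{\check f^q}(z)=\mathrm{rot}_{\check f^q}(z'')$. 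By Proposition \ref{prop: rotation numbers powers} the left-hand side equals $q\alpha\notin\Q$, while the right-hand side is an integer because $z''$ is fixed by $f^q$, yielding the contradiction.

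For item (2), let $z'_0,z'_1$ be periodic points with $z_0\notin\{z'_0,z'_1\}$, and let $q'$ be their smallest common period. Since $z_0$ is fixed by $f^{q'}$, the index $j$ produced by item (4) of Proposition \ref{prop: birkhoff connexions} can be harmlessly absorbed (as $f^j(z_0)=z_0$), placing $z$ and $z_0$ in the same Birkhoff class of $f^{q'}$. Let $\check g$ be the lift of $f^{q'}|_{\S^2\setminus\{z'_0,z'_1\}}$ used in the definition of $\mathrm{rot}_{f,z'_0,z'_1}$. Applying Theorem \ref{th:birkhoffcycles} in this annulus gives $\mathrm{rot}_{\check g}(z)=\mathrm{rot}_{\check g}(z_0)\in\Z$, whence $\mathrm{rot}_{f,z'_0,z'_1}(z)=0+\Z$ as claimed.

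For item (3), assuming $\mathcal{B}$ contains both $z_0$ and $z_1$, I would invoke Proposition \ref{prop_birkoffclasses3fixedpoints}; its first alternative would require every recurrent point of $\mathcal{B}$ to be periodic, but $z\in\mathcal{B}$ is recurrent with irrational rotation $\alpha$ and therefore not periodic, so only the second alternative can hold: $f$ is an irrational pseudo-rotation. The only genuinely delicate point in this plan is the correspondence between Birkhoff classes of $f$ and those of $f^q$, and that is exactly the content of item (4) of Proposition \ref{prop: birkhoff connexions}; I do not anticipate any further substantive obstacle.
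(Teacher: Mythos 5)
Your proof is correct and follows essentially the same route as the paper: items (1) and (2) are obtained from Theorem \ref{th:birkhoffcycles} after transporting the Birkhoff class to the relevant iterate via Proposition \ref{prop: birkhoff connexions}, which is exactly what the paper does (implicitly, through the definition of $\mathrm{rot}_{f,z'_0,z'_1}$). The only cosmetic difference is in item (3), where the paper invokes Proposition \ref{prop:PB} directly with $z_0,z_1$ in the roles of $N$ and $S$, whereas you route through Proposition \ref{prop_birkoffclasses3fixedpoints}; since that proposition is itself derived from Proposition \ref{prop:PB}, the two arguments coincide in substance.
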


\begin{proof}
\textcolor{black}{Note that $\mathcal{B}= \bigcup_{i=0}^{q-1} f^{i}(\mathcal{B}_0)$, where $\mathcal{B}_0$ is the Birkhoff recurrence class of $z$ for $g=f^q$. From Theorem  \ref{th:birkhoffcycles} we know that $\mathcal{B}_0$ does not contain any periodic point that is neither $z_0$ or $z_1$, since $\mathrm{rot}_{g,z_0,z_1}(z')\in\Q/\Z$ if $z'$ is periodic. This implies the first item.  The second item is also a consequence of Theorem  \ref{th:birkhoffcycles}, since $\mathrm{rot}_{f,z'_0,z'_1}(z_0)=0+\Z$. The fact that $f^q$ is a pseudo-rotation in the third item is a direct consequence of Proposition \ref{prop:PB}, where $z_0$ and $z_1$ play the role of $N$ and $S$. But if $f^q$ is a pseudo-rotation, it follows that directly that $q=1$.}
\end{proof}

\subsubsection{Transitive Sets}
We say that a transitive set $\Lambda$ is {\it of irrational type} if $\Lambda=\omega(z)$ for some recurrent point $z$ of irrational type.  In particular $\Lambda$  is contained in the Birkhoff recurrence class of $z$.
Let us now restate Proposition \ref{prmain:transitivesetsgeneralcase} from the introduction.

\begin{proposition}\label{pr:transitivesetsgeneralcase} Let $f:\S^2\to\S^2$ be an orientation preserving homeomorphism with no horseshoe and $\Lambda$ a closed and invariant transitive set. Then:
\begin{enumerate}
\item either $\Lambda$ is a periodic orbit;
\item or $f$ is topologically infinitely renormalizable over $\Lambda$;
\item or $\Lambda$ is of irrational type.
\end{enumerate}
Furthermore, if the second possibility holds, then $f$ has periodic points of arbitrarily large prime periods.
\end{proposition}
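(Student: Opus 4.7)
The plan is to pick a point of $\Lambda$ realizing $\Lambda$ as an $\omega$-limit set and then trichotomize according to whether this point is periodic, non-periodic of rational type, or non-periodic of irrational type. By the definition of transitive set recalled just before Corollary \ref{crmain:transitivewithperiodicorbit}, there exists $z\in\Lambda$ with $\omega(z)=\Lambda$; since $z\in\omega(z)$, the point $z$ is recurrent.

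First, if $z$ is periodic, then $\omega(z)$ coincides with the (finite) orbit of $z$, so $\Lambda$ is a periodic orbit and alternative (1) holds. Otherwise $z$ is a recurrent non-periodic point, so the dichotomy introduced at the beginning of subsection 7.2 applies: either $\mathrm{rot}_{f,z_0,z_1}(z)\in\Q/\Z$ for every pair of periodic points $z_0,z_1$ (rational type), or some such rotation number is irrational (irrational type). The second alternative puts us directly in case (3) of the proposition, because $\Lambda=\omega(z)$ is then of irrational type by definition.

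If instead $z$ is of rational type, I would invoke Proposition \ref{prop:rational}: it delivers an increasing sequence of integers $(q_n)_{n\geq 0}$ with $q_n\mid q_{n+1}$ and a decreasing sequence of open disks $(D_n)_{n\geq 0}$ containing $z$ such that $D_n$ is $f^{q_n}$-invariant, the disks $f^k(D_n)$ for $0\leq k<q_n$ are pairwise disjoint, and $\Lambda=\omega(z)\subset\bigcup_{0\leq k<q_n}f^k(D_n)$. This is precisely the definition of $f$ being topologically infinitely renormalizable over $\Lambda$, so alternative (2) holds. The final assertion of the proposition -- that, in case (2), $f$ has periodic points of arbitrarily large prime periods -- is the last sentence of Proposition \ref{prop:rational}, since each $D_n$ is a topological open disk invariant by $f^{q_n}$ and containing a recurrent point, so Brouwer theory forces a fixed point of $f^{q_n}$ inside $D_n$, necessarily of prime period $q_n$ because the disks in its $f$-orbit are pairwise disjoint.

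There is essentially no hard step here: the real work has already been packaged into Proposition \ref{prop:rational} (for case (2)) and into the very definition of ``irrational type'' (for case (3)). The only point that requires a brief comment is that the trichotomy is exhaustive -- but this is immediate from the definitions: every recurrent non-periodic point is, tautologically, either of rational or of irrational type.
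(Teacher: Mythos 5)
Your proposal is correct and follows essentially the same route as the paper: the paper's own proof is exactly this trichotomy (periodic, rational type via Proposition \ref{prop:rational}, irrational type by definition), with the final claim about arbitrarily large prime periods read off from the last sentence of Proposition \ref{prop:rational}. The only difference is cosmetic — you fix a single point $z$ with $\omega(z)=\Lambda$ and classify it, while the paper phrases the first case as "all recurrent points in $\Lambda$ are periodic," which amounts to the same thing.
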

\begin{proof}
\textcolor{black}{Pick $z$ in $\Lambda$ such that $\omega(z)=\Lambda$, which exists since $\Lambda$ is transitive. If $z$ is periodic, we are in case 1. If $z$ is a point of rational type then, by Proposition \ref{prop:rational}, we are in case 2. If $z$ is a point of irrational type then by Proposition \ref{prop:irrational} we are in case 3.}
\end{proof}

 We also obtain directly a proof of Corollary \ref{crmain:transitivewithperiodicorbit} of the introduction as, if the second possibility in Proposition \ref{pr:transitivesetsgeneralcase} holds, then $\Lambda$ does not contain any periodic point, and if $\Lambda$ is of irrational type and contains two distinct periodic orbits, then applying Corollary \ref{co: B.classes} one may deduce that $f$ is a irrational pseudo-rotation.
 


Let us conclude this section with an application that concerns dissipative $\mathcal{C}^1$ diffeomorphisms of the plane $\R^2$. Let us recall Proposition \ref{prmain:dissipativediffeomorfisms}

\begin{proposition}\label{pr:dissipativediffeomorfisms}
Let $f:\R^2\to\R^2$ be an orientation preserving diffeomorphism with no topological horseshoe, such that $0<\mathrm{det} Df(z)<1$ for all $z\in\R^2$. Let $\Lambda$ be a compact, invariant, and transitive subset that is locally stable. Then either $\Lambda$ is a periodic orbit, or $f$ is topologically infinitely renormalizable over $\Lambda$.
 \end{proposition}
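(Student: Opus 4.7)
The strategy is to reduce to Proposition \ref{pr:transitivesetsgeneralcase} applied to the extension of $f$ to the sphere, then to rule out the third alternative (irrational type) using dissipativity. The plan is first to extend $f$ to an orientation preserving homeomorphism of $\S^2$ by setting $f(\infty)=\infty$; this extension inherits the absence of topological horseshoe and fixes $\infty$, while $\Lambda\subset\R^2$ remains a compact transitive invariant set. Proposition \ref{pr:transitivesetsgeneralcase} then yields three alternatives, the first two of which are exactly the conclusion, so the task reduces to excluding that $\Lambda$ is of irrational type.

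Suppose for contradiction that $\Lambda$ is of irrational type, witnessed by a recurrent $z\in\Lambda$ and periodic points $z_0,z_1$ of $f$, both fixed by $f^q$ for some common period $q\geq 1$, with $\mathrm{rot}_{f,z_0,z_1}(z)\notin\Q/\Z$. By Proposition \ref{prop: rotation numbers powers}, $\mathrm{rot}_{f^q,z_0,z_1}(z)\equiv q\cdot\mathrm{rot}_{f,z_0,z_1}(z)\pmod{\Z}$ remains irrational. Applying the three-rotation relation of Proposition \ref{prop: rotation numbers} to the triple $\infty,z_0,z_1$ of fixed points of $f^q$, at least one of $\mathrm{rot}_{f^q,\infty,z_0}(z)$ or $\mathrm{rot}_{f^q,\infty,z_1}(z)$ must be irrational; denote the corresponding point by $\tilde z$. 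After replacing $z$ by some $f^k(z)\in\Lambda$ so that it is recurrent for $f^q$ (using that rotation numbers are constant on $f$-orbits), I would invoke Proposition \ref{prop:irrational} for $f^q$ with the pair of fixed points $(\infty,\tilde z)$ to obtain a decreasing sequence $(A_n)_{n\ge 0}$ of open annuli, each essential in $\S^2\setminus\{\infty,\tilde z\}$ and invariant under some iterate $f^{Q_n}$ (with $q\mid Q_n$), each containing $z$.

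The geometric heart of the argument is then the following. Each $A_n$ has two complementary disks in $\S^2$, say $F_n^0\ni\infty$ and $F_n^1\ni\tilde z$, and both are $f^{Q_n}$-invariant because they contain fixed points of that iterate. The set $U_n:=\S^2\setminus F_n^0=A_n\cup F_n^1$ is a nonempty open subset of $\R^2$ whose closure in $\S^2$ avoids a neighborhood of $\infty$, so $U_n$ is bounded and has finite positive Lebesgue area, and $f^{Q_n}(U_n)=U_n$. The change of variables formula, combined with the pointwise strict dissipativity $|\det Df|<1$ (which pulls back to $|\det D(f^{Q_n})|<1$), yields
$$\mathrm{area}(f^{Q_n}(U_n))=\int_{U_n}|\det D(f^{Q_n})|\,d\mu<\mathrm{area}(U_n),$$
in contradiction with $f^{Q_n}(U_n)=U_n$. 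Hence the irrational-type alternative cannot occur, completing the proof.

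The hard part will be the careful coordination of iterates: ensuring that the fixed points produced by the irrational-type hypothesis can be paired with $\infty$ via the cocycle relation, that the passage from $f$ to $f^q$ preserves recurrence of the witness point, and that the annuli of Proposition \ref{prop:irrational} are invariant under a \emph{single} iterate $f^{Q_n}$ on which dissipativity gives a genuine Lebesgue contraction. Local stability of $\Lambda$ enters in the background, keeping the setup of the structural results from Section 6 clean and supporting the passage between $\R^2$ and its one-point compactification; but the ultimate contradiction is purely area-theoretic and uses only that a bounded open subset of $\R^2$ invariant under an iterate of a strictly area-contracting diffeomorphism cannot exist.
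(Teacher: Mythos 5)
Your reduction to Proposition \ref{pr:transitivesetsgeneralcase} and your use of the cocycle relation of Proposition \ref{prop: rotation numbers} on the triple $(\infty,z_0,z_1)$ to produce a fixed point $\tilde z$ of $f^q$ with $\mathrm{rot}_{f^q,\infty,\tilde z}(z)\notin\Q/\Z$ are both fine (the paper reaches the same point via the squeezed annulus of Theorem \ref{th:global-structure2}). The gap is in the area-theoretic punchline. The annuli $A_n$ furnished by Proposition \ref{prop:irrational} are essential in $\S^2\setminus\{\infty,\tilde z\}$, but nothing forces the complementary component $F_n^0$ containing $\infty$ to be a neighbourhood of $\infty$: its only guaranteed content is a fixed point of the relevant isotopy, which may be $\infty$ itself. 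In the extreme case $F_n^0=\{\infty\}$ one gets $U_n=\R^2$, and more generally $\overline{A_n}$ may accumulate on $\infty$, so $U_n=A_n\cup F_n^1$ can be unbounded and of infinite Lebesgue area; then $f^{q}(U_n)=U_n$ is perfectly compatible with $0<\det Df<1$ and no contradiction arises. The bounded complementary component $F_n^1$ is compact and invariant but may have empty interior, so it does not help either. Your remark that local stability ``enters only in the background'' is exactly where the proof breaks: that hypothesis is what produces a set to which the area argument actually applies.

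Concretely, the paper's proof needs two ingredients you omit. First, it shows that the finite point $z^*$ of the distinguished pair does \emph{not} belong to $\Lambda'=\omega_{f^q}(z)$; this uses dissipativity a second time, through the eigenvalues of $Df^q(z^*)$, a blow-up of $z^*$ to a circle and a doubling construction, combined with Theorem \ref{th:rotation-number}. Second, local stability then yields a forward invariant neighbourhood $U$ of $\Lambda'$ with $z^*\notin\overline U$; the component $U'$ of $U$ containing $z$ is essential in $\R^2\setminus\{z^*\}$ (otherwise the rotation number of $z$ would be rational), and the component of $\R^2\setminus\overline{U'}$ containing $z^*$ is then \emph{automatically bounded} --- it is the hole of an essential annulus around $z^*$, not around $\infty$ --- as well as open, nonempty and backward invariant; this is the set whose existence contradicts area contraction. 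If you wish to keep your route through Proposition \ref{prop:irrational}, you would still have to inject local stability and the fact that $\tilde z\notin\omega_{f^q}(z)$ at this stage in order to manufacture a bounded invariant open set of positive area.
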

 \begin{proof}
Let $f_{\mathrm{sp}}$ be the natural extension of $f$ to the Alexandrov compactification of $\R^2$ by adding a point $\infty$ at infinity.  By Proposition \ref{pr:transitivesetsgeneralcase}, it suffices to show that $\Lambda$ cannot have a recurrent point of irrational type.

Assume, for a contradiction, this is false and let $z$ be a recurrent point of irrational type such that $\Lambda=\omega(z)$. There exist $q\geq 1$ and  $z_0, z_1$ in $\mathrm{fix}(f^q_{\mathrm{sp}})$ such that $\mathrm{rot}_{f^q_{\mathrm{sp}}, z_0, z_1}(z)\not \in\Q/\Z$.  By Theorem \ref{th:global-structure2}, there exists a squeezed annulus $A$ of $f^q_{\mathrm{sp}}$ that contains $z$. Moreover $z_0$ and  $z_1$ are not in the same component of the complement of $A$. Denote $z^*$ the point $z_0$ or $z_1$ that is not in the same component as $\infty$. As explained before, one has $\mathrm{rot}_{f^q_{\mathrm{sp}},  z^*,\infty}=\mathrm{rot}_{f^q\vert_{\R^2\setminus\{z^*\}}}(z)\not \in\Q/\Z$. Write $\Lambda'=\omega_{f^q_{\mathrm{sp}}}(z)$. Since $\Lambda'$ is a compact subset of $\R^2$ and $\Lambda'\subset \Lambda$, one knows that $\infty$ does not belong to $\Lambda'$. We claim that $z^*$ also does not belong to $\Lambda'$. To see this, we argue by contradiction and suppose that $z^*\in\Lambda'$. Note first that $z^*$ cannot be neither a sink nor a source because $\Lambda'$ is a transitive set of $f^q$. We deduce that the eigenvalues of $Df^q(z^*)$ are distinct real numbers, otherwise $z$ would be a sink because $0<\mathrm{det} Df^q(z^*)<1$. Therefore one may blowup the point $z^{*}$ to a circle $\Sigma$ and extend $f^q\vert_{\R^2\setminus\{z^*\}}$ to a homeomorphism $g_{\mathrm{ann}}$ of $\R^2_{\mathrm{ann}}=(\R^2\setminus\{z^*\})\sqcup\Sigma$ admitting on $\Sigma$ either a fixed point (if the eigenvalues are both positive), or a periodic point with period two (if the eigenvalues are both negative). We paste two copies of $\R^2_{\mathrm{ann}}$ on $\Sigma$ to get an open annulus $\R^2_{\mathrm{double}}$ and construct by reflection a homeomorphim $g_{\mathrm{double}}$ on $\R^2_{\mathrm{double}}$. The set $\omega_{g_{\mathrm{double}}}(z)$ contains the point $z$, with $\mathrm{rot}_{g_{\mathrm{double}}}(z)\not \in\Q/\Z$, and a periodic point $z'\in\Sigma$, with $\mathrm{rot}_{g_{\mathrm{double}}}(z')=0+\Z$ or $\mathrm{rot}_{g_{\mathrm{double}}}(z')=1/2+\Z$. By Theorem \ref{th:rotation-number}, we deduce that $g_{\mathrm{double}}$ has a topological horseshoe, which is also the case for $f$.

Since $\Lambda$ is locally stable for $f$, so is $\Lambda'$ for $f^q$ and one can find a forward invariant neighborhood $U$ of $\Lambda'$  such that $z^{*}$ does not belong to $\overline U$. The connected component of $U$ that contains $z$, denoted $U'$, is forward invariant by a power of $f^q$, because $z$ is recurrent. It is essential in the annulus $\R^2\setminus\{z^*\}$, otherwise $\mathrm{rot}_{f^q\vert_{\R^2\setminus\{z^*\}}}(z)$ would be rational. This implies that $U'$ is forward invariant by $g$ and that the connected component of $\R^2\setminus\overline{U'}$ that contains $z^*$ is backward invariant and relatively compact. This contradicts the fact that $f^q$ decreases the area.
 \end{proof}

\end{document}